\newcommand\CH{{\mathcal{H} }}
\newcommand\bR{\mathbb R}
\newcommand\bZ{\mathbb Z}
\newcommand\bC{\mathbb C}
\newcommand\al{{\alpha}}
\newcommand\la{{\lambda}}
\newcommand\CG{{\mathcal{G}}}
\newcommand\CA{{\mathcal{A}}}
\newcommand\CZ{{\mathcal{Z}}}
\newcommand\CO{{\mathcal{O}}}
\newcommand\CM{{\mathcal{M}}}
\newcommand\CL{{\mathcal{L}}}
\newcommand\CE{{\mathcal{E}}}
\newcommand\CI{{\mathcal{I}}}
\newcommand\CJ{{\mathcal{J}}}
\newcommand\CV{{\mathcal{V}}}
\newcommand\CP{{\mathcal{P}}}
\newcommand\CT{{\mathcal{T}}}
\newcommand\ft{{\mathfrak{t}}}
\newcommand \bfG{{\mathbf G}}
\newcommand \bfT{{\mathbf T}}
\newcommand \bfP{{\mathbf P}}
\newcommand \bfU{{\mathbf U}}
\newcommand \bfL{{\mathbf L}}
\newcommand \Gal{{\mathrm {Gal}}}
\newcommand \sfk{{\mathsf k}}
\newcommand \bark{{\bar{\mathsf k}}}
\begin{document}

\title{Wavefront sets of unipotent representations of reductive $p$-adic groups II} 


\begin{abstract}
The wavefront set is a fundamental invariant of an admissible representation arising from the Harish-Chandra-Howe local character expansion. In this paper, we give a precise formula for the wavefront set of an irreducible representation of real infinitesimal character in Lusztig's category of unipotent representations in terms of the Deligne-Langlands-Lusztig correspondence. Our formula generalizes the main result of \cite{cmo}, where this formula was obtained in the Iwahori-spherical case. We deduce that for any irreducible unipotent  representation with real infinitesimal character, the algebraic wavefront set is a singleton, verifying a conjecture of M\oe glin and Waldspurger. In the process, we establish new properties of the generalized Springer correspondence in relation to Lusztig's families of unipotent representations of finite reductive groups.
\end{abstract}

\maketitle

\tableofcontents

\section{Introduction}

Let $\mathsf k$ be a nonarchimedean local field of characteristic $0$ and residue field $\FF_q$ of sufficiently large characteristic, and let $\mathbf{G}$ be a connected reductive algebraic group defined over $\sfk$, inner to split. Fix an algebraic closure $\bar{\mathsf{k}}$ of $\mathsf{k}$ and let $K \subset \bar{\mathsf{k}}$ be the maximal unramified extension of $\mathsf{k}$ in $\bar{\mathsf{k}}$. The main result in this paper is a formula for the wavefront set of an arbitrary irreducible unipotent representation of $\mathbf{G}(\sfk)$ of real infinitesimal character. To formulate our result, we will need to review some preliminary ideas.

\subsection{Wavefront sets} 

Let $X$ be an irreducible admissible representation of $\mathbf{G}(\sfk)$. The \emph{wavefront set} of $X$ is a fundamental invariant related to the Harish-Chandra-Howe local character expansion. In its classical form, the wavefront set of $X$ is a collection of nilpotent $\bfG(\sfk)$-orbits in the Lie algebra $\mathfrak g(\sfk)$, denoted $\WF(X)$. We will recall the definition in Section \ref{subsec:wavefrontsets}. In this paper, we consider two coarser invariants. The first of these invariants is the \emph{algebraic wavefront set}, denoted $\hphantom{ }^{\bar{\sfk}}\WF(X)$. This is a collection of nilpotent orbits in $\mathfrak{g}(\bark)$, see for example \cite[p. 1108]{Wald18}. The second invariant is a natural refinement $^K\WF(X)$ of $\hphantom{ }^{\bar{\sfk}}\WF(X)$ called the \emph{canonical unramified wavefront set}, defined recently in \cite{okada2021wavefront}. This is a collection of nilpotent orbits $\mathfrak g(K)$ (modulo a certain equivalence relation $\sim_A$). The relationship between these three invariants is as follows: the algebraic wavefront set $\hphantom{ }^{\bar{\sfk}}\WF(X)$ is deducible from the usual wavefront set $\WF(X)$ as well as the canonical unramified wavefront set $^K\WF(X)$. It is not known whether the canonical unramified wavefront set is deducible from the usual wavefront set, but we expect this to be the case.

\subsection{Representations with unipotent cuspidal support}

In \cite{Lu-unip1}, Lusztig defines the notion of an irreducible  \emph{unipotent representation} of $\mathbf{G}(\sfk)$. These are the quotients of the representations which are compactly induced from Deligne-Lusztig unipotent cuspidal representations of the reductive quotients of parahoric subgroups. We denote the set of such $\mathbf{G}(\sfk)$-representations by $\Pi^{\mathsf{Lus}}(\mathbf{G}(\sfk))$. We will now briefly recall the classification of $\Pi^{\mathsf{Lus}}(\mathbf{G}(\sfk))$ (see Section \ref{subsec:LLC} for a more detailed statement). While our main results are independent of isogeny (see in particular Remark \ref{r:real-indep}), we will assume in the introduction that $\bfG$ is of adjoint type. Let $G^{\vee}$ denote the complex points of the Langlands dual group of $\mathbf{G}$. Then $\Pi^{\mathsf{Lus}}(\mathbf{G}(\sfk))$ is in one-to-one correspondence with $G^{\vee}$-conjugacy classe of triples $(s,n,\rho)$ consisting of
\begin{itemize}
    \item a semisimple element $s \in G^{\vee}$,
    \item a nilpotent element $n \in \fg^{\vee}$ such that $\Ad(s)n=qn$, and
    \item an irreducible representation $\rho$ of the component group of the centralizer of $s$ and $n$ in $G^{\vee}$. 
\end{itemize}
We write $X(s,n,\rho)$ for the unipotent representation associated to the triple $(s,n,\rho)$. Conjugating if necessary, we can assume that $s$ belongs to a fixed maximal torus $T^{\vee} \subset G^{\vee}$. There is a canonical decomposition $T^{\vee} =  T^{\vee}_c  T^{\vee}_r$, where $T^{\vee}_c$ is the maximal compact subgroup of $T^{\vee}$ and $T^{\vee}_r$ is a real vector group. The representation $X(s,n,\rho)$ is said to have \emph{real infinitesimal character} if $s$ belongs to $T^{\vee}_r$.

\subsection{Aubert-Zelevinsky duality}

In \cite{Au}, Aubert defines an involution $X \mapsto \mathrm{AZ}(X)$ on the Grothendieck group of finite-length smooth $\mathbf{G}(\sfk)$-representations (generalizing an involution defined by Zelevinsky in \cite{zelevinsky-induced}). For details, we refer the reader to \cite[Section 2.7]{cmo}. After a sign normalization depending only on the Bernstein component, this involution takes irreducibles to irreducibles and preserves the set $\Pi^{\mathsf{Lus}}(\mathbf{G}(\sfk))$ of unipotent representations. Moreover, if $(s,n,\rho)$ is the parameter of a unipotent representation, then $\mathrm{AZ}(X(s,n,\rho)) = X(s,n',\rho')$.

\subsection{Main results}

Let $G$ be the complex connected reductive algebraic group associated to $\mathbf{G}$. Let $\mathcal N_o$ be the set of nilpotent orbits in the Lie algebra $\mathfrak g$ and let $\mathcal N_{o,\bar c}$ be the set of pairs $(\mathbb O,\bar C)$ consisting of a nilpotent orbit $\mathbb O\in \mathcal N_o$ and a conjugacy class $\bar C$ in Lusztig's canonical quotient $\bar A(\mathbb O)$ of the $G$-equivariant fundamental group $A(\OO)$ of $\OO$. Write $\mathcal N^\vee_o$ and $\mathcal N^\vee_{o,\bar{c}}$ for the corresponding sets for $G^\vee$. In \cite[Proposition 10.3]{Spaltenstein}, Spaltenstein defines a duality map
$$d: \cN^{\vee}_o \to \cN_o$$
An orbit $\OO \in \cN_o$ is in the image of $d$ if and only if it is special in the sense of Lusztig. In \cite{Acharduality}, Achar defines a related map
$$d_A: \mathcal N^\vee_{o,\bar c}\rightarrow \mathcal N_{o,\bar c}$$
This map generalizes $d$ in the following sense: for every orbit $\OO^{\vee} \in \mathcal{N}_o$, there is a conjugacy class $C'$ in $\bar{A}(d(\OO^{\vee}))$ such that
$$d_A(\OO^\vee,1)=(d(\OO^\vee),\bar C')$$
Furthermore, $\bar{C}'=1$ if $\OO^{\vee}$ is special. 

There is a natural bijection between $\mathcal{N}_{o,\bar{c}}$ and the set of $\sim_A$-equivalence classes of unramified nilpotent orbits $\mathcal N_o(K)$ of $\bfG(K)$, see \cite[Section 5.1]{okada2021wavefront}. Thus, we can regard $d_A(\OO^\vee, \bar{C})$ as an element of $\mathcal N_o(K)/\sim_A$.

\begin{theorem}[See Theorem \ref{thm:realwf} and Corollary \ref{cor:wfbound} below]\label{t:main} Let $X=X(s,n,\rho)$ be a unipotent representation of $\bfG(\mathsf k)$ with real infinitesimal character and let $\AZ(X) = X(s,n',\rho')$. 
\begin{enumerate}
    \item The canonical unramified wavefront set $^K\WF(X)$ is a singleton, and
\[^K\WF(X) = d_A(\OO^{\vee}_{\AZ(X)},1),\]
where $\OO^{\vee}_{\AZ(X)}$ is the $G^\vee$-orbit of $n'$ and $d_A$ is the duality map defined by Achar. In particular,
\[\hphantom{ }^{\bar{\sfk}}\WF(X) = d(\OO^\vee_{\AZ(X)}).\]
\item Suppose $X=X(q^{\frac 12 h^\vee},n,\rho)$ where $h^\vee$ is the neutral element of a Lie triple attached to a nilpotent orbit $\OO^\vee\subset \mathfrak g^\vee$. Then
\[d_A(\OO^{\vee}, 1) \leq_A \hphantom{ } ^K\WF(X),
\]
where $\leq_A$ is the partial order defined by Achar. In particular,
\[d(\OO^{\vee}) \le \hphantom{ }^{\bar{\sfk}}\WF(X).\]
\end{enumerate}
\end{theorem}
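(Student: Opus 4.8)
The plan is to prove the two parts of Theorem~\ref{t:main} separately, with part (1) being the substantive one and part (2) following from it together with standard comparison results. For part (1), the strategy is to reduce the computation of $^K\WF(X)$ to a computation on the finite-reductive-group side via the theory of the canonical unramified wavefront set from \cite{okada2021wavefront}. Concretely, I would first recall from \cite{okada2021wavefront} that $^K\WF(X)$ can be computed ``facet by facet'': for each vertex (or more generally facet) $x$ in the Bruhat--Tits building, the representation $X$ has an associated invariant $\WF_x(X)$ defined in terms of the restriction of $X$ to the parahoric $\bfG_x$ and the Kawanaka/generalized-Gelfand--Graev representations of the reductive quotient $\mathbb{G}_x = \bfG_x/\bfG_x^+$, and $^K\WF(X)$ is the maximum over $x$ of the orbits obtained by inducing these finite-group orbits up to $\mathfrak{g}(K)$. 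Since $X$ is unipotent with real infinitesimal character, its restriction to each $\bfG_x$ is controlled, via the Deligne--Langlands--Lusztig classification and Lusztig's work, by the unipotent representations of $\mathbb{G}_x$ appearing in the relevant Deligne--Lusztig induction, so the problem becomes: determine which unipotent nilpotent orbits of $\mathbb{G}_x$ carry the Kawanaka wavefront set of these finite-group constituents.

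The key technical input is then the relationship between the generalized Springer correspondence and Lusztig's families, which the abstract advertises as ``new properties'' established in this paper. The plan is to use the fact that the wavefront set of a unipotent representation of a finite reductive group is governed by the family it lies in, together with an Aubert--Zelevinsky-type duality statement on the finite level matching Lusztig's nonabelian Fourier transform; combining this across all facets and passing to the limit should identify $^K\WF(X)$ with a single element of $\mathcal{N}_o(K)/\!\sim_A$. To see that this single element is exactly $d_A(\OO^\vee_{\AZ(X)},1)$, I would exploit the compatibility of Achar's duality $d_A$ with parabolic/parahoric induction and with the Spaltenstein duality $d$, checking the formula on cuspidal unipotent data (where it is essentially the statement that Achar's map restricted to trivial local systems recovers $d$, as recalled in the excerpt) and then propagating by functoriality. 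The identity $^{\bar{\sfk}}\WF(X) = d(\OO^\vee_{\AZ(X)})$ is then immediate from the relationship between $^K\WF$ and $^{\bar\sfk}\WF$ recalled in Section~\ref{subsec:wavefrontsets} and the fact that $d_A(\OO^\vee,1)$ lies over $d(\OO^\vee)$.

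For part (2), I would take $X = X(q^{1/2 h^\vee}, n, \rho)$ and compare with the representation dual to it: when $s = q^{1/2 h^\vee}$ for the neutral element $h^\vee$ of a Lie triple through $\OO^\vee$, the representation $X$ is (up to $\AZ$) closely related to a ``spherical''/tempered-type object whose Aubert--Zelevinsky dual has $\OO^\vee_{\AZ(X)} \geq \OO^\vee$ in the closure order — or more precisely one shows $\OO^\vee_{\AZ(X)}$ dominates $\OO^\vee$ using the structure of the parameter and the known behaviour of $\AZ$ on such parameters. Since Achar's map $d_A$ is order-reversing for $\leq_A$ and Spaltenstein's $d$ is order-reversing for the closure order, part (1) then gives $d_A(\OO^\vee,1) \leq_A d_A(\OO^\vee_{\AZ(X)},1) = {}^K\WF(X)$, and likewise $d(\OO^\vee) \leq {}^{\bar\sfk}\WF(X)$.

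The main obstacle I anticipate is the second step of part (1): controlling $^K\WF(X)$ facet-by-facet requires a precise understanding of how the Deligne--Langlands--Lusztig parameter $(s,n,\rho)$ of a unipotent representation restricts to each parahoric in terms of unipotent representations of the finite reductive quotients, and then matching the Kawanaka wavefront sets of those finite-group representations — organized by Lusztig's families and the generalized Springer correspondence — with the combinatorics of Achar's duality. In particular, the compatibility between the nonabelian Fourier transform governing families and Achar's component-group bookkeeping in $\bar A(\OO)$ is delicate, and establishing it in the generality needed (arbitrary special and non-special orbits, all classical and exceptional types, all facets) is where the bulk of the work — and the ``new properties of the generalized Springer correspondence'' promised in the abstract — will reside.
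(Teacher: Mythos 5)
Your part (2) argument contains a direction error that would reverse the conclusion. You claim $\OO^\vee_{\AZ(X)} \geq \OO^\vee$ in the closure order; but since $d_A$ is order-reversing, that would yield $d_A(\OO^\vee_{\AZ(X)},1) \leq_A d_A(\OO^\vee,1)$, i.e. $^K\WF(X) \leq_A d_A(\OO^\vee,1)$, which is the \emph{opposite} of what part (2) asserts. The correct input (the paper cites \cite[Lemma 3.0.9]{cmo}) is $\OO^\vee_{\AZ(X)} \leq \OO^\vee$: when $s = q^{\tfrac12 h^\vee}$ with $h^\vee$ the neutral element of a triple through $\OO^\vee$, the orbit $\OO^\vee$ is the open $G^\vee(s)$-orbit in $\fg^\vee_q$, and every nilpotent $n'$ with $\Ad(s)n' = qn'$ therefore lies in $\overline{\OO^\vee}$. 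Plugging this in, $d_A(\OO^\vee,1) \leq_A d_A(\OO^\vee_{\AZ(X)},1) = {}^K\WF(X)$, as desired. You wrote down the right endpoint but the wrong intermediate inequality, so as stated the deduction fails.

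For part (1), your facet-by-facet reduction via the Kawanaka wavefront set of the parahoric restrictions, and the identification of the key input as a property linking Lusztig families to the generalized Springer correspondence, are both the right strategic moves. However, two essential ingredients of the actual argument are missing from your sketch. First, you need to express the irreducible $X$ in the Grothendieck group as a $\mathbb{Z}$-linear combination of standard modules (by inverting the triangular character identities $Y(s,n',\rho') = X(s,n',\rho') + \cdots$), because it is the \emph{standard} modules whose parahoric restrictions are computable: they become generalized Springer representations of the relative Weyl group after the Hecke-algebra deformation, and the AZ twist corresponds to tensoring with sign (via Kato/Iwahori--Matsumoto duality on the Hecke-algebra side, not the nonabelian Fourier transform). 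This triangularity gives you, in each facet $J$, an expression for $\Res(\AZ(X))$ as a sum of $E(\OO^\vee_X,\tilde\rho)|_{W_{x,J}}$ plus terms supported on strictly larger orbits. Second, the substance of the proof is the verification of a precise combinatorial property — ``$x$-faithfulness'' — for every orbit in every block of the generalized Springer correspondence, which simultaneously (a) exhibits a facet $J$ and family $\phi$ realizing the exact value $d_A(\OO^\vee_{\AZ(X)},1)$, and (b) provides the matching upper bound for all constituents of the restriction. Your suggestion to ``check the formula on cuspidal unipotent data and then propagate by functoriality'' does not deliver this: one cannot obtain the required two-sided bound this way, and the paper instead establishes $x$-faithfulness case-by-case (explicit symbol combinatorics for classical types, including a new nonrecursive description of the generalized Springer correspondence for spin groups, and tables/computer verification for exceptional types). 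So while your high-level outline is compatible with the paper, the concrete mechanism that makes the argument close is absent from the proposal.
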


This result is a vast generalization of \cite[Theorem 1.2.1]{cmo} where a similar result was proved in the case when $\bfG$ is split and $X$ is Iwahori-spherical. When $\bfG$ is inner to $\mathsf{SO}(2n+1)$ and $\AZ(X)$ is tempered, a formula for $\hphantom{ }^{\bar{\sfk}}\WF(X)$ was previously obtained by Waldspurger in \cite{Wald18}. 

We note that the class of representations considered in Theorem \ref{t:main} is probably the most general setting for a result of this form. If $X$ is unipotent of \emph{non-real} infinitesimal character, then both formulas in (1) may not hold. 

\begin{example}
    Let $G^\vee = E_7$, let $T^\vee$ be a maximal torus, and le $s\in T^\vee$ be a semisimple element such that the centralizer $C_{G^{\vee}}(s)$ of $s$ is a subgroup of type $A_1+D_6$ (there are two such $s$ up to $W$-conjugacy).
    Let $u$ be a unipotent element in the orbit $(2)\times (5,3^2,1)$ of $C_{G^\vee}(s)$.
    Then $A(s,u)$ is a cyclic group of order 2 and $u$ belongs to the orbit $A_3+A_2+A_1$ of $G^\vee$.
    Let $\rho$ be the sign representation of $A(s,u)$.
    We have that $d(A_3+A_2+A_1) = A_4+A_2$.
    However if $\AZ(X)$ has parameters $(sq^{\frac12h^\vee},u,\rho)$ then $X$ has wavefront set $A_4+A_1$ which is the special orbit directly below $d(A_3+A_2+A_1)$.
    This is an example where both equalities in (1) fail (it suffices to show that second fails since the first implies the second).
\end{example}

We note that there is a strictly weaker result which appears to hold for all unipotent representations (including the example above). Let $d_S$ be the duality map (\ref{e:sommers-duality}) defined by Sommers \cite{Sommers2001}. The following conjecture is due to the third named author.

\begin{conj}
    \label{conj:one}
    Let $X$ be a unipotent representation of $\bfG(\sfk)$.
    Then $^K\WF(X)$ is a singleton and 
    $$d_S(\hphantom{ }^K\WF(X)) = \OO^\vee_{\AZ(X)}.$$
\end{conj}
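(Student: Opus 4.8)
In the real-infinitesimal-character case the conjecture is already a consequence of Theorem \ref{t:main}(1): one has $^K\WF(X) = d_A(\OO^\vee_{\AZ(X)}, 1)$, and $d_S \circ d_A$ is the projection $(\OO^\vee, \bar C) \mapsto \OO^\vee$ (Achar, \cite{Acharduality}), so $d_S(\hphantom{ }^K\WF(X)) = \OO^\vee_{\AZ(X)}$ and the singleton property is inherited. The plan for the general case is therefore to reduce an arbitrary $X(s,n,\rho)$ to this situation. Writing $s = s_c s_r$ with $s_c \in T^\vee_c$, $s_r \in T^\vee_r$, the work of Lusztig, and of Solleveld on unipotent types, identifies the Bernstein block containing $X(s,n,\rho)$ with the module category of an extended---and in general unequal-parameter---affine Hecke algebra $\mathbb{H}_{s_c}$ attached to the (typically disconnected) reductive group $H^\vee := Z_{G^\vee}(s_c)$; under this equivalence $X(s,n,\rho)$ becomes a module of \emph{real} central character governed by $\log s_r$, with geometric parameter the pair $(n,\rho)$ now read as a nilpotent orbit of $H^\vee$ equipped with a local system. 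I would first record that both invariants in the conjecture are intrinsic to this block: $^K\WF$ is extracted from the module via the asymptotic/lowest-unramified-$K$-type data exactly as in \cite{okada2021wavefront, cmo}, and $\AZ$ corresponds to the Iwahori--Matsumoto involution on $\mathbb{H}_{s_c}$, so that $\OO^\vee_{\AZ(X)}$ depends only on $(H^\vee, n, \rho)$.

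The heart of the matter is then a ``relative'' form of Theorem \ref{t:main}(1) for $\mathbb{H}_{s_c}$: a module of real central character with geometric parameter $(n,\rho)$ for $H^\vee$ has singleton canonical unramified wavefront set, equal to an Achar-type dual $d^{H^\vee}_A$ of the $H^\vee$-orbit of the Iwahori--Matsumoto-dual parameter. Structurally this should be ``the same proof'' as Theorem \ref{t:main}, whose argument already passes through geometric graded Hecke algebras; the real content is to push the geometric inputs---the generalized Springer correspondence, Lusztig's families, and the combinatorics of special orbits and canonical quotients---through to the \emph{disconnected} group $H^\vee$. This is precisely the type of extension advertised in the abstract, and I expect the new properties of the generalized Springer correspondence proved in the body of the paper to supply the needed tools. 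One must be careful here because $\mathbb{H}_{s_c}$ can genuinely have unequal parameters, so the dictionary between unramified $K$-type data and nilpotent orbits has to be established in that generality, not only for Iwahori-Hecke algebras.

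It remains to identify $d^{H^\vee}_A$, together with the orbit-saturation matching up the two sides of the reduction, with Sommers' map $d_S$ for $G$. By construction (\ref{e:sommers-duality}), $d_S(\OO, \bar C)$ is obtained by reading $\bar C \in \bar A(\OO)$ as the conjugacy-class datum that pins down a pseudo-Levi $M^\vee \subseteq G^\vee$, dualizing the relevant orbit \emph{inside} $\mathfrak m^\vee$, and saturating back to $\mathfrak g^\vee$; taking $M^\vee = H^\vee = Z_{G^\vee}(s_c)$---whose type is exactly what $s_c$ records---this says $d_S\big(d^{H^\vee}_A(\OO^\vee_{\AZ(X)}, 1)\big) = \OO^\vee_{\AZ(X)}$, where on the right the orbit is viewed as an $H^\vee$-orbit in $\mathfrak h^\vee \subseteq \mathfrak g^\vee$. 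Combined with the compatibility of nilpotent-orbit saturation with $^K\WF$ under the reduction and with the relative statement above, this gives $d_S(\hphantom{ }^K\WF(X)) = \OO^\vee_{\AZ(X)}$ in general. I expect the main obstacle to be, on the one hand, making the reduction to $\mathbb{H}_{s_c}$ simultaneously compatible with $^K\WF$ and $\AZ$---i.e.\ controlling how the wavefront set depends on the compact part $s_c$, so that passing to $Z_{G^\vee}(s_c)$ neither loses nor distorts wavefront information---and, on the other, extending the machinery of Theorem \ref{t:main} to disconnected pseudo-Levis. The example following Theorem \ref{t:main} shows that this second point is not formal: there the output is a \emph{proper} special orbit below $d(\OO^\vee_{\AZ(X)})$, so it is genuinely $d_S$, and not $d_A(-,1)$ on the ambient group, that must emerge from the relative computation.
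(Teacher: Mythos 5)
The statement you are trying to prove is stated in the paper as a \emph{conjecture} (attributed to the third named author), not a theorem: the paper proves the formula $^K\WF(\AZ(X)) = d_A(\OO^\vee_X,1)$ only for real infinitesimal character, and explicitly leaves the general case open. Your first paragraph is correct and matches what the paper establishes: for real infinitesimal character the conjecture follows from Theorem \ref{t:main}(1) together with the fact that $\mathrm{pr}_1\circ d_A$ recovers $d$ and, more precisely, that $d_S$ applied to $d_A(\OO^\vee,1)$ returns $\OO^\vee$. But the remainder of your proposal is a research program, not a proof, and each of its main steps is a genuine open problem rather than a routine verification.

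Concretely: (1) the ``relative form of Theorem \ref{t:main}(1)'' for the possibly disconnected pseudo-Levi $H^\vee = Z_{G^\vee}(s_c)$ is exactly the missing content, and the paper's own example ($G^\vee = E_7$, $\OO^\vee = A_3+A_2+A_1$, $s$ with centralizer $A_1+D_6$) shows it cannot be ``the same proof'': there the wavefront set is strictly \emph{smaller} than $d(\OO^\vee_{\AZ(X)})$, so whatever mechanism produces the answer must genuinely see the pseudo-Levi and cannot reduce to the $x$-faithfulness argument on the ambient group. (2) The compatibility of $^K\WF$ with the block equivalence when $s_c\neq 1$ is not formal either: by Theorem \ref{t:sol-zeta} and Theorem \ref{t:mult-general}, the parahoric restrictions of such a representation are computed from $\Ind_{\mathcal W^\vee(t_c)}^{\mathcal W^\vee}(\mathbb C_{t_c}\otimes\,\cdot\,)$, and controlling the Kawanaka wavefront sets of the resulting (highly reducible) finite-group representations across all parahorics is precisely the analogue of the faithfulness property that the paper only verifies in the $s_c=1$ case. (3) The identification of the hypothetical relative Achar dual with Sommers' $d_S$ on the nose is asserted by analogy with the definition of $d_S$ but not proved. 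So your proposal correctly isolates where the difficulty lies, but it does not close any of these gaps; as written it proves the conjecture only in the real-infinitesimal-character case, which is the case already covered by the paper's main theorem.
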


This is a weakening of our main theorem in the following sense: for a representation $X$
$$^K\WF(X) = d_A(\OO^\vee_{\AZ(X)},1) \Leftrightarrow d_S(\hphantom{ }^K\WF(X)) = \OO^\vee_{\AZ(X)} \text{ and } \hphantom{ }^{\bark}\WF(X) = d(\OO^\vee_{\AZ(X)}).$$
%


If we leave the world of unipotent representations, further difficulties arise. For one, there is no complete classification of irreducible representations in this more general setting. 
Moreover, it was recently announced in \cite{Tsai2022} that $\hphantom{ }^{\bar{\sfk}}\WF(X)$ may not always be a singleton for positive depth representations.

We conclude with the following remark. 
The study of wavefront sets has traditionally been guided by the question `How can we determine the wavefront set from the L-parameter?'. However since $d_S$ is a many to one map, Conjecture \ref{conj:one} suggests it is more natural to ask `How can we determine the nilpotent part of the L-parameter from the wavefront set of the AZ dual?'. 
Moreover, since $^K\WF(X)$ is a coarsening of $\WF(X)$ one might further wonder how much arithmetic information exactly is encoded in $\WF(X)$? 

\subsection{Structure of paper}

In Section \ref{sec:preliminaries} we collect some preliminaries regarding the structure theory of $p$-adic groups, nilpotent orbits, and wavefront sets. In Section \ref{sec:2}, we recall some facts regarding (graded) affine Hecke algebras, including Lusztig's reduction theorems, the classification of simple modules, analytic deformations, and branching to parahoric subalgebras. In Section \ref{sec:arithmeticgeometric}, we recall the details of Lusztig's arithmetic-geometric correspondence, and use it to obtain a formula for the structure of the parahoric restrictions of a unipotent representation. Our main results, stated and proved in Section \ref{sec:main}, depend on verifying a certain technical property of nilpotent orbits related to the generalized Springer correspondence. We call this property \emph{$x$-faithfulness}; it can be viewed as a generalization of the notion of an \emph{adapted pair} from \cite{lusztig2020families}. In Section \ref{sec:faithful} we check this property in all types. In the process, we establish a new combinatorial characterization of the generalized Springer correspondence for spin groups. In Section \ref{sec:main}, we use $x$-faithfulness to prove our main results. 

We mention that the motivation of \cite{lusztig2020families} is to give a new, purely algebraic, description of the Springer correspondence, via families of unipotent representations for (maximal) parahoric subgroups. We expect that our results in Section \ref{sec:faithful} will play a similar role for the generalized Springer correspondence. 

\subsection{Acknowledgments}

This research was partially supported by EP/V046713/1. The authors thank Ruben La for helpful discussions about the generalized Springer correspondence.

\section{Preliminaries}\label{sec:preliminaries}

\subsection{Notation}\label{subsec:notation}
Let $\mathsf{k}$ be a nonarchimedean local field of characteristic $0$ with residue field $\mathbb{F}_q$ of sufficiently large characteristic, ring of integers $\mathfrak{o} \subset \mathsf{k}$, and valuation $\mathsf{val}_{\mathsf{k}}$. Fix an algebraic closure $\bar{\mathsf{k}}$ of $\mathsf{k}$ with Galois group $\Gamma$, and let $K\subset \bar{\mathsf{k}}$ be the maximal unramified extension of $\mathsf{k}$ in $\bar{\mathsf{k}}$. 
Let $\mf O$ be the ring of integers of $K$.
Let $\mathrm{Frob}$ be the geometric Frobenius element of $\mathrm{Gal}(K/\mathsf{k})$, the topological generator which induces the inverse of the automorphism $x\to x^q$ of $\mathbb{F}_q$.

Let $\bfG$ be a connected reductive algebraic group defined over $\sfk$, inner to split, and let $\bfT \subset \mathbf{G}$ be a maximal torus. For any field $F$ containing $\sfk$, we write $\mathbf{G}(F)$, $\mathbf{T}(F)$, etc. for the groups of $F$-rational points. Let $\bfG_{\ad}=\bfG/Z(\bfG)$ denote the adjoint group of $\bfG$.

Write $X^*(\mathbf{T},\bark)$ (resp. $X_*(\mathbf{T},\bark)$) for the lattice of algebraic characters (resp. co-characters) of $\mathbf{T}(\bark)$, and write $\Phi(\mathbf{T},\bark)$ (resp. $\Phi^{\vee}(\mathbf{T},\bark)$) for the set of roots (resp. co-roots). Let
$$\mathcal R=(X^*(\mathbf{T},\bark), \ \Phi(\mathbf{T},\bark),X_*(\mathbf{T},\bark), \ \Phi^\vee(\mathbf{T},\bark), \ \langle \ , \ \rangle)$$
be the root datum corresponding to $\mathbf{G}$, and let $W$ the associated (finite) Weyl group.
Let $G$ be the complex reductive group with the same absolute root datum as $\bfG$ and let $\mathbf{G}^\vee$ be the Langlands dual group of $\bfG$, i.e. the connected reductive algebraic group defined and split over $\ZZ$ corresponding to the dual root datum 
$$\mathcal R^\vee=(X_*(\mathbf{T},\bark), \ \Phi^{\vee}(\mathbf{T},\bark),  X^*(\mathbf{T},\bark), \ \Phi(\mathbf{T},\bark), \ \langle \ , \ \rangle).$$
Set $\Omega=X_*(\mathbf{T},\bark)/\ZZ \Phi^\vee(\mathbf{T},\bark)$. The center $Z(\bfG^\vee)$ can be naturally identified with the irreducible characters $\mathsf{Irr} \Omega$, and dually, $\Omega\cong X^*(Z(\bfG^\vee))$. For $\omega\in\Omega$, let $\zeta_\omega$ denote the corresponding irreducible character of $Z(\bfG^\vee)$.

For details regarding the parametrization of inner twists of $\bfG$, see \cite[\S2]{Vogan1993}, \cite{Kottwitz1984}, \cite[\S2]{Kaletha2016}. 
For our purposes, it suffices to note that there is a natural bijection between equivalence classes of inner twists of $\mathbf G$ and elements of the Galois cohomology group
\[H^1(\Gamma, \mathbf G_{\ad})\cong H^1(F,\mathbf G_{\ad}(K))\cong\Omega_{\ad}\cong \Irr Z(\bfG^\vee_{\mathsf{sc}}),
\]
Here $\bfG^\vee_{\mathsf{sc}}$ denotes the Langlands dual group of $\bfG_{\ad}$ and $F$ denotes the action of $\mathrm{Frob}$ on $\bfG(K)$. The isomorphism above is defined as follows: for a cohomology class $h$ in $H^1(F, \mathbf G_{\ad}(K))$, let $z$ be a representative cocycle. Let $u\in \bfG_{\ad}(K)$ be the image of $F$ under $z$, and let $\omega$ denote the image of $u$ in $\Omega_{\ad}$. Set $F_\omega=\Ad(u)\circ F$. Then the rational structure of $\bfG$ corresponding to $h$ is given by $F_\omega$.
Write $\bfG^\omega$ for the connected reductive group defined over $\sfk$ such that $\bfG(K)^{F_\omega}=\bfG^\omega(\mathsf k)$.

\

If $H$ is a complex reductive group and $x$ is an element of $H$ or $\fh$, we write $H(x)=C_H(x)$ for the centralizer of $x$ in $H$, and $A_H(x)$ for the group of connected components of $H(x)$. If $S$ is a subset of $H$ or $\fh$ (or indeed, of $H \cup \fh$), we can similarly define $H(S)$ and $A_H(S)$. We will sometimes write $A(x)$, $A(S)$ when the group $H$ is implicit. 
The subgroups of $H$ of the form $H(x)$ where $x$ is a semisimple element of $H$ are called \emph{pseudo-Levi} subgroups of $H$.

\medskip

Let $\mathcal C(\bfG(\mathsf k))$ be the category of finite-length smooth complex $\bfG(\mathsf k)$-representations and let $\Pi(\mathbf{G}(\mathsf k)) \subset \mathcal C(\bfG(\mathsf k))$ be the set of irreducible objects. Let $R(\bfG(\mathsf k))$ denote the Grothendieck group of $\mathcal C(\bfG(\mathsf k))$.

\subsection{The Bruhat-Tits Building}
\label{sec:btbuilding}

In this section we will recall some standard facts about the Bruhat-Tits building.

Fix a $\omega \in \Omega$ and let $\bfG^\omega$ be the inner twist of $\bfG$ corresponding $\omega$ as defined in the previous section.
Let $\mathcal B(\bfG^\omega,\sfk)$ denote the (enlarged) Bruhat-Tits building for $\bfG^\omega(\sfk)$. 
Let $\mathcal B(\bfG,K)$ denote the (enlarged) Bruhat-Tits building for $\bfG(K)$.
For an apartment $\mathcal A$ of $\mathcal B(\bfG,K)$ and $\Omega\subseteq \mathcal A$ we write $\mathcal A(\Omega,\mathcal A)$ for the smallest affine subspace of $\mathcal A$ containing $\Omega$.
The inner twist $\bfG^\omega$ of $\bfG$ gives rise to an action of the Galois group $\Gal(K/k)$ on $\mathcal B(\bfG,K)$ and we can (and will) identify $\mathcal B(\bfG^\omega,\sfk)$ with the fixed points of this action.
We use the notation  $c\subseteq \mathcal B(\bfG^\omega,\sfk)$ to indicate that $c$ is a face of $\mathcal B(\bfG^\omega,\sfk)$.
Given a maximal $\sfk$-split torus $\bfT$ of $\bfG^\omega$, write $\mathcal A(\bfT,\sfk)$ for the corresponding apartment in $\mathcal B(\bfG^\omega,\sfk)$.
For a face $c\subseteq \mathcal B(\bfG^\omega,\sfk)$ there is a group $\bfP_c^\dagger$ defined over $\mf o$ such that $\bfP_c^\dagger(\mf o)$ identifies with the stabiliser of $c$ in $\bfG(k)$. There is an exact sequence
\begin{equation}\label{eq:parahoricses}
    1 \to \bfU_c(\mf o) \to  \bfP_c^\dagger(\mf o) \to  \bfL_c^\dagger(\mathbb F_q) \to 1,
\end{equation}
where $\bfU_c(\mf o)$ is the pro-unipotent radical of $\bfP_c^\dagger(\mf o)$ and $\bfL_c^\dagger$ is the reductive quotient of the special fibre of $\bfP_c^\dagger$.
Let $\bfL_c$ denote the identity component of $\bfL_c^\dagger$, and let $\bfP_c$ be the subgroup of $\bf P_c^\dagger$ defined over $\mf o$ such that $\bfP_c(\mf o)$ is the inverse image of $\bfL_c(\mathbb F_q)$ in $\bfP_c^\dagger(\mf o)$.
We also write $\bfT$ for the well defined split torus scheme over $\mf o$ with generic fibre $\bfT$.
This scheme $\bfT$ defined over $\mf o$ is a subgroup of $\bfP_c$ and the special fibre of $\bfT$, denoted $\bar\bfT$, is a maximal torus of $\bfL_c$.
For $c$ viewed as a face of $\mathcal B(\bfG,K)$, the stabiliser of $c$ in $\bfG(K)$ identifies with $\bfP_c^\dagger(\mf O)$.
It has pro-unipotent radical $\bfU_c(\mf O)$ and $\bfP_c^\dagger(\mf O)/\bfU_c(\mf O) = \bfL_c^\dagger(\overline{\mathbb F}_q)$.
For $c$ a face lying in $\mathcal B(\bfG^\omega,\sfk)\subseteq \mathcal B(\bfG,K)$, $F_\omega$ stabilises $\bfP_c(\mf O)$ and induces a Frobenius on $\bfL_c(\overline{\mathbb F}_q)$.
The group $\bfL_c(\mathbb F_q)$ consists of the fixed points of this Frobenius.
The groups $\bfP_c(\mf o)$ obtained in this manner are called ($\sfk$-)\emph{parahoric subgroups} of $\bfG^\omega$.
When $c$ is a chamber, then we call $\bfP_c(\mf o)$ an \emph{Iwahori subgroup} of $\mathbf{G}$. 

For the purposes of this paper, it will be convenient to fix a maximal $\sfk$-split torus $\bfT$ of $\bfG^\omega$ and a maximal $K$-split torus $\bfT_1$ of $\bfG^\omega$ containing $\bfT$ and defined over $\sfk$.
We have that $\mathcal A(\bfT,\sfk) = \mathcal A(\bfT_1,K)^{\Gal(K/k)}$.
Write $\Phi(\bfT_1,K)$ (resp. $\Psi(\bfT_1,K)$) for the set of roots of $\bfG(K)$ (resp. affine roots) with respect to $\bfT_1(K)$. 
For $\psi\in \Psi(\bfT,\sfk)$ write $\dot\psi\in \Phi(\bfT,\sfk)$ for the gradient of $\psi$, and
$W=W(\bfT_1,K)$ for the Weyl group of $\bfG(K)$ with respect to $\bfT_1(K)$.
We will also fix a $\Gal(K/\sfk)$-stable chamber $c_0$ of $\mathcal A(\bfT_1,K)$ and a special point $x_0\in c_0$.
Let $\widetilde W=W\ltimes X_*(\mathbf{T}_1,K)$ be the (extended) affine Weyl group. 
The choice of special point $x_0$ of $\mathcal B(\bfG,K)$ fixes an inclusion $\Phi(\bfT_1,K)\to \Psi(\bfT_1,K)$ and an isomorphism between $\widetilde W$ and $N_{\bfG(K)}(\bfT_1(K))/\bfT_1(\mf O^\times)$.
Write 
\begin{equation}
    \widetilde W\to W, \qquad w\mapsto \dot w
\end{equation}
for the natural projection map.
For a face $c\subseteq \mathcal A(\bfT_1,K)$ let $W_c$ be the subgroup of $\widetilde{W}$ generated by reflections in the hyperplanes through $c$.
The special fibre of $\bfT_1$ (as a scheme over $\mf O$) which we denote by $\overline{\bfT}_1$, is a split maximal torus of $\bfL_c(\overline{\mathbb F}_q)$.
Write $\Phi_c(\bar\bfT_1,\overline{\mathbb F}_q)$ for the root system of $\bfL_c$ with respect to $\bar\bfT_1$.
Then $\Phi_c(\bar\bfT_1,\overline{\mathbb F}_q)$ naturally identifies with the set of $\psi\in\Psi(\bfT_1,K)$ that vanish on $c$, and the Weyl group of $\bar\bfT_1$ in $\bfL_c$ is isomorphic to $W_c$. 

Recall that a choice of $x_0$ fixes an embedding $\Phi(\bfT_1,K)\to \Psi(\bfT_1,L)$.
If we fix a set of simple roots $\Delta \subset \Phi(\bfT_1,K)$, this embedding determines a set of extended simple roots $\tilde\Delta\subseteq \Psi(\bfT_1,K)$.
When $\Phi(\bfT_1,K)$ is irreducible, $\tilde\Delta$ is just the set $\Delta\cup\{1-\alpha_0\}$ where $\alpha_0$ is the highest root of $\Phi(\bfT_1,K)$ with respect to $\Delta$.
When $\Phi(\bfT_1,K)$ is reducible, say $\Phi(\bfT_1,K) = \cup_i\Phi_i$ where each $\Phi_i$ is irreducible, then $\tilde\Delta = \cup_i\tilde\Delta_i$ where $\Delta_i = \Phi_i \cap \Delta$.
Fix $\Delta$ so that the chamber cut out by $\tilde\Delta$ is $c_0$.
Let
$$\bfP(\tilde\Delta) := \{J\subsetneq \tilde\Delta: J\cap \tilde\Delta_i\subsetneq \tilde\Delta_i,\forall i\}.$$
Each $J\in \bfP(\tilde\Delta)$ cuts out a face of $c_0$ which we denote by $c(J)$.
In particular $c(\Delta) = x_0$.
Note that since $\Omega \simeq \widetilde W/W\ltimes\ZZ\Phi(\bfT_1,K)$ (recall $\bfG$ is semisimple), and $W\ltimes \ZZ\Phi(\bfT_1,K)$ acts simply transitively on the chambers of $\mathcal A(\bfT_1,K)$, the action of $\widetilde W$ on $\mathcal A(\bfT_1,K)$ induces an action of $\Omega$ on the faces of $c_0$ and hence on $\tilde\Delta$ (and $\bfP(\tilde\Delta)$).
For $\omega\in \Omega$ let $\sigma_\omega$ denote the corresponding permutation of $\tilde\Delta$.
Let
$$\bfP^\omega(\tilde\Delta) := \{J\in \bfP(\tilde\Delta) \mid \sigma_\omega(J) = J\}$$
and let $c_0^\omega$ be the chamber of $\mathcal B(\bfG^\omega,\sfk)$ lying in $c_0$.
The set $\bfP^\omega(\tilde\Delta)$ is an indexing set for the faces of $c_0^\omega$.
For $J\in \bfP^\omega(\tilde\Delta)$ write $c^\omega(J)$ for the face of $c_0^\omega$ corresponding to $J$.
The face $c^\omega(J)$ lies in $c(J)$.
Moreover for $J,J'\in \bfP^\omega(\tilde\Delta)$ (resp. $\bfP(\tilde\Delta)$) we have $J\subseteq J'$ if and only if $\overline{c^\omega(J)}\supseteq c^\omega(J')$ (resp. $\overline{c(J)}\supseteq c(J')$).
After this section we will write $I$ for $\tilde \Delta$ and $I_0$ for $\Delta$.

\subsection{Nilpotent orbits}
\label{sec:nil}
Let $\mathcal N$ be the functor which takes a field $F$ to the set of nilpotent elements in $\mf g(F)$, and let $\mathcal N_o$ be the functor which takes $F$ to the set of adjoint $\bfG(F)$-orbits on $\mathcal N(F)$. When $F$ is $\sfk$ or $K$, we view $\mathcal N_o(F)$ as a partially ordered set with respect to the closure ordering in the topology induced by the topology on $F$.
When $F$ is algebraically closed, we view $\mathcal N_o(F)$ as a partially ordered set with respect to the closure ordering in the Zariski topology.
To simplify the notation, we will write $\mathcal N(F'/F)$ (resp. $\mathcal N_o(F'/F)$) for $\mathcal N(F\to F')$ (resp. $\mathcal N_o(F\to F')$) where $F\to F'$ is a morphism of fields.
For $(F,F')=(\sfk,K)$ (resp. $(\sfk,\bark)$, $(K,\bark)$), the map $\mathcal N_o(F'/F)$ is strictly increasing (resp. strictly increasing, non-decreasing).
We will simply write $\mathcal N$ for $\mathcal N(\CC)$ and $\mathcal N_o$ for $\mathcal N_o(\CC)$.
In this case we also define $\mathcal N_{o,c}$ (resp. $\mathcal N_{o,\bar c}$) to be the set of all pairs $(\OO,C)$ such that $\OO\in \mathcal N_o$ and $C$ is a conjugacy class in the fundamental group $A(\OO)$ of $\OO$ (resp. Lusztig's canonical quotient $\bar A(\OO)$ of $A(\OO)$, see \cite[Section 5]{Sommers2001}). There is a natural map 
\begin{equation}
    \mf Q:\mathcal N_{o,c}\to\mathcal N_{o,\bar c}, \qquad (\OO,C)\mapsto (\OO,\bar C)
\end{equation}
where $\bar C$ is the image of $C$ in $\bar A(\OO)$ under the natural homomorphism $A(\OO)\twoheadrightarrow \bar A(\OO)$. There are also projection maps $\pr_1: \cN_{o,c} \to \cN_o$, $\pr_1: \cN_{o,\bar c} \to \cN_o$. We will typically write $\mathcal N^\vee$, $\mathcal N^\vee_o, \cN^{\vee}_{o,c}$, and $\cN^{\vee}_{o,\bar c}$ for the sets $\mathcal N$, $\mathcal N_o, \cN_{o,c}$, and $\cN_{o,\bar c}$ associated to the Langlands dual group $G^\vee$. When we wish to emphasise the group we are working with we include it as a superscript e.g. $\mathcal N_o^{\bfG^\omega}$.
Note that since $\bfG^\omega$ splits over $K$ we have that $\mathcal N_o^{\bfG}(F) = \mathcal N_o^{\bfG^\omega}(F)$ for field extensions $F$ of $K$. The following well-known result is due to Pommerening, Spaltenstein, Gerstenhaber, and Hesselink.

\begin{lemma}
    Let $F$ be algebraically closed of sufficiently large characteristic.
    Then there is canonical isomorphism of partially ordered sets $\Lambda^F:\mathcal N_o^\bfG(F)\xrightarrow{\sim}\mathcal N_o$.
\end{lemma}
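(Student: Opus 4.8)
The plan is to combine the classical parametrization of nilpotent orbits over an algebraically closed field with results of Pommerening (normality of closures, characteristic-free existence of $\mathfrak{sl}_2$-triples), Spaltenstein (the theory of Bala--Carter parametrization and comparisons across characteristics), and Gerstenhaber--Hesselink (the degeneration/closure ordering). The point is that for $F$ algebraically closed of sufficiently large characteristic, the set of nilpotent $\bfG(F)$-orbits is in bijection with the set of $\bfG(\CC)$-orbits via the \emph{Bala--Carter classification}: every nilpotent orbit is labeled by a $G$-conjugacy class of pairs $(\bfL, \OO_{\bfL})$ where $\bfL$ is a Levi subgroup and $\OO_{\bfL}$ is a distinguished nilpotent orbit in $\mathfrak{l}$, and distinguished orbits are in turn classified by (weighted) Dynkin diagrams. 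This combinatorial data depends only on the absolute root datum $\mathcal{R}$ of $\bfG$ (which is the same for $\bfG$ and for $G$ by construction), not on the characteristic of $F$, provided the characteristic is large enough (explicitly, larger than some bound depending only on $\mathcal{R}$ — good primes suffice for Bala--Carter, and one needs slightly more for the $\mathfrak{sl}_2$-theory and for the closure order statement).

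First I would fix, once and for all, the combinatorial index set: $G$-conjugacy classes of Bala--Carter pairs, equivalently weighted Dynkin diagrams arising from nilpotent orbits, computed from $\mathcal{R}$. Then for each algebraically closed $F$ of characteristic either $0$ or sufficiently large, I would invoke Pommerening's theorem that the Bala--Carter map is a bijection from this index set onto $\mathcal{N}_o^{\bfG}(F)$; composing the map for $F$ with the inverse of the map for $\CC$ yields a canonical bijection $\Lambda^F : \mathcal{N}_o^{\bfG}(F) \xrightarrow{\sim} \mathcal{N}_o$. Canonicity is automatic because the Bala--Carter datum is intrinsic — it makes no reference to choices beyond a pinning, and changing the pinning changes the labeling by the same inner automorphism on both sides. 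Next I would check that $\Lambda^F$ is a morphism of partially ordered sets, i.e.\ that it is order-preserving in both directions: this is exactly the statement that the closure order on nilpotent orbits (the Gerstenhaber--Hesselink degeneration order, which by Hesselink coincides with the geometric closure order) is encoded by the same combinatorial data in large characteristic as in characteristic $0$. Here I would cite Spaltenstein's work (and Hesselink) comparing the closure orders across characteristics, or alternatively reduce to the characteristic-$0$ statement by a standard semicontinuity / specialization argument using a model of $\bfG$ over a suitable ring.

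The main obstacle — and really the only substantive point — is pinning down exactly what "sufficiently large characteristic" has to mean and making sure all three ingredients (Bala--Carter bijectivity, the $\mathfrak{sl}_2$/associated-cocharacter machinery underlying it, and the coincidence of the closure orders) hold simultaneously under one bound depending only on $\mathcal{R}$. In practice this is handled by taking the characteristic to be larger than the largest structure constant / coefficient appearing in the relevant computations, or more cleanly by a spreading-out argument: choose a split reductive group scheme $\underline{\bfG}$ over $\mathbb{Z}$ with root datum $\mathcal{R}$, note that the nilpotent cone and its stratification by orbits form a scheme over $\mathbb{Z}[1/N]$ for suitable $N$, and invoke constructibility/flatness to transport the orbit poset and its closure relations from the generic (characteristic $0$) fiber to all special fibers of residue characteristic coprime to $N$. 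I would state the lemma with "sufficiently large characteristic" meaning "larger than this $N$", consistent with the blanket hypothesis on $\FF_q$ already imposed in the paper, and leave the explicit bookkeeping of $N$ to the cited references. With that in hand, naturality with respect to field extensions $F \hookrightarrow F'$ of algebraically closed fields (which will be used implicitly elsewhere, e.g.\ in identifying $\mathcal{N}_o^{\bfG}(\bark)$ with $\mathcal{N}_o$) follows for free, since the Bala--Carter datum is insensitive to such extensions.
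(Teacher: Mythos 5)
Your proposal is correct and, for the main point, coincides with the paper's proof: the canonical bijection $\Lambda^F$ is exactly Pommerening's extension of the Bala--Carter classification to large (good) characteristic, which is what the paper cites. Where you diverge is in verifying that $\Lambda^F$ is an isomorphism of \emph{posets}. The paper handles this by a case analysis: in classical types the closure order is the dominance order on partitions on both sides (Gerstenhaber, Hesselink), and in exceptional types one simply compares Spaltenstein's Hasse diagrams for the two characteristics. You instead propose a type-uniform argument, either by citing the general comparison of closure orders across characteristics or by a spreading-out/specialization argument over $\mathbb{Z}[1/N]$. The uniform route is more conceptual, but note that the naive semicontinuity argument only gives one inclusion of closure relations directly (specialization can enlarge closures, so $\OO_1\le\OO_2$ in characteristic $0$ implies the same in characteristic $p$, while the converse needs an extra input such as equality of orbit dimensions across characteristics, or the explicit comparisons in Spaltenstein); the paper's case-by-case check sidesteps this subtlety at the cost of uniformity. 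Either way the lemma holds, and your appeal to the blanket ``sufficiently large characteristic'' hypothesis is consistent with what the paper assumes.
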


\begin{proof}
Pommerening establishes a canonical bijection $\Lambda^F:\mathcal N_o^\bfG(F)\xrightarrow{\sim}\mathcal N_o$ in \cite[Corollary 3.5]{Pommerening} and \cite[Theorem 1.5]{Pommerening2}. In classical types, it is well-known that this bijection is order-preserving (the partial order is given by the dominance order on partitions on both sides, see \cite{Gerstenhaber} and \cite{Hesselink}). In exceptional types, one sees that $\Lambda^F$ is order-preserving by examining the diagrams in \cite[Chapter 4]{Spaltenstein}.
\end{proof}

There are several duality maps relating the sets $\cN_o$, $\cN_{o,c}$, $\cN_{o,\bar{c}}$ and their dual group counterparts.

\begin{enumerate}
    \item Write
    \begin{equation}\label{eq:dBV}
    d: \cN_o \to \cN_o^{\vee}, \qquad d: \cN_o^{\vee} \to \cN_o.
    \end{equation}
    %
    for the duality maps defined by Spaltenstein (\cite[Proposition 10.3]{Spaltenstein}), Lusztig (\cite[\S13.3]{Lusztig1984}), and Barbasch-Vogan (\cite[Appendix A]{BarbaschVogan1985}).
    \item Write 
    \begin{equation}\label{e:sommers-duality}
        d_S: \cN_{o,c} \twoheadrightarrow \cN^{\vee}_o, \qquad d_S: \cN^{\vee}_{o,c} \twoheadrightarrow \cN_o
    \end{equation}
    for the duality maps defined by Sommers (\cite[Section 6]{Sommers2001}).
    \item Write 
    \begin{equation}
        d_A: \cN_{o,\bar c} \to \cN^{\vee}_{o,\bar c}, \qquad d_A: \cN^{\vee}_{o,\bar c} \to \cN_{o,\bar c}
    \end{equation}
    for the duality maps defined by Achar (\cite[Section 1]{Acharduality}). 
\end{enumerate}

Write $\leq_A$ for Achar's preorder on $\mathcal N_{o,c}$ \cite[Introduction]{Acharduality} and $\sim_A$ for the equivalence relation on $\cN_{o,c}$ induced by this pre-order, i.e. 
$$(\OO_1,C_1) \sim_A (\OO_2,C_2) \iff (\OO_1,C_1) \leq_A (\OO_2,C_2) \text{ and } (\OO_2,C_2) \leq_A (\OO_1,C_1).$$
Write $[(\OO,C)]$ for the equivalence class of $(\OO,C) \in \cN_{o,c}$. The $\sim_A$-equivalence classes in $\cN_{o,c}$ coincide with the fibres of the projection map $\mf Q:\mathcal N_{o,c}\to\mathcal N_{o,\bar c}$ \cite[Theorem 1]{Acharduality}. So $\le_A$ descends to a partial order on $\mathcal N_{o,\bar c}$, which we simply denote by $\le_A$. The maps $d,d_S,d_A$ are all order reversing with respect to the relevant pre/partial orders. 

\subsubsection{Structure of $\mathcal N_o^\bfG(K)$}
Let $\bfT$ be a maximal $\sfk$-split torus of $\bfG^\omega$, $\bfT_1$ be a maximal $K$-split torus of $\bfG^\omega$ defined over $\sfk$ and containing $\bfT$, and let $x_0$ be a special point in $\mathcal A(\bfT_1,K)$.
In \cite[Section 2.1.5]{okada2021wavefront} the third-named author constructs a bijection
$$\theta_{x_0,\bfT_1}:\mathcal N_o^{\bfG^\omega}(K)\xrightarrow{\sim}\mathcal N_{o,c}.$$
The composition 
$$d^{un}:= d_S\circ \theta_{x_0,\bfT_1}:\cN_o(K) \to \cN_o^\vee$$
is independent of the choice of $x_0$ and natural in $\bfT_1$ \cite[Proposition 2.32]{okada2021wavefront}.

For $\OO_1,\OO_2\in \mathcal N_o(K)$ define $\OO_1\le_A\OO_2$ by
$$\OO_1\le_A \OO_2 \iff \mathcal N_o(\bar k/K)(\OO_1) \le \mathcal N_o(\bar k/K)(\OO_2),\text{ and } d^{un}(\OO_1)\ge d^{un}(\OO_2)$$
and let $\sim_A$ denote the equivalence classes of this pre-order.
\begin{theorem}
    \label{thm:unramclasses}
    \cite[Theorem 2.33]{okada2021wavefront}
    The composition $\mf Q\circ \theta_{x_0,\bfT_1}:\mathcal N_o(K)\to \mathcal N_{o,\bar c}$ descends to a (natural in $\bfT_1$) isomorphism of partial orders
    $$\bar\theta:\mathcal N_o(K)/\sim_A\to \mathcal N_{o,\bar c}$$
    which does not depend on $x_0$.
\end{theorem}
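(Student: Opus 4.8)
The statement to prove is Theorem~\ref{thm:unramclasses}, which is cited from \cite[Theorem 2.33]{okada2021wavefront}. Since this is an already-published result quoted verbatim, I will sketch how one would prove it from the ingredients assembled in the excerpt, rather than reproduce the original argument line by line.

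\medskip

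\textbf{Overall approach.} The plan is to show that the map $\mf Q \circ \theta_{x_0,\bfT_1} : \mathcal N_o(K) \to \mathcal N_{o,\bar c}$ is surjective, that its fibres are exactly the $\sim_A$-equivalence classes, that the induced map on the quotient is a bijection independent of $x_0$, and finally that this bijection and its inverse are both order-preserving. The key structural input is the bijection $\theta_{x_0,\bfT_1} : \mathcal N_o^{\bfG^\omega}(K) \xrightarrow{\sim} \mathcal N_{o,c}$ of \cite[Section 2.1.5]{okada2021wavefront} together with the fact (also from \emph{loc.\ cit.}) that $d^{un} = d_S \circ \theta_{x_0,\bfT_1}$ is independent of $x_0$ and natural in $\bfT_1$, and the fact recalled above that the $\sim_A$-equivalence classes in $\mathcal N_{o,c}$ coincide with the fibres of $\mf Q$ (Achar, \cite[Theorem 1]{Acharduality}).

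\medskip

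\textbf{Key steps, in order.} First, I would unwind the definition of $\le_A$ on $\mathcal N_o(K)$ and compare it, via $\theta_{x_0,\bfT_1}$, to $\le_A$ on $\mathcal N_{o,c}$: by definition $\OO_1 \le_A \OO_2$ on $\mathcal N_o(K)$ iff $\mathcal N_o(\bark/K)(\OO_1) \le \mathcal N_o(\bark/K)(\OO_2)$ and $d^{un}(\OO_1) \ge d^{un}(\OO_2)$; on $\mathcal N_{o,c}$ Achar's preorder is $(\OO_1,C_1) \le_A (\OO_2,C_2)$ iff $\OO_1 \le \OO_2$ and $d_S(\OO_1,C_1) \ge d_S(\OO_2,C_2)$. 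Since $d^{un} = d_S \circ \theta_{x_0,\bfT_1}$, the only thing to check for the equivalence of the two preorders is that $\theta_{x_0,\bfT_1}$ intertwines $\mathcal N_o(\bark/K)$ with $\pr_1 : \mathcal N_{o,c} \to \mathcal N_o$ up to the canonical identification $\mathcal N_o^{\bfG}(\bark) \cong \mathcal N_o$ of the preceding Lemma — this is a compatibility built into the construction of $\theta_{x_0,\bfT_1}$. Consequently $\theta_{x_0,\bfT_1}$ is an isomorphism of preordered sets, hence sends $\sim_A$-classes to $\sim_A$-classes, and $\mf Q \circ \theta_{x_0,\bfT_1}$ has fibres equal to the $\sim_A$-classes by Achar's theorem. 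Second, since $\mf Q$ is surjective and $\theta_{x_0,\bfT_1}$ is a bijection, $\mf Q\circ\theta_{x_0,\bfT_1}$ is surjective, so it descends to a bijection $\bar\theta : \mathcal N_o(K)/\!\sim_A \;\to\; \mathcal N_{o,\bar c}$. Third, for independence of $x_0$: the fibres of $\mf Q\circ\theta_{x_0,\bfT_1}$ are $\sim_A$-classes regardless of $x_0$, and on each class the value is determined by $d^{un}$ on that class (via Achar's description of $\mf Q$ in terms of $d_S$ as identifying $(\OO,\bar C)$ with the fibre of $d_S$), which is $x_0$-independent by \cite[Proposition 2.32]{okada2021wavefront}; hence $\bar\theta$ does not depend on $x_0$, and naturality in $\bfT_1$ follows the same way. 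Fourth, order-preservation in both directions: $\le_A$ on $\mathcal N_o(K)/\!\sim_A$ is by construction the preorder descended from $\mathcal N_o(K)$, $\le_A$ on $\mathcal N_{o,\bar c}$ is the one descended from $\mathcal N_{o,c}$, and Step~1 showed $\theta_{x_0,\bfT_1}$ is a preorder isomorphism, so the descended map $\bar\theta$ is an isomorphism of partial orders. (One should also note that $\le_A$ on both quotients is genuinely a partial order, i.e.\ antisymmetric, which again follows by transport of structure from Achar's result that $\le_A$ is a partial order on $\mathcal N_{o,\bar c}$.)

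\medskip

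\textbf{Main obstacle.} The essential content — and the step I expect to be the real work — is verifying that $\theta_{x_0,\bfT_1}$ intertwines the two preorders, which reduces to (a) the identity $d^{un} = d_S\circ\theta_{x_0,\bfT_1}$ with its $x_0$-independence, and (b) the compatibility of $\theta_{x_0,\bfT_1}$ with passing from $K$ to $\bark$ on underlying orbits. Both (a) and (b) are properties of the construction in \cite{okada2021wavefront}, so in the present paper they are legitimately quoted; the remaining bookkeeping (surjectivity, descent to the quotient, transport of the partial order) is then formal. If one had to prove (a) and (b) from scratch, the difficulty would lie in analysing how the Bruhat--Tits-theoretic construction of $\theta_{x_0,\bfT_1}$ — which reads off a pair $(\OO,C)$ from an unramified nilpotent orbit via the geometry of the building and the parahoric subgroups described in Section~\ref{sec:btbuilding} — interacts with Sommers duality and with base change of the residue field; this is exactly the technical heart of \cite{okada2021wavefront} and is beyond the scope of what the present excerpt needs to reprove.
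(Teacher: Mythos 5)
The paper offers no proof of this statement: it is quoted verbatim from \cite[Theorem 2.33]{okada2021wavefront}, so there is nothing internal to compare against. Your reconstruction is sound in outline and correctly locates the only nontrivial inputs — the identity $d^{un}=d_S\circ\theta_{x_0,\bfT_1}$ with its $x_0$-independence, the compatibility of $\theta_{x_0,\bfT_1}$ with $\mathcal N_o(\bark/K)$ and $\pr_1$, and Achar's identification of the fibres of $\mf Q$ with the $\sim_A$-classes — in the cited reference, where they are in fact proved; the remaining descent and transport-of-order arguments are formal, as you say.
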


\subsubsection{Lifting nilpotent orbits}
Define 
\begin{equation}
    \mathcal I_o = \{(c,\OO) \mid c\subseteq \mathcal B(\bfG,K),\OO\in\cN_o^{\bfL_c}(\overline{\mathbb F}_q)\}.
\end{equation}
There is a partial order on $\mathcal{I}_o$, defined by
$$(c_1,\OO_1)\le(c_2,\OO_2) \iff c_1=c_2 \text{ and } \OO_1\le\OO_2,$$
and a strictly increasing surjective map \cite[Section 1.1.2]{okada2021wavefront}
\begin{equation}
    \label{eq:lift}
    \mathscr L:(\mathcal I_o,\le)\to(\mathcal N_o(K),\le).
\end{equation}
Recall the groups $G=\bfG(\CC),T=\bfT(\CC)$ from section \ref{subsec:notation}. We call a pseudo-Levi subgroup $L$ of $G$ \emph{standard} if it contains $T$, and we write $Z_L$ for its center. Let $\mathcal A = \mathcal A(\bfT_1,K)$.

Recall from the end of section \ref{sec:btbuilding} the definitions of $\Delta,\tilde\Delta,c_0,\bfP(\tilde\Delta)$ and $c(J)$.
Note that the definitions of $c_0$ and $c(J)$ depend on a choice of $x_0$ and $\Delta$.
Let $L_J$ denote the pseudo-Levi subgroup of $G$ generated by $T$ and the root groups corresponding to $\dot \alpha$ for $\alpha\in J$.
Then $L_J$ is the complex connected reductive group with the same absolute root datum as $\mathbf L_{c(J)}$.
Thus Lemma \ref{lem:Noalgclosed} gives rise to an isomorphism 
$$\Lambda_{J}^{\overline{\mathbb F}_q}:\mathcal N_o^{\bfL_{c(J)}(\mf o)}(\overline{\mathbb F}_q)\to \mathcal N_o^{L_J}(\CC).$$
%
Define 
\begin{align}
    \mathcal{I}_{x_0,\tilde\Delta}&=\{(J,\OO) \mid J\in \bfP(\tilde\Delta), \ \OO\in\mathcal{N}_o^{\bfL_{c(J)}}(\overline{\mathbb F}_q)\}, \\
    \mathcal{K}_{\tilde\Delta}&=\{(J,\OO) \mid J\in \bfP(\tilde\Delta), \ \OO\in\mathcal{N}_o^{L_{J}}(\CC)\}.
\end{align}
The map
$$\iota_{x_0}:\mathcal I_{x_0,\tilde\Delta}\to\mathcal K_{\tilde\Delta},\quad (J,\OO)\mapsto(J,\Lambda_{J}^{\overline{\mathbb F}_q}(\OO))$$
is a bijection.
Let
\begin{equation}
    \mathbb L:\mathcal K_{\tilde\Delta} \to \mathcal N_{c,o}
\end{equation}
be the map \cite[Corollary 2.19]{okada2021wavefront}.
By \cite[Theorem 2.1.7]{cmo} the diagram 
\begin{equation}
    \begin{tikzcd}[column sep = large]
        \mathcal I_{x_0,\tilde\Delta} \arrow[r,"\iota_{x_0}"',"\sim"] \arrow[d,two heads,"\mathscr L"] & \mathcal K_{\tilde\Delta}  \arrow[d,two heads,"\mathbb L"] \\
        \mathcal N_o(K) \arrow[r,"\theta_{x_0,\bfT_1}"',"\sim"] & \mathcal N_{o,c}
    \end{tikzcd}
\end{equation}
commutes.
Define 
$$\overline{\mathbb L}=\mf Q\circ \mathbb L.$$
This map can be computed using the algorithms in \cite[Section 3.4]{Acharduality}.

\subsection{Wavefront sets}\label{subsec:wavefrontsets}
Let $X$ be an admissible smooth representation of $\bfG^\omega(\sfk)$ and let $\Theta_X$ be the character of $X$.
Recall that for each nilpotent orbit $\OO\in \mathcal N_o^{\bfG^\omega}(\sfk)$ there is an associated distribution $\mu_\OO$ on $C_c^\infty(\mf g^\omega(\sfk))$ called the \emph{nilpotent orbital integral} of $\OO$ (\cite{rangarao}).
Write $\hat\mu_\OO$ for its Fourier transform.
Generalizing a result of Howe (\cite{Howe1974}), Harish-Chandra in \cite{HarishChandra1999} shows that there are complex numbers $c_{\OO}(X) \in \CC$ such that
\begin{equation}\label{eq:localcharacter}\Theta_{X}(\mathrm{exp}(\xi)) = \sum_{\OO} c_{\OO}(X) \hat{\mu}_{\OO}(\xi)\end{equation}
for $\xi \in \fg^\omega(\sfk)$ a regular element in a small neighborhood of $0$. The formula (\ref{eq:localcharacter}) is called the \emph{local character expansion} of $X$. 
The \textit{($p$-adic) wavefront set} of $X$ is
$$\WF(X) := \max\{\OO \mid  c_{\OO}(X)\ne 0\} \subseteq \mathcal N_o(\sfk).$$
The \emph{algebraic wavefront set} of $X$ is
$$^{\bar k}\WF(X) := \max \{\mathcal N_o(\bark/\sfk)(\OO) \mid c_{\OO}(X)\ne 0\} \subseteq \mathcal N_o(\bark),$$
see \cite[p. 1108]{Wald18} (warning: in \cite{Wald18}, the invariant $^{\bar k}\WF(X)$ is called simply the `wavefront set' of $X$). 

In \cite[Section 2.2.3]{okada2021wavefront} the third author has introduced a third type of wavefront set for depth-$0$ representations, called
the \emph{canonical unramified wavefront set}. This invariant is a natural refinement of $\hphantom{ }^{\bar{\sfk}}\WF(X)$. We will now define $^K\WF(X)$ and explain how to compute it. 

Recall from Equation \ref{eq:lift} the lifting map $\mathscr L$.
For every face $c \subseteq \mathcal{B}(\mathbf{G})$, the space of invariants $X^{\bfU_c(\mf o)}$ is a (finite-dimensional) $\bfL_c(\mathbb F_q)$-representation. Let $\WF(X^{\bfU_c(\mf o)}) \subseteq \cN^{\bfL_c}_o(\overline{\mathbb F}_q)$ denote the Kawanaka wavefront set \cite{kawanaka} and let 
\begin{equation}
    ^K\WF_c(X) := \{[\mathscr L (c,\OO)] \mid \OO\in \WF(X^{\bfU_{c}(\mf o)})\} \quad \subseteq \cN_o(K).
\end{equation}


\begin{definition}\label{def:CUWF}
    Let $X$ be a depth-0 representation of $\bfG^\omega(\sfk)$.
    The \emph{canonical unramified wavefront set} of $X$ is
    \begin{equation}
        ^K\WF(X) := \max \{\hphantom{ }^K\WF_c(X) \mid  c\subseteq \mathcal B(\bfG^\omega,\sfk)\} \quad \subseteq \cN_o(K)/\sim_A.
    \end{equation}
\end{definition}

Fix $\bfT,\bfT_1,c_0,x_0,\Delta$ as at the end of section \ref{sec:btbuilding}.
By \cite[Lemma 2.36]{okada2021wavefront} we have that
\begin{equation}
    ^K\WF(X) = \max \{\hphantom{ }^K\WF_{c^\omega(J)}(X) \mid J \in \bfP^\omega(\tilde\Delta)\}.
\end{equation}
We will often want to view $^K\WF(X)$ and $^K\WF_c(X)$ as subsets of $\mathcal N_{o,\bar c}$ using the identification $\bar\theta_{\bfT_1}$ from Theorem \ref{thm:unramclasses}.
We will write 
$$^{K}\WF(X,\CC) := \bar \theta_{\bfT_1}(\hphantom{ }^K\WF(X)), \quad \hphantom{ }^{K}\WF_c(X,\CC) := \bar \theta_{\bfT_1}(\hphantom{ }^K\WF_c(X)).$$
We will also want to view $\hphantom{ }^{\bar{\sfk}}\WF(X)$, which is naturally contained in $\mathcal N_o(\bar k)$, as a subset of $\mathcal{N}_o$. Thus, we will write 
$$\hphantom{ }^{\bar{\sfk}}\WF(X,\CC) := \Lambda^{\bar \sfk}(\hphantom{ }^{\bar{\sfk}}\WF(X)).$$
By \cite[Theorem 2.37]{okada2021wavefront}, if $^{K}\WF(X)$ is a singleton, then $\hphantom{ }^{\bark}\WF(X)$ is also a singleton and 
\begin{equation}
    \label{eq:wfcompatibility}
    \hphantom{ }^{K}\WF(X,\CC) = (\hphantom{ }^{\bark}\WF(X,\CC),\bar C).
\end{equation}
for some conjugacy class $\bar C$ in $\bar A(\hphantom{ }^\bark\WF(X,\CC))$.

\section{Affine Hecke algebras and graded affine Hecke
  algebras}\label{sec:2} 
In this section we will collect some preliminaries regarding affine Hecke algebras and
graded affine Hecke algebras. We follow \cite{Lu-graded,lusztig-cuspidal-1,lusztig-cuspidal-2,lusztig-cuspidal-3} , \cite{BM2}, \cite{barbaschciubo-ajm}, and \cite{solleveld-2012}. 

\subsection{Generic affine Hecke algebras}\label{sec:2.1} Let $\Psi=(X,X^\vee,R,R^\vee)$ be a reduced root
datum. 
Let $W$ be the finite Weyl group associated to $\Psi$. Fix a choice of positive
roots $R^+ \subset R$, with basis $\Pi$ of
simple roots, and let $R^{\vee,+}$, $\Pi^\vee$ be the corresponding
subsets of $R^\vee$. Let
$\ell: W \to \ZZ$ be the length function corresponding to $\Pi$. Let $\widetilde W=W\ltimes X$ be the extended affine Weyl group.

We write $G(\Psi)$ (or
just $G$ if there is no ambiguity) for the complex connected reductive algebraic group associated to $\Psi$. Then
$T:=X^\vee\otimes_\bZ\bC^{\times}$ is a maximal torus in $G$. Let $B\supset
T$ be the Borel subgroup corresponding to $R^+.$ We may identify: 
\[X=\Hom(T,\bC^\times),\qquad X^\vee=\Hom(\bC^\times, T).
\] 

A parameter set for $\Psi$ is a pair of functions $(\lambda,\lambda^*)$,
$$
\begin{aligned}
 &\lambda:\Pi\to \bZ_{\ge 0},&\lambda^*:\{\al\in\Pi:\check\al\in 2X^\vee\}\to \bZ_{\ge 0}, 
\end{aligned}
$$
such that $\lambda(\al)=\lambda(\al')$ and
  $\la^*(\al)=\la^*(\al')$ whenever $\al,\al'$ are $W-$conjugate.

\begin{definition}[Bernstein presentation, {\cite[Section 3]{Lu-graded}}]\label{d:2.1} The affine Hecke algebra
  $\CH^{\lambda,\lambda^*}(\Psi,z)$, or just $\CH(\Psi,z)$, 
associated to the root datum
  $\Psi$ with parameter set $(\lambda,\lambda^*)$ is the
  associative algebra over $\bC[z,z^{-1}]$ with unit ($z$ is an indeterminate), generated by the elements $T_w$, $w\in W$, and $\theta_x$,
  $x\in X$ subject to the relations:
\begin{align}
&(T_{s_\al}+1)(T_{s_\al}-z^{2\lambda(\al)})=0,\text{ for all }\al\in\Pi,\\
&T_wT_{w'}=T_{ww'},\text{ for all }w,w'\in W\text{ such that }\ell(ww')=\ell(w)+\ell(w'),\notag\\
&\theta_x\theta_{x'}=\theta_{x+x'},\text{ for all }x,x'\in X,\\
&\theta_x T_{s_\al}-T_{s_\al}\theta_{s_\al(x)}=(\theta_x-\theta_{s_\al(x)}) 
  (\CG(\al)-1),\text{ where } x\in X, \al\in\Pi,\text{ and } \notag\\
&\CG(\al)=\begin{cases} 
\frac{\theta_\al z^{2\lambda(\al)}-1}{\theta_\al-1}, &\text{ if }
\check\al\notin 2X^\vee,\\
    \frac{(\theta_\al z^{\lambda(\al)+\lambda^*(\al)}-1)(\theta_\al
      z^{\lambda(\al)-\lambda^*(\al)}+1)}{\theta_{2\al}-1}, &\text{ if
    }\check\al\in 2X^\vee.
 \end{cases}
\end{align}
\end{definition}




Let $\CA=\CA(\Psi,z)$ be the algebra of regular functions on $\bC^\times\times
  T.$  Note that $\cA$ can be identified with the abelian $\bC[z,z^{-1}]-$subalgebra of
$\CH(\Psi,z)$ generated by $\{\theta_x:x\in X\}$, where for
  every $x\in X,$  $\theta_x:T\to \bC^\times$ is defined by
\begin{equation}
\theta_x(y\otimes\zeta)=\zeta^{\langle x,y\rangle},\
y\in X^\vee,~\zeta\in\bC^\times.   
\end{equation}
If we denote by $\CH_W(z)$ the $\bC[z,z^{-1}]-$subalgebra generated by
$\{T_w:w\in W\}$, then 
\begin{equation}\label{eq:2.2.1}
\CH(\Psi,z)=\CH_W(z)\otimes_{\bC[z,z^{-1}]}\CA(\Psi,z),
\end{equation}
 as a $\bC[z,z^{-1}]-$module.

\begin{theorem}[{\cite[Proposition 3.11]{Lu-graded}}]\label{t:2.2} The center of
  $\CH(\Psi,z)$ is $\CZ=\CA(\Psi,z)^W,$ i.e. the
  $W-$invariants in $\CA(\Psi,z).$ 
\end{theorem}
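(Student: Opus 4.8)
The statement to prove is Theorem~\ref{t:2.2}: the center of the generic affine Hecke algebra $\CH(\Psi,z)$ is $\CZ = \CA(\Psi,z)^W$, the $W$-invariants in $\CA(\Psi,z)$.

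\textbf{Overall approach.} The plan is to argue along the classical lines of Bernstein's proof (as in Lusztig's \cite{Lu-graded}), working over the base ring $\bC[z,z^{-1}]$ and exploiting the decomposition $\CH(\Psi,z) = \CH_W(z)\otimes_{\bC[z,z^{-1}]}\CA(\Psi,z)$ from \eqref{eq:2.2.1}, which in particular gives $\CA(\Psi,z)$-module freeness of $\CH(\Psi,z)$ with basis $\{T_w : w\in W\}$. First I would establish the easy inclusion: every $W$-invariant element of $\CA(\Psi,z)$ is central. For this it suffices to check that such an element commutes with each generator $\theta_x$ (clear, since $\CA$ is commutative) and with each $T_{s_\al}$ for $\al\in\Pi$. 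The latter comes directly from the cross relation in Definition~\ref{d:2.1}: for $f\in\CA$ one computes $f\,T_{s_\al} - T_{s_\al}\,(s_\al\!\cdot\! f) = (f - s_\al\!\cdot\!f)\,(\CG(\al)-1)$, and when $f$ is $s_\al$-invariant both sides vanish, giving $fT_{s_\al}=T_{s_\al}f$. Since the $\theta_x$ and $T_{s_\al}$ generate $\CH(\Psi,z)$, this shows $\CA^W\subseteq\CZ$.

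\textbf{The reverse inclusion.} This is the substantive direction. Take $h\in\CZ$ and write it uniquely as $h = \sum_{w\in W} T_w\, f_w$ with $f_w\in\CA(\Psi,z)$, using the basis from \eqref{eq:2.2.1}. The strategy is: (i) show first that $h\in\CA$, i.e. $f_w=0$ for $w\neq e$; (ii) then show $f_e$ is $W$-invariant. For step (i), the key tool is commutation with $\theta_x$ for $x\in X$. Using the cross relations, one can move $\theta_x$ past each $T_w$ and expand $\theta_x h - h\theta_x$ in the basis $\{T_w f : f\in\CA\}$; the leading term (with respect to length, or with respect to an appropriate filtration) forces $f_w\cdot(\theta_x - \theta_{w^{-1}(x)}) = 0$ in $\CA$ for all $x$. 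Since $\CA$ is the ring of functions on the irreducible variety $\bC^\times\times T$ and is a domain, and since for $w\neq e$ there exists $x\in X$ with $w^{-1}(x)\neq x$ (so $\theta_x-\theta_{w^{-1}(x)}$ is a nonzero, hence non-zero-divisor, element of $\CA$), we conclude $f_w=0$ for all $w\neq e$. Hence $h=f_e\in\CA$. For step (ii), now apply centrality against $T_{s_\al}$: from $f_e T_{s_\al} = T_{s_\al} f_e$ and the cross relation $f_e T_{s_\al} - T_{s_\al}(s_\al\!\cdot\!f_e) = (f_e - s_\al\!\cdot\!f_e)(\CG(\al)-1)$, subtracting gives $T_{s_\al}(f_e - s_\al\!\cdot\!f_e) = (f_e - s_\al\!\cdot\!f_e)(\CG(\al)-1)$; comparing coefficients of $T_{s_\al}$ versus $T_e$ in the basis expansion forces $f_e - s_\al\!\cdot\!f_e = 0$ (here one uses that $\CG(\al)\in\CA$ is a genuine $\CA$-coefficient on $T_e$, not a multiple of $T_{s_\al}$). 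Thus $f_e$ is fixed by every simple reflection, hence by all of $W$, so $h=f_e\in\CA^W$.

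\textbf{Main obstacle.} The delicate point is step (i): carefully tracking the expansion of $\theta_x T_w$ in the PBW-type basis and isolating the genuinely "top" term so that the vanishing statement $f_w(\theta_x-\theta_{w^{-1}(x)})=0$ falls out cleanly rather than getting tangled with lower-order contributions. The cleanest way to handle this is to induct on $\ell(w)$ (or filter $\CH$ by length), using the fundamental commutation formula $\theta_x T_{s_\al} = T_{s_\al}\theta_{s_\al(x)} + (\text{lower-length terms in }\CA)$ repeatedly; one must also be slightly careful about the case $\check\al\in 2X^\vee$ where $\CG(\al)$ involves $\theta_{2\al}$ in the denominator, but the identity $\theta_x T_{s_\al}-T_{s_\al}\theta_{s_\al(x)}=(\theta_x-\theta_{s_\al(x)})(\CG(\al)-1)$ is stated uniformly and $(\theta_x-\theta_{s_\al(x)})/(\theta_{2\al}-1)$ is still a regular function since $\theta_{2\al}-1$ divides $\theta_x-\theta_{s_\al(x)}$ in $\CA$ whenever $s_\al(x)-x\in\bZ\check\al\cdot$(relevant lattice). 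Once the length filtration is set up correctly, the argument is routine; I would not expect this proof to require anything beyond what is recalled in Definition~\ref{d:2.1} and equation~\eqref{eq:2.2.1}.
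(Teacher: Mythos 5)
Your argument is correct and is essentially the standard Bernstein--Lusztig proof: the paper itself offers no proof of Theorem \ref{t:2.2}, simply citing \cite[Proposition 3.11]{Lu-graded}, and the argument there proceeds exactly as you outline (easy inclusion $\CA^W\subseteq\CZ$ via the cross relation, then expansion of a central element in the free $\CA$-basis $\{T_w\}$, killing the $w\neq e$ coefficients by commuting with $\theta_x$ and using that $\CA$ is a domain on which $W$ acts faithfully, and finally extracting $s_\al$-invariance of $f_e$ from the cross relation). The technical points you flag (the length filtration for $\theta_x T_w$, and regularity of $(\theta_x-\theta_{s_\al(x)})(\CG(\al)-1)$) are handled the same way in the reference, so no gap.
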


Let $\CH(\Psi)\modd$ denote the category of
finite dimensional $\CH(\Psi,z)$-modules. {By Schur's lemma, every
irreducible module $(\pi,V)$ has a \textit{central character}, i.e. there
is a homomorphism $\chi:\CZ\longrightarrow\bC$ such that
$\pi(z)v=\chi(z)v$ for every $v\in V$ and $z\in \CZ.$} 
By Theorem \ref{t:2.2}, there is a one-to-one correspondence between central characters for $\CH(\Psi,z)$ and $W-$conjugacy  classes $(z_0,s)\in
\bC^\times\times T$. There is a block a decomposition:
\begin{equation}
\CH(\Psi,z)\modd=\prod_{(z_0,s)\in
  \bC^\times\times W\backslash T} 
\CH(\Psi,z)\modd_{(z_0,s)},
\end{equation}
where $\CH(\Psi,z)\modd_{(z_0,s)}$ is the
subcategory of modules with central character  $(z_0,s).$ 
Let $\text{Irr}_{(z_0,s)}\CH(\Psi,z)$ be the set of isomorphism classes of
simple objects in this category. For the remainder of this section, 
we will assume that $z_0$ is a fixed element of $\bR_{>1}.$

A second presentation of the affine Hecke algebra is due to Iwahori and Matsumoto \cite{IM} (and, in fact, predates the Bernstein presentation). Let $W^a=W\ltimes \mathbb Z R\subseteq \widetilde W$ be the affine Weyl group and let $S^a$ denote a set of reflections of $S^a$ containing in particular all $s_\alpha$, $\alpha\in \Pi$. The length function on $W^a$, still denoted by $\ell$, can be extended to a length function on $\widetilde W$. Let $L:S^a\to \mathbb Z_{\ge 0}$ be a $\widetilde W$-invariant function. 

\begin{definition}[Iwahori presentation]\label{d:Iwahori} The affine Hecke algebra
  $\mathcal H^{L}(\Psi,z)$ 
associated to the root datum
  $\Psi$ with parameter function $L$ is the
  associative algebra over $\mathbb C[z,z^{-1}]$ with unit ($z$ is an indeterminate) generated by the elements $T_w$, $w\in \widetilde W$ subject to the relations:
\begin{align}
&(T_{s}+1)(T_{s}-z^{L(s)})=0,\text{ for all }s\in S^a,\\
&T_wT_{w'}=T_{ww'},\text{ for all }w,w'\in \widetilde W\text{ such that }\ell(ww')=\ell(w)+\ell(w').\notag
\end{align}
For the dictionary between the parameter functions $L$ and $(\lambda,\lambda^*)$ as well as the relations between $\theta_x$ and $T_w$, we refer to \cite[Section 3]{Lu-graded}.
\end{definition}

If $J\subsetneq S^a$, denote by $\widetilde W_J$ the stabiliser in $\widetilde W$ of $S^a-J$. Let $\CH_J(\Psi,z)$ the subalgebra of $\CH^{L}(\Psi,z)$ generated by $T_w$, $w\in \widetilde W_J$. This is a finite Hecke algebra with parameters (possibly extended by diagram automorphims).

\subsection{Graded Hecke algebras}  

Fix a $W$-orbit $\CO \subset T$. Following \cite[Section 4]{Lu-graded} and
  \cite[Section 3]{BM2}, we will define a decreasing chain of ideals 
$\CI^1, \CI^2,...$ in $\CA$. Let

\begin{equation}
\CI:=\{f\text{ regular function on }\bC^\times\times T:
f(1,\sigma)=0,\text{ for all }\sigma\in \CO\},
\end{equation}
and let $\CI^k$ be the ideal of functions vanishing on $(1,\CO)$ to at least
order $k.$ Let $\widetilde {\CI}^k:={\CI}^k\CH=\CH\CI^k$, $k\ge 1,$ be
the chain of ideals in $\CH(\Psi)$, generated by the $\CI^k$'s.

\begin{definition}\label{d:2.3}
The graded affine Hecke algebra {$\bH_\CO(\Psi)$} is the
associated graded of the chain of ideals $\dots\supset \widetilde{\CI}^k\supset\dots$ in $\CH(\Psi).$ \end{definition}

Let $\ft=X^\vee\otimes_\bZ \bC$ be the Lie algebra of $T,$ and let
$\ft^*=X\otimes_\bZ\bC$ be the dual space. Extend the pairing
$\langle\ ,\ \rangle$ to $\ft^*\times \ft.$ Let $\bA$
be the algebra of regular functions on $\bC\oplus \ft.$ Note that
$\bA$ can be identified with $\bC[r]\otimes_\bC S(\ft^*),$ where
$S(~)$ denotes the symmetric algebra, and $r$ is an indeterminate. In
the following $\delta$ denotes the delta function.

\begin{theorem}[{{\cite[Proposition 4.4 and \S8.7]{Lu-graded}},\cite[Proposition 3.2]{BM2}}]\label{t:2.3} The graded Hecke algebra
$\bH_{\CO}(\Psi)$ is a $\bC[r]-$algebra generated by
$\{t_w:w\in W\}$,   $S(\ft^*),$
and a set of orthogonal   idempotents   $\{E_{\sigma}:\sigma\in\CO\}$  
subject to the relations 
\begin{equation}\label{eq:2.3.5}
\begin{aligned}
&t_w\cdot t_{w'}=t_{ww'},\ w,w'\in W,\quad \sum_{\sigma\in \CO}E_{\sigma}=1, \quad 
  E_{\sigma'}E_{\sigma''}=\delta_{\sigma',\sigma''} E_{\sigma'},\\  &E_{\sigma}t_{s_\al}=t_{s_\al}E_{s_\al\sigma},\  E_\sigma\omega=\omega E_\sigma,\\
&\omega\cdot t_{s_\al}-t_{s_\al}\cdot s_\al(\omega)=r 
  g(\al)\langle\omega,\check\al \rangle, \text{ where }\al\in \Pi,\ 
  \omega\in \ft^*,\\
& g(\al)=\sum_{\sigma\in\CO}
  E_{\sigma}\mu_{\sigma}(\al), \text{ and }\mu_{\sigma}(\al)=\left\{ \begin{matrix} 0, &\text{ if } s_\al\sigma\neq
    \sigma,\\
                                  2\lambda(\al), &\text{ if
                                  }s_\al\sigma=\sigma,\ \  \check\al\notin
                                  2X^\vee,\\
           \lambda(\al)+\lambda^*(\al)\theta_{-\al}(\sigma),   &\text{ if
                                  }s_\al\sigma=\sigma,\ \  \check\al\in
                                  2X^\vee.                   
\\ \end{matrix}  \right.
\end{aligned} 
\end{equation}
\end{theorem}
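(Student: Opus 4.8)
The plan is to compute the associated graded of $\CH(\Psi,z)$ along the filtration $\widetilde\CI^{\,k}$ in two stages: first determine $\mathrm{gr}_{\CI}(\CA)$ by elementary commutative algebra, then propagate this to $\CH$ via the Bernstein decomposition $\CH=\CH_W\otimes_{\bC[z,z^{-1}]}\CA$ of (\ref{eq:2.2.1}), and finally read off the relations of $\bH_{\CO}(\Psi)$ from the defining relations of $\CH$ in the Bernstein presentation. For the commutative step: since $\CO\subset T$ is a finite $W$-orbit, $(1,\CO)$ is a finite set of pairwise distinct smooth closed points of $\bC^\times\times T$ and $\CI=\bigcap_{\sigma\in\CO}\mathfrak m_{(1,\sigma)}$; distinctness of the points gives $\CI^{k}=\bigcap_\sigma\mathfrak m_{(1,\sigma)}^{k}$ and $\CA/\CI^{k}\cong\bigoplus_{\sigma}\CO_{(1,\sigma)}/\mathfrak m_{(1,\sigma)}^{k}$, whence
\[
\mathrm{gr}_{\CI}(\CA)\;\cong\;\bigoplus_{\sigma\in\CO}\mathrm{gr}_{\mathfrak m_{(1,\sigma)}}\CO_{(1,\sigma)}\;\cong\;\bigoplus_{\sigma\in\CO}S\big(T^{*}_{(1,\sigma)}(\bC^\times\times T)\big)\;=\;\bigoplus_{\sigma\in\CO}\bC[r]\otimes S(\ft^{*}),
\]
where $r$ is a coordinate on the $\bC^\times$-factor normalized so that $z\equiv 1+r$ modulo $\CI^{2}$ (equivalently Lusztig's $z=e^{r}$), placed in degree $1$, and $\ft^{*}=X\otimes_{\bZ}\bC$ is identified with the cotangent space of $T$ at each point through $x\mapsto\mathrm d\log\theta_x$, also in degree $1$. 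The idempotent $E_\sigma\in\mathrm{gr}^{0}\CA=\CA/\CI$ is the class of the regular function supported at $(1,\sigma)$, and this already produces the relations $\sum_\sigma E_\sigma=1$, $E_{\sigma'}E_{\sigma''}=\delta_{\sigma',\sigma''}E_{\sigma'}$, and $E_\sigma\omega=\omega E_\sigma$ for $\omega\in\ft^{*}$ (commutativity of $\CA$).

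To lift this to $\CH$, one checks that $\widetilde\CI^{\,k}=\CI^{k}\CH=\CH\CI^{k}$ is a genuine two-sided multiplicative filtration ($\widetilde\CI^{\,j}\widetilde\CI^{\,k}\subseteq\widetilde\CI^{\,j+k}$); the only nonformal input is the Bernstein commutation relation $\theta_x T_{s_\al}-T_{s_\al}\theta_{s_\al(x)}=(\theta_x-\theta_{s_\al(x)})(\CG(\al)-1)$ of Definition \ref{d:2.1}, together with the facts that $\theta_x-\theta_{s_\al(x)}=\theta_x\big(1-\theta_\al^{-\langle x,\check\al\rangle}\big)$ is divisible by $\theta_\al-1$ and that $\CG(\al)-1$ vanishes identically at $z=1$, hence lies in $\CI$. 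Combined with the freeness $\CH=\bigoplus_{w\in W}T_w\CA$ as a right $\CA$-module, this yields $\mathrm{gr}(\CH)=\bigoplus_{w\in W}t_w\cdot\mathrm{gr}(\CA)$, free over $\mathrm{gr}(\CA)=\bigoplus_{\sigma}\bC[r]\otimes S(\ft^{*})$ with basis $\{t_w\}$. Consequently $\bH_{\CO}(\Psi)$ is generated over $\bC[r]$ by $\{t_w\}$, $S(\ft^{*})$ and $\{E_\sigma\}$, and it has exactly the rank of the algebra presented in the theorem; so it suffices to verify the listed relations, after which a PBW-type dimension count for the target algebra forces the presentation.

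The relations $t_wt_{w'}=t_{ww'}$ come from reducing $T_wT_{w'}=T_{ww'}$ (for $\ell$-additive pairs) modulo $\widetilde\CI$, while the quadratic relation $(T_{s_\al}+1)(T_{s_\al}-z^{2\lambda(\al)})=0$ degenerates to $(t_{s_\al}+1)(t_{s_\al}-1)=0$ because $z^{2\lambda(\al)}\equiv 1$ modulo $\CI$ and its coefficients are constant on $T$; with the braid relations this gives the group-algebra relations for $W$. Reducing the Bernstein relation modulo $\widetilde\CI$ kills the right-hand side and leaves $\big(\sum_\sigma\theta_x(\sigma)E_\sigma\big)t_{s_\al}=t_{s_\al}\big(\sum_\sigma\theta_{s_\al(x)}(\sigma)E_\sigma\big)$ for all $x\in X$; since characters separate the points of $T$, this is equivalent to $E_\sigma t_{s_\al}=t_{s_\al}E_{s_\al\sigma}$. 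Finally, reducing the Bernstein relation modulo $\widetilde\CI^{\,2}$ and isolating the degree-$1$ part gives the crucial relation: writing $\theta_x=\theta_x(\sigma)(1+\omega_x+\cdots)$ near $(1,\sigma)$ with $\omega_x$ the image of $x$ in $\ft^{*}$, so that $\theta_{s_\al(x)}=\theta_{s_\al(x)}(\sigma)(1+s_\al(\omega_x)+\cdots)$, and expanding $\CG(\al)-1$ to first order in $r$, one obtains $\omega\cdot t_{s_\al}-t_{s_\al}\cdot s_\al(\omega)=r\,g(\al)\langle\omega,\check\al\rangle$ with $g(\al)=\sum_\sigma E_\sigma\mu_\sigma(\al)$, where $\mu_\sigma(\al)$ is the leading coefficient of $\CG(\al)-1$ at $(1,\sigma)$.

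The main obstacle is precisely this last first-order computation, i.e. identifying $\mu_\sigma(\al)$. It requires a case analysis: in the generic case $s_\al\sigma\neq\sigma$ one has $\theta_\al(\sigma)\neq1$ and a cancellation between the two sides of the Bernstein relation forces $\mu_\sigma(\al)=0$; in the fixed-point case $s_\al\sigma=\sigma$ with $\check\al\notin 2X^{\vee}$, the factor $\theta_\al-1$ appearing in $\CG(\al)$ itself vanishes at $\sigma$, so $\CG(\al)$ is a ratio of quantities vanishing at $(1,\sigma)$ and a L'Hôpital-type expansion yields $\mu_\sigma(\al)=2\lambda(\al)$; and in the fixed-point case with $\check\al\in 2X^{\vee}$, the second formula for $\CG(\al)$ (involving $\theta_{2\al}-1$) produces the $\theta_{-\al}(\sigma)$-dependent value $\lambda(\al)+\lambda^{*}(\al)\theta_{-\al}(\sigma)$. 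Everything else is commutative algebra or bookkeeping with the Bernstein presentation, so this expansion — including the slightly delicate degenerate points where $\theta_\al-1$ vanishes — is where essentially all the work lies.
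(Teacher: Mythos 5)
This theorem is stated in the paper as a citation (Lusztig, \emph{Graded Hecke algebras}, Prop.\ 4.4 and \S 8.7; Barbasch--Moy, Prop.\ 3.2) with no proof given, and your argument correctly reconstructs exactly the proof those references supply: compute $\mathrm{gr}_{\CI}(\CA)$ at the finite reduced set $(1,\CO)$ to produce $\{E_\sigma\}$ and $\bC[r]\otimes S(\ft^*)$, use the Bernstein relation to see that $\widetilde\CI^{\,k}$ is a two-sided filtration with $\mathrm{gr}(\CH)$ free over $\mathrm{gr}(\CA)$ on $\{t_w\}$, and extract the relations (\ref{eq:2.3.5}) from the degree-$0$ and degree-$1$ reductions of the defining relations, the only real work being the first-order expansion of $\CG(\al)$ at $(1,\sigma)$ in the three cases. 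The one point stated a little loosely is the treatment of the case $s_\al\sigma\neq\sigma$, where the linear term of $(\theta_x-\theta_{s_\al(x)})(\CG(\al)-1)$ at $(1,\sigma)$ is not itself zero and $\mu_\sigma(\al)=0$ only emerges after projecting onto the $E_\sigma(\cdot)E_\sigma$ component using $E_\sigma t_{s_\al}=t_{s_\al}E_{s_\al\sigma}$ — but you flag this cancellation explicitly, so the proposal is sound.
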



\begin{rmk}\label{r:2.3} An important special case is when
 $\CO$ is formed of a single ($W-$invariant) element $\sigma.$ 
Then there is only one
idempotent  generator $E_{\sigma}=1$, so it   is suppressed from the notation.
The algebra $\bH_\CO(\Psi)$ is generated by $\{t_w:w\in W\}$
and $S(\ft^*)$ subject to the commutation relation
\begin{equation}\label{e:basic-graded}
\begin{aligned}
&\omega\cdot t_{s_\al}-t_{s_\al}\cdot s_\al(\omega)=r
\mu_\sigma(\al)\langle\omega,\check\al\rangle, \quad \al\in
\Pi,\ \omega\in \ft^*,
\text{ where }\\
&\mu_{\sigma}(\al)=\left\{ \begin{matrix} 2\lambda(\al), &\text{ if
                                  }  \check\al\notin
                                  2X^\vee,\\
           \lambda(\al)+\lambda^*(\al)\theta_{-\al}(\sigma),   &\text{ if
                                  } \check\al\in
                                  2X^\vee.                   
\\ \end{matrix}  \right.
\end{aligned}
\end{equation}
Still assuming that $\sigma$ is $W-$invariant, we have  
$\theta_{\al}(\sigma)\in\{\pm 1\},$ for all $\al\in \Pi$. If 
$\sigma$ is in the center of group $G(\Psi)$, then  $\theta_{\al}(\sigma)=1,$
for all $\al\in\Pi.$ 
\end{rmk}

We will use the notation   $\bH_{\mu_\sigma}(r)$ for the
    graded affine Hecke
  algebra in the
  particular case 
  defined by equations (\ref{e:basic-graded}).
Define 
\begin{equation}\label{eq:2.3.16}
W(\sigma):=\{w\in W: w\sigma=\sigma\}.
\end{equation}  
Then $W\cdot\sigma=\{w_j\cdot \sigma: 1\le j\le n\}$, where
$\{w_1=1,w_2,\dots,w_n\}$ are coset representatives for
$W/{W}(\sigma).$ Then $\{E_{\tau}\}=\{E_{w_j\sigma}: 1\le j\le
n\},$ and from Proposition \ref{t:2.3}, 
$\bH_\CO=\bC[W]\otimes(\CE\otimes \bA)$, as a $\bC[r]-$vector
space, where $\CE$ is the algebra generated by the
$E_{w_j\cdot\sigma}$'s with $1\le j\le n.$

\begin{prop}[{\cite[Proposition 4.5]{Lu-graded}}]\label{p:2.3}
The center of $\bH_\CO$ is $Z=(\CE\otimes\bA)^{W}.$
\end{prop}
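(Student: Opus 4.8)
The plan is to establish the two inclusions $(\CE\otimes\bA)^W\subseteq Z$ and $Z\subseteq(\CE\otimes\bA)^W$ directly from the presentation in Theorem~\ref{t:2.3}, exploiting the fact (implicit in the decomposition $\bH_\CO=\bC[W]\otimes(\CE\otimes\bA)$ recorded above) that $\bH_\CO$ is free as a left, and as a right, $(\CE\otimes\bA)$-module with basis $\{t_w:w\in W\}$. Throughout, $W$ acts on $\CE\otimes\bA$ diagonally, by $w\cdot E_\sigma=E_{w\sigma}$ and by the reflection action on $S(\ft^*)\subset\bA$, fixing $r$; since $r$ is evidently central in $\bH_\CO$, everything is $\bC[r]$-linear and one may work with $S(\ft^*)$ in place of $\bA$.

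For the inclusion $\supseteq$, the main preparatory step is the divided-difference identity $p\,t_{s_\al}=t_{s_\al}\,s_\al(p)+r\,g(\al)\,\partial_\al(p)$ for $p\in S(\ft^*)$ and $\al\in\Pi$, where $\partial_\al(p)=(p-s_\al(p))/\al$; this follows by induction on $\deg p$ from the defining relation $\omega t_{s_\al}-t_{s_\al}s_\al(\omega)=r\,g(\al)\langle\omega,\cha\rangle$, using that $\CE$ commutes with $S(\ft^*)$ and that $r$ is central. Combining this with the idempotent-swap relation $E_\sigma t_{s_\al}=t_{s_\al}E_{s_\al\sigma}$ and with $E_\sigma g(\al)=\mu_\sigma(\al)E_\sigma$, a short computation shows that for $z=\sum_{\sigma\in\CO}E_\sigma z_\sigma\in\CE\otimes\bA$ one has $z\,t_{s_\al}-t_{s_\al}\,z = t_{s_\al}\sum_\tau E_\tau\bigl(s_\al(z_{s_\al\tau})-z_\tau\bigr)+r\sum_\sigma \mu_\sigma(\al)\,E_\sigma\,\partial_\al(z_\sigma)$. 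If $z$ is $W$-invariant, i.e.\ $z_{s_\al\sigma}=s_\al(z_\sigma)$ for all $\sigma,\al$, the first sum vanishes, and the second vanishes term by term: $\mu_\sigma(\al)=0$ when $s_\al\sigma\ne\sigma$ by the formula for $\mu_\sigma$, while invariance forces $\partial_\al(z_\sigma)=0$ when $s_\al\sigma=\sigma$. Thus $z$ commutes with every $t_{s_\al}$, hence with $\bC[W]$; since it manifestly commutes with the commutative algebra $\CE\otimes\bA$, it lies in $Z$.

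For the inclusion $\subseteq$, write a central $z$ as $z=\sum_{w\in W}p'_w\,t_w$ with $p'_w\in\CE\otimes\bA$. From the relations one gets $t_w\,g=(w\cdot g)\,t_w$ modulo a $\bC[r]$-combination of the $t_{w'}$ with $w'<w$ in the Bruhat order, for any $g\in\CE\otimes\bA$; comparing the $t_w$-coefficient for $w$ maximal in the support of $z$ in the identity $zg=gz$ yields $p'_w\,(w\cdot g-g)=0$ for all such $g$. Taking $g=E_\sigma$ with $w\sigma\ne\sigma$ kills the $E_\sigma$- and $E_{w\sigma}$-isotypic components of $p'_w$; for $\sigma$ fixed by $w$ with $w\ne1$, taking $g=E_\sigma h$ with $h\in S(\ft^*)$ and using faithfulness of the reflection representation of $W$ (so $w(h)\ne h$ for some $h$) together with the fact that $\bC[r]\otimes S(\ft^*)$ is an integral domain kills the remaining components. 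Hence $p'_w=0$ for $w\ne1$ and $z\in\CE\otimes\bA$. Finally, feeding $z\in\CE\otimes\bA$ back into the commutator computation above and using freeness over $\CE\otimes\bA$ in the basis $\{t_w\}$, vanishing of the $t_{s_\al}$-component of $z\,t_{s_\al}-t_{s_\al}\,z$ reads $s_\al(z_{s_\al\sigma})=z_\sigma$ for all $\sigma$; ranging over $\al\in\Pi$ this is precisely $W$-invariance of $z$.

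I expect the only genuine subtlety — really bookkeeping rather than an obstacle — to be keeping the idempotents straight: one must consistently use the \emph{diagonal} $W$-action on $\CE\otimes\bA$ and the fact that $t_w$ permutes the $E_\sigma$ while twisting the polynomial part, and it is the dichotomy $s_\al\sigma=\sigma$ versus $s_\al\sigma\ne\sigma$ (governing $\mu_\sigma(\al)$) that makes the correction terms disappear in both directions. An alternative route would be to deduce the statement from Theorem~\ref{t:2.2} via the associated-graded construction of Definition~\ref{d:2.3}, but since passing to the associated graded need not commute with taking centers, the direct argument above seems cleaner.
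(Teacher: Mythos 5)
The paper offers no proof of this proposition; it is quoted from Lusztig, \cite[Proposition~4.5]{Lu-graded}, and (for the $\CE$-component) from \cite{BM2}. Your reconstruction is the standard direct argument and it is correct. Both inclusions are handled cleanly: the twisted Leibniz rule $\partial_\al(p\omega)=p\,\partial_\al(\omega)+\partial_\al(p)\,s_\al(\omega)$ makes the induction for $p\,t_{s_\al}=t_{s_\al}\,s_\al(p)+r\,g(\al)\,\partial_\al(p)$ go through, and the dichotomy $s_\al\sigma=\sigma$ versus $s_\al\sigma\ne\sigma$ governing $\mu_\sigma(\al)$ is exactly what kills both pieces of the commutator when $z$ is $W$-invariant. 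For the reverse inclusion, comparing the top-Bruhat coefficient of $zg=gz$ is the right move, and splitting into the cases $w\sigma\ne\sigma$ (use orthogonality of $E_\sigma$ and $E_{w\sigma}$) and $w\sigma=\sigma$, $w\ne1$ (use faithfulness of $W$ on $\ft^*$ and integrality of $\bC[r]\otimes S(\ft^*)$) does establish that the support of a central element is $\{1\}$; freeness of $\{t_w\}$ as a right $(\CE\otimes\bA)$-basis then extracts $W$-invariance from vanishing of the $t_{s_\al}$-coefficient.

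One small slip: when you expand $t_wg$ in the $\{t_{w'}\}$ basis, the correction terms in lower Bruhat order have coefficients in $\CE\otimes\bA$ (more precisely in $\bC[r]\otimes(\CE\otimes S(\ft^*))$), not merely in $\bC[r]$ as you wrote. This does not affect the argument, since you only compare the coefficient of $t_w$ for $w$ maximal in the support and the lower-order terms cannot contribute there, but it should be stated accurately. Your concluding remark that passing to the associated graded of $\CH(\Psi)$ need not commute with taking centers is also a fair caution; the alternative route most authors take for the $\CE$-variant is instead through the matrix-algebra isomorphism of Theorem~\ref{t:lusztig-graded}, which reduces to the $\CE$-trivial case and then to Lusztig's original statement.
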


It follows that the central characters of $\bH_\CO$ are
parameterized by ${W}(\sigma)-$orbits in $\bC\oplus \ft.$ Define the category
$\text{mod}_{(r_0,x)}\bH_\sigma,$ where $r$ acts by $r_0>0$ and
$x\in \ft.$

\medskip

We describe the structure of $\bH_{\CO}(\Psi)$
in more detail. Fix a $\sigma\in\CO\subset T.$  We define a  root datum
$\Psi_\sigma=(X,R_\sigma,X^\vee,R_\sigma^\vee)$ with positive roots
$R_\sigma^+$, defined as follows:
\begin{align}
&R_\sigma=\left\{ \al\in R: \theta_\al(\sigma)=\left\{\begin{matrix}1,
    &\text{ if }\check\al\notin 2X^\vee,\\ \pm 1, &\text{ if
    }\check\al\in 2X^\vee  \end{matrix}\right.\right\},\\
&R_\sigma^+=R_\sigma\cap R^+,\quad R_\sigma^\vee=\{\check\al\in R^\vee: \al\in R_\sigma\}\notag. 
\end{align}
{Note that $\Psi_\sigma$ is the root datum for $(G(\sigma)_0,T)$,
  with positive roots $R_\sigma^+$ with respect to the Borel subgroup $G(\sigma)_0\cap B.$} 

Define
\begin{equation}\label{2.4.4}
\Gamma_\sigma:=\{w\in {W}(\sigma): w(R_\sigma^+)=R_\sigma^+\}.
\end{equation}
Note that $\Gamma_{\sigma}$ acts on the root datum $\Psi_\sigma$ and preserves the set of positive roots. Thus, it defines an extended graded Hecke algebra
\begin{equation}\label{eq:2.4.5}
\bH'_{\mu_\sigma}(\Psi_\sigma):=\bH_{\mu_\sigma}(\Psi_\sigma)\rtimes \Gamma_\sigma.
\end{equation}

\medskip
Recall the coset representatives  $\{w_1,\dots, w_n\}$  for
$W/{W}(\sigma)$ from the paragraph after (\ref{eq:2.3.16}). Set
\begin{equation}
E_{i,j}=t_{w_i^{-1}w_j}E_{w_j\sigma}=E_{w_i\sigma}t_{w_i^{-1}w_j},
\text{ for all } 1\le i,j\le n,
\end{equation}
and let $\CM_n$ be the matrix algebra with basis $\{E_{i,j}\}$.

\begin{theorem}[{\cite[Section 8]{Lu-graded}, \cite[Theorem 3.3]{BM2}}]\label{t:lusztig-graded}
There is an algebra isomorphism
$$
\bH_\CO(\Psi)\cong\CM_n\otimes_\bC
\bH_{\mu_\sigma}'(\Psi_\sigma)=\CM_n\otimes_\bC
(\bH_{\mu_\sigma}(\Psi_\sigma)\rtimes \Gamma_\sigma).
$$ 
\end{theorem}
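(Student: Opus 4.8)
The plan is to realize $\bH_\CO(\Psi)$ as a full matrix algebra over its ``corner'' $E_\sigma\bH_\CO(\Psi)E_\sigma$, and then to identify that corner with the extended graded Hecke algebra $\bH'_{\mu_\sigma}(\Psi_\sigma)$ of (\ref{eq:2.4.5}).

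\emph{Step 1 (matrix units).} First I would record the identity $t_w E_\tau t_{w^{-1}}=E_{w\tau}$ for all $w\in W$ and $\tau\in\CO$, which follows by induction on $\ell(w)$ from the relation $E_\tau t_{s_\al}=t_{s_\al}E_{s_\al\tau}$ of Theorem \ref{t:2.3}. Using it one checks that the elements $E_{i,j}=t_{w_i}E_\sigma t_{w_j^{-1}}$ (agreeing with the $E_{i,j}$ introduced above) satisfy $E_{i,j}E_{k,l}=\delta_{j,k}E_{i,l}$: indeed $E_{i,j}E_{k,l}=t_{w_i}\bigl(E_\sigma t_{w_j^{-1}w_k}E_\sigma\bigr)t_{w_l^{-1}}$, and $E_\sigma t_{w_j^{-1}w_k}E_\sigma=\delta_{\sigma,(w_j^{-1}w_k)\sigma}\,E_\sigma t_{w_j^{-1}w_k}$ vanishes unless $w_j^{-1}w_k\in W(\sigma)$, i.e. unless $j=k$ (the $w_\bullet$ being coset representatives for $W/W(\sigma)$), in which case it equals $E_\sigma$. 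Together with $\sum_i E_{i,i}=\sum_{\tau\in\CO}E_\tau=1$, this exhibits a copy of $\CM_n$ inside $\bH_\CO(\Psi)$ whose diagonal idempotents sum to the unit.

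\emph{Step 2 (matrix reduction).} Next I would apply the elementary fact that any unital $\bC$-algebra $A$ carrying matrix units $\{E_{i,j}\}_{1\le i,j\le n}$ with $\sum_i E_{i,i}=1$ is isomorphic to $\CM_n\otimes_\bC\bigl(E_{1,1}AE_{1,1}\bigr)$, via $a\mapsto\bigl(E_{1,i}\,a\,E_{j,1}\bigr)_{i,j}$ with inverse $(x_{i,j})\mapsto\sum_{i,j}E_{i,1}x_{i,j}E_{1,j}$. Since $w_1=1$ we have $E_{1,1}=E_\sigma$, so $\bH_\CO(\Psi)\cong\CM_n\otimes_\bC\bigl(E_\sigma\bH_\CO(\Psi)E_\sigma\bigr)$, reducing everything to identifying the corner. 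Using the decomposition $\bH_\CO=\bC[W]\otimes(\CE\otimes\bA)$ from Theorem \ref{t:2.3}, together with $E_\sigma\omega=\omega E_\sigma$ and $t_w E_\tau=E_{w\tau}t_w$, one sees that $E_\sigma t_w E_\tau p\,E_\sigma$ vanishes unless $w\in W(\sigma)$ and $\tau=\sigma$; hence $E_\sigma\bH_\CO(\Psi)E_\sigma$ has $\bA$-basis $\{E_\sigma t_w\mid w\in W(\sigma)\}$. Invoking the standard decomposition $W(\sigma)=W(R_\sigma)\rtimes\Gamma_\sigma$ (with $W(R_\sigma)$ the Weyl group of the subsystem $R_\sigma$ of roots unmoved by $\sigma$ and $\Gamma_\sigma$ as in (\ref{2.4.4})) matches this basis with the natural basis of $\bH_{\mu_\sigma}(\Psi_\sigma)\rtimes\Gamma_\sigma$. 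Finally, conjugating the relation $\omega\,t_{s_\al}-t_{s_\al}\,s_\al(\omega)=r\,g(\al)\langle\omega,\cha\rangle$ of Theorem \ref{t:2.3} by $E_\sigma$ and using $E_\sigma g(\al)E_\sigma=\bigl(\sum_\tau E_\tau\mu_\tau(\al)\bigr)E_\sigma=\mu_\sigma(\al)E_\sigma$ shows that, for $\al$ simple in $R_\sigma$, the images $E_\sigma t_{s_\al}$ and $E_\sigma\omega$ satisfy exactly the defining relation (\ref{e:basic-graded}) of $\bH_{\mu_\sigma}(\Psi_\sigma)$, while the elements of $\Gamma_\sigma$ act on them by the corresponding permutations of roots, producing the semidirect factor of (\ref{eq:2.4.5}). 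Composing with Step 2 gives $\bH_\CO(\Psi)\cong\CM_n\otimes_\bC\bH'_{\mu_\sigma}(\Psi_\sigma)$.

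\emph{Main obstacle.} Steps 1 and 2 are formal. The real content is in the identification of the corner: one must verify that $\mu_\sigma$, restricted to the simple roots of $R_\sigma$, is a \emph{legitimate} $\Gamma_\sigma$-invariant parameter set for $\Psi_\sigma$ (so that $\bH_{\mu_\sigma}(\Psi_\sigma)\rtimes\Gamma_\sigma$ is even defined), which rests on the splitting $W(\sigma)=W(R_\sigma)\rtimes\Gamma_\sigma$ and on the constancy of $\mu_\sigma$ along $W(\sigma)$-orbits of roots; and one must check that the linear identification above is multiplicative, i.e. that the corner is genuinely \emph{isomorphic} to the semidirect product and not merely Morita equivalent to it. This is the combinatorial heart of \cite[Section 8]{Lu-graded} and \cite[Theorem 3.3]{BM2}.
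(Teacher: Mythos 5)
The paper does not prove this theorem; it quotes it from \cite[Section 8]{Lu-graded} and \cite[Theorem 3.3]{BM2}, and your matrix-unit/corner-algebra reduction is exactly the strategy of those references, so there is no divergence of approach to report. Steps 1 and 2 are correct as written (your $E_{i,j}=t_{w_i}E_\sigma t_{w_j^{-1}}=E_{w_i\sigma}t_{w_iw_j^{-1}}$ is the right normalization; note the paper's displayed formula $E_{i,j}=E_{w_i\sigma}t_{w_i^{-1}w_j}$ appears to have the indices transposed, since only $t_{w_iw_j^{-1}}$ carries $E_{w_j\sigma}$ to $E_{w_i\sigma}$), and the computation that $E_\sigma\bH_\CO(\Psi)E_\sigma$ has $\bA$-basis $\{E_\sigma t_w: w\in W(\sigma)\}$ is also fine.

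The one place where your sketch understates the difficulty is the verification of the cross relation in the corner. Conjugating $\omega t_{s_\al}-t_{s_\al}s_\al(\omega)=r\,g(\al)\langle\omega,\cha\rangle$ by $E_\sigma$ only applies to $\al\in\Pi$, hence only yields relation (\ref{e:basic-graded}) for simple roots of $R_\sigma$ that happen to be simple in $R$. The crux of \cite[Section 8]{Lu-graded} is precisely the remaining case: a simple root $\beta$ of $R_\sigma^+$ need not lie in $\Pi$ (e.g.\ $R=A_2$ with $R_\sigma=\{\pm(\al_1+\al_2)\}$), so one must write $s_\beta$ as a word in the $s_\al$, $\al\in\Pi$, and compute $E_\sigma(\omega t_{s_\beta}-t_{s_\beta}s_\beta(\omega))$ from scratch; moreover the value $\mu_\sigma(\beta)$ for such $\beta$ is not even defined by the formulas in Theorem \ref{t:2.3}, so part of the theorem is that this computation \emph{produces} a well-defined $\Gamma_\sigma$-invariant parameter for $\Psi_\sigma$. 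You flag this under ``Main obstacle,'' which is the honest thing to do, but be aware that it is not a routine multiplicativity check layered on top of your Step 3 — it is a separate rank-one-at-a-time calculation, and without it the corner is only identified as a filtered deformation of $\bC[W(\sigma)]\otimes\bA$ rather than as $\bH_{\mu_\sigma}(\Psi_\sigma)\rtimes\Gamma_\sigma$ with the asserted parameters.
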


Since the only irreducible representation of $\CM_n$ is the
$n-$dimensional standard representation, one obtains immediately 
the equivalences of categories: 
\begin{equation}\label{eq:2.4.7}
\text{mod}_{(r_0,x)} \bH_{\CO}(\Psi)\cong \text{mod}_{(r_0,x)}
\bH_{\mu_\sigma}'(\Psi_\sigma). 
\end{equation}



\subsection{Reduction theorems}\label{sec:2.5} In this section, we recall the relation
between $\CH(\Psi)$ and $\bH_\CO(\Psi)$. 
The torus $T=X^\vee\otimes_\bZ\bC^\times$ admits a polar decomposition
$T=T_c\times T_r,$ where $T_c=X^\vee\otimes_\bZ S^1,$ and
$T_r=X^\vee\otimes_\bZ \bR_{>0}.$ Consequently, every $s\in T$
decomposes uniquely as $s=s_c\cdot s_h,$ with $s_c\in T_c$ and
$s_h\in T_r.$ We call an element $s_c\in T_c$ elliptic, and an element
$s_h\in T_r$ hyperbolic. Similarly, $\ft=X^\vee\otimes_\bZ\bC$
admits a decomposition $\ft=\ft_{i\bR}\oplus\ft_\bR$, where $\ft_{i\bR}=X^\vee\otimes_\bZ i\bR$ and $\ft_\bR=X^\vee\otimes_\bZ \bR.$

There are certain completions of the Hecke algebras which appear in the reduction theorems. The
algebras $\bC[r],\ \mathscr{S},$ and $\bC[r]\otimes\mathscr{S}$ consist
of polynomial functions on $\bC,\ \ft$ and {$\CM:=\bC\oplus\ft$}, respectively. Let 
$\widehat{\bC[r]}$ and $\widehat{\mathscr{S}}$ 
be the  corresponding algebras of holomorphic functions. Let $\mathscr{K}$ and
$\widehat{\mathscr{K}}$ be the fields of rational and meromorphic functions
on $\CM.$ Finally set 
$\widehat\bA:=\bA\otimes_{\mathscr{S}}\widehat{\mathscr{S}}\subset\widehat{\mathscr K},$ 
and
\begin{align}
  \label{eq:compl}
&\widehat{\bH}_\CO:=\bC [W]\otimes(\widehat{\bC[r]}\otimes\widehat \bA),\quad \widehat\bH_\CO({\mathscr{K}}):=\bC [W]\otimes(\CE \otimes\mathscr{K})\supset \widehat\bH_\CO,\notag\\
&\widehat{\bH}_\CO(\widehat{\mathscr{K}}):=\bC [W]\otimes(\CE\otimes\widehat{\mathscr{K}})
\supset \bH_\CO(\mathscr{K}),\widehat\bH_\CO.\notag
\end{align}

\begin{theorem}[{\cite[Proposition 5.2]{Lu-graded} and \cite[Theorem 3.5]{BM2}}]
The map $\iota:\bC[W]\ltimes(\CE\otimes\widehat{\mathscr{K}})\rightarrow\widehat\bH_\CO(\widehat{\mathscr{K}})$, defined by
\[
\iota(E_\sigma)=E_\sigma,\quad \iota(f)=f,\ f\in\widehat{\mathscr{K}},\quad \iota(t_\al)=(t_\al+1)(\sum_{\sigma\in\CO}
{g_\sigma(\al)^{-1}}E_\sigma) -1,
\]
where
{\begin{equation}\label{eq:gal}
g_\sigma(\al)=1+\mu_\sigma(\al)\al^{-1}\in\widehat{\mathscr{K}},
\end{equation}}
is an algebra isomorphism. 
\end{theorem}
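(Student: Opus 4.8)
The plan is first to verify that $\iota$ is a well-defined algebra homomorphism — that the images under $\iota$ of the generators $E_\sigma$, $f\in\widehat{\mathscr K}$, $t_{s_\al}$ of the source $\bC[W]\ltimes(\CE\otimes\widehat{\mathscr K})$ satisfy that algebra's defining relations — and then to deduce bijectivity by a PBW/triangularity argument. Since $\iota$ is the identity on the commutative subalgebra $\CE\otimes\widehat{\mathscr K}$, the relations needing work are the four involving $t_{s_\al}$: the idempotent-crossing relation $t_{s_\al}E_\sigma=E_{s_\al\sigma}t_{s_\al}$, the function-crossing relation $t_{s_\al}f=s_\al(f)t_{s_\al}$, the quadratic relation $t_{s_\al}^2=1$, and the braid relations. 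Write $\tau_\al:=\iota(t_{s_\al})=(t_{s_\al}+1)P_\al-1$, with $P_\al:=\sum_{\sigma\in\CO}g_\sigma(\al)^{-1}E_\sigma$; here $P_\al$ is a unit in $\CE\otimes\widehat{\mathscr K}$, with inverse $\sum_\sigma g_\sigma(\al)E_\sigma$, since each $g_\sigma(\al)=1+\mu_\sigma(\al)\al^{-1}$ is invertible in $\widehat{\mathscr K}$. On an $E_\sigma$-block with $s_\al\sigma\neq\sigma$ one has $\mu_\sigma(\al)=0$, hence $g_\sigma(\al)=1$ and $\tau_\al$ there is just $t_{s_\al}$; so it suffices to analyze the $s_\al$-fixed blocks, where the first three relations become rank-one computations and the braid relations live in the rank-two sub-root-datum generated by $\al$ and $\beta$.

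The conceptual heart is the function-crossing relation, which is exactly what pins down the shape of $g_\sigma(\al)$. First I would extend the commutation relations of Theorem \ref{t:2.3} from $S(\ft^*)$ to $\widehat{\mathscr K}$: for $\phi\in S(\ft^*)$ one has $\al\mid\phi-s_\al(\phi)$, so the Demazure-type identity $t_{s_\al}\phi=s_\al(\phi)t_{s_\al}+g(\al)\partial_\al(\phi)$, with $g(\al)=\sum_\sigma E_\sigma\mu_\sigma(\al)$ and $\partial_\al(\phi):=\al^{-1}(\phi-s_\al(\phi))$, makes sense on polynomials and then continues to hold on all of $\widehat{\mathscr K}$, with $\partial_\al$ obeying the twisted Leibniz rule $\partial_\al(ab)=\partial_\al(a)b+s_\al(a)\partial_\al(b)$. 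Plugging $\tau_\al=(t_{s_\al}+1)P_\al-1$ into $\tau_\al\phi-s_\al(\phi)\tau_\al$ and expanding with this identity, the $t_{s_\al}$-terms cancel and on the $\sigma$-block one is reduced to the scalar identity $g_\sigma(\al)^{-1}+\mu_\sigma(\al)\partial_\al(g_\sigma(\al)^{-1})+\mu_\sigma(\al)\al^{-1}s_\al(g_\sigma(\al)^{-1})=1$, which holds by a one-line computation with $g_\sigma(\al)=1+\mu_\sigma(\al)\al^{-1}$ — precisely the property forcing this choice. The idempotent-crossing relation is then immediate from $t_{s_\al}E_\sigma=E_{s_\al\sigma}t_{s_\al}$ and $E_\sigma g_{\sigma'}(\al)^{-1}=\delta_{\sigma\sigma'}g_\sigma(\al)^{-1}E_\sigma$, and the quadratic relation $\tau_\al^2=1$ reduces, via $(t_{s_\al}+1)t_{s_\al}=t_{s_\al}+1$, $t_{s_\al}^2=1$ in $\bH_\CO$, and the Demazure identity, to the companion scalar identity $g_\sigma(\al)^{-1}+s_\al(g_\sigma(\al)^{-1})+\mu_\sigma(\al)\partial_\al(g_\sigma(\al)^{-1})=2$, again checked directly.

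I expect the braid relations $\tau_\al\tau_\beta\tau_\al\cdots=\tau_\beta\tau_\al\tau_\beta\cdots$ (with $m(\al,\beta)$ factors each side) to be the main obstacle — laborious, but only as bookkeeping, with no new idea required. Because $g_\sigma(\al)$ only involves the rank-two sub-root-system generated by $\al$ and $\beta$, it suffices to treat the types $A_1\times A_1$, $A_2$, $B_2$, $G_2$; in each case one expands both products, uses the unconditional relation $t_wt_{w'}=t_{ww'}$ in $\bH_\CO$ so that the $t_w$-parts of the two sides coincide automatically — both being $t_{w_0}$ times a function for $w_0$ the longest element of the rank-two group — and checks that the remaining rational-function coefficients agree using the function-crossing relation from the previous step. (Alternatively one can appeal to the known fact that the normalized intertwining operators $\tau_\al$ of a graded affine Hecke algebra satisfy the braid relations of $W$.)

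Finally, bijectivity. Both $\bC[W]\ltimes(\CE\otimes\widehat{\mathscr K})$ and $\widehat\bH_\CO(\widehat{\mathscr K})=\bC[W]\otimes(\CE\otimes\widehat{\mathscr K})$ are free right modules over $R:=\CE\otimes\widehat{\mathscr K}$ with basis $\{t_w:w\in W\}$ — for the target this is the PBW decomposition of Theorem \ref{t:2.3}, completed. As $\iota$ is the identity on $R$ and multiplicative, it is right $R$-linear, hence recorded by a matrix $\iota(t_w)=\sum_v t_v m_{v,w}$ with $m_{v,w}\in R$. Using that $\tau_\al=t_{s_\al}u+r_0$ with $u\in R$ a unit and $r_0\in R$, that each reduced $t_w=t_{s_\al}t_{w'}$ in the source with $\ell(w)=\ell(w')+1$, and that moving a function past $t_v$ in $\bH_\CO$ produces only Bruhat-strictly-lower terms, an induction on $\ell(w)$ gives $m_{v,w}=0$ unless $v\le w$ in the Bruhat order, with $m_{w,w}$ a unit in $R$. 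Hence $(m_{v,w})$ is triangular with invertible diagonal, so $\iota$ is an isomorphism of right $R$-modules; being an algebra map, it is an algebra isomorphism.
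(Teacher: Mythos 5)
The paper does not give a proof of this statement --- it is quoted from Lusztig \cite[Proposition 5.2]{Lu-graded} and Barbasch--Moy \cite[Theorem 3.5]{BM2} --- so your blind attempt is a reconstruction of the proof in those references, and it follows essentially the same route: define $\iota$ on generators, verify the defining relations of the source algebra on their images, and deduce bijectivity from a PBW/Bruhat-triangularity argument. Your rank-one scalar identities are correct (for $g=1+\mu\alpha^{-1}$ one indeed has $g^{-1}+\mu\,\partial_\alpha(g^{-1})+\mu\alpha^{-1}s_\alpha(g^{-1})=1$ and $g^{-1}+s_\alpha(g^{-1})+\mu\,\partial_\alpha(g^{-1})=2$, both reducing to $(\alpha+\mu)g^{-1}=\alpha$), the idempotent- and function-crossing relations follow as you describe, the quadratic relation $\tau_\alpha^2=1$ follows from the second identity together with $t_{s_\alpha}^2=1$ and $(t_{s_\alpha}+1)t_{s_\alpha}=t_{s_\alpha}+1$, and the triangularity observation $\iota(t_w)=t_w\,u_w+(\text{terms }t_v,\,v<w)$ with $u_w$ a unit in $\CE\otimes\widehat{\mathscr K}$ correctly yields bijectivity.

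Two remarks. First, you correctly identify the braid relations as the one genuinely laborious step; your suggested appeal to the known braid relations for normalized intertwining operators is fine provided one takes care that the fact being cited is established independently of the present isomorphism (which it is, via rank-two verification or the geometric construction), otherwise the argument would be circular --- so it is cleaner to regard the rank-two check as part of the proof, as you also offer. Second, the extension of the Demazure identity $t_{s_\alpha}\phi = s_\alpha(\phi)\,t_{s_\alpha}+g(\alpha)\partial_\alpha(\phi)$ from $S(\ft^*)$ to all of $\widehat{\mathscr K}$ deserves a sentence of justification: the twisted Leibniz rule gives it on polynomials, and it passes to $\widehat{\mathscr K}$ because the multiplication on $\widehat{\bH}_\CO(\widehat{\mathscr K})=\bC[W]\otimes(\CE\otimes\widehat{\mathscr K})$ is by construction the unique $\widehat{\mathscr K}$-linear extension of that on $\bH_\CO$; $\partial_\alpha$ sends meromorphic functions to meromorphic functions, so no regularity issue arises in this completed setting.
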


For every character $\chi$ of $\CZ$ (the center of $\CH=\CH_W\otimes\CA$),  there is a maximal
ideal $\CJ_\chi=\{z\in \CZ: \chi(z)=0\}$ of $\CZ.$ Consider the
quotients
\begin{equation*}
\CA_\chi=\CA/\CA\cdot\CJ_\chi,\qquad
\CH(\Psi)_\chi=\CH(\Psi)/\CH(\Psi)\cdot\CJ_\chi.
\end{equation*}
Similarly, consider the ideal $\CI_{\overline\chi}$ for every character
$\overline\chi$ of $Z$ in $\bH_\CO$, and define the analogous
quotients. 
Then
\begin{align*}
  \label{eq:2.5.4}
\bA_{\overline\chi}:=\bA/(\bA\cdot\CI_{\overline\chi})=
\widehat\bA/(\widehat{\bA}\cdot {\CI}_{\overline\chi})=\widehat\bA_{\overline\chi},\quad
\bH_{\overline\chi}:=\bH_\CO/(\bH_\CO\cdot\CI_{\overline\chi})=
\widehat\bH_\CO/(\widehat\bH_\CO\cdot\CI_{\overline\chi})=\widehat\bH_{\overline\chi}.
\end{align*}

The map 
\begin{equation*}
\tau:\bC\oplus\ft_\bR\to \bC^\times\times T,\quad (r_0,\nu)\mapsto
(z_0,s)=(e^{r_0},\sigma\cdot e^\nu)
\end{equation*}
is $W(\sigma)-$invariant. It matches the central characters 
\begin{equation}\label{eq:2.5.3} 
\tau:~\overline\chi=W(\sigma)\cdot (r_0,\nu)\longleftrightarrow
\chi=W\cdot (z_0,s).
\end{equation}
Moreover, $\tau$ is a bijection onto the central characters of $\CH$ with elliptic part in $\CO.$

\begin{theorem}[{\cite[Proposition 4.1, Theorem 4.3]{BM2}}]\label{t:graded-reduction}\ 
\begin{enumerate}
    \item The map $\phi:\CA[z,z^{-1}]\longrightarrow\widehat\bC[r]\otimes\widehat\bA$ defined by
\begin{equation}
\begin{aligned}
&\phi(z)=e^r,\quad \phi(\theta_x)=\sum_{\sigma\in\CO}\theta_x(\sigma)E_\sigma e^x,\qquad x\in
X,
\end{aligned}
\end{equation}
is a $\bC$-algebra homomorphism which maps $\CJ_\chi$ to $\CI_{\overline\chi}$ and defines by passage to the quotients an isomorphism $\CA_\chi\cong\bA_{\overline\chi}.$ 

\item The map $\Phi:\CH\longrightarrow\widehat\bH_\CO(\widehat{\mathscr{K}})$ 
defined by 
\begin{equation}
  \label{eq:2.5.5}
\begin{aligned}
&\Phi(a)=\phi(a),\quad \Phi(T_\al+1)=\sum_{\sigma\in\CO} E_\sigma(t_\al+1){\phi(\CG_\al) g_\sigma(\al)^{-1}}    
\end{aligned}
\end{equation}
 {with $\CG_\al$ as in Definition \ref{d:2.1}, and $g_\sigma(\al)$ as in
 (\ref{eq:gal}),} induces an isomorphism  $\CH(\Psi)_\chi\cong(\bH_\CO(\Psi))_{\overline\chi}.$ 

\item Assume
  $\sigma\in T_c.$ Let   $\chi=W\cdot (e^{r_0},\sigma\cdot e^\nu)$,
  $\overline\chi=W(\sigma)\cdot   (r_0,\nu)$ be as in
  (\ref{eq:2.5.3}), with $(r_0,\nu)\in   \bC\oplus\ft_\bR.$ The
  map (\ref{eq:2.5.5})
\[
\Phi:\CH(\Psi)_\chi\xrightarrow{\cong}
  \bH_\CO(\Psi)_{\overline\chi}
\]
  is
  an analytic isomorphism in $(r_0,\nu)\in \bC\oplus\ft_\bR$ between two families of analytic $\CC$-algebras, in the sense of \cite[Definition 4.2]{BM2}.
  \end{enumerate}
\end{theorem}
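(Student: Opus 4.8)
The statement is Theorem~\ref{t:graded-reduction}, parts (1)--(3), which is attributed in the excerpt to \cite[Proposition 4.1, Theorem 4.3]{BM2} together with \cite[Proposition 5.2]{Lu-graded}. Since the essential content has been established in those references, the plan is to reduce the assertion to those results and to spell out the compatibility of the various maps introduced above. I would organise the argument in three stages corresponding to the three parts of the theorem.

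First, for part (1): the map $\phi$ on the commutative part $\CA[z,z^{-1}]$ is built so that $\theta_x \mapsto \sum_{\sigma\in\CO}\theta_x(\sigma) E_\sigma e^x$. One checks directly that this respects the relations $\theta_x\theta_{x'}=\theta_{x+x'}$ using the orthogonality of the idempotents $E_\sigma$ (Theorem~\ref{t:2.3}, relation $E_{\sigma'}E_{\sigma''}=\delta_{\sigma',\sigma''}E_{\sigma'}$) and the multiplicativity $\theta_x(\sigma)\theta_{x'}(\sigma)=\theta_{x+x'}(\sigma)$; and $\phi(z)=e^r$ is clearly a unit. So $\phi$ is a well-defined $\CC$-algebra homomorphism. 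To see it carries $\CJ_\chi$ into $\CI_{\overline\chi}$, one traces through the matching of central characters (\ref{eq:2.5.3}): a function on $\CC^\times\times T$ vanishing at $W\cdot(z_0,s)$ pulls back under $\tau$ to a function on $\CC\oplus\ft_\bR$ vanishing at $W(\sigma)\cdot(r_0,\nu)$, i.e. lands in $\CI_{\overline\chi}$. The key point for the induced isomorphism $\CA_\chi\cong\bA_{\overline\chi}$ is that both sides are finite-dimensional of the same dimension (namely $|W\cdot(z_0,s)|$ on one side and $|W(\sigma)\cdot(r_0,\nu)|$ on the other, which agree by the bijectivity statement for $\tau$ in the paragraph after (\ref{eq:2.5.3})), together with surjectivity of the induced map; surjectivity follows because the $\theta_x(\sigma)E_\sigma e^x$ generate. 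This is essentially a dimension count plus a direct check.

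Second, for part (2): one extends $\phi$ to $\Phi$ by the prescribed formula on $T_\al+1$, and the main task is to verify that $\Phi$ respects the Bernstein relations of Definition~\ref{d:2.1} -- the quadratic relation $(T_{s_\al}+1)(T_{s_\al}-z^{2\lambda(\al)})=0$, the braid relations among the $T_w$, and the Bernstein commutation $\theta_x T_{s_\al}-T_{s_\al}\theta_{s_\al(x)}=(\theta_x-\theta_{s_\al(x)})(\CG(\al)-1)$. Here the factor $\phi(\CG_\al)g_\sigma(\al)^{-1}$ is engineered precisely so that, upon passing to $\widehat\bH_\CO(\widehat{\mathscr K})$, the image of $T_\al+1$ becomes conjugate (via the idempotents) to the intertwiner $(t_\al+1)(\sum_\sigma g_\sigma(\al)^{-1}E_\sigma)$ appearing in the isomorphism $\iota$ of the preceding theorem; combining $\Phi$ with $\iota^{-1}$ recovers Lusztig's original reduction. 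Then one observes that $\Phi$ sends $\CH(\Psi)\cdot\CJ_\chi$ into $\bH_\CO(\Psi)\cdot\CI_{\overline\chi}$ using part (1) on the commutative part, so it descends to a homomorphism $\CH(\Psi)_\chi\to\bH_\CO(\Psi)_{\overline\chi}$; bijectivity again follows from equality of (finite) dimensions -- both quotients have dimension $|W|^2\cdot(\text{size of the relevant central-character orbit})$ -- and surjectivity, which holds since the $T_\al$ and $\theta_x$ generate $\CH$ and their images together with the $E_\sigma$ generate $\bH_\CO$. One must also check that after inverting the necessary denominators, $g_\sigma(\al)$ is a unit in the completed algebra localised away from the relevant walls, which is where the completion $\widehat{\mathscr K}$ enters.

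Third, for part (3): with $\sigma\in T_c$ fixed, one lets $(r_0,\nu)$ vary over $\CC\oplus\ft_\bR$ and regards the family of quotient algebras $\CH(\Psi)_\chi$ and $\bH_\CO(\Psi)_{\overline\chi}$ as analytic families of finite-dimensional $\CC$-algebras in the sense of \cite[Definition 4.2]{BM2}. The point is that all the structure constants appearing in $\Phi$ -- i.e. the entries of $\phi(\CG_\al)g_\sigma(\al)^{-1}$ -- are holomorphic functions of $(r_0,\nu)$ on the locus where $g_\sigma(\al)\neq 0$, and that the denominators $g_\sigma(\al)=1+\mu_\sigma(\al)\al^{-1}$ do not vanish identically; after passing to the quotient by $\CI_{\overline\chi}$ the potential poles are cancelled (this is the content of the reduction at the level of the completed algebras), so $\Phi$ is given by holomorphic structure constants on all of $\CC\oplus\ft_\bR$. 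Hence $\Phi$ is an isomorphism of analytic families, as claimed.

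\textbf{Main obstacle.} The part requiring genuine care is part (2): checking that the twisting factor $\phi(\CG_\al)g_\sigma(\al)^{-1}$ makes $\Phi$ respect the Bernstein commutation relation, and that the apparent denominators (from $\CG_\al$ having $\theta_\al-1$ or $\theta_{2\al}-1$ in the denominator, and from $g_\sigma(\al)^{-1}$) actually cancel so that $\Phi$ lands in $\widehat\bH_\CO(\widehat{\mathscr K})$ and descends to the \emph{polynomial} (uncompleted) quotients. This is precisely the computation carried out in \cite{Lu-graded} and \cite{BM2}, so in the write-up I would verify the compatibility on generators, track the cancellation of denominators carefully, and otherwise cite those references for the bulk of the verification; the analytic-family statement in (3) is then a formal consequence of the holomorphicity of the resulting structure constants.
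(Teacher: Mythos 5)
This theorem is imported verbatim from \cite[Proposition 4.1, Theorem 4.3]{BM2} (building on \cite[Proposition 5.2]{Lu-graded}); the paper gives no proof of its own, only the citation, and your plan of reducing everything to those references and sketching the compatibility checks is exactly the same approach. One small caution if you write it up: the quotients $\CA_\chi$ and $\CH(\Psi)_\chi$ have dimensions $|W|$ and $|W|^2$ respectively (as $\CA$ is finite flat of rank $|W|$ over its $W$-invariants), not the orbit sizes $|W\cdot(z_0,s)|$ you quote, though this does not affect the overall strategy.
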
 

If $\sigma\in T_c$, denote by $r_\sigma$ the bijection defined by $\Phi$ between simple $\bH_{\mu_\sigma}(\Psi_\sigma)\rtimes \Gamma_\sigma$-modules with real central character and simple $\mathcal H(\Psi,z_0)$-modules with central character having compact part in $W\cdot \sigma$.

\subsection{Tempered modules}
Let $V$ be a
{$\CH(\Psi,z)-$module} on which $z$ acts by $z_0\in
\bR_{>1}$. For every $t\in T,$ define a $\CA$-generalized eigenspace
of $V$: 
\begin{equation}
V_t=\{v\in V: \text{ for all } x\in X,\ (x(t)-\theta_x)^kv=0, \text{
  for some } k\ge 0\}.
\end{equation}
We say that $t$ is a weight of $V$ if $V_t\neq\{0\}.$ Let $\Phi(V)$
denote the set of weights of $V.$ We have $V=\oplus_{t\in\Phi(V)}
V_t$ as $\CA$-modules.

\begin{definition}\label{d:2.7}
We say that $V$ is tempered if, for all $x\in X^+:=\{x\in X:\langle
x,\check\al\rangle\ge 0, \text{ for all }\al\in R^+\}$, and all
$t\in\Phi(V),$  $|x(t)|\le 1$ holds.
\end{definition}

Let $\overline V$ be a {
  $\bH_{\mu_\sigma}'(\Psi_\sigma)-$module} on which $r$ acts by 
$r_0>0.$ For every $\nu\in\ft,$ define a $\bA$-generalized eigenspace
of $\overline V$,
\begin{equation}
\overline V_\nu=\{v\in \overline V: \text{for all }\omega\in S(\ft^*),\ (\omega(\nu)-\omega)^kv=0, \text{ for some }k\ge 0\}.
\end{equation}
We say that $\nu$ is a weight of $\overline V$ if $\overline
V_\nu\neq\{0\}.$ Let $\Phi(\overline V)$ denote the set of weights
of $\overline V.$ We have 
$
\overline V=\oplus_{\nu\in\Phi(\overline V)} V_\nu,
$ as $\bA$-modules.

\begin{definition}
We say that $\overline V$ is tempered if, for all $\omega\in
\bA^+:=\{\omega\in S(\ft^*): \langle\omega,\check\al\rangle\ge
0,\text{ for all }\al\in R^+\}$, and all $\nu\in\Phi(\overline V)$,
we have $\langle\omega,\nu\rangle\le 0.$
\end{definition}

The two notions of tempered are naturally related, see for example \cite[Sections 6, 7]{BM2}: under the bijection $r_\sigma$ from 
  Theorem \ref{t:graded-reduction}, the tempered modules correspond.

\subsection{Deformations of standard modules}\label{sec:OS-deform}  We make use of certain continuous deformations of affine Hecke algebras and their Schwarz completions due to Solleveld \cite[\S4]{solleveld-2012}, based also on an idea of Opdam \cite{opdam-spectral}. Let $\mathcal H(\Psi,z_0)$ be the specialisation of the generic affine Hecke algebra $\mathcal H(\Psi,z_0)$
at $z=z_0>1$. There is a family of additive functors

\begin{equation}\label{e:OS-deform}
\widetilde\sigma_\epsilon:\text{mod} \mathcal H(\Psi,z_0)\to \text{mod}\mathcal H(\Psi,z_0^\epsilon),\qquad  \widetilde\sigma_\epsilon(\pi,V)=(\pi_\epsilon,V), \qquad \epsilon\in[-1,1]    
\end{equation}
such that for each $w\in \widetilde W$, the map
\begin{equation*}
  [-1,1]\to \operatorname{End}(V),\ \epsilon\mapsto \pi_\epsilon(T_w)  
\end{equation*}
is analytic in $\epsilon$. The functors $\widetilde\sigma_\epsilon$ are defined by
\[\pi_\epsilon=\pi\circ \rho_\epsilon,
\]
see \cite[Proposition 4.1.2]{solleveld-2012}, where $\rho_\epsilon$ is a scaling map (which is an isomorphism of topological algebras when $\epsilon\neq 0$) between certain analytic completions of $\mathcal H(\Psi,z_0^\epsilon)$ and $\mathcal H(\Psi,z_0)$, such that the maps $\epsilon\mapsto \rho_\epsilon(T_w)$ are analytic for all $w\in \widetilde W$.
We summarize the properties of $\widetilde\sigma_\epsilon$. 

\begin{theorem}[{\cite[Corollary 4.2.2  ]{solleveld-2012} }] The functors $\widetilde\sigma_\epsilon$:
  \begin{enumerate}
      \item map modules with central character $s\in W\backslash T$ to modules with the same central character;
      \item are equivalences of categories if $\epsilon\neq 0$;
      \item map tempered modules to tempered modules when $\epsilon\ge 0$.
  \end{enumerate}
\end{theorem}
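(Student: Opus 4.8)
This is \cite[Corollary 4.2.2]{solleveld-2012}, a refinement of Opdam's scaling construction \cite{opdam-spectral}; here is the plan one would follow to reprove it. The crucial point is that the parameter $z$ can be \emph{isolated} within the algebra. I would first pass to the Schwartz completion $\mathcal{S}(\Psi,z)$ of $\mathcal{H}(\Psi,z)$, inside which Opdam's normalized intertwining operators $\tau_w$ ($w\in W$) become available; together with $\mathcal{A}$ they generate $\mathcal{S}(\Psi,z)$ and satisfy the braid relations of $W$, the cross relations $\tau_{s_\alpha}\theta_x=\theta_{s_\alpha(x)}\tau_{s_\alpha}$, and $\tau_{s_\alpha}^2=1$ --- all of which are \emph{independent of} $z$. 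The parameter enters only through the formula re-expressing the Bernstein generator $T_{s_\alpha}$ as an $\mathcal{A}$-linear combination of $\tau_{s_\alpha}$ and $1$, with coefficients assembled from the Macdonald $c$-functions $c_\alpha(\,\cdot\,;z)$, which are rational in $\theta_\alpha$ and holomorphic in $z$. Next I would \emph{define} $\rho_\epsilon\colon\mathcal{S}(\Psi,z_0^\epsilon)\to\mathcal{S}(\Psi,z_0)$ by $\rho_\epsilon(\tau_w)=\tau_w$, by letting $\rho_\epsilon$ act on the completed abelian part through the endomorphism of (the relevant completion of) $\mathcal{A}$ that rescales the real, hyperbolic directions of $T$ by $\epsilon$ while fixing the elliptic ones, and by sending $z_0^\epsilon$ to $z_0$ on parameters. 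Well-definedness then reduces to matching the two $z$-dependent conversion formulas for $T_{s_\alpha}$, which is forced by the definition, and analyticity of $\epsilon\mapsto\rho_\epsilon(T_w)$ follows because $c_\alpha(\,\cdot\,;z_0^\epsilon)$ depends analytically on $\epsilon$.

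Granting this construction, the three assertions become formal. For (1): $\rho_\epsilon$ carries the (completed) center $\mathcal{Z}$ of $\mathcal{S}(\Psi,z_0^\epsilon)$ to that of $\mathcal{S}(\Psi,z_0)$ by a map acting on $\operatorname{Spec}\mathcal{Z}=W\backslash T$ through the rescaling of the hyperbolic coordinate alone; in particular it preserves the elliptic part of the central character, and for a module whose central character lies in a single elliptic $W$-orbit $\mathcal{O}\subseteq T_c$ --- the case used below --- the central character is unchanged. For (2): when $\epsilon\neq 0$ this rescaling is invertible, and composing $\rho_\epsilon$ with the analogous scaling by $\epsilon^{-1}$ in the opposite direction yields a two-sided inverse; hence $\rho_\epsilon$ is an isomorphism of topological algebras, and $\widetilde\sigma_\epsilon$, being pullback along $\rho_\epsilon$, is an equivalence of categories. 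For (3): if $(\pi,V)$ is tempered then for each weight $t\in\Phi(V)$ and each $x\in X^+$ one has $|x(t)|=x(t_h)\le 1$, with $x(t_h)\in\bR_{>0}$; the weight of $\widetilde\sigma_\epsilon(\pi,V)$ corresponding to $t$ has hyperbolic part $t_h^\epsilon$, so its absolute $x$-value equals $x(t_h)^\epsilon$, which is still $\le 1$ precisely when $\epsilon\ge 0$ (for $\epsilon<0$ the inequality reverses, which forces the hypothesis).

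The step I expect to be the main obstacle is the construction of $\rho_\epsilon$ itself, not the deductions of (1)--(3). One must show that --- despite the poles of the $c_\alpha$ along the walls $\theta_\alpha=1$ --- the recipe above extends to a genuine continuous algebra homomorphism of the Schwartz completions, and, for the part of the statement used later, that when $\epsilon\ge 0$ it maps $\mathcal{S}(\Psi,z_0^\epsilon)$ into $\mathcal{S}(\Psi,z_0)$ rather than into some larger completion --- since temperedness of a module is exactly the condition that it extend to the Schwartz algebra. This rests on the uniform analytic estimates for the normalized intertwiners and the Plancherel density due to Opdam, carried out in the present deformation-family setting in \cite[\S4]{solleveld-2012}, to which I refer for the remaining details.
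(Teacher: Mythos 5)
The paper does not actually prove this statement: it is imported wholesale from \cite[Corollary 4.2.2]{solleveld-2012}, and the only justification offered in the text is the citation together with the description of the scaling maps $\rho_\epsilon$ from \cite[Proposition 4.1.2]{solleveld-2012}. Your sketch is a faithful outline of the argument in that reference, so there is nothing internal to compare it against: you correctly identify that the $z$-dependence is isolated in the Macdonald $c$-functions once Opdam's normalized intertwiners are adjoined to a suitable analytic completion, that $\rho_\epsilon$ is then defined by fixing the intertwiners and rescaling the hyperbolic directions of $T$, and that (1)--(3) follow formally from the effect of $\rho_\epsilon$ on the center, its invertibility for $\epsilon\neq 0$, and the behaviour of $\mathcal{A}$-weights $t=t_ct_h\mapsto t_ct_h^{\epsilon}$. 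You also correctly locate the genuinely hard analytic content (the poles of the $c$-functions and the passage to Schwartz rather than merely analytic completions), and you are right that for the purely weight-theoretic Definition \ref{d:2.7} of temperedness the computation in your part (3) is all that is needed.

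One point is worth emphasizing. Your argument for (1) establishes that the elliptic part of the central character is preserved while the hyperbolic part is raised to the power $\epsilon$, with literal preservation only when the central character is elliptic (or $\epsilon=1$); this is indeed what Solleveld proves, namely that $W(t_ct_h)$ is sent to $W(t_ct_h^{\epsilon})$. The statement (1) as transcribed in the theorem is stronger and fails at $\epsilon=0$ for any tempered module whose central character has nontrivial hyperbolic part (e.g.\ the Steinberg module): by Theorem \ref{t:sol-zeta}, $\widetilde\sigma_0$ of such a module is induced from $\mathbb{C}_\sigma\otimes\widetilde\pi|_{W(\sigma)}$ and hence has central character $W\sigma$, the compact part only. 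So your hedge is not a gap in your proof but a correct reading of the cited result; the applications later in the paper (constancy of restrictions to finite Hecke subalgebras, and the $\epsilon=0$ endpoint) use only this weaker, correct version.
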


We are particularly interested in the functor $\widetilde\sigma_0$. For finite-dimensional tempered modules, a precise description is available. Let 
\begin{equation}
    \zeta^*: \{\text{simple tempered $\mathcal H(\Psi,z_0)$-modules}\}\longrightarrow \{\text{tempered $\mathbb C[X\rtimes W]$-modules}\}
\end{equation}
denote the restriction of $\widetilde\sigma_0$ to the set of tempered modules. Fix a compact semisimple element $\sigma\in T_c$, and let $r_{\sigma}$ denote the corresponding bijection given after Theorem \ref{t:graded-reduction}. 

\begin{theorem}[{\cite[Theorem 2.3.1, Corollary 4.4.3]{solleveld-2012}}]\label{t:sol-zeta}
  If $\widetilde \pi$ is any simple tempered $\bH_{\mu_\sigma}(\Psi_\sigma)\rtimes \Gamma_\sigma$-module with real central character and $\widetilde \pi\circ r_\sigma$ the corresponding simple tempered $\mathcal H(\Psi,z_0)$-module, then
  \[\zeta^*(\widetilde\pi\circ r_\sigma)=\operatorname{Ind}_{X\rtimes W(\sigma)}^{X\rtimes W}(\mathbb C_\sigma\otimes \widetilde\pi|_{W(\sigma)}).
  \]
  
\end{theorem}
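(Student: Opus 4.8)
The plan is to transport the question to the graded affine Hecke algebra $\bH_{\mu_\sigma}(\Psi_\sigma)\rtimes\Gamma_\sigma$, where the deformation $\widetilde\sigma_\epsilon$ becomes an elementary rescaling, to compute the $\epsilon\to 0$ limit there, and then to carry the answer back through the reduction isomorphisms of Theorems~\ref{t:graded-reduction} and~\ref{t:lusztig-graded}.

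\emph{Step 1 (reduction to the graded side).} Put $r_0=\log z_0$, so that $\CH(\Psi,z_0)$, resp.\ $\CH(\Psi,z_0^\epsilon)$, is the specialisation of the generic algebra at $z=e^{r_0}$, resp.\ $z=e^{\epsilon r_0}$; at $\epsilon=0$ this degenerates to $\CH(\Psi,1)=\bC[X\rtimes W]$. I would first verify that, under the analytic isomorphism $\Phi$ of Theorem~\ref{t:graded-reduction}(3) --- which identifies the family $\epsilon\mapsto\CH(\Psi,z_0^\epsilon)_{\chi_\epsilon}$ with the family $\epsilon\mapsto\bH_\CO(\Psi)_{\overline\chi_\epsilon}$ over $(\epsilon r_0,\epsilon\nu)\in\bC\oplus\ft_\bR$ --- Solleveld's functor $\widetilde\sigma_\epsilon$ corresponds to the \emph{naive} rescaling functor on graded modules: the one attached to the algebra map $\bH_{\mu_\sigma}(\Psi_\sigma)\rtimes\Gamma_\sigma\to\bH_{\mu_\sigma}(\Psi_\sigma)\rtimes\Gamma_\sigma$ sending $r\mapsto\epsilon r$, $\omega\mapsto\epsilon\omega$ for $\omega\in\ft^*$, and $t_w\mapsto t_w$ for $w\in W(\sigma)$. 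This is an honest algebra homomorphism for every $\epsilon$ by the commutation relation~\eqref{e:basic-graded}, and at $\epsilon=0$ it degenerates to the augmentation on $S(\ft^*)$ together with the inclusion $\bC[W(\sigma)]\hookrightarrow\bH_{\mu_\sigma}(\Psi_\sigma)\rtimes\Gamma_\sigma$. Since both $\Phi$ (via the factor $\phi(\CG_\al)\,g_\sigma(\al)^{-1}$ in~\eqref{eq:2.5.5}) and Solleveld's scaling map $\rho_\epsilon$ are assembled from the same $c$-function data $g_\sigma(\al)=1+\mu_\sigma(\al)\al^{-1}$, this is a bookkeeping check on the analytic generators; the matrix factor $\CM_n$ of Theorem~\ref{t:lusztig-graded}, with $n=[W:W(\sigma)]$, is carried along unchanged.

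\emph{Step 2 (the limit is an induced module).} By Step~1, for a simple tempered $\widetilde\pi$ with real central character the module $\zeta^*(\widetilde\pi\circ r_\sigma)$ will be the $\bC[X\rtimes W]$-module corresponding, through $\Phi|_{\epsilon=0}$, to the standard $\CM_n$-module tensored with $\widetilde\pi$ --- now regarded as a module over $S(\ft^*)\rtimes W(\sigma)$ on which $S(\ft^*)$ acts by augmentation and $W(\sigma)$ acts through $\widetilde\pi|_{W(\sigma)}$. On the other hand $\bC[X\rtimes W]$ localised at the orbit $W\sigma$ is $\CM_n\otimes(\widehat{\bC[X]}_\sigma\rtimes W(\sigma))$, the $n$ diagonal idempotents $E_{i,i}$ being indexed by the cosets $w_1W(\sigma),\dots,w_nW(\sigma)$, with $X$ acting on the $i$-th block by the character $x\mapsto\theta_x(w_i\sigma)$ --- as one reads off from $\phi(\theta_x)=\sum_\tau\theta_x(\tau)E_\tau e^x$ in Theorem~\ref{t:graded-reduction}(1), whose value at $r=0$, $\nu=0$ is $\sum_\tau\theta_x(\tau)E_\tau$. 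Unwinding, the resulting $\bC[X\rtimes W]$-module is $\bigoplus_{i=1}^{n}w_i\cdot(\bC_\sigma\otimes\widetilde\pi|_{W(\sigma)})$ with $W$ permuting the summands and acting within each through $\widetilde\pi|_{W(\sigma)}$ --- precisely the Mackey model of $\operatorname{Ind}_{X\rtimes W(\sigma)}^{X\rtimes W}(\bC_\sigma\otimes\widetilde\pi|_{W(\sigma)})$. This would yield the formula; the equality $\dim(\widetilde\pi\circ r_\sigma)=n\dim\widetilde\pi$, from the $\CM_n$-factor of Theorem~\ref{t:lusztig-graded}, is a convenient consistency check.

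\emph{Main obstacle.} The crux is Step~1: matching Solleveld's analytic deformation of the affine Hecke algebra with the naive rescaling of the graded algebra through the Barbasch--Moy isomorphism $\Phi$, and proving the $\epsilon\to 0^{+}$ limit is the expected degeneration. On the graded side the rescaling is trivial to write down and is an algebra map for every $\epsilon$; but on the affine side there is no naive analogue --- one cannot take fractional powers of the $\theta_x$ --- so one is forced into the analytic/Schwartz completions, where essentially all the work of \cite{solleveld-2012} lies (well-definedness, exactness, equivalence for $\epsilon\neq 0$, compatibility with the reduction theorems, and existence of the $\epsilon=0$ functor on tempered modules). Temperedness, together with the reality of the infinitesimal character, is used precisely to keep the $c$-functions $g_\sigma(\al)^{\pm1}$ off their poles along the whole deformation $\epsilon\in[0,1]$, so that the limit exists and coincides with the naive one on the graded side. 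An alternative that would avoid Step~1 is to compute the map $\rho_0\colon\bC[X\rtimes W]\to\CH(\Psi,z_0)^{\mathrm{an}}$ on the $W\sigma$-block directly using Opdam's scaling formulas \cite{opdam-spectral}; this does not appear to be shorter.
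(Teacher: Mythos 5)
This statement is not proved in the paper; it is imported verbatim as a black-box citation to Solleveld \cite[Theorem 2.3.1, Corollary 4.4.3]{solleveld-2012}, and the ensuing Corollary~\ref{c:restr-temp} and Proposition~\ref{p:restr-standard} are derived from it. So there is no ``paper's proof'' to compare against; the comparison can only be made with Solleveld's argument.

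Your sketch reconstructs the right skeleton of that argument. Step~2 --- identifying the localization $\bC[X\rtimes W]_{W\sigma}\cong \CM_n\otimes(\widehat{\bC[X]}_\sigma\rtimes W(\sigma))$, reading off the $X$-action on each block from $\phi(\theta_x)|_{r=\nu=0}=\sum_\tau\theta_x(\tau)E_\tau$, and recognizing the Mackey model of $\operatorname{Ind}_{X\rtimes W(\sigma)}^{X\rtimes W}(\bC_\sigma\otimes\widetilde\pi|_{W(\sigma)})$ --- is sound and is essentially the final step of Solleveld's proof. The consistency check on dimensions via the $\CM_n$-factor is also correct (note $W(\sigma)=W_\sigma\rtimes\Gamma_\sigma$, so the restriction to $W(\sigma)$ really does see the entire Weyl-group part of the extended graded algebra).

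The genuine gap is exactly the one you flag: Step~1 is where all the substance lies, and your proposal does not supply it. Two remarks on that step. First, a small inaccuracy: Solleveld's $\widetilde\sigma_\epsilon$ fixes the affine central character $s\in W\backslash T$ while scaling $z_0\to z_0^\epsilon$; under the Barbasch--Moy dictionary $z_0=e^{r_0}$, $s=\sigma e^\nu$ this corresponds on the graded side to scaling $r_0\mapsto\epsilon r_0$ with $\nu$ held fixed, \emph{not} to the graded map $(r,\omega)\mapsto(\epsilon r,\epsilon\omega)$ you wrote down. The two agree on tempered modules with real central character --- because there $\nu$ is automatically proportional to $r_0$ (it is $r_0 h$ for an $\mathfrak{sl}_2$-neutral element $h$) --- but that coincidence is specific to temperedness and should be said explicitly, since it is one of the places temperedness enters. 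Second, even granting the correct graded-side description, establishing that the rescaling commutes with $\Phi$ requires the analytic completions and pole-free control of the $c$-functions $g_\sigma(\al)^{\pm1}$ along $\epsilon\in[0,1]$; this is not a ``bookkeeping check on the analytic generators'' but a substantial piece of harmonic analysis, and is the content of Solleveld's \S4 building on Opdam. Your alternative (compute $\rho_0$ on the $W\sigma$-block directly from Opdam's scaling formulas) is in fact what Solleveld does, and it is not shorter. In short: the outline is correct and well organized, but as a proof it reduces the theorem to exactly the hard part, which is appropriate for a citation but would not stand alone.
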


Let $J\subsetneq S^a$ be given. The finite Hecke algebra $\mathcal H_J(\Psi,z_0^\epsilon)$ is semisimple and finite dimensional with the underlying vector space $\mathbb C[\widetilde W_J]$. Since the multiplication varies continuously with $\epsilon$, it follows by Tits' deformation theorem, that the algebras $\mathcal H_J(\Psi,z_0^\epsilon)$ are isomorphic for all $\epsilon$. 
Moreover since the maps $\epsilon\to \pi_\epsilon(T_w)$ are continuous in $\epsilon$, it follows that the multiplicities of the restrictions of $\pi_\epsilon$ to $\mathcal H_J(\Psi,z_0^\epsilon)$ are constant with respect to $\epsilon$. We record these facts in the following corollary.

\begin{cor}\label{c:restr-temp}
Let $\tau$ be a simple $\mathcal H_J(\Psi,z_0)$-module and let $\tau|_{z_0\to 1}$ be the corresponding simple $\mathbb C[\widetilde W_J]$-module. Then, for every finite-dimensional $\mathcal H(\Psi,z_0)$-module $\pi$,
\[\Hom_{\mathcal H_J(\Psi,z_0)}[\tau,\pi]=\Hom_{\widetilde W_J}[\tau|_{z_0\to 1}, \widetilde\sigma_0(\pi)].
\]
In particular, for every simple tempered $\pi=\widetilde\pi\circ r_\sigma$-module as in Theorem \ref{t:sol-zeta},
\begin{equation}
   \Hom_{\mathcal H_J(\Psi,z_0)}[\tau,\pi]=\Hom_{\widetilde W_J}[\tau|_{z_0\to 1}, \zeta^*(\pi)]=\Hom_{\widetilde W_J}[\tau|_{z_0\to 1},\operatorname{Ind}_{X\rtimes W(\sigma)}^{X\rtimes W}(\mathbb C_\sigma\otimes \widetilde\pi|_{W(\sigma)})]. 
\end{equation}
\end{cor}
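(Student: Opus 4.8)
The plan is to obtain Corollary \ref{c:restr-temp} as a formal consequence of two inputs already assembled in this section: the deformation functors $\widetilde\sigma_\epsilon$ together with their basic properties, and Solleveld's explicit description of $\zeta^* = \widetilde\sigma_0$ on tempered modules (Theorem \ref{t:sol-zeta}). First I would fix $J \subsetneq S^a$ and recall, as stated in the paragraph preceding the corollary, that the finite Hecke algebra $\mathcal H_J(\Psi, z_0^\epsilon)$ is semisimple with underlying vector space $\mathbb C[\widetilde W_J]$, and that the multiplication is analytic (in particular continuous) in $\epsilon$. By Tits' deformation theorem the algebras $\mathcal H_J(\Psi, z_0^\epsilon)$ are all abstractly isomorphic, and there is a canonical bijection $\tau \mapsto \tau|_{z_0 \to 1}$ between their simple modules and the simple $\mathbb C[\widetilde W_J]$-modules, compatible with specialization. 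The content to prove is that for a finite-dimensional $\mathcal H(\Psi, z_0)$-module $\pi$ one has $\Hom_{\mathcal H_J(\Psi,z_0)}[\tau, \pi] = \Hom_{\widetilde W_J}[\tau|_{z_0\to 1}, \widetilde\sigma_0(\pi)]$.

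The key step is a semicontinuity-plus-rigidity argument for multiplicities. For $\epsilon \in [-1,1]$ consider $\widetilde\sigma_\epsilon(\pi) = (\pi_\epsilon, V)$; by construction $\pi_\epsilon(T_w)$ depends analytically on $\epsilon$ for every $w \in \widetilde W$, hence so does the restriction of $\pi_\epsilon$ to the finite-dimensional subalgebra spanned by $\{T_w : w \in \widetilde W_J\}$, which for $\epsilon \ne 0$ is (an algebra isomorphic to) $\mathcal H_J(\Psi, z_0^\epsilon)$ and for $\epsilon = 1$ is $\mathcal H_J(\Psi, z_0)$. Since all these algebras are semisimple of the same dimension, the multiplicity of a given simple constituent $\tau$ (tracked through the Tits isomorphism) in $\pi_\epsilon|_{\mathcal H_J}$ is an integer-valued function of $\epsilon$ that varies continuously, hence is constant on $[-1,1]$. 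Evaluating at $\epsilon = 1$ gives $\Hom_{\mathcal H_J(\Psi,z_0)}[\tau, \pi]$; evaluating at $\epsilon = 0$ gives $\Hom_{\mathbb C[\widetilde W_J]}[\tau|_{z_0 \to 1}, \widetilde\sigma_0(\pi)]$, since at $\epsilon = 0$ the generic finite Hecke algebra degenerates to $\mathbb C[\widetilde W_J]$ and $\widetilde\sigma_0(\pi) = (\pi_0, V)$ by definition. This yields the first displayed equality.

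For the ``in particular'' clause I would simply specialize $\pi$ to a simple tempered module of the form $\pi = \widetilde\pi \circ r_\sigma$ as in Theorem \ref{t:sol-zeta}, so that $\widetilde\sigma_0(\pi) = \zeta^*(\pi) = \operatorname{Ind}_{X \rtimes W(\sigma)}^{X \rtimes W}(\mathbb C_\sigma \otimes \widetilde\pi|_{W(\sigma)})$ by that theorem; since $\widetilde W_J \subset X \rtimes W$, restricting this induced module to $\widetilde W_J$ and applying the first equality produces the second displayed formula. The main obstacle, and the point requiring care rather than invocation, is the rigidity of multiplicities: one must be sure that the ``analytic family of algebra homomorphisms into a fixed semisimple algebra of fixed dimension'' really forces locally constant, hence globally constant, decomposition multiplicities on the closed interval $[-1,1]$ — in particular that nothing pathological happens at the single degenerate point $\epsilon = 0$ where $\rho_\epsilon$ fails to be an isomorphism. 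This is exactly the setting handled by Tits' deformation theorem applied fiberwise together with the analyticity in $\epsilon$ recorded after \eqref{e:OS-deform}, so the argument goes through; I would make the continuity-of-integer-valued-function step explicit and cite \cite[Corollary 4.2.2]{solleveld-2012} and \cite[Corollary 4.4.3]{solleveld-2012} for the behavior of $\widetilde\sigma_\epsilon$ and $\zeta^*$.
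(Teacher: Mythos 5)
Your argument is exactly the one the paper uses: the preceding paragraph in the paper derives the corollary from Tits' deformation theorem for the family $\mathcal H_J(\Psi,z_0^\epsilon)$ together with the analyticity of $\epsilon\mapsto\pi_\epsilon(T_w)$, forcing the integer-valued multiplicities to be constant on $[-1,1]$, and then specializes via Theorem \ref{t:sol-zeta} for the tempered case. Your proposal is correct and matches the paper's approach, including the evaluation at $\epsilon=1$ and $\epsilon=0$.
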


\subsection{Parabolic subalgebras} For $P\subseteq \Pi$, define
\[R_P=\Pi\cap \mathbb Q P,\quad R_P^\vee=R^\vee\cap \mathbb Q P^\vee,\quad X_P=X/X\cap (P^\vee)^\perp,\quad X_P^\vee=X^\vee\cap \mathbb QP^\vee.
\]
Let $\Psi^P=(X,X^\vee,R_P,R_P^\vee)$ and $\Psi_P=(X_P,X_P^\vee,R_P,R_P^\vee)$ be the corresponding root data. The parameters $\lambda,\lambda^*$ of $\Psi$ give rise by restriction to certain parameters for $\Psi^P$, $\Psi_P$, see \cite[\S1.4, (1.3)]{solleveld-2012}. Denote the resulting affine Hecke algebras by $\mathcal H_P=\mathcal H(\Psi_P,z_0)$ and $\mathcal H^P=\mathcal H(\Psi^P,z_0)$. The algebra $\mathcal H^P$ is a subalgebra of $\mathcal H(\Psi,z_0)$ and there is a natural surjective algebra homomorphism 
\[\mathcal H^P\to \mathcal H_P,\quad \theta_x\mapsto \theta_{x_P},\ T_w\mapsto T_w,\ x\in X,\ w\in W,
\]
arising from the natural projection $X\to X_P$, $x\mapsto x_P$.

For every $t\in T^P:=\Hom[X^P,\mathbb C^\times]$, we may define an algebra automorphism
\[\chi_t: \mathcal H^P\to\mathcal H^P,\quad \chi_t(\theta_x)=t(x)\theta_x,\ \chi_t(T_w)=T_w.
\]
Every finite-dimensional $\mathcal H_P$-module $\pi_P$ can be regarded as an $\mathcal H^P$-module by pullback and we may define the parabolically induced representations
\begin{equation}
    I(P,\pi_P,t)=\operatorname{Ind}_{\mathcal H^P}^{\mathcal H}(\pi_P\circ\chi_t),\quad t\in T^P.
\end{equation}
Since the action varies continuously in $t$, the restriction of $I(P,\pi_P,t)$ to any finite Hecke algebra $\mathcal H_J(\Psi,z_0)$ is independent of $t$. It is clear from the definitions that if $\pi_P$ is a simple tempered module for $\mathcal H_P$, then $I(P,\pi_P,1)=\operatorname{Ind}_{\mathcal H^P}^{\mathcal H}(\pi_P)$ is a (not necessarily simple) tempered module for $\mathcal H$.  Hence Corollary \ref{c:restr-temp} implies at once:

\begin{prop}\label{p:restr-standard}
Let $\pi_P$ be a simple tempered $\mathcal H_P$-module, $t\in T^P$, and $J\subsetneq S^a$.  Then for every simple $\mathcal H_J(\Psi,z_0)$-module $\tau$,
\begin{equation}
   \Hom_{\mathcal H_J(\Psi,z_0)}[\tau,I(P,\pi_P,t)]=\Hom_{\widetilde W_J}[\tau|_{z_0\to 1}, \zeta^*(I(P,\pi_P,1))],
\end{equation}
which can be computed using Corollary \ref{c:restr-temp}.
\end{prop}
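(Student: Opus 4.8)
The plan is to reduce Proposition \ref{p:restr-standard} to Corollary \ref{c:restr-temp} by exhibiting $I(P,\pi_P,t)$ as (a continuous deformation of) a tempered module to which that corollary applies. First I would observe that the restriction of $I(P,\pi_P,t)$ to a finite Hecke subalgebra $\mathcal H_J(\Psi,z_0)$ does not depend on $t$: indeed, $\mathcal H_J(\Psi,z_0)$ is spanned by the $T_w$ with $w\in\widetilde W_J$, the automorphism $\chi_t$ fixes all $T_w$, and the induction functor $\operatorname{Ind}_{\mathcal H^P}^{\mathcal H}$ is realized on the fixed underlying vector space $\mathcal H\otimes_{\mathcal H^P}(\cdot)$ with the $\mathcal H$-action varying analytically (in fact polynomially) in $t$; hence the $\mathcal H_J$-module structure is rigid as $t$ varies and we may take $t=1$. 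This already reduces the identity to the case $t=1$, i.e. to computing $\Hom_{\mathcal H_J(\Psi,z_0)}[\tau, I(P,\pi_P,1)]$.

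Next I would use the remark, recorded just before the proposition, that if $\pi_P$ is simple tempered for $\mathcal H_P$ then $I(P,\pi_P,1)=\operatorname{Ind}_{\mathcal H^P}^{\mathcal H}(\pi_P)$ is a (not necessarily simple) tempered $\mathcal H(\Psi,z_0)$-module; this is standard from the fact that induction from a parabolic subalgebra preserves temperedness. Now apply Corollary \ref{c:restr-temp} with $\pi = I(P,\pi_P,1)$: the corollary was proved for an arbitrary finite-dimensional $\mathcal H(\Psi,z_0)$-module $\pi$ in its first display, so it gives directly
\[
\Hom_{\mathcal H_J(\Psi,z_0)}[\tau, I(P,\pi_P,1)] = \Hom_{\widetilde W_J}[\tau|_{z_0\to 1}, \widetilde\sigma_0(I(P,\pi_P,1))].
\]
It remains to identify $\widetilde\sigma_0(I(P,\pi_P,1))$ with $\zeta^*(I(P,\pi_P,1))$; but $\zeta^*$ was defined to be precisely the restriction of $\widetilde\sigma_0$ to tempered modules, and we have just noted $I(P,\pi_P,1)$ is tempered, so $\widetilde\sigma_0(I(P,\pi_P,1)) = \zeta^*(I(P,\pi_P,1))$. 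Combining the two displays with the $t$-independence from the first step yields the claimed equality, and the parenthetical `which can be computed using Corollary \ref{c:restr-temp}' is then immediate — once one decomposes $I(P,\pi_P,1)$ into simple tempered constituents, each of which is of the form $\widetilde\pi\circ r_\sigma$ as in Theorem \ref{t:sol-zeta}, one applies the explicit induction formula $\zeta^*(\widetilde\pi\circ r_\sigma)=\operatorname{Ind}_{X\rtimes W(\sigma)}^{X\rtimes W}(\mathbb C_\sigma\otimes\widetilde\pi|_{W(\sigma)})$ termwise.

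The only genuine point requiring care — and the step I expect to be the main (minor) obstacle — is the $t$-independence of the $\mathcal H_J$-restriction: one must be slightly careful that "the action varies continuously in $t$" is enough to conclude the restriction is \emph{literally} independent of $t$ rather than merely having constant composition multiplicities. This is fine because $\mathcal H_J(\Psi,z_0)$ is a \emph{fixed} finite-dimensional semisimple algebra (independent of $t$), its simple modules are rigid under deformation by Tits' deformation theorem, and a family of finite-dimensional modules over a fixed semisimple algebra that varies continuously in a connected parameter has constant isomorphism type; so one gets genuine equality of $\Hom$-spaces, not just of dimensions. Everything else is a matter of citing Corollary \ref{c:restr-temp} and unwinding the definition of $\zeta^*$.
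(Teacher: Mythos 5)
Your proposal is correct and follows essentially the same route as the paper: the paper likewise observes that the restriction of $I(P,\pi_P,t)$ to $\mathcal H_J(\Psi,z_0)$ is independent of $t$ by continuity (and Tits' deformation), notes that $I(P,\pi_P,1)=\operatorname{Ind}_{\mathcal H^P}^{\mathcal H}(\pi_P)$ is tempered, and then applies Corollary~\ref{c:restr-temp} together with the definition $\zeta^*=\widetilde\sigma_0|_{\text{tempered}}$. The only slight imprecision is your opening remark that $\chi_t$ fixes the $T_w$; this alone does not give $t$-independence of the $\mathcal H_J$-restriction (the structure constants of the induced module still involve $\chi_t$ applied to elements of $\mathcal A$), but your later continuity/rigidity argument is the correct one and is what the paper actually uses.
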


\subsection{Geometric graded Hecke algebras}\label{sec:graded} 
We analyze the graded Hecke algebra $\bH=\bH_{\mu_\sigma}(\Psi_\sigma)$ defined by
the relations in (\ref{e:basic-graded}) for the case $\sigma=1$.  
We will consider only the Hecke algebras 
arising by the constructions of \cite{lusztig-cuspidal-1,lusztig-cuspidal-2}. 
 Let $G$ be a complex connected
reductive group, with a fixed Borel subgroup $B$,
and maximal torus $A\subset B$. The Lie algebras will be denoted by
the corresponding Gothic letters.

\begin{definition}
A cuspidal triple for $G$ is a triple $(L,\OO,\mathcal E),$ where $L$ is a
Levi subgroup of $G,$ $\OO$ is a nilpotent adjoint $L$-orbit on the Lie
algebra $\fl$, and $\mathcal E$ is an irreducible $G$-equivariant local
system on $\OO$ which is cuspidal in the sense of \cite[Definition 2.4]{lusztig-intersection}. 
\end{definition}
Let $\fL(G)$ denote the set of $G$-conjugacy
classes of cuspidal triples for $G$. 
Let us fix $(L,\OO,\mathcal E)\in \fL(G)$, such that $A\subset L,$
and $P=LU\supset B$ is a parabolic subgroup.  Let
$T$ denote the identity component of the center of $L$. Set
$W=N_G(L)/L$, a (finite) Coxeter group.

Let $\bH=\bH(L,\OO,\mathcal E)$ denote the graded affine Hecke algebra attached to $(L,\OO,\mathcal E)$ in \cite[section 2]{lusztig-cuspidal-1}. The
  explicit classification of cuspidal triples when $G$ is simple,
  along with the corresponding values for the parameters of the graded Hecke algebra can
  be found in the tables of \cite[\S2.13]{lusztig-cuspidal-1}.


Consider  the algebraic varieties
\begin{align}
\widetilde\fg=\{(x,g)\in \fg\times G:  \Ad(g^{-1})x\in \OO+\ft+\fu\},\quad\dot{\fg}=\{(x,gP)\in \fg\times G/P: (x,g)\in \widetilde\fg\},
\end{align} 
on which $G\times \bC^\times$ acts via $(g_1,\lambda)$: $x\mapsto
\lambda^{-2}\text{Ad}(g_1)x,$ $x\in \fg,$ and $gP\mapsto g_1gP,$ $g\in
G.$ Let $\pr_P:\widetilde\fg\to\dot\fg$ denote the obvious $G\times\bC^\times$-equivariant map.

If $\CV$ is any $G\times \bC^\times$-stable subvariety of $\fg,$ denote by $\dot{\CV}$ the preimage under the first
projection $\pr_1:\dot\fg\to\fg$. Let $\mathcal N$ denote the variety of nilpotent elements in
$\fg.$ Define \[\CP_n=\pr_2(\dot{\{n\}})=\{gP:~ \Ad(g^{-1})n\in \OO+\fu\},\]
for any $n\in\mathcal N,$ where
$\pr_2:\dot\fg\to G/P$ is the second projection. Define also $\CP_n^s=\{gP\in \CP_n: \Ad(g^{-1})s\in \fp\},$
for any semisimple $s\in \fg$.

Consider the $G\times\bC^\times$-equivariant projection $\pr_{\OO}:\widetilde\fg\to \OO$, $\pr_{\OO}(x,g)=\pr_{\OO}(\Ad(g^{-1})x)$.  Let $\dot{\mathcal E}$ be the
$G\times\bC^\times$-equivariant local system on $\dot\fg$ defined by
the condition $\pr_{\OO}^*(\mathcal E)=\pr_P^*(\dot{\mathcal E}).$ 

\medskip

The classification of simple modules
for $\bH$ is in \cite{lusztig-cuspidal-1,lusztig-cuspidal-2,lusztig-cuspidal-3}. 
 Fix a semisimple element $s\in \fg$ and $r_0\in
\bC^\times$, and let $\CT=\CT_{s,r_0}$ be the smallest torus in
$G\times\bC^\times$ whose Lie algebra contains $(s,r_0).$ Let
$\fg_{2r_0}$ be the set of $\CT-$fixed vectors in $\fg,$ namely
\begin{equation}
\fg_{2r_0}=\{x\in\fg:~ [s,x]=2r_0x\}.
\end{equation} 
Let $G(s)\times \bC^\times$ be the centralizer of $(s,r_0)$ in
$G\times \bC^\times.$ The group $G(s)$ is connected (in fact, a Levi subgroup of
$G$).
Denote
\[M(n)=C_{G\times\bC^*}(n)=\{(x,\lambda): \Ad(x)n=\lambda^2 n\},
\]
and let $M(n)^\circ$ be its identity component. 

If $Q$ is an algebraic group, $X$ a $Q$-variety, and $\CL$ a $Q$-equivariant local system on $X$, let $H_\bullet^Q(X,\CL)$ denote the equivariant homology groups as in \cite[\S1]{lusztig-cuspidal-1}. In particular, if $Q=\{1\}$, then
\begin{equation}\label{e:hom-equiv}
H_j(X,\CL):=H^{\{1\}}_j(X,\CL)=H^{2\dim X-j}_c(X,\CL^*)^*.
\end{equation}

The construction of standard modules using equivariant homology $H_\bullet^{M(n)^\circ}$ is in \S8 in {\it loc. cit.}, see also \cite[10.7, 10.12]{lusztig-cuspidal-2}). It is shown that for $n\in \fg_{2r_0}$, the space
\[Y_\bH(s,n)=\bC_{(s,r_0)}\otimes_{H^\bullet_{M(n)^\circ}}H_\bullet^{M(n)^\circ}(\CP_n,\dot{\mathcal E})\cong H_\bullet^{\{1\}}(\CP_n^s,\dot{\mathcal E})
\]
carries an action of $A(s,n)\times \bH$. In particular, the groups $H_\bullet(\CP_n,\dot{\mathcal E})$ carry an $A(n)\times W$-action which is compatible via the identification (\ref{e:hom-equiv}), and with the appropriate normalization, with the action defined in \cite{lusztig-intersection}.
 We normalize the $W$-action so that $H_0(\CP_n,\dot{\mathcal E})=\mathsf{triv}$ when $n$ is the {\it smallest} nilpotent orbit that occurs in this setting.

Let $\widehat {A(n)}_\mathcal E$ denote the irreducible representations that occur in the action of $A(n)$ on $H^{\{1\}}(\CP_n,\dot{\mathcal E})$. We have a natural injection $A(s,n)\to A(n)$ and let $\widehat {A(s,n)}_\mathcal E$ denote the irreducible $A(s,n)$-representations that occur in the restrictions of $\widehat {A(n)}_\mathcal E$. 
For every $\rho\in \widehat {A(s,n)}$, define the geometric {\it standard module}
\begin{equation}
Y_\bH(s,n,\rho)=\Hom_{A(s,n)}[\rho,Y_\bH(s,n)].
\end{equation}
By
\cite[Proposition 8.10]{lusztig-cuspidal-1}, $Y_\bH(s,n,\rho)\neq 0$ if and
only if  $\rho\in \widehat {A(s,n)}_\mathcal E.$ 
One can phrase the
classification as follows.

\begin{theorem}[{\cite[\S8.10, 8.14, 8.17]{lusztig-cuspidal-2},\cite[Theorem 1.15]{lusztig-cuspidal-3}}]\label{t:graded-classif}
The standard module $Y_\bH(s,n,\rho),$ $s\in \fg$ semisimple, $n\in\fg_{2r_0}$, $\rho\in \widehat {A(s,n)}_\mathcal E$ 
has a unique irreducible quotient $X_\bH(s,n,\rho).$ 
This induces a natural one-to-one  correspondence
\begin{equation}
  \begin{aligned}
&\textup{Irr}_{r_0}\bH(L,\OO,\mathcal E)\leftrightarrow\{(s,n,\rho): [s,e]=2r_0e,\ s\in \fg \text{  semisimple},~n\in \mathcal N,~ \widehat {A(s,n)}_\mathcal E\}/G.    
  \end{aligned}
\end{equation}
\end{theorem}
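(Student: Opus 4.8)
\medskip

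\noindent\emph{Proof proposal.} The assertion assembles theorems of Lusztig \cite{lusztig-cuspidal-1,lusztig-cuspidal-2,lusztig-cuspidal-3}, so the plan is to follow the geometric construction of standard modules by convolution in equivariant homology. The first step is the convolution action: as in \cite[\S 8]{lusztig-cuspidal-1}, one makes the equivariant homology $H_\bullet^{M(n)^\circ}(\CP_n,\dot{\mathcal E})$ into a module over a suitable completion of $\bH=\bH(L,\OO,\mathcal E)$ (with $r$ acting by $r_0$), using convolution along a Steinberg-type correspondence built from $\dot{\fg}$. Specializing at $(s,r_0)$ and using the isomorphism
\[
\bC_{(s,r_0)}\otimes_{H^\bullet_{M(n)^\circ}}H_\bullet^{M(n)^\circ}(\CP_n,\dot{\mathcal E})\;\cong\;H_\bullet^{\{1\}}(\CP_n^s,\dot{\mathcal E})=Y_\bH(s,n)
\]
produces the $\bH$-module $Y_\bH(s,n)$, on which $A(s,n)$ acts by monodromy commuting with $\bH$. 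Decomposing over $A(s,n)$ gives $Y_\bH(s,n)=\bigoplus_\rho\rho\otimes Y_\bH(s,n,\rho)$, defining the standard modules $Y_\bH(s,n,\rho)=\Hom_{A(s,n)}[\rho,Y_\bH(s,n)]$; and comparison with $\CP_n$, i.e.\ with the generalized Springer correspondence for $(L,\OO,\mathcal E)$, shows $Y_\bH(s,n,\rho)\neq 0\iff\rho\in\widehat{A(s,n)}_{\mathcal E}$, which is \cite[Proposition 8.10]{lusztig-cuspidal-1}.

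Second, I would establish the unique irreducible quotient. The idea is to exhibit $Y_\bH(s,n,\rho)$ as a parabolically induced module with a dominant real parameter: decomposing $s=s_e+\nu$ with $\nu$ dominant, the pair $(s_e,n,\rho)$ yields a \emph{tempered} module over a Levi subalgebra (temperedness being detected by the homological grading on the corresponding flag variety), and $Y_\bH(s,n,\rho)$ is its induction twisted by $\nu$. The Langlands classification for graded affine Hecke algebras then forces a unique irreducible quotient $X_\bH(s,n,\rho)$; equivalently, one argues geometrically using the weight (perverse) filtration on $H_\bullet^{\{1\}}(\CP_n^s,\dot{\mathcal E})$, whose top quotient carries the simple. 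This occupies \cite[\S 8.10--8.17]{lusztig-cuspidal-2}.

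Finally, I would check that $(s,n,\rho)\mapsto X_\bH(s,n,\rho)$ descends to a bijection onto $\textup{Irr}_{r_0}\bH$. It is constant on $G$-orbits by construction. For injectivity I would separate invariants in order: the central character recovers the $W$-orbit of the $\ft$-part of $(s,r_0)$; among data with that central character, the associated variety of $X_\bH(s,n,\rho)$ recovers the $G$-orbit of $n$; and the residual ambiguity is exactly the $A(s,n)$-action, recovering $\rho$. Surjectivity --- that every simple $\bH$-module on which $r$ acts by $r_0$ occurs this way --- is \textbf{the main obstacle}: one must show the classes $[Y_\bH(s,n,\rho)]$ span the Grothendieck group, which Lusztig proves in \cite[Theorem 1.15]{lusztig-cuspidal-3} via a reduction to the case $r_0=0$ (where the classification reduces to the semisimple theory of $\bC[W]$-type algebras) together with a dimension count matching simples of a fixed central character against the admissible pairs $(n,\rho)$. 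Securing the unique irreducible quotient outside the generic range is the next most delicate point, since it rests on the decomposition theorem for the maps underlying $\dot{\fg}$.
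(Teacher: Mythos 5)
This theorem is stated in the paper as a quoted result of Lusztig (see the citation in the theorem's attribution), not proved internally, so there is no in-paper proof to compare against. Your reconstruction follows the genuine outline of Lusztig's argument: the convolution module structure on equivariant homology, the specialization at $(s,r_0)$ producing $Y_\bH(s,n)$ with a commuting $A(s,n)$-action, the non-vanishing criterion $Y_\bH(s,n,\rho)\neq 0\iff\rho\in\widehat{A(s,n)}_{\mathcal E}$ from \cite[Proposition 8.10]{lusztig-cuspidal-1}, and the unique irreducible quotient extracted from the perverse/weight filtration coming from the decomposition theorem, which is indeed the content of \cite[\S 8.10--8.17]{lusztig-cuspidal-2}. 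The alternative route via a Langlands classification for graded Hecke algebras is also valid but needs independent input (Evens, or Solleveld's analytic theory).

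Two points merit caution. First, your injectivity argument --- ``the associated variety recovers the $G$-orbit of $n$'' --- is not Lusztig's and as phrased has a gap: associated variety is not a ready-made invariant of a graded affine Hecke algebra module, and making that step rigorous would require importing additional structure (e.g.\ a theory of lowest $W$-types or Dirac cohomology). In Lusztig's framework injectivity does not require a separate argument of this kind: the simples $X_\bH(s,n,\rho)$ are realized via simple equivariant perverse sheaves, i.e.\ IC extensions of distinct pairs (orbit, local system), which are pairwise non-isomorphic by construction; this distinctness survives the specialization and the passage to the top of the filtration. Second, your sketch of surjectivity (``reduction to $r_0=0$ plus a dimension count'') does not match \cite[Theorem 1.15]{lusztig-cuspidal-3}: Lusztig shows directly that the geometric standards span the Grothendieck group at each central character, and the degenerate case $r_0=0$ (where $\bH$ becomes $\mathbb{C}[W]\ltimes S(\ft^*)$) is handled separately rather than used as a stepping stone. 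The dimension-count idea is plausible as an alternative re-proof, but you would need to supply and match those counts explicitly.
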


 We are interested in the $W-$structure of
standard modules. 

\begin{lemma}[\cite{lusztig-cuspidal-1},10.13]\label{l:graded-W-action} There is an isomorphism of $W$-representations
\begin{equation}
  \begin{aligned}
Y_\bH{(s,n,\rho)}=&\operatorname{Hom}_{A(s,n)}[\rho,H_\bullet(\CP_n^s,\dot{\mathcal E})]=\operatorname{Hom}_{A(s,n)}[\rho,H_\bullet(\CP_n,\dot{\mathcal E})].    
  \end{aligned}
\end{equation}
\end{lemma}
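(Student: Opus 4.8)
The plan is to establish the claimed $W$-equivariant isomorphism by tracing through the construction of the standard module and exploiting the fact that the $W$-action, as defined by Lusztig, factors through homology groups on which the torus $\mathcal T = \mathcal T_{s,r_0}$ acts trivially. First I would recall the definition
\[
Y_\bH(s,n) = \bC_{(s,r_0)} \otimes_{H^\bullet_{M(n)^\circ}} H_\bullet^{M(n)^\circ}(\CP_n, \dot{\mathcal E}) \cong H_\bullet^{\{1\}}(\CP_n^s, \dot{\mathcal E}),
\]
the last isomorphism being the localization statement from \cite[\S8]{lusztig-cuspidal-1} (compare \cite[10.7, 10.12]{lusztig-cuspidal-2}). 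Taking $\rho$-isotypic components for the commuting $A(s,n)$-action gives
\[
Y_\bH(s,n,\rho) = \Hom_{A(s,n)}[\rho, Y_\bH(s,n)] = \Hom_{A(s,n)}[\rho, H_\bullet(\CP_n^s, \dot{\mathcal E})],
\]
which is the first equality in the lemma, essentially by definition once one checks that the $\rho$-isotypic functor and the identifications above are all $W$-equivariant; the $W$-action on $H_\bullet^{M(n)^\circ}(\CP_n,\dot{\mathcal E})$ descending compatibly to $H_\bullet(\CP_n^s,\dot{\mathcal E})$ is part of Lusztig's setup.

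The substance is therefore the second equality, namely that the inclusion $\CP_n^s \hookrightarrow \CP_n$ induces, after taking $\rho$-isotypic parts, an isomorphism of $W$-modules
\[
\Hom_{A(s,n)}[\rho, H_\bullet(\CP_n^s, \dot{\mathcal E})] \xrightarrow{\ \sim\ } \Hom_{A(s,n)}[\rho, H_\bullet(\CP_n, \dot{\mathcal E})].
\]
The strategy here is exactly Lusztig's argument in \cite[10.13]{lusztig-cuspidal-1}: one uses the $\bC^\times$-action (the cocharacter whose differential contributes the semisimple part relevant to $s$) and a contraction/fixed-point argument. Concretely, write $s = s_0 + \nu$ with $s_0$ elliptic and $\nu$ hyperbolic as in the polar decomposition; the variety $\CP_n$ carries an action of the one-parameter subgroup generated by $\nu$, its fixed-point set is $\CP_n^{s}$ (up to the standard identification), and the Bialynicki-Birula / Morse-theoretic contraction shows the restriction map on equivariant homology, followed by passage to the $\rho$-isotypic component under the centralizer group action, is an isomorphism. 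The point is that $\Hom_{A(s,n)}[\rho, -]$ picks out precisely the summand of $H_\bullet(\CP_n,\dot{\mathcal E})$ supported — via the spectral decomposition for the $\mathcal T$-action — on the generalized eigenvalue corresponding to $(s,r_0)$, and that summand coincides with the contribution of the fixed-point locus $\CP_n^s$. All maps in sight commute with $W = N_G(L)/L$ because $W$ acts through correspondences on $\dot\fg$ that commute with both the $G(s)$-action and the $\bC^\times$-action used in the contraction.

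The main obstacle I anticipate is bookkeeping the compatibility of three different actions simultaneously: the $W$-action (defined geometrically on $H_\bullet(\CP_n,\dot{\mathcal E})$, normalized so that $H_0 = \mathsf{triv}$ for the smallest orbit), the $A(s,n)$-action, and the $\bC^\times$- or $\nu$-action driving the localization. One must verify that the localization isomorphism $H_\bullet^{M(n)^\circ}(\CP_n,\dot{\mathcal E}) \otimes \bC_{(s,r_0)} \cong H_\bullet(\CP_n^s,\dot{\mathcal E})$ is $W$-equivariant — this is not formal, since the $W$-action is only defined after the identification \eqref{e:hom-equiv} with compactly-supported cohomology and requires matching normalizations with \cite{lusztig-intersection}. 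I would handle this by citing \cite[10.12, 10.13]{lusztig-cuspidal-1} and \cite[\S8]{lusztig-cuspidal-2} for the equivariance and restricting the new content to assembling these statements; if a gap appears it will be in checking that the $\rho$-isotypic projector is $W$-linear, which follows because the $A(s,n)$- and $W$-actions on $H_\bullet(\CP_n,\dot{\mathcal E})$ commute (they come from commuting families of correspondences), so I would isolate and record that commutation as the key lemma underlying the proof.
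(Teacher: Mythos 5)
This lemma is quoted in the paper directly from Lusztig (\cite{lusztig-cuspidal-1}, \cite[10.12--10.13]{lusztig-cuspidal-2}) with no proof of its own, and your sketch is essentially a reconstruction of Lusztig's argument: the first equality is definitional (apply $\Hom_{A(s,n)}[\rho,-]$ to $Y_\bH(s,n)\cong H_\bullet(\CP_n^s,\dot{\mathcal E})$), and the second follows because both $H_\bullet(\CP_n^s,\dot{\mathcal E})$ and $H_\bullet(\CP_n,\dot{\mathcal E})$ are identified, $A(s,n)\times W$-equivariantly, with the specialization of the free $H^\bullet_{M(n)^\circ}$-module $H_\bullet^{M(n)^\circ}(\CP_n,\dot{\mathcal E})$ at $(s,r_0)$. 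One sentence of yours is off, though not fatally: the projector $\Hom_{A(s,n)}[\rho,-]$ does \emph{not} ``pick out the summand supported on the eigenvalue $(s,r_0)$'' --- the isomorphism $H_\bullet(\CP_n^s,\dot{\mathcal E})\cong H_\bullet(\CP_n,\dot{\mathcal E})$ already holds as $A(s,n)\times W$-modules before any isotypic component is taken, and the $\rho$-isotypic functor is then applied uniformly to both sides; the localization/contraction is what produces that prior isomorphism, not the isotypic projection.
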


Denote
\begin{equation}\label{eq:zero-hom}
\mu(n,\tilde\rho):=\operatorname{Hom}_{A(n)}[\tilde\rho, H_0(\CP_n,\dot{\mathcal E})],\quad \tilde\rho\in\widehat{A(n)}_{\mathcal E},
\end{equation}
an irreducible $W$-representation. The
correspondence $\widehat {A(n)}_\mathcal E\to \widehat W,$ 
$(n,\tilde\rho)\to \mu(n,\tilde\rho)$ is a bijection. 

The results from \cite[\S6.5]{lusztig-intersection},\cite[\S24.4]{lusztig-sheaves-V}, see also the summary in \cite[Theorem 3.4.1]{barbaschciubo-ajm}, imply the following $W$-character formula:
\begin{equation}
  \operatorname{Hom}_{A(n)}[\tilde\rho, H_\bullet(\CP_n,\dot{\mathcal E})]=\mu(n,\tilde\rho)+\sum_{(n',\tilde\rho')} m_{(n,\tilde\rho),(n',\tilde\rho')} \mu(n',\tilde\rho'),
\end{equation}
where $m_{(n,\tilde\rho),(n',\tilde\rho')}\in \mathbb Z_{\ge 0}$, $\tilde\rho'\in \widehat{A(n')}_{\mathcal E}$, and $n'$ ranges over a set of representatives of $G$-orbits in $\mathcal N$ such that $n\in\overline{G\cdot n'}\setminus {G\cdot n'}$. Combining this with Lemma \ref{l:graded-W-action}, we arrive at a character formula for the standard modules.

\begin{prop}\label{p:stand-graded-struct}
  Let $(s,n,\rho)$ be as in Theorem \ref{t:graded-classif}.  As $W$-representations,
    \[
    Y_\bH(s,n,\rho)=\sum_{\tilde\rho\in \widehat{A(n)}_{\mathcal E}} m(\rho,\tilde\rho) \mu(n,\tilde\rho) + \sum_{(n',\tilde\rho')} m_{(n,\tilde\rho),(n',\tilde\rho')} \mu(n',\tilde\rho'),
    \]
    where $m(\rho,\tilde\rho)=\dim \Hom_{A(s,n)}[\rho,\tilde\rho|_{A(s,n)}]$, $m_{(n,\tilde\rho),(n',\tilde\rho')}\in \mathbb Z_{\ge 0}$, $\tilde\rho'\in \widehat{A(n')}_{\mathcal E}$, and $n'$ ranges over a set of representatives of $G$-orbits in $\mathcal N$ such that $n\in\overline{G\cdot n'}\setminus {G\cdot n'}$. 
\end{prop}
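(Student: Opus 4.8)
The plan is to combine the two structural ingredients already assembled in this subsection: Lemma~\ref{l:graded-W-action}, which identifies $Y_\bH(s,n,\rho)$ as a $W$-representation with $\operatorname{Hom}_{A(s,n)}[\rho, H_\bullet(\CP_n,\dot{\mathcal E})]$, and the $W$-character formula for $H_\bullet(\CP_n,\dot{\mathcal E})$ coming from \cite[\S6.5]{lusztig-intersection} and \cite[\S24.4]{lusztig-sheaves-V}. First I would take the stated decomposition of $H_\bullet(\CP_n,\dot{\mathcal E})$ as an $A(n)\times W$-module: for each $\tilde\rho\in\widehat{A(n)}_{\mathcal E}$ the $\tilde\rho$-isotypic part contributes $\mu(n,\tilde\rho)$ plus a non-negative combination of the $\mu(n',\tilde\rho')$ with $G\cdot n'$ strictly larger than $G\cdot n$ in the closure order. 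The point is that this is an identity of $A(n)\times W$-modules, so we may apply the exact functor $\operatorname{Hom}_{A(s,n)}[\rho, (-)|_{A(s,n)}]$ to it.

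The key step is then to bookkeep how restriction to the subgroup $A(s,n)\hookrightarrow A(n)$ interacts with the formula. Applying $\operatorname{Hom}_{A(s,n)}[\rho,-]$ to $H_\bullet(\CP_n,\dot{\mathcal E})$ and using Lemma~\ref{l:graded-W-action} gives, as $W$-representations,
\begin{equation*}
Y_\bH(s,n,\rho)=\sum_{\tilde\rho\in\widehat{A(n)}_{\mathcal E}} \dim\operatorname{Hom}_{A(s,n)}[\rho,\tilde\rho|_{A(s,n)}]\cdot\Big(\mu(n,\tilde\rho)+\sum_{(n',\tilde\rho')}m_{(n,\tilde\rho),(n',\tilde\rho')}\mu(n',\tilde\rho')\Big).
\end{equation*}
Setting $m(\rho,\tilde\rho)=\dim\operatorname{Hom}_{A(s,n)}[\rho,\tilde\rho|_{A(s,n)}]$ and absorbing the iterated sum over $(n',\tilde\rho')$ into a single sum with re-labelled non-negative integer coefficients yields exactly the asserted formula, with $n'$ still ranging only over orbits with $n\in\overline{G\cdot n'}\setminus G\cdot n'$ since that constraint is preserved under taking the union over $\tilde\rho$.

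The only genuinely delicate points are compatibility of normalizations and the bijection $\mu$. One must check that the $W$-action appearing in the cited character formula for $H_\bullet(\CP_n,\dot{\mathcal E})$ is the same (suitably normalized) $W$-action used in Lemma~\ref{l:graded-W-action} and in the definition of $\mu(n,\tilde\rho)$ in \eqref{eq:zero-hom}; this is precisely the content of the normalization remark made just before $\widehat{A(n)}_{\mathcal E}$ is introduced, together with the cited comparison \cite[Theorem 3.4.1]{barbaschciubo-ajm}, so it can be invoked rather than re-proved. One should also note that the leading term of $\operatorname{Hom}_{A(n)}[\tilde\rho,H_\bullet(\CP_n,\dot{\mathcal E})]$ is genuinely $\mu(n,\tilde\rho)$ with multiplicity one (the $j=0$ graded piece contributes $H_0(\CP_n,\dot{\mathcal E})$, whose $\tilde\rho$-part is $\mu(n,\tilde\rho)$ by \eqref{eq:zero-hom}), which is what lets us separate the $\mu(n,\tilde\rho)$ terms from the strictly-higher ones. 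The main obstacle, such as it is, is therefore not a computation but careful cross-referencing of the $A(n)\times W$-module structures across the three sources; once that is in place the proof is a one-line application of an exact functor to an identity of modules.
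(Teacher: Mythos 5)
Your proposal is correct and follows exactly the route the paper takes: the paper derives the proposition by combining Lemma~\ref{l:graded-W-action} with the displayed $W$-character formula for $\operatorname{Hom}_{A(n)}[\tilde\rho, H_\bullet(\CP_n,\dot{\mathcal E})]$, i.e.\ by applying $\operatorname{Hom}_{A(s,n)}[\rho,-]$ to the $A(n)\times W$-decomposition of $H_\bullet(\CP_n,\dot{\mathcal E})$ and reading off the branching multiplicities $m(\rho,\tilde\rho)$. Your additional remarks on normalization and on absorbing the iterated lower-order sum are consistent with the paper's (essentially one-line) argument.
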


Finally, we record the classification of tempered modules with real central character, see \cite[\S3.5]{barbaschciubo-ajm} for a summary of Lusztig's results \cite{lusztig-cuspidal-3}. If $n\in \mathcal N_G$, let $\{\bar n,h,n\}$ be a Lie triple for $n$. Then $A(h,n)=A(n)$, so we may identify $\widehat{A(n)}_\mathcal E=\widehat{A(h,n)}_\mathcal E.$

\begin{theorem}\label{t:graded-tempered} The map
\begin{equation}
\{(n,\rho): e\in\mathcal N,\rho\in \widehat {A(n)}_\mathcal E\}/G\longleftrightarrow X_\bH{(r_0h,n,\rho)}=Y_\bH(r_0h,n,\rho),
\end{equation}
 is a one-to-one correspondence onto the set of isomorphism classes of 
  tempered irreducible $\bH(L,\OO,\mathcal E)$- modules with real central
  character, on which $r$ acts by $r_0$.
\end{theorem}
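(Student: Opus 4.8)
\textbf{Proof proposal for Theorem \ref{t:graded-tempered}.}

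The plan is to deduce the statement from the general classification of simple modules in Theorem \ref{t:graded-classif} together with Lusztig's description of tempered modules in \cite{lusztig-cuspidal-3}. Recall that Theorem \ref{t:graded-classif} sets up a bijection between $\textup{Irr}_{r_0}\bH(L,\OO,\mathcal E)$ and $G$-conjugacy classes of triples $(s,n,\rho)$ with $[s,n]=2r_0 n$, $s$ semisimple, and $\rho\in\widehat{A(s,n)}_{\mathcal E}$, sending such a triple to the unique irreducible quotient $X_\bH(s,n,\rho)$ of the standard module $Y_\bH(s,n,\rho)$. So it suffices to identify, among all these triples, exactly those for which $X_\bH(s,n,\rho)$ is tempered with real central character, and to show that for such triples the standard module is already irreducible.

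First I would pin down the central character. The central character of $X_\bH(s,n,\rho)$ is the $W(s)$-orbit of $(r_0,s)\in\bC\oplus\ft$ (under the parametrization of central characters of $\bH$ by $W(\sigma)$-orbits, here $\sigma=1$, following Proposition \ref{p:2.3}). It is real precisely when $s\in\ft_\bR$. Next, among triples with $s\in\ft_\bR$ and $[s,n]=2r_0 n$, temperedness of $X_\bH(s,n,\rho)$ should force $s$ to be, up to conjugacy, the neutral element $r_0 h$ of a Lie triple $\{\bar n,h,n\}$ for $n$: this is exactly the content of Lusztig's analysis of the tempered spectrum of graded Hecke algebras in \cite{lusztig-cuspidal-3}, which characterizes tempered modules with real central character via the condition that $s$ be (conjugate to) a multiple of the neutral element of an $\mathfrak{sl}_2$-triple through $n$. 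Conversely, for $s=r_0 h$, one has the key fact (also from \cite{lusztig-cuspidal-3}, via the analysis of the geometry of $\CP_n^s$ when $s$ is the semisimple part of a Jacobson--Morozov triple) that the standard module $Y_\bH(r_0 h,n,\rho)$ is tempered and irreducible, i.e. $Y_\bH(r_0h,n,\rho)=X_\bH(r_0h,n,\rho)$; this yields the equality asserted in the displayed formula. Finally, since $A(h,n)=A(n)$ (the reductive centralizer of an $\mathfrak{sl}_2$-triple is a maximal reductive subgroup of the centralizer of $n$, so the component groups agree), the parameter set $\{(n,\rho):\rho\in\widehat{A(n)}_{\mathcal E}\}/G$ is identified with the set of tempered triples, giving the bijection.

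The main obstacle, and the step I would expect to require the most care, is the converse direction: showing that \emph{every} tempered irreducible module with real central character arises as $Y_\bH(r_0 h,n,\rho)$ for a genuine Lie triple, i.e. that $s$ really must be the neutral element of an $\mathfrak{sl}_2$-triple and not merely some real semisimple element with $[s,n]=2r_0 n$. This is where one must invoke the detailed structure of $Y_\bH(s,n,\rho)\cong H_\bullet^{\{1\}}(\CP_n^s,\dot{\mathcal E})$ as a $W$-representation (Lemma \ref{l:graded-W-action}) and estimate the weights of $\mathcal A$ (equivalently, the $\ft$-weights) appearing in it, checking the inequality $\langle\omega,\nu\rangle\le 0$ of Definition (temperedness) — this ultimately reduces to the Casselman-type criterion and the positivity properties of the equivariant homology, and is precisely the hard geometric input of \cite[\S1.15 and surrounding]{lusztig-cuspidal-3}. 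Since we are entitled to cite Lusztig's results, in the write-up this amounts to carefully quoting \cite{lusztig-cuspidal-3} with the normalizations fixed above (in particular the normalization of the $W$-action on $H_0$ placed on the smallest relevant nilpotent orbit), and then assembling the bijection from Theorem \ref{t:graded-classif}.
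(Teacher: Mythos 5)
Your proposal is correct and matches the paper's treatment: the paper states this theorem purely as a recollection of Lusztig's classification (citing \cite{lusztig-cuspidal-3} and the summary in \cite[\S3.5]{barbaschciubo-ajm}) and offers no independent proof, and your outline correctly locates all the substance — irreducibility and temperedness of $Y_\bH(r_0h,n,\rho)$, and the converse that temperedness forces $s$ to be the neutral element of a Lie triple — in exactly those references, together with the elementary observation $A(h,n)=A(n)$ already noted in the paper.
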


\section{Lusztig's arithmetic-geometric correspondence}\label{sec:arithmeticgeometric}
We assume in this section that $\bfG$ is adjoint simple.

\subsection{Representations with unipotent cuspidal support}\label{subsec:LLC} We briefly recall the classification of irreducible representations with unipotent cuspidal support defined in \cite[\S1.6,\S1.21]{Lu-unip1}. With the notation as in section \ref{sec:btbuilding}, let $(\pi,V)$ be a $\mathbf G^\omega(\mathsf k)$-representation. We say $V$ has \emph{unipotent cuspidal support} if there exists a parahoric subgroup $\mathbf P_c(\mathfrak o)\subset \mathbf G^\omega(\mathsf k)$ and an irreducible unipotent cuspidal representation $(\sigma_{\mathbf E},\mathbf E)$ of $\mathbf L_c(\mathbb F_q)$ such that the $\mathbf E$-isotypic component of $V|_{P_c(\mathfrak o)}$ generates $V$. (By abuse of notation, we denote by $\mathbf E$ also the pullback of $\mathbf E$ to $\mathbf P_c(\mathfrak o)$). Let $\mathcal C(\mathbf G^\omega(\mathsf k),\bf E)$ denote the corresponding full subcategory of smooth $\mathbf G^\omega(\mathsf k)$-representations. The assignment
\[\mathsf{m}_{\mathbf E}:V\mapsto \Hom_{\mathbf P_c(\mathfrak o)}(\mathbf E,V)
\]
defines an equivalence of categories between $\mathcal C(\mathbf G^\omega(\mathsf k),\mathbf E)$ and the category of modules for the convolution algebra \[\mathcal H(\mathbf G^\omega(\mathsf k),\mathbf E)=\{f:\mathbf G^\omega(\mathsf k)\to \End_{\mathbb C}(\mathbf E)\mid f(p_1 g p_2)=\sigma_{\mathbf E}(p_1)\circ f(g)\circ \sigma_{\mathbf E}(p_2),\ g\in \mathbf G^\omega(\mathsf k), p_1,p_2\in \mathbf P_c(\mathfrak o)\}.\]

Let $G^\vee=\bfG^\vee(\CC)$ and let $T^\vee=X^*(\mathbf{T},\bark)\otimes_\ZZ \CC^\times$. The polar decomposition of $T^\vee$ is $T^\vee=T^\vee_c T^\vee_r$, where $T^\vee_c=X^*(\mathbf{T},\bark)\otimes_\ZZ S^1$ and $T^\vee_r=X^*(\mathbf{T},\bark)\otimes_\ZZ \RR_{>0}$. 

Write $\Phi(G^\vee)$ for the set of $G^\vee$-orbits (under conjugation) of triples $(s,n,\rho)$ where
\begin{itemize}
    \item $s\in G^\vee$ is semisimple,
    \item $n\in \mathfrak g^\vee$ such that $\operatorname{Ad}(s) n=q n$,
    \item $\rho\in \mathrm{Irr}(A_{G^{\vee}}(s,n))$.
\end{itemize}
Without loss of generality, we may assume that $s\in T^\vee$. Note that $n\in\mathfrak g^\vee$ is necessarily nilpotent. The group $G^\vee(s)$ acts with finitely many orbits on the $q$-eigenspace of $\Ad(s)$
$$\mathfrak g_q^\vee=\{x\in\mathfrak g^\vee\mid \operatorname{Ad}(s) x=qx\}.$$
In particular, there is a unique open $G^\vee(s)$-orbit in $\mathfrak g_q^\vee$.

Fix an $\mathfrak{sl}(2)$-triple $\{n^-,h^\vee,n\} \subset \fg^{\vee}$ with $h^\vee\in \mathfrak t^\vee_{\mathbb R}$ and set
$$s_0:=sq^{-\frac{h^\vee}{2}}.$$
Then $\operatorname{Ad}(s_0)n=n$.

A parameter $(s,n,\rho)\in \Phi(G^\vee)$ is called \emph{discrete} if ${G^\vee}(s,n)$ does not contain a nontrivial torus.
A discrete parameter is called \emph{cuspidal} if $(n,\rho)$ is a cuspidal pair in ${G^\vee}(s)$ in the sense of section \ref{sec:graded}. 
Let $\Pi^{\mathsf{Lus}}(\bfG^\omega(\mathsf k))$ denote the set of equivalence classes of irreducible $\bfG^\omega(\mathsf k)$-representations with unipotent cuspidal support and let
\[\Pi^{\mathsf{Lus}}(\bfG)=\bigsqcup_{\omega\in\Omega_{\mathsf{ad}}} \Pi^{\mathsf{Lus}}(\bfG^\omega(\mathsf k)).
\]
(In this subsection, $\bfG^1(\mathsf k)$ is the split form.)
The following theorem is a combination of several results, namely \cite[Theorems 7.12, 8.2, 8.3]{KL} for Iwahori-spherical representations, and \cite[Corollary 6.5]{Lu-unip1} and \cite[Theorem 10.5]{Lu-unip2} for representations with unipotent cuspidal support. 
See \cite[\S2.3]{AMSol} for a discussion of the compatibility between these  classifications. 

\begin{theorem}[{Deligne-Langlands-Lusztig correspondence}]\label{thm:Langlands} There is a bijection
$$\Phi(G^\vee)\xrightarrow{\sim} \Pi^{\mathsf{Lus}}(\bfG),
\qquad (s,n,\rho)\mapsto X(s,n,\rho),$$
with the following properties:
\begin{enumerate}
    \item $X(s,n,\rho)$ is tempered if and only if $s_0\in T_c^\vee$ (in particular, $\overline {G^\vee(s)n}=\mathfrak g_q^\vee$),
    \item $X(s,n,\rho)$ is square integrable (modulo center) if and only if it is tempered and $(s,n,\rho)$ is discrete.
    \item $X(s,n,\rho)$ is supercuspidal if and only if $(s,n,\rho)$ is a cuspidal parameter.
\item $X(s,n,\rho)\in \Pi^{\mathsf{Lus}}(\bfG^\omega(\mathsf k))$ if and only if $\rho|_{Z(G^\vee)}$ is a multiple of $\zeta_\omega$. 
\end{enumerate}
\end{theorem}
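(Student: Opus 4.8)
# Proof proposal for the Deligne--Langlands--Lusztig correspondence

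The statement is essentially a synthesis of established results, so the ``proof'' I would write is really an \emph{assembly} argument: one has to explain how the pieces fit together and how the compatibilities are arranged, rather than to prove something from scratch. The plan is to reduce the classification of $\Pi^{\mathsf{Lus}}(\bfG^\omega(\sfk))$ to module theory for the Hecke algebras $\CH(\bfG^\omega(\sfk),\mathbf E)$ via the equivalence $\mathsf m_{\mathbf E}$ recalled at the start of Section~\ref{subsec:LLC}, then to identify each such algebra with a (possibly extended) affine Hecke algebra $\CH(\Psi,z_0)$ as in Section~\ref{sec:2} — this is exactly the content of \cite{Lu-unip1,Lu-unip2} (and \cite{KL} in the Iwahori case), together with \cite[\S2.3]{AMSol} for the identifications of parameters. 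Having done this, Lusztig's reduction theorems (Theorem~\ref{t:graded-reduction}) pass from affine Hecke module theory to graded affine Hecke module theory, and the geometric classification of Section~\ref{sec:graded} (Theorem~\ref{t:graded-classif}) produces the triples $(s,n,\rho)$. So the skeleton is: $\Pi^{\mathsf{Lus}}(\bfG^\omega(\sfk)) \leftrightarrow \bigsqcup_{\mathbf E}\mathrm{Irr}\,\CH(\bfG^\omega(\sfk),\mathbf E) \leftrightarrow \bigsqcup \mathrm{Irr}_{z_0}\CH(\Psi,z_0) \leftrightarrow \bigsqcup \mathrm{Irr}_{r_0}\bH(L,\OO,\mathcal E) \leftrightarrow \{(s,n,\rho)\}$, and one checks that the composite only depends on the dual-group data and recovers the three bulleted conditions (with $s \in T^\vee$, $\mathrm{Ad}(s)n = qn$, $\rho \in \mathrm{Irr}\,A_{G^\vee}(s,n)$).

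The four enumerated properties are then checked one at a time, each by tracking an invariant through the chain of equivalences. For (1), temperedness of $X$ is detected on the $p$-adic side by the Casselman criterion / growth of matrix coefficients; under $\mathsf m_{\mathbf E}$ this becomes temperedness of the $\CH(\Psi,z_0)$-module (Definition~\ref{d:2.7}); under $r_\sigma$ and the reduction theorems this matches temperedness of the graded module, which by Theorem~\ref{t:graded-tempered} is precisely the condition $s = r_0 h^\vee + (\text{compact part})$, i.e.\ $s_0 = s q^{-h^\vee/2} \in T_c^\vee$; and temperedness on the graded side forces $\overline{G^\vee(s)\cdot n} = \fg^\vee_q$ because the standard module equals its irreducible quotient exactly when $n$ is the open orbit. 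For (2), square-integrability modulo center is temperedness together with the absence of a nonzero invariant functional on the split center — on the Hecke side this is the absence of a central torus in $G^\vee(s,n)$, i.e.\ discreteness of the parameter. For (3), supercuspidality means the representation is not a subquotient of any proper parabolic induction; via the Hecke algebra dictionary this corresponds to the module not arising by induction from a proper parabolic subalgebra, which by Lusztig's analysis is exactly the cuspidality of $(n,\rho)$ as a cuspidal pair in $G^\vee(s)$. For (4), the central character constraint: $Z(\bfG^\vee) \cong \mathrm{Irr}\,\Omega_{\mathsf{ad}}$ and $\bfG^\omega$ is the inner twist labeled by $\omega$; the action of $Z(\bfG^\vee)$ on $\rho$ via $A_{G^\vee}(s,n) \supset Z(\bfG^\vee)$ corresponds under the dictionary to the way $\Omega_{\mathsf{ad}}$ acts on the block decomposition by Bernstein components, so $\rho|_{Z(\bfG^\vee)}$ being a multiple of $\zeta_\omega$ is equivalent to $X(s,n,\rho)$ living in the component belonging to $\bfG^\omega(\sfk)$. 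This last point is where the arithmetic-geometric correspondence of Section~\ref{sec:arithmeticgeometric} (to be developed below) is used, and it is the reason \cite{Lu-unip2} is invoked rather than just \cite{KL}.

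The main obstacle — and the only genuinely nontrivial point — is the \textbf{compatibility of normalizations} across the three sources \cite{KL}, \cite{Lu-unip1,Lu-unip2}, and the geometric side \cite{lusztig-cuspidal-1,lusztig-cuspidal-2,lusztig-cuspidal-3}. Each constructs a bijection with a set of triples, but a priori these triples live in different-looking parameter sets (the Kazhdan--Lusztig geometry is for $G^\vee$ itself in the Iwahori case, while Lusztig's unipotent classification uses cuspidal data on parahorics that must be matched to cuspidal local systems on the dual side), and the $W$-action normalizations, the sign/twist conventions in the definition of $\mathsf m_{\mathbf E}$, and the choice of $\mathfrak{sl}(2)$-triple all have to be pinned down so that the three bijections agree on the nose. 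The cleanest route is to \emph{not} reprove these compatibilities but to cite \cite[\S2.3]{AMSol} for the statement that the classifications agree, and then to observe that, with that agreement granted, properties (1)--(4) are stable under the identifications because each is phrased intrinsically (temperedness, square-integrability, supercuspidality are representation-theoretic properties of $X$; the dual-side conditions discreteness, cuspidality, $\rho|_{Z(G^\vee)} \propto \zeta_\omega$ are conjugation-invariant properties of the triple). Thus the proof I would write is short: cite the three sources and \cite{AMSol} for the bijection, then verify (1)--(4) by the invariant-tracking arguments sketched above, spending most of the words on (1) (temperedness $\leftrightarrow$ $s_0 \in T_c^\vee$, with the parenthetical about the open orbit) and on (4) (the central character bookkeeping, which is where inner forms enter).
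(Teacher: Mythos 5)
The paper gives no proof of this theorem beyond the attribution immediately preceding it: it is stated as a combination of \cite[Theorems 7.12, 8.2, 8.3]{KL}, \cite[Corollary 6.5]{Lu-unip1}, \cite[Theorem 10.5]{Lu-unip2}, with \cite[\S2.3]{AMSol} cited for the compatibility of the classifications. Your assembly argument — the chain $\Pi^{\mathsf{Lus}}\leftrightarrow \mathrm{Irr}\,\CH(\bfG^\omega(\sfk),\mathbf E)\leftrightarrow \mathrm{Irr}\,\bH\leftrightarrow\{(s,n,\rho)\}$ followed by invariant-tracking for properties (1)--(4) — is exactly the intended derivation, and your identification of the normalization compatibilities as the one genuinely delicate point (resolved by citing \cite{AMSol}) matches the paper's treatment.
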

For each parameter $(s,n,\rho)$, there is an associated admissible \emph{standard module} $Y(s,n,\rho)$ with the following properties,  see \cite[Section 10]{Lu-unip2}, which relies on the graded affine Hecke algebra results in \cite[Theorems 1.15, 1.21, 1.22]{lusztig-cuspidal-3}):
\begin{enumerate}
\item $X(s,n,\rho)$ is the unique simple quotient of $Y(s,n,\rho)$;
\item If $\overline {G^\vee(s)n}=\mathfrak g_q^\vee$, then $X(s,n,\rho)=Y(s,n,\rho)$; in particular, this is the case when $X(s,n,\rho)$ is tempered. More generally:
\item There is an identity of characters (\cite[Corollary 10.7]{lusztig-cuspidal-2})
\begin{equation}\label{e:std-irred}
Y(s,n,\rho)=X(s,n,\rho)+\sum_{(n',\rho')} m((n,\rho),(n',\rho')) X(s,n',\rho'),
\end{equation}
where $m((n,\rho),(n',\rho'))\in \mathbb Z_{\ge 0}$, and the sum is over all $G^\vee(s)$-orbits $(n',\rho')$, such that $n\in \overline {G^\vee(s)n'}$.
\end{enumerate}

We denote by $\Pi^{\mathsf{Lus}}_s(\bfG(\mathsf k))$ the set of irreducible $\bfG(\mathsf k)$-representations $X(s,n,\rho)$ for a fixed $s\in T^\vee$.
We say that $X(s,n,\rho)$ has \emph{real infinitesimal character} if $s\in T^\vee_r.$

\begin{rmk}\label{r:real-indep}
    When $\bfG$ is not adjoint, the parameters of the unipotent representations take a similar form $(s,n,\rho)$, see \cite{Sol-LLC}, except $\rho$ is an irreducible representation of the group $A^1_{G^\vee}(s,n)$, rather than $A_{G^\vee}(s,n)$. To define $A^1_{G^\vee}(s,n)$, let $G^\vee_{\mathsf{sc}}\twoheadrightarrow G^\vee\twoheadrightarrow G^\vee_{\mathsf{ad}}$ be the chain of isogenies, and let $Z^1_{G^\vee}(s)$ be the preimage in $G^\vee$ of the image of $G^\vee(s)$ in $G^\vee_{\mathsf{ad}}.$ Then $A^1_{G^\vee}(s,n)$ is the group of components of the centralizer of $n$ in $Z^1_{G^\vee}(s)$. 

    If $s\in T^\vee_r$, let $\bar s$ be the unique element of the Lie algebra $\mathfrak t^\vee_r\subset \mathfrak g^\vee$ that exponentiates to $s$. Then $G^\vee(s)=C_{G^\vee}(\bar s)$, and $Z^1_{G^\vee}(s)=C_{G^\vee_{\mathsf{sc}}}(\bar s)$, and therefore $A^1(s,n)=A_{G^\vee_{\mathsf{sc}}}(\bar s,n)$. In other words, the classification on unipotent representations with real infinitesimal character is independent of isogeny. We will use this implicitly in Section \ref{sec:main}.
\end{rmk}

\subsection{The arithmetic side}\label{subsec:arithmetic} We follow \cite{lusztigunip}. Let $\mathcal H(\mathbf G^\omega(\mathsf k),\mathbf E)$ be a Hecke algebra as in Section \ref{subsec:LLC}. Let $J\in \bfP^\omega(I)$ be the subset of the affine simple roots corresponding to $\mathbf P_c(\mathfrak o)$, and let $\widetilde W_J$ be the finite subgroup of $\widetilde W$ generated by the simple reflections in $J$. Write $\mathcal W=N_{\widetilde W}(\widetilde W_J)/\widetilde W_J$. As a set, $\mathcal{W}$ can be identified with the set of minimal-length $(\widetilde W_J,\widetilde W_J)$-double cosets in $\widetilde W$ which are contained in $N_{\widetilde W}(\widetilde W_J)$. Let $\mathcal W'$ (resp. $\overline \Omega$) denote the subgroup of $\mathcal W$ given by the double cosets contained in $W^a$ (resp. $\Omega$). 

The element $\omega\in \Omega$ acts on $I$ and preserves $J$. Let $\mathcal W^\omega$, ${\mathcal W'}^{\omega}$, $\overline\Omega^\omega$ denote the fixed points. Let $(I-J)/\omega$ denote the orbits of the action of $\omega$ on $I-J$. The group $\overline \Omega^\omega$ acts on  $(I-J)/\omega$ by permutations. Let $\overline\Omega^\omega_1$ (resp. $\overline\Omega^\omega_2$) denote the kernel (resp. the image) of the resulting homomorphism $\overline\Omega^\omega\to \mathrm{Perm}(I-J/\omega)$, see \cite[\S1.20]{lusztigunip}. 

The affine Coxeter group ${\mathcal W'}^\omega$ has generators $s_{\underline k}$, where ${\underline k}$ ranges over $(I-J)/\omega$ if this set has cardinality at least two. Otherwise ${\mathcal W'}^\omega=1$. Let $\mathcal H'(I,J,\omega,\mathbf E)$ be the affine Hecke algebra with parameters $L(\underline k)$ constructed in \cite[\S1.18]{lusztigunip} for the extended affine Weyl group 
\begin{equation}\label{e:affineWeyl-arith}
\mathcal W(J,\omega,\mathbf E):={\mathcal W'}^\omega\rtimes \overline\Omega^\omega_2.
\end{equation}Then
\begin{equation}\label{e:arithm-HA}
    \mathcal H(\mathbf G^\omega(\mathsf k),\mathbf E)=\mathcal H'(I,J,\omega,\mathbf E)\otimes \mathcal C[\overline\Omega^\omega_1].
\end{equation}
This can be rewritten as $ \mathcal H(\mathbf G^\omega(\mathsf k),\mathbf E)=\bigoplus_{\psi\in \widehat{\overline\Omega^\omega_1}}\mathcal H'(I,J,\omega,\mathbf E)$. Each (conjugacy class of) triple $(J,\mathbf E,\psi)$ as above is called an \emph{arithmetic diagram} for $\mathbf G^\omega$ with associated affine Hecke algebra $\mathcal H'(I,J,\omega,\mathbf E)$ (independent of $\psi$). Let $\bar{\mathfrak U}_\omega$ denote the set of arithmetic diagrams $(J,\mathbf E,\psi)$ of $\mathbf G^\omega$ and set $\bar{\mathfrak U}=\sqcup_{\omega\in\Omega}\bar{\mathfrak U}_\omega$. 

There is a natural one-to-one correspondence (coming from an equivalence of categories)
\begin{equation}
\Pi^{\mathsf{Lus}}(\mathbf G^\omega(\mathsf k))\longleftrightarrow \bigsqcup_{(J,\mathbf E,\psi)\in\bar{\mathfrak U}_\omega} \mathrm{Irr}~ \mathcal H'(I,J,\omega,\mathbf E).
\end{equation}
Denote by $\mathsf{Acusp}(V)=(J,\omega,\mathbf E)$, the supercuspidal support of a simple $\mathbf G^\omega(\mathsf k)$-representation $V$.

\subsection{The geometric side}\label{subsec:geometric} Let $I^\vee$ denote the affine Dynkin diagram of the simply-connected simple group $G^\vee$. If $J^\vee\subsetneq I^\vee$, denote by $G^\vee_{J^\vee}$ the corresponding pseudo-Levi subgroup of $G^\vee$. Let $\bar{\mathfrak S}$ denote the set of triples ({\it geometric diagrams}) $(J^\vee,\mathcal C^\vee, \mathcal F^\vee)$, where $J^\vee\subsetneq I^\vee$, $\mathcal C^\vee$ is a nilpotent $G^\vee_{J^\vee}$-orbit on the Lie algebra $\mathfrak g^\vee_{J^\vee}$ and $\mathcal E^\vee$ is an isomorphism class of $G^\vee_{J^\vee}$-equivariant cuspidal local systems on $\mathcal C^\vee$. 
To such a triple, in \cite[\S5.12]{lusztigunip}, Lusztig associates a generic affine Hecke algebra $\mathcal H(G^\vee,G^\vee_{J^\vee},\mathcal C^\vee,\mathcal E^\vee)=\mathcal H^{\lambda,\lambda^*}(\Psi,z)$, for some root datum $\Psi$ and parameters $\lambda,\lambda^*$ as in {\it loc.cit.}. Denote by $\mathcal W^\vee(J^\vee,\mathcal C^\vee,\mathcal E^\vee)=\bar W\ltimes \mathcal X$ the extended affine Weyl group underlying this affine Hecke algebra. The classification of the simple modules of $\mathcal H(G^\vee,G^\vee_{J^\vee},\mathcal C^\vee,\mathcal E^\vee)$ goes via the reduction to graded affine Hecke algebras. More precisely, if $\mathcal T=\mathcal T_c\cdot \mathcal T_r=\Hom_{\mathbb Z}(\mathcal X,\mathbb C^\times)$, then fixing a semisimple conjugacy class $\bar W\cdot t\subset \mathcal T$, $t=t_c t_h$ we have
\[
\mathrm{Irr}_{\bar Wt,z_0}\mathcal H(G^\vee,G^\vee_{J^\vee},\mathcal C^\vee,\mathcal E^\vee)\longleftrightarrow \mathrm{Irr}_{\log t_h,r_0} \mathbf H({G^\vee}(s_c),G^\vee_{J^\vee},\mathcal C^\vee,\mathcal E^\vee),
\]
where $s_c$ (also denoted by $t_c^\psi$ in {\it loc.cit.}) is the element of $T^\vee_c$ associated to $t_c$ in \cite[\S5.13]{lusztigunip}. The group $G^\vee_{J^\vee}$ is a Levi subgroup of $Z_{G^\vee}(s_c)=G^\vee_{I^\vee-S^\vee}$, for a nonempty subset $S^\vee\subset I^\vee-J^\vee$. Therefore, the graded affine Hecke algebra in the right hand side is of the type defined in section \ref{sec:graded}, with $G=G^\vee_{I^\vee-S^\vee}$, $L=G^\vee_{J^\vee}$, and $(\mathcal C,\mathcal L)=(\mathcal C^\vee,\mathcal E^\vee).$

For every $\zeta\in \mathrm{Irr} Z(G^\vee)$, let $\bar{\mathfrak S}_\zeta$ denote subset of the triples $(J^\vee,\mathcal C^\vee,\mathcal E^\vee)$ such that $Z(G^\vee)\subset G^\vee_{J^\vee}$ acts on each stalk of $\mathcal E^\vee$ by $\zeta$.

For every $z_0>0$, denote by $\mathfrak S(z_0)$ the set of $G^\vee$-conjugacy classes of triples $(s,n,\rho)$, where $s\in T^\vee$, $n\in \mathfrak g^\vee$ nilpotent such that $\Ad(s)n=z_0^2 n$, and $\rho\in\Irr A(s,n).$  Denote by $\mathfrak S(z_0)_\zeta$ the subset of $\mathfrak S(z_0)$ with the property that $Z(G^\vee)$ acts in $\rho$ by the character $\zeta$. 

Via the classification of simple modules for graded Hecke algebras recalled in Theorem \ref{t:graded-classif}, we have (recall $s_c$ is determined by $t_c$):
\begin{equation}\label{e:irr-geom-diagram}
\mathrm{Irr}_{\bar Wt,z_0}\mathcal H(G^\vee,G^\vee_{J^\vee},\mathcal C^\vee,\mathcal E^\vee)\leftrightarrow \{(s,n,\rho)\in \mathfrak S(z_0)\mid s=s_c s_h,\ s_h\in T^\vee_h,\  \rho\in \widehat{A(s,n)}_{\mathcal E^\vee}\}.
\end{equation}

Then, by combining all such correspondences for all geometric diagrams, \cite[Theorem 5.21]{lusztigunip}] says that there is a natural bijection 
\begin{equation}
    \mathfrak S(z_0)_\zeta\longleftrightarrow \bigsqcup_{(J^\vee,\mathcal C^\vee,\mathcal E^\vee)\in \bar{\mathfrak S}_\zeta} \mathrm{Irr}~ \mathcal H_{z_0}(G^\vee,G^\vee_{J^\vee},\mathcal C^\vee,\mathcal E^\vee).
\end{equation}
    If $(s,n,\rho)\in \mathfrak S(z_0)$ parametrizes a module for $\mathcal H_{z_0}(G^\vee,G^\vee_{J^\vee},\mathcal C^\vee,\mathcal E^\vee)$, denote by $\mathsf{Gcusp}(s,n,\rho)=(J^\vee,\mathcal C^\vee,\mathcal E^\vee)$ and call it the geometric cuspidal support of $(s,n,\rho)$ (or $X(s,n,\rho), Y(s,n,\rho)$).
\begin{rmk}\label{r:real-geom-diag}
    If $X$ has real infinitesimal character and $\mathsf{Gcusp}(X)=(J^\vee,\mathcal C^\vee,\mathcal E^\vee)$, then $J^\vee \subseteq I^\vee_0$. 
    \end{rmk}

To complete the correspondence and the proof of Theorem \ref{thm:Langlands}, Lusztig matched the arithmetic and geometric diagrams.  More precisely, \cite[Theorem 6.3]{lusztigunip} gives an explicit bijection, for each $\omega\in\Omega$,
\begin{equation}
    \theta:\bar{\mathfrak S}_{\zeta_{\omega}} \longleftrightarrow\bar{\mathfrak U}_\omega,
\end{equation}
such that if $\theta((J^\vee,\mathcal C^\vee,\mathcal E^\vee))=(J,\mathbf E,\psi)$, then
\[
\mathcal H_{\sqrt q}(G^\vee,G^\vee_{J^\vee},\mathcal C^\vee,\mathcal E^\vee)\cong \mathcal H' (I,J,\omega,\mathbf E).
\]
In particular, the underlying (extended) affine Weyl groups are isomorphic
\[
\mathcal W^\vee(J^\vee,\mathcal C^\vee,\mathcal E^\vee)\cong\mathcal W(J,\omega,\mathbf E)
\]
Denote this isomorphism of Hecke algebras also by $\theta$ and let $\theta^*$ the dual module correspondence.
We also write $\theta$ for the bijection between $\overline{\mathfrak S}$ and $\overline{\mathfrak U}$. In this case write $\theta(J^\vee,\mathcal C^\vee,\mathcal E^\vee) = (J,\bf E,\psi,\omega)$.

\subsection{Multiplicities}\label{sec:multi}
Let $(\pi,V)\in \mathcal C(\mathbf G^\omega(\mathsf k),\bf E)$ be an irreducible or standard representation with unipotent cuspidal support $(\mathbf P_c(\mathfrak o),\mathbf E)$, and $\mathbf P_{c'}(\mathfrak o)\supseteq \mathbf P_c(\mathfrak o)$  another parahoric subgroup. Define the convolution algebra
\[
\mathcal H(\mathbf P_{c'}(\mathfrak o),\mathbf E)=\{f\in \mathcal H(\mathbf G^\omega(\mathsf k),\mathbf E)\mid \mathsf{supp} f\subseteq \mathbf P_{c'}(\mathfrak o)\}.
\]
If $\tau$ is irreducible representation of $\mathbf L_{c'}(\mathbb F_q)$, denote by $\mathsf{m}_{\mathbf E}(\tau)$ the representation of $\mathcal H(\mathbf P_{c'}(\mathfrak o),\bf E)$ on $\Hom_{\mathbf L_c(\mathbb F_q)}(\mathbf E, \tau|_{\mathbf L_c(\mathbb F_q)}).$ Then we have
\begin{equation}
    \Hom_{\mathbf P_{c'}(\mathfrak o)}(\tau, V)=\Hom_{\mathcal H(\mathbf P_{c'}(\mathfrak o),\mathbf E)}(\mathsf m_{\mathbf E}(\tau),\mathsf m_{\mathbf E}(V)|_{\mathcal H(\mathbf P_{c'}(\mathfrak o),\mathbf E)} ).
\end{equation}
Next, let $J\subseteq J'\in \bfP^\omega(I)$ be the subsets defining $\mathbf P_c(\mathfrak o),\mathbf P_{c'}(\mathfrak o)$, respectively. Then using (\ref{e:arithm-HA}), we find
\begin{equation}
   \Hom_{\mathbf P_{c'}(\mathfrak o)}(\tau, V)=\Hom_{\mathcal H(J',J,\omega,\mathbf E)}( \mathsf m_{\mathbf E}(\tau), \mathsf m_{\mathbf E}(V)).
\end{equation}
Here, $\mathsf m_{\mathbf E}(V)$ is a module an affine Hecke algebras $\mathcal H'(I,J,\omega,\mathbf E)$ (for one $\psi$), and $\mathcal H(J',J,\omega,\mathbf E)$ is the parahoric (finite) Hecke algebra with support in $J'$.

Let $(J^\vee, \mathcal C^\vee,\mathcal E^\vee)=\theta^{-1}(J,\mathbf E,\psi)$ be the corresponding geometric diagram in $\bar{\mathfrak S}_{\zeta_\omega}.$ Denote by ${J'}^\vee=\theta^{-1}(J')$ the corresponding subset of $I^\vee$ such that there is an isomorphism of finite Hecke algebras
\[
\mathcal H_{\sqrt q}(G^\vee_{{J'}^\vee},G^\vee_{J^\vee},\mathcal C^\vee,\mathcal E^\vee)\cong \mathcal H' (J',J,\omega,\mathbf E).
\]
Let $\mathcal W^\vee_{{J'}^\vee}(J^\vee,\mathcal C^\vee,\mathcal E^\vee)\cong\mathcal W_{J'}(J,\omega,\mathbf E)$ denote the finite parahoric subgroups defined by ${J'}^\vee\supseteq J^\vee$ and $J'\supseteq J$, respectively.
It follows that
\[
 \Hom_{\mathbf P_{c'}(\mathfrak o)}(\tau, V)=\Hom_{\mathcal H_{\sqrt q}(G^\vee_{{J'}^\vee},G^\vee_{J^\vee},\mathcal C^\vee,\mathcal E^\vee)}( \theta^*(\mathsf m_{\mathbf E}(\tau)), \theta^*(\mathsf m_{\mathbf E}(V))),
\]
where 
$\theta^*(\mathsf m_{\mathbf E}(V))$ is an $\mathcal H_{\sqrt q}(G^\vee,G^\vee_{J^\vee},\mathcal C^\vee,\mathcal E^\vee)$-module. We are now in position to apply the deformation argument from section \ref{sec:OS-deform}. From the first part of Corollary \ref{c:restr-temp}, it follows at once that, when $V$ is irreducible,
\begin{equation}\label{e:mult-1}
     \Hom_{\mathbf P_{c'}(\mathfrak o)}(\tau, V)=\Hom_{\mathcal W^\vee_{{J'}^\vee}(J^\vee,\mathcal C^\vee,\mathcal E^\vee)}(\theta^*(\mathsf m_{\mathbf E}(\tau))_{q\to 1}, \widetilde \sigma_0(\theta^*(\mathsf m_{\mathbf E}(V))),
\end{equation}
where $\widetilde\sigma_0$ is the deformation functor from (\ref{e:OS-deform}). Here we use the notation $q\to 1$ in place of $z_0\to 1$ as in section \ref{sec:OS-deform}. Recall that $\widetilde \sigma_0(\theta^*(\mathsf m_{\mathbf E}(V))$ is a module for the extended affine Weyl group $\mathcal W^\vee(J^\vee,\mathcal C^\vee,\mathcal E^\vee)$.

 \
 
 For standard modules, we can explicitate this formula further using Proposition \ref{p:restr-standard}. Let $Y(s,n,\rho)$ be a standard $\mathbf G^\omega(\mathsf k)$-module with unipotent cuspidal support. Let $(J,\mathbf E)$ be the supercuspidal support of $X(s,n,\rho)$, and $(J^\vee,\mathcal C^\vee, \mathcal E^\vee)=\mathsf{Gcusp}(s,n,\rho).$ Let $G^\vee_{I^\vee-S^\vee}=Z_{G^\vee}(s_c)$. Let $\mathcal P_{s_c,n}$ denote the generalized Springer fiber $\mathcal P_n$ defined as in section \ref{sec:graded} for the pair $(\mathcal C^\vee, \mathcal E^\vee)$ in $\mathfrak g^\vee_{I^\vee-S^\vee}.$ Let $s_c$ be the compact part of $s$ and $t_c$ the corresponding element in $\mathcal T_c$ for the geometric Hecke algebra $\mathcal H(G^\vee_{{J'}^\vee},G^\vee_{J^\vee},\mathcal C^\vee,\mathcal E^\vee)$. 

\begin{theorem}\label{t:mult-general} Let $\mathbf P_{c}(\mathfrak o)$ be a parahoric subgroup for $\mathbf G^\omega(\mathsf k)$ corresponding to $J$, $J\subset J'\in \bfP^\omega(I)$, and $\tau$ an irreducible $\mathbf L_{c'}(\mathbf F_q)$-representation. Write $\mathcal W^\vee:=\mathcal W^\vee(J^\vee,\mathcal C^\vee,\mathcal E^\vee)$ and $\mathcal W^\vee_{{J'}^\vee}:=\mathcal W^\vee_{{J'}^\vee}(J^\vee,\mathcal C^\vee,\mathcal E^\vee)$ for simplicity, and let $\mathcal W^\vee(t_c)$ be the centralizer of $t_c$ in $\mathcal W^\vee$. Then
   \begin{equation}
   \begin{aligned}
   \Hom_{\mathbf P_{c'}(\mathfrak o)}(\tau,Y(s,n,\rho))&=\Hom_{\mathcal W^\vee_{{J'}^\vee}}(\theta^*(m_{\mathbf E}(\tau))_{q\to 1},\Ind_{\mathcal W^\vee(t_c)}^{\mathcal W^\vee}(\mathbb C_{t_c}\otimes Y_{\mathbf H}(s,n,\rho)))\\
   &=\Hom_{\mathcal W^\vee_{{J'}^\vee}}(\theta^*(m_{\mathbf E}(\tau))_{q\to 1},\Ind_{\mathcal W^\vee(t_c)}^{\mathcal W^\vee}(\mathbb C_{t_c}\otimes H_\bullet (\mathcal P_{s_c,n},\dot{\mathcal E}^\vee)^\rho)   ,
   \end{aligned}
   \end{equation}
   where $Y_{\mathbf H}(s,n,\rho))$ is the standard module for the graded affine Hecke algebra $\mathbf H({G^\vee}(s_c),G^\vee_{J^\vee},\mathcal C^\vee,\mathcal E^\vee)$, and $ H_\bullet (\mathcal P_{s_c,n},\dot{\mathcal E}^\vee)^\rho$ is a (reducible) generalized Springer representation of the finite Weyl group $N_{{G^\vee}(s_c)}(G^\vee_{J^\vee})/G^\vee_{J^\vee}$, as exposited in section \ref{sec:graded}.
\end{theorem}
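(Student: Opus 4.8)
The plan is to run the reduction of Section~\ref{sec:multi} one step further, now applied to a \emph{standard} module in place of an irreducible one, and then to identify the resulting $\mathcal W^\vee$-module explicitly using Solleveld's scaling functor together with Lusztig's reduction theorems. The chain of identifications in Section~\ref{sec:multi} ending in \eqref{e:mult-1} used nothing about $V$ beyond the facts that $\mathsf m_{\mathbf E}(V)$ is a finite-dimensional $\mathcal H'(I,J,\omega,\mathbf E)$-module and that Corollary~\ref{c:restr-temp} (equivalently Proposition~\ref{p:restr-standard}) applies. Taking $V=Y(s,n,\rho)$ and using that $\theta^*(\mathsf m_{\mathbf E}(Y(s,n,\rho)))$ is a standard module for $\mathcal H_{\sqrt q}(G^\vee,G^\vee_{J^\vee},\mathcal C^\vee,\mathcal E^\vee)$, the same derivation gives
\[
\Hom_{\mathbf P_{c'}(\mathfrak o)}(\tau,Y(s,n,\rho))=\Hom_{\mathcal W^\vee_{{J'}^\vee}}\!\big(\theta^*(\mathsf m_{\mathbf E}(\tau))_{q\to 1},\ \widetilde\sigma_0\big(\theta^*(\mathsf m_{\mathbf E}(Y(s,n,\rho)))\big)\big),
\]
so the theorem reduces to identifying $\widetilde\sigma_0$ of the geometric standard module as a module for the extended affine Weyl group $\mathcal W^\vee$.

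To do this I would write $s=s_cs_h$ with $s_c$ compact and $s_h$ hyperbolic and let $t_c\in\mathcal T_c$ be the corresponding element. By Lusztig's reduction theorem (Theorem~\ref{t:graded-reduction}) together with Theorem~\ref{t:lusztig-graded}, the block of $\mathcal H_{\sqrt q}(G^\vee,G^\vee_{J^\vee},\mathcal C^\vee,\mathcal E^\vee)$ with compact central character $\bar W t_c$ is $\mathcal M_n$-Morita equivalent to the extended graded algebra $\mathbf H=\mathbf H(G^\vee(s_c),G^\vee_{J^\vee},\mathcal C^\vee,\mathcal E^\vee)$, and under this equivalence $\theta^*(\mathsf m_{\mathbf E}(Y(s,n,\rho)))$ corresponds to the graded standard module $Y_{\mathbf H}(s,n,\rho)$ of Theorem~\ref{t:graded-classif} --- both are assembled from the same equivariant homology of the generalized Springer fiber, so this compatibility is essentially built into the constructions of \cite[\S10]{Lu-unip2} and \cite{lusztig-cuspidal-3}. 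Since $Y_{\mathbf H}(s,n,\rho)$ is parabolically induced from a tempered standard module on the parabolic subalgebra of $\mathbf H$ cut out by $s_h$, Solleveld's description of $\zeta^*$ on such modules (Theorem~\ref{t:sol-zeta} and Proposition~\ref{p:restr-standard}) applies; the factor $\mathcal M_n$ from Theorem~\ref{t:lusztig-graded}, whose unique simple module is $n$-dimensional, is exactly what converts the restriction-and-twist $\mathbb C_\sigma\otimes(-)|_{W(\sigma)}$ of Theorem~\ref{t:sol-zeta} into an induction over $\mathcal W^\vee/\mathcal W^\vee(t_c)$-cosets. This gives
\[
\widetilde\sigma_0\big(\theta^*(\mathsf m_{\mathbf E}(Y(s,n,\rho)))\big)=\Ind_{\mathcal W^\vee(t_c)}^{\mathcal W^\vee}\!\big(\mathbb C_{t_c}\otimes Y_{\mathbf H}(s,n,\rho)\big),
\]
where $Y_{\mathbf H}(s,n,\rho)$ is regarded as a module for the Weyl group $N_{G^\vee(s_c)}(G^\vee_{J^\vee})/G^\vee_{J^\vee}$ of $\mathbf H$; combining this with the first display yields the first equality of the theorem.

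For the second equality I would apply Lemma~\ref{l:graded-W-action}: as a module for the Weyl group of $\mathbf H$ we have $Y_{\mathbf H}(s,n,\rho)=\Hom_{A(s,n)}[\rho,H_\bullet(\mathcal P_n^{s_h},\dot{\mathcal E}^\vee)]=\Hom_{A(s,n)}[\rho,H_\bullet(\mathcal P_n,\dot{\mathcal E}^\vee)]$, all formed inside $G^\vee(s_c)$, which by the definition of $\mathcal P_{s_c,n}$ is precisely $H_\bullet(\mathcal P_{s_c,n},\dot{\mathcal E}^\vee)^\rho$; substituting into the first equality completes the proof. I expect the main obstacle to be the middle paragraph: tracking central characters and parabolic structures carefully enough to see that $\widetilde\sigma_0$ carries the geometric affine standard module to $\Ind_{\mathcal W^\vee(t_c)}^{\mathcal W^\vee}(\mathbb C_{t_c}\otimes Y_{\mathbf H}(s,n,\rho))$. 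This requires both the identification of the reduced affine standard module with the graded standard module $Y_{\mathbf H}(s,n,\rho)$, and the fact that the affine standard module is parabolically induced from a tempered standard module in a manner compatible with Solleveld's deformation --- i.e.\ that the parabolic appearing in the Langlands-type construction of $Y(s,n,\rho)$ is the one governed by the hyperbolic part $s_h$ and is seen as such by the functor $\zeta^*$.
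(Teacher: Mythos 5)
Your proposal is correct and follows essentially the same route as the paper: the first equality is obtained by combining the multiplicity formula \eqref{e:mult-1} (which, as you note, applies verbatim to standard modules) with Proposition \ref{p:restr-standard} and the second part of Corollary \ref{c:restr-temp}, and the second equality is exactly Lemma \ref{l:graded-W-action}. The ``main obstacle'' you flag in the middle paragraph is precisely what Proposition \ref{p:restr-standard} together with Theorem \ref{t:sol-zeta} is set up to handle, so no additional argument is needed there.
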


\begin{proof}
    The first equality follows immediately from (\ref{e:mult-1}), Proposition \ref{p:restr-standard} and the second part of Corollary \ref{c:restr-temp}. The second equality is implied directly by Lemma \ref{l:graded-W-action}.
\end{proof}

In the case of real infinitesimal character in this paper, this result takes the following simpler form.

\begin{cor}\label{c:mult-real}
    Retain the notation from Theorem \ref{t:mult-general}. Suppose $Y(s,n,\rho)$ has real infinitesimal character, i.e. $s_c=1$. Then
    \begin{equation}
   \begin{aligned}
   \Hom_{\mathbf P_{c'}(\mathfrak o)}(\tau,Y(s,n,\rho))&=\Hom_{\mathcal W^\vee_{{J'}^\vee}}(\theta^*(m_{\mathbf E}(\tau))_{q\to 1}, Y_{\mathbf H}(s,n,\rho)))\\&=\Hom_{\mathcal W^\vee_{{J'}^\vee}}(\theta^*(m_{\mathbf E}(\tau))_{q\to 1}, H_\bullet (\mathcal P_{n},\dot{\mathcal E}^\vee)^\rho)   ,
   \end{aligned}
   \end{equation}
   where $Y_{\mathbf H}(s,n,\rho))$ is the standard module for the graded affine Hecke algebra $\mathbf H(G^\vee,G^\vee_{J^\vee},\mathcal C^\vee,\mathcal E^\vee)$, $\mathcal W^\vee_{J^\vee}=N_{G^\vee}(G^\vee_{J^\vee})/G^\vee_{J^\vee}$, and $H_\bullet(\mathcal P_n,\dot{\mathcal E}^\vee)$ are the homology groups carrying a $\mathcal W^\vee_{J^\vee}\times A(n)$-representation that occur in the generalized Springer correspondence for $G^\vee_{J^\vee}\subset G^\vee$ (see \ref{sec:graded}).
\end{cor}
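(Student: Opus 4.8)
The corollary is the specialization of Theorem~\ref{t:mult-general} to the case of real infinitesimal character, and the plan is to unwind the hypothesis $s_c=1$ and feed it into that theorem. The first thing I would do is translate $s_c=1$ to the geometric side: real infinitesimal character means $s\in T^\vee_r$, i.e.\ $s_c=1$ in the polar decomposition $s=s_c s_h$, and via the reduction to graded affine Hecke algebras recalled in Section~\ref{subsec:geometric} this is equivalent to the compact parameter $t_c\in\mathcal T_c$ of the associated geometric Hecke algebra module being trivial, hence to $Z_{G^\vee}(s_c)=G^\vee$ (so $G^\vee_{J^\vee}$ is a standard Levi of $G^\vee$ itself; in particular $J^\vee\subseteq I^\vee_0$, cf.\ Remark~\ref{r:real-geom-diag}). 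Consequently, in Theorem~\ref{t:mult-general}: the graded Hecke algebra $\mathbf H(G^\vee(s_c),G^\vee_{J^\vee},\mathcal C^\vee,\mathcal E^\vee)$ becomes $\mathbf H(G^\vee,G^\vee_{J^\vee},\mathcal C^\vee,\mathcal E^\vee)$, its Weyl group $N_{G^\vee(s_c)}(G^\vee_{J^\vee})/G^\vee_{J^\vee}$ becomes $\mathcal W^\vee_{J^\vee}=N_{G^\vee}(G^\vee_{J^\vee})/G^\vee_{J^\vee}$, and the generalized Springer fibre $\mathcal P_{s_c,n}$ becomes the full fibre $\mathcal P_n$ attached to $(\mathcal C^\vee,\mathcal E^\vee)$ and $n$ in $\mathfrak g^\vee$.

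Next I would simplify the induced module. Since $t_c=1$, its centralizer $\mathcal W^\vee(t_c)$ in $\mathcal W^\vee$ is all of $\mathcal W^\vee$ and the one-dimensional twist $\mathbb C_{t_c}$ is the trivial character, so $\Ind_{\mathcal W^\vee(t_c)}^{\mathcal W^\vee}(\mathbb C_{t_c}\otimes Y_{\mathbf H}(s,n,\rho))$ collapses to $Y_{\mathbf H}(s,n,\rho)$. Substituting this into the two identities of Theorem~\ref{t:mult-general} gives the first displayed equality of the corollary. For the second equality I would invoke Lemma~\ref{l:graded-W-action}, which identifies $Y_{\mathbf H}(s,n,\rho)$, as a $\mathcal W^\vee_{J^\vee}$-representation, with $\Hom_{A(s,n)}[\rho,H_\bullet(\mathcal P_n,\dot{\mathcal E}^\vee)]$---that is, with the $\rho$-isotypic part (along the natural inclusion $A(s,n)\hookrightarrow A(n)$) of the generalized Springer representation $H_\bullet(\mathcal P_n,\dot{\mathcal E}^\vee)$ of $\mathcal W^\vee_{J^\vee}\times A(n)$ for the pair $G^\vee_{J^\vee}\subset G^\vee$. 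This is the asserted formula.

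I do not expect a serious obstacle: all of the analytic deformation input and the passage from Hecke-algebra multiplicities to finite-Weyl-group multiplicities has already been carried out in Theorem~\ref{t:mult-general} (via Corollary~\ref{c:restr-temp} and Proposition~\ref{p:restr-standard}), and Lemma~\ref{l:graded-W-action} supplies the second equality directly. The one point that genuinely requires care is the first step---matching ``real infinitesimal character'' of the $p$-adic representation with triviality of the compact parameter $t_c$ (equivalently $Z_{G^\vee}(s_c)=G^\vee$)---for which one must use the precise form of Lusztig's arithmetic--geometric correspondence from Section~\ref{sec:arithmeticgeometric}, not merely its existence.
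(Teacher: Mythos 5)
Your proposal is correct and matches the paper's (essentially immediate) derivation: the corollary is obtained by setting $s_c=1$ in Theorem \ref{t:mult-general}, so that $t_c=1$, $\mathcal W^\vee(t_c)=\mathcal W^\vee$, the twist $\mathbb C_{t_c}$ is trivial, the induction collapses, $\mathcal P_{s_c,n}=\mathcal P_n$, and Lemma \ref{l:graded-W-action} gives the second equality. Your extra care in matching "real infinitesimal character" with triviality of the compact parameter $t_c$ via the arithmetic--geometric correspondence is the right point to be careful about, and is consistent with how the paper sets things up.
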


\section{Faithfulness for the generalised Springer correspondence}\label{sec:faithful}

\subsection{Preliminaries on partitions}\label{subsec:partitions}

In this section, we will recall some of the basic notation for partitions, and some related objects. A \emph{partition of $n$} is a non-increasing sequence of positive integers $(\lambda_1 \geq \lambda_2 \geq ... \geq \lambda_m)$ such that $\lambda_1+\lambda_2+...+\lambda_m=n$ (occasionally, it will be more convenient to denote a partition as a non-decreasing sequence, but we will clearly indicate when this is the case). Let $\mathcal{P}(n)$ denote the set of partitions of $n$. We write $\#\lambda :=m$ and $|\lambda|:=n$. If $x \in \ZZ_{\geq 0}$, we write $m_{\lambda}(x)$ for the multiplicity of $x$ in $\lambda$ and $\mathrm{ht}_{\lambda}(x)$ for its \emph{height}, i.e. $\mathrm{ht}_{\lambda}(x) = \sum_{y \geq x} m_{\lambda}(y)$. A partition is \emph{very even} if all parts are even, occurring with even multiplicity. A partition is \emph{rather odd} if every even part occurs with even multiplicity and every odd part occurs with multiplicity $\leq 1$. We write $\mathcal{P}_{ve}(n)$ (resp. $\mathcal{P}_{ro}(n)$) for the set of very even (resp. rather odd) partitions of $n$.

There is a partial order on integer sequences (and in particular, on $\mathcal{P}(n)$), defined by the formula
\begin{equation}\label{eq:dominanceordering}\lambda \leq \mu \iff \sum_{i \leq j} \lambda_i \leq \sum_{i \leq j} \mu_i.\end{equation}
Given an arbitrary partition $\lambda = (\lambda_1,...,\lambda_m)$, we define two new partitions (of $|\lambda|+1$ and $|\lambda|-1$)
$$\lambda^+ := (\lambda_1+1,\lambda_2,...,\lambda_m), \qquad \lambda^- := (\lambda_1,\lambda_2,...,\lambda_m-1)$$
and we write $\lambda^t$ for the transpose of $\lambda$.

Given $\lambda \in \mathcal{P}(n)$ and $\mu \in \mathcal{P}(m)$, we define $\lambda \cup \mu \in \mathcal{P}(m+n)$ by `adding multiplicities'
$$m_{\lambda \cup \mu}(x) = m_{\lambda}(x) + m_{\mu}(x), \qquad \forall x \in \ZZ_{\geq 0}.$$
We will need the following elementary fact.

\begin{lemma}\label{lem:uniondominance}
Let $\alpha_1,\alpha_2,\beta_1,\beta_2$ be partitions such that $\alpha_1 \leq \alpha_2$ and $\beta_1 \leq \beta_2$. Then $\alpha_1\cup\beta_1 \leq \alpha_2\cup\beta_2$.
\end{lemma}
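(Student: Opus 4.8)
The plan is to prove this directly from the definition of the dominance order in terms of partial sums, using the fact that the operation $\cup$ can be described via the ``sorted concatenation'' of sequences, together with a monotonicity property of that operation.

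First I would record the key reformulation: for a partition $\lambda$, let $\Sigma_\lambda(k) = \sum_{i=1}^{k}\lambda_i$ denote its partial-sum function (extended by $\Sigma_\lambda(k)=|\lambda|$ for $k\ge \#\lambda$), so that $\lambda\le\mu$ exactly when $\Sigma_\lambda(k)\le\Sigma_\mu(k)$ for all $k$ and $|\lambda|=|\mu|$ — except here $|\alpha_1|=|\alpha_2|$ and $|\beta_1|=|\beta_2|$ need not hold individually; rather the inequality $\lambda\le\mu$ in the sense of (\ref{eq:dominanceordering}) is just the partial-sum inequality. (Strictly, (\ref{eq:dominanceordering}) as written is the partial-sum inequality without an equal-size hypothesis, so I will work with that.) The crucial combinatorial fact I would isolate is the following description of $\Sigma_{\lambda\cup\mu}$: for each $k$,
\[
\Sigma_{\lambda\cup\mu}(k) = \max\Big\{\, \Sigma_\lambda(a)+\Sigma_\mu(b) \ \mid\ a,b\ge 0,\ a+b=k \,\Big\}.
\]
This holds because $\lambda\cup\mu$ is obtained by listing all parts of $\lambda$ and $\mu$ together in non-increasing order, so the sum of its $k$ largest parts is the best one can do by taking $a$ parts from $\lambda$ and $b=k-a$ parts from $\mu$ and this is exactly the greedy/exchange statement for sorting a merged multiset.

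Given this formula the lemma is immediate: fix $k$ and choose $a,b$ with $a+b=k$ achieving the maximum for $\alpha_1\cup\beta_1$. Then
\[
\Sigma_{\alpha_1\cup\beta_1}(k) = \Sigma_{\alpha_1}(a)+\Sigma_{\beta_1}(b) \le \Sigma_{\alpha_2}(a)+\Sigma_{\beta_2}(b) \le \Sigma_{\alpha_2\cup\beta_2}(k),
\]
where the first inequality uses $\alpha_1\le\alpha_2$ and $\beta_1\le\beta_2$ applied at indices $a$ and $b$, and the second uses the formula again (the right-hand side is one of the terms over which the max defining $\Sigma_{\alpha_2\cup\beta_2}(k)$ is taken). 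Since $k$ was arbitrary, $\alpha_1\cup\beta_1\le\alpha_2\cup\beta_2$. If one also wants the size bookkeeping, note $|\alpha_1\cup\beta_1|=|\alpha_1|+|\beta_1|$ and similarly for the others, and one passes to a common $k$ beyond all lengths.

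The only genuine content is verifying the $\max$-formula for $\Sigma_{\lambda\cup\mu}$, and even that is standard: I would prove it by an exchange argument — among the $k$ largest parts of the merged multiset, say $a$ come from $\lambda$ and $b$ from $\mu$; these must be the $a$ largest parts of $\lambda$ and the $b$ largest parts of $\mu$ (otherwise swapping in a larger omitted part would increase the sum, contradicting maximality), giving $\ge$; conversely any choice of $a$ parts of $\lambda$ and $b$ of $\mu$ is a choice of $k$ parts of the merged multiset, giving $\le$. I expect this exchange argument to be the main (and only) obstacle, and it is entirely routine; alternatively one can cite it as well known. I would keep the written proof to a few lines, stating the $\max$-formula and the two-inequality chain above.
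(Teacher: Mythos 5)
Your proof is correct, and it takes a genuinely different (if closely related) route from the paper's. The paper picks, by padding with zeros and a labeling function $f$, one particular interleaving of $\alpha_1$ and $\beta_1$ realizing $\alpha_1\cup\beta_1$, then builds the intermediate sequence $\lambda$ by replacing each entry of $\alpha_1\cup\beta_1$ with the corresponding entry of $\alpha_2$ or $\beta_2$ according to $f$; it verifies $\alpha_1\cup\beta_1\le\lambda$ prefix-by-prefix and then applies the ``sorting only increases in dominance order'' observation to get $\lambda\le\alpha_2\cup\beta_2$. You instead isolate the clean identity
\[
\Sigma_{\lambda\cup\mu}(k)=\max_{a+b=k}\big(\Sigma_\lambda(a)+\Sigma_\mu(b)\big),
\]
and the lemma drops out in two lines without ever naming an intermediate sequence. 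The key exchange step is present in both — the paper hides it in the observation that permuting toward non-increasing order only increases the dominance partial sums, you surface it as the proof of the $\max$-formula — but your formulation makes the dependence on the interleaving disappear entirely and gives a reusable statement ($\Sigma$ of a union is the max-plus convolution of the $\Sigma$'s). That is a modest conceptual gain; the paper's version has the advantage of being entirely self-contained within the one lemma. One small bookkeeping point you already flag: since the definition of $\le$ used in the paper is just the prefix-sum inequality without an equal-size hypothesis, no size argument is needed, but if you want strict comparability of partitions you should note $|\alpha_1\cup\beta_1|=|\alpha_1|+|\beta_1|$ and likewise on the other side.
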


\begin{proof}
We begin with the following observation. If $\lambda$ is an integer sequence and $\lambda'$ is the partition obtained by permuting the elements of $\lambda$, then $\lambda \leq \lambda'$. Indeed, $\lambda'$ is obtained from $\lambda$ through the following algorithm: if $\lambda_i > \lambda_j$ for $i > j$, swap $\lambda_i$ and $\lambda_j$, and swaps of this type are strictly increasing with respect to $\leq$. 

Add $0$'s to $\alpha_i$, $\beta_i$ so that $\#\alpha_1+\#\beta_1=\#\alpha_2 + \#\beta_2$, and choose a function $f: \{1,2,...,\#\alpha_1 + \#\beta_1\} \to \{\alpha,\beta\}$ such that $\alpha_1 = ((\alpha_1 \cup \beta_1)_i \mid f(i) = \alpha)$ and $\beta_1 = ((\alpha_1 \cup \beta_1)_i \mid f(i) = \beta)$. Define a sequence $\lambda$ by replacing each element of $\alpha_1 \cup \beta_2$ with the corresponding element in $\alpha_2$ or $\beta_2$ according to $f$, i.e.

$$\lambda_i = \begin{cases*}
                    (\alpha_2)_{\#\{j \leq i \mid f(j) = \alpha\}} & if  $f(i)=\alpha$  \\
                     (\beta_2)_{\#\{j \leq i \mid f(j) = \beta\}} & if  $f(i)=\beta$
                 \end{cases*} $$
Since $\alpha_1 \leq \alpha_2$ and $\beta_1 \leq \beta_2$, we have $\alpha_1 \cup \beta_1 \leq \lambda$. On the other hand, $\alpha_2 \cup \beta_2$ is the partition obtained by permuting the entries of $\lambda$. So by the discussion of the previous paragraph, $\lambda \leq \alpha_2 \cup \beta_2$. So $\alpha_1 \cup \beta_1 \leq \alpha_2 \cup \beta_2$, as asserted.
\end{proof}

If $r\geq \lambda_1$, we write $(r,\lambda) = \lambda \cup \{r\}$. We write $\alpha = \lambda \cup_{\geq} \mu$ if $\alpha = \lambda \cup \mu$ and the smallest part of $\lambda$ is at least as large as the largest part of $\mu$. We write $\mu \subseteq \lambda$ if $m_{\mu}(x) \leq m_{\lambda}(x)$ for every $x \in \ZZ_{\geq 0}$. In this case, there is a unique subpartition partition $\lambda \setminus \mu \subseteq \lambda$ such that $\lambda = \mu \cup (\lambda \setminus \mu)$.


A \emph{bipartition} of $n$ is a pair of partitions $(\lambda,\mu)$ such that $|\lambda|+|\mu|=n$. Write $\mathcal{B}(n)$ for the set of bipartitions of $n$. $\ZZ_2$ acts by flipping on $\mathcal{B}(n)$, and we write $\mathcal{D}(n)$ for the set of $\ZZ_2$-orbits on $\mathcal{B}(n)$.

\subsection{Preliminaries on nilpotent orbits in classical types}\label{subsec:orbits}

For $X \in \{B,C,D\}$ and $n$ a positive integer, let $\fg_X(n)$ denote the simple complex Lie algebra of type $X$ and rank $n$. Define the sets
\begin{align*}
    \mathcal{P}_B(2n+1) &= \{\lambda \in \mathcal{P}(2n+1) \mid x \text{ even} \implies m_{\lambda}(x) \text{ even}\}\\
    \mathcal{P}_C(2n) &= \{\lambda \in \mathcal{P}(2n) \mid x \text{ odd} \implies m_{\lambda}(x) \text{ odd (or 0)}\}\\
    \mathcal{P}_D(2n) &= \{\lambda \in \mathcal{P}(2n) \mid x \text{ even} \implies m_{\lambda}(x) \text{ even}\}
\end{align*}
The elements of $\mathcal{P}_B(2n+1)$ (resp. $ \mathcal{P}_C(2n)$, resp. $\mathcal{P}_D(2n)$) are called \emph{$B$-partitions} (resp. \emph{$C$-partitions}, resp. \emph{$D$-partitions}). There are well-known bijections
$$\cN_o^{\fg_B} \xrightarrow{\sim} \mathcal{P}_B(2n+1), \qquad  \cN_o^{\fg_C} \xrightarrow{\sim} \mathcal{P}_C(2n)$$
For $X=D$, there is a surjection
$$\cN_o^{\fg_D} \twoheadrightarrow \mathcal{P}_D(2n)$$
which is a bijection over partitions in $\mathcal{P}_D(2n)$, except the very even partitions, over which is two-to-one, see \cite[Section 5.1]{CM}. 

For $X \in \{B,C,D\}$, the $X$-collapse of $\lambda$ is the unique largest $X$-partition $\lambda_X$ such that $\lambda_X \leq \lambda$. A formula for $\lambda_X$ is provided in \cite[Lem 6.3.8]{CM}. 

If $\lambda$ is an $X$-partition, define the \emph{dual} partition $d_{BV}(\lambda)$ of $\lambda$ as follows:
\begin{itemize}
    \item If $\lambda \in \mathcal{P}_B(2n+1)$, then $d_{BV}(\lambda) = ((\lambda^t)^-)_C \in \mathcal{P}_C(2n)$.
    \item If $\lambda \in \mathcal{P}_C(2n)$, then $d_{BV}(\lambda) = ((\lambda^t)^+)_B \in \mathcal{P}_B(2n+1)$.
    \item If $\lambda \in \mathcal{P}_D(2n)$, then $d_{BV}(\lambda)=(\lambda^t)_D \in \mathcal{P}_D(2n)$.
\end{itemize}

\subsection{Preliminaries on symbols}\label{subsec:symbols}

In this section, we will recall some of the basic notation for symbols, following \cite[Section 4.4]{geckmalle}. 

For a sequence $X=(x_1,x_2,\dots,x_n)$ and integer $a\in \ZZ$ write $X+a$ for the sequence $(x_1+a,x_2+a,\dots,x_n+a)$. We write $z$ for the sequence $(0,1,\dots,n-1)$ ($n$ will always be clear from the context and therefore omitted from the notation).

A $\beta$-\emph{set} is a strictly increasing sequence $x=(x_1 < x_2 < ... <x_m)$. The partition $\mathcal{P}(x)$ associated to $x$ is $(x_1-0,x_2-1,\dots,x_m-(m-1))$. We say that $x$ is associated to a partition $\lambda$ if $\mathcal{P}(x)=\lambda$. For example, the $\beta$-sets associated to  $\lambda=(1,2,4)$ are those of the form $(1,3,6)$ or $(0,1,2,...,k-1,k+1,k+3,k+6)$ for some $k$.

For integers $a,b \geq 0$ and $d$, let $\widetilde{\mathcal{X}}_d^{a,b}$ denote the set of pairs $(X,Y)$ of finite sequences $X=(x_1,...,x_r)$ and $Y=(y_1,...,y_s)$ of nonnegative integers, such that
\begin{align*}
    x_j &\geq x_{j-1} + a+b \qquad && 2 \leq j \leq r,\\
    y_j &\geq y_{j-1} + a+b &&2 \leq j \leq s,\\
    y_1 &\geq b, && \\
    r-s &=d. &&
\end{align*}
The elements of $\widetilde{\mathcal{X}}_d^{a,b}$ are called \emph{symbols}, and are often denoted as follows
$$(X,Y) = \begin{pmatrix} x_1 & &x_2&\dots & &x_r\\&y_1&&\dots&y_s
\end{pmatrix}$$
The \emph{defect} of $S=(X,Y) \in \widetilde{\mathcal{X}}_d^{a,b}$ is the integer $d(S)=d=|X|-|Y|$. The \emph{content} is the integer $|S|:=|X|+|Y|$. Define
\begin{align*}
    r(S) &:= \sum_{x \in X \cup Y} x -(a+b) \lfloor \frac{(|S|-1)^2}{4}\rfloor - b \lfloor \frac{|S|}{2}\rfloor \\
    m(S) &:= \sum_{x \in X \cup Y} x -(a+b) \left(\binom{|X|}{2}+\binom{|Y|}{2}\right)- b |Y|.
\end{align*}
where the sums run over the multiset $X \cup Y$. We call $r(S)$ (resp. $m(S)$) the \emph{rank} (resp. \emph{relative rank}) of $S$. It is easy to check that both $r(S)$ and $m(S)$ are nonnegative integers.
Define 
$$\Delta^{a,b}_d := (a+b)\left\lfloor \frac {d^2}{4}\right\rfloor - b \left\lfloor\frac {d}2\right\rfloor = \begin{cases} (a+b)\left(\frac {d}{2}\right)^2 - b \left(\frac {d}2\right) & d \text{ even}\\ (a+b)\left(\frac {d-1}{2}\right)\left(\frac {d-1}{2}+1\right) - b \left(\frac {d-1}2\right) & d \text{ odd}.\end{cases}$$
Then for $S\in \tilde {\mathcal X}^{a,b}_d$
$$m(S) = r(S) - \Delta^{a,b}_{d}.$$
The \emph{shift} of a symbol $S =(X,Y) \in \widetilde{\mathcal{X}}_d^{a,b}$ is the symbol
$$S' = (\{0\} \cup (X+a+b), \{b\} \cup (Y+a+b)) \in \widetilde{\mathcal{X}}_d^{a,b}$$
Note that both defect and rank are invariant under shift. Write $\mathcal{X}_d^{a,b}$ for the set of equivalence classes under the equivalence relation generated by the shift operation. Write $\mathcal{X}_{d,n}^{a,b}$ for the set of classes in $\mathcal{X}_d^{a,b}$ of rank $n$ and write
$$\mathcal{X}^{a,b}_{\bullet,n} = \bigsqcup_{d \in \ZZ} \mathcal{X}_{d,n}^{a,b},\quad \mathcal{X}^{a,b}_{even,n} = \bigsqcup_{d \in \ZZ,d\ even} \mathcal{X}_{d,n}^{a,b},\quad \mathcal{X}^{a,b}_{odd,n} = \bigsqcup_{d \in \ZZ,d\ odd} \mathcal{X}_{d,n}^{a,b}.$$

There is an equivalence relation $\sim$ on $\mathcal{X}_{\bullet,n}^{a,b}$ defined as follows: $S_1 \sim S_2$ if and only if there are representatives $(X_1,Y_1)$ and $(X_2,Y_2)$ of $S_1$ and $S_2$ such that the multisets $X_1 \cup Y_1$ and $X_2 \cup Y_2$ coincide. 

There is a bijection $\mathcal {B}:\mathcal X^{a,b}_{d,n}\to \mathcal B(n-\Delta_d^{a,b})$ given by 
\begin{equation}\label{eq:P}(X,Y)\mapsto (X-(0,(a+b),2(a+b),\dots),Y-(b,b+(a+b),b+2(a+b),\dots)).\end{equation}
For $a=b=0$ this specialises to a bijection $\mathcal B:\mathcal X^{0,0}_{d,n}\to \mathcal B(n)$.

If $b=0$, then $\ZZ_2$ acts on $\mathcal{X}_d^{a,0} \cup \mathcal{X}^{a,0}_{-d}$ by interchanging the rows. Let $\mathcal{Y}_d^a$ denote the set of $\ZZ_2$-orbits on $\mathcal{X}_d^{a,0} \cup \mathcal{X}^{a,0}_{-d}$. Note that the rank of a symbol is invariant under flipping. We call $S\in \mathcal Y_d^a$ degenerate if its $\ZZ_2$-orbit is a singleton. We write $\mathcal{Y}_{d,n}^a$ for the set of orbits of rank $n$ and
$$\mathcal{Y}^{a}_{\bullet,n} = \bigsqcup_{d \ge 0} \mathcal{Y}_{d,n}^{a},\quad \mathcal{Y}^{a}_{even,n} = \bigsqcup_{d \ge 0,d\ even} \mathcal{Y}_{d,n}^{a},\quad  \mathcal{Y}^{a}_{odd,n} = \bigsqcup_{d \ge 0,d\ odd} \mathcal{Y}_{d,n}^{a}.$$
It is clear that $\sim$ induces an equivalence relation on $\mathcal{Y}_{\bullet, n}^a$.

Given a symbol $S = (X,Y)\in \mathcal Y^a_{d,n}$ define $\underline S\in \mathcal X^{a,0}_{|d|,n}$ as follows: if $d\ne 0$
\begin{equation}
    \underline S := \begin{cases}
        (X,Y) & \mbox{if } |X|>|Y| \\
        (Y,X) & \mbox{if } |Y|>|X|
    \end{cases}
\end{equation}
and if $d=0$
\begin{equation}
    \underline S := \begin{cases}
        (X,Y) & \mbox{if } \sum_{x\in X}x\ge \sum_{y\in Y}y \\
        (Y,X) & \mbox{if } \sum_{x\in X}x< \sum_{y\in Y}y 
    \end{cases}
\end{equation}

There is a bijection 
\begin{equation}
    \label{eq:mathcalD}
    \mathcal {D}:\mathcal Y^{a}_{d,n}\to \begin{cases}\mathcal D(n) & d=0 \\ \mathcal B_{n-\Delta^{a,0}_d} & d\ne 0.\end{cases}
\end{equation}
given by 
\begin{equation}\label{eq:P}S\mapsto (X-(0,(a+b),2(a+b),\dots),Y-(0,(a+b),2(a+b),\dots))\end{equation}
where $(X,Y) = \underline S$.

For $d\in \mathbb Z$ there is a bijection $\mathcal X_{d,n}^{1,0}\to \mathcal X_{d+1,n}^{0,1}$ given by $S=(X,Y)\mapsto S^!$ where 
$$S^! = (\{0\}\cup (X+1),Y+1).$$
We have that $\mathcal B(S) = \mathcal B(S^!)$.

A symbol $S =(X,Y) \in \mathcal{X}^{a,b}_{d,n}$ is called \emph{special} if $d \in \{0,1\}$ and $x_1 \leq y_1 \leq x_2 \leq ... $. Note that this definition is independent of the choice of representative of $S$. Every $\sim$-equivalence class in $\mathcal{X}^{a,b}(n)$ contains a unique special element. A symbol $S \in \mathcal{Y}^{a,b}_n$ is called \emph{special} if the corresponding $\ZZ_2$-orbit in $\mathcal{X}^{a,b}_n$ contains a special symbol. Given a symbol $S$ in $\mathcal{X}^{a,b}_n$ or $\mathcal{Y}^{a}_n$, we write $S^{sp}$ for the (unique) special symbol in the $\sim$-equivalence class of $S$. 


For $n \geq 0$, write $W_n$ (resp. $W_n'$) for the Weyl group of type $B_n$ (resp. $D_n$). 
Recall that $\mathrm{Irr}(W_n)\leftrightarrow \mathcal B_n$ and $\mathrm{Irr}(W_n') \leftrightarrow \mathcal D_n$ except to each degenerate orbit in $\mathcal D_n$ there are two irreducible representations of $W_n'$.

To make our terminology consistent with that of \cite{cmo} we say that $S$ is an \emph{a-symbol} if $a+b = 1$ and an \emph{s-symbol} if $a+b = 2$.

We conclude this subsection with two elementary lemmas. A \emph{linear presentation} of a symbol $S =(X,Y) \in \widetilde{\mathcal{X}}_{\bullet}^{a,0}$ is a pair $(Z,\epsilon)$ consisting of sequences $Z \in \ZZ_{\geq 0}^{|S|}$ and $\epsilon \in \{0,1\}^{|S|}$ such that $X=(Z_i \mid \epsilon_i = 0)$ and $Y= (Z_i \mid \epsilon_i = 1)$. We say that $(Z,\epsilon)$ is \emph{canonical} if $Z$ is non-decreasing and $\epsilon_i=0$ whenever $Z_i=Z_{i+1}$. Note that if $a>0$, then $S$ has a unique canonical linear presentation, which we will denote by $(Z^*(S),\epsilon^*(S))$ or simply $(Z^*,\epsilon^*)$ when there is no risk for confusion. 

\begin{lemma}
\label{lem:linearpresentation}
Suppose $S =(X,Y) \in \widetilde{\mathcal{X}}_{\bullet}^{1,0}$. Let $(Z^*,\epsilon^*)$ be the canonical linear presentation of $S$ and let $(Z,\epsilon)$ be any linear presentation of $S$. Then 
$$Z^* + \epsilon^* \geq Z + \epsilon$$
\end{lemma}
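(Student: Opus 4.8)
The plan is to compare the two linear presentations entry-by-entry after first bringing them into a common shape. The key observation is that the canonical presentation $(Z^*,\epsilon^*)$ is characterized by two features: the underlying multiset $Z^*$ (as a multiset, $Z^* = X \cup Y = Z$) is sorted non-decreasingly, and among equal consecutive values all the $0$'s (i.e. the entries of $X$) come before the $1$'s (i.e. the entries of $Y$). So first I would reduce to the case where $Z$ itself is already sorted non-decreasingly: since $Z$ and $Z^*$ are the same multiset, $Z$ is a permutation of $Z^*$, and sorting $Z$ is achieved by a sequence of transpositions of adjacent entries $Z_i > Z_{i+1}$; I claim each such transposition (carrying $\epsilon$ along) can only increase $Z+\epsilon$ in the dominance order, or rather — more carefully — I should track the partial sums $\sum_{i\le j}(Z_i+\epsilon_i)$. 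Actually the cleanest route is to observe that $Z+\epsilon$ and $Z^*+\epsilon^*$ both have the \emph{same total sum} $\sum_i Z_i + \#Y$ (since $\epsilon$ and $\epsilon^*$ each have exactly $|Y|$ ones), so the dominance inequality is really a statement about prefix sums, and I only need $\sum_{i\le j}(Z^*_i+\epsilon^*_i) \ge \sum_{i\le j}(Z_i+\epsilon_i)$ for every $j$.

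Having reduced to $Z$ non-decreasing (equal to $Z^*$ as a sequence), the two presentations differ only in $\epsilon$ versus $\epsilon^*$, and $\epsilon^*$ is obtained from $\epsilon$ by, within each block of equal $Z$-values, pushing all the $1$'s to the right (equivalently pushing $0$'s left). So I would argue: fix a maximal block $Z_p = Z_{p+1} = \dots = Z_{q}$ of equal values; within this block $\epsilon^*$ restricted to $\{p,\dots,q\}$ is the non-decreasing rearrangement of $\epsilon$ restricted to that block (all $0$'s then all $1$'s). For prefix sums ending \emph{inside} a block, replacing $\epsilon$ by its non-decreasing rearrangement within the block can only decrease the prefix sum up to that point — wait, that's the wrong direction; let me reconsider. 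Pushing $1$'s to the right \emph{decreases} prefix sums of $\epsilon$ at positions inside the block. So $\sum_{i\le j}\epsilon^*_i \le \sum_{i\le j}\epsilon_i$ in general?? That would give $Z^*+\epsilon^* \le Z+\epsilon$, the opposite inequality. Since $Z^*_i = Z_i$ once sorted, the inequality $Z^*+\epsilon^* \ge Z+\epsilon$ must be getting all its strength from the sorting step, not the $\epsilon$-rearrangement; so the $\epsilon$-rearrangement within blocks must actually be forced to go the \emph{other} way by the canonicity convention, or — more likely — the point is that $(Z,\epsilon)$ being an \emph{arbitrary} presentation means $Z$ need not be sorted, and the gain from sorting $Z$ dominates the (small, bounded-by-block-length) loss from the $\epsilon$-rearrangement. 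I would make this precise by the following lemma: for each $j$, if $Z_{\sigma(1)} \le \dots \le Z_{\sigma(j)}$ is obtained by taking the $j$ smallest entries of $Z$ (which are the first $j$ entries of $Z^*$), then $\sum_{i=1}^j Z^*_i \le \sum_{i\le j} Z_i$ with equality iff $\{1,\dots,j\}$ already indexes the $j$ smallest — no, again wrong direction. Let me just commit: $\sum_{i\le j} Z^*_i$ is the sum of the $j$ \emph{smallest} entries, so $\sum_{i\le j} Z^*_i \le \sum_{i\le j}Z_i$, meaning sorting \emph{decreases} prefix sums too.

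So both effects decrease prefix sums, which contradicts the claimed inequality unless I have the convention backwards. Re-reading: "$\epsilon_i = 0$ whenever $Z_i = Z_{i+1}$" — so in a block of equal values, $\epsilon^*$ is $0$ everywhere except possibly the \emph{last} position, i.e. $0$'s are pushed \emph{left} and the block contributes its $1$'s (= $X$-entries being the smaller-indexed ones). Hmm — that is pushing $1$'s right within a block, as I said. Given the sign confusion, the honest plan is: \textbf{(1)} Establish that $Z^*$ is the sorted rearrangement of $Z$ and reduce the claim to prefix sums since total sums agree. \textbf{(2)} Treat the two presentations via the intermediate presentation $(Z^*, \epsilon')$ where $\epsilon'$ is $\epsilon$ transported along the sorting permutation (choosing, among sorting permutations, any fixed one — sorting is not unique on repeated values, which is exactly where the block analysis enters). \textbf{(3)} Compare $(Z,\epsilon)$ with $(Z^*,\epsilon')$: here $Z^*$ dominates $Z$ in prefix sums \emph{from the right} (largest entries accumulate fastest at the end when sorted ascending) — i.e. use that $\sum_{i \ge k} Z^*_i \ge \sum_{i\ge k} Z_i$, equivalently prefix sums from the left satisfy $\sum_{i\le j}Z^*_i \le \sum_{i\le j}Z_i$, and then carefully combine with the $\epsilon$ bookkeeping so that the $\epsilon$-part exactly compensates. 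I expect the real content — and the main obstacle — is precisely this combination: showing that the $\pm 1$ fluctuations in $\epsilon$ versus $\epsilon^*$, localized within blocks of equal $Z$-values, never overtake the monotone gain from reordering $Z$, which I would prove by induction on the number of adjacent transpositions needed to sort $Z$, checking that a single adjacent swap of $(Z_i,\epsilon_i),(Z_{i+1},\epsilon_{i+1})$ with $Z_i \ge Z_{i+1}$ — followed by re-canonicalizing $\epsilon$ on any newly-created equal pair — weakly increases every prefix sum of $Z+\epsilon$. The base case and the single-swap step are short finite checks (finitely many cases for the values of $\epsilon_i,\epsilon_{i+1}$ and whether $Z_i = Z_{i+1}$), so once the induction is set up correctly the argument closes quickly.
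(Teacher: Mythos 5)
There is a genuine gap. You correctly observe that $Z^*$ and $Z$ are the same multiset and that the total sums of $Z^*+\epsilon^*$ and $Z+\epsilon$ agree, but you never find the one observation that makes the lemma immediate: the \emph{summed} sequence $Z+\epsilon$ is itself presentation-independent as a multiset, namely $\{Z_i+\epsilon_i\}=X\cup(Y+1)$ for every linear presentation $(Z,\epsilon)$ of $S=(X,Y)$. Once you see this, the lemma reduces to the rearrangement principle already proved inside Lemma \ref{lem:uniondominance} (a sorted arrangement of a fixed multiset is extremal in dominance), and all that remains is to check that $Z^*+\epsilon^*$ is monotone. That check is a two-case verification: for $i<j$ either $Z^*_i<Z^*_j$, in which case $(Z^*+\epsilon^*)_i\le Z^*_i+1\le Z^*_j\le(Z^*+\epsilon^*)_j$, or $Z^*_i=Z^*_j$ with $\epsilon^*_i=0$, $\epsilon^*_j=1$ by canonicity, in which case $(Z^*+\epsilon^*)_i<(Z^*+\epsilon^*)_j$. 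This is the paper's proof, and it is three lines long.

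Your alternative route --- sort $Z$ first, then rearrange $\epsilon$ within blocks of equal $Z$-values --- is exactly where your argument breaks down, and you flag the problem yourself without resolving it: treated separately, the $\epsilon$-rearrangement within a block moves prefix sums in the ``wrong'' direction, and you are left needing to show that the gain from sorting $Z$ always dominates the loss from rearranging $\epsilon$. That compensation claim is the entire content of the lemma, and your proposal leaves it as ``the main obstacle,'' to be handled by an induction on adjacent transpositions whose key step is never verified (and which is clouded by the several sign reversals in your write-up). The fix is not to separate $Z$ from $\epsilon$ at all: within a block $Z^*_p=\dots=Z^*_q$, the canonicity convention ($0$'s before $1$'s) is precisely what makes the \emph{combined} sequence $Z^*+\epsilon^*$ non-decreasing across the block, so sorting the sum is the right object to compare, not $Z$ and $\epsilon$ independently.
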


\begin{proof}
Note that $Z^*+\epsilon=Z+\epsilon$ as multisets. So by the argument in the proof of Lemma \ref{lem:uniondominance}, it suffices to show that the sequence $Z^*+\epsilon^*$ is non-decreasing. If $i<j$, then either
\begin{itemize}
    \item[(1)] $Z^*_i < Z^*_j$, or
    \item[(2)] $Z^*_i=Z^*_j$, $\epsilon^*_i=0$, and $\epsilon^*_j=1$. 
\end{itemize}
In the first case, we have $(Z^*+\epsilon^*)_i \leq Z^*_i+1 \leq Z^*_j \leq (Z^*+\epsilon^*)_j$. In the second case, $(Z^*+\epsilon^*)_i = Z^*_i < Z^*_i+1 = (Z^*+\epsilon^*)_j$. Hence $Z^*+\epsilon^*$ is non-decreasing.
\end{proof}

For a symbol $S=(X,Y)$ let $\tilde S = (2X,2Y+1)^{sp}$.
\begin{lemma}
    \label{lem:unwind}
Let $S=(X,Y)$ be an a-symbol and let $(Z,\epsilon)$ be the canonical linear presentation for $S$. Then $Z^*(\tilde S) = 2Z+\epsilon$.
\end{lemma}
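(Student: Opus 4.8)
The plan is to unwind the definitions on both sides and compare. On the right-hand side, $S=(X,Y)$ is an $a$-symbol, so $a+b=1$; since $a>0$ forces $a=1$, $b=0$, the symbol $S$ has a unique canonical linear presentation $(Z,\epsilon)$ by the remark preceding Lemma \ref{lem:linearpresentation}. Here $Z$ is non-decreasing, $\epsilon_i=0$ whenever $Z_i=Z_{i+1}$, and $X=(Z_i\mid\epsilon_i=0)$, $Y=(Z_i\mid\epsilon_i=1)$. On the left-hand side, $\tilde S=(2X,2Y+1)^{sp}$ lives in $\widetilde{\mathcal X}^{2,0}_{\bullet}$ (i.e. it is an $s$-symbol, $a+b=2$), and $Z^*(\tilde S)$ is the $Z$-part of its canonical linear presentation, which is again uniquely determined since $a=2>0$.

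First I would write down a natural linear presentation of the symbol $(2X,2Y+1)$ — not yet shifted to special form — using the same index set as $(Z,\epsilon)$: set $W_i = 2Z_i+\epsilon_i$ and keep the bookkeeping sequence $\epsilon$. Then $(W_i\mid\epsilon_i=0)=2X$ and $(W_i\mid\epsilon_i=1)=2Y+1$, so $(W,\epsilon)$ is a linear presentation of $(2X,2Y+1)$. The key point is to check that $(W,\epsilon)$ is in fact \emph{canonical} for the $s$-symbol $(2X,2Y+1)^{sp}$: I must verify (a) $W$ is non-decreasing, and (b) $\epsilon_i=0$ whenever $W_i=W_{i+1}$, and (c) that $(2X,2Y+1)$ is already in the shift class containing the special symbol, or else track the shift. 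For (a): if $Z_i<Z_{i+1}$ then $W_i\le 2Z_i+1\le 2Z_{i+1}\le W_{i+1}$; if $Z_i=Z_{i+1}$ then by canonicity of $(Z,\epsilon)$ we have $\epsilon_i=0$, so $W_i=2Z_i<2Z_i+1=2Z_{i+1}+\epsilon_{i+1}\ge W_{i+1}$... — more precisely $W_i=2Z_i\le 2Z_{i+1}+\epsilon_{i+1}=W_{i+1}$, with equality only if $\epsilon_{i+1}=0$. For (b): $W_i=W_{i+1}$ with the parities forced by $\epsilon$ means $2Z_i+\epsilon_i=2Z_{i+1}+\epsilon_{i+1}$; reducing mod $2$ gives $\epsilon_i=\epsilon_{i+1}$, hence $Z_i=Z_{i+1}$, hence $\epsilon_i=0$ by canonicity of $(Z,\epsilon)$, as required.

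The remaining issue — and the main obstacle — is the passage to the \emph{special} form: $\tilde S$ is defined as $(2X,2Y+1)^{sp}$, the unique special symbol in the $\sim$-equivalence class, and I need that the canonical linear presentation is unchanged by this operation, or that $(2X,2Y+1)$ already sits in the correct shift class. Here I would use that for $a>0$ the canonical linear presentation is a complete invariant of the $\sim$-equivalence class together with the defect (it records the multiset $2X\cup(2Y+1)$ and the interleaving pattern), and that the special representative within a shift class is picked out precisely by the interleaving condition $x_1\le y_1\le x_2\le\cdots$, which is exactly what condition (b) above encodes (when $W_i=W_{i+1}$ the $X$-entry comes first). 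I should double-check that no genuine shift is needed, i.e. that $y_1\ge b=0$ is automatic and the defect of $(2X,2Y+1)$ already agrees with that of the special symbol in its class — this follows because $\mathcal{B}$ (or $\mathcal D$) is a bijection and $\tilde S$ and $(2X,2Y+1)$ have the same image. Granting this, $(W,\epsilon)$ is the canonical linear presentation of $\tilde S$, so $Z^*(\tilde S)=W=2Z+\epsilon$, which is the claim. I expect the write-up to be short once the canonicity verification is done carefully; the only subtlety to be vigilant about is exactly how the $sp$ operation interacts with the indexing, and whether degenerate/defect-$0$ symbols need separate handling (they should not, since canonicity of $(Z,\epsilon)$ is symmetric in a way compatible with $\underline S$).
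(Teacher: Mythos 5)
Your proof is correct and follows essentially the same route as the paper: the heart of the matter is that canonicity of $(Z,\epsilon)$ forces $2Z+\epsilon$ to be non-decreasing (in fact strictly increasing), and the multiset of entries of $\tilde S$ visibly agrees with that of $2Z+\epsilon$. The extra verification you propose for the $\epsilon$-component and the interaction with the $(\cdot)^{sp}$ operation is harmless but not needed, since $Z^*(\tilde S)$ is determined by the multiset of entries alone, namely as its non-decreasing arrangement.
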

\begin{proof}
    Clearly $\tilde S$ and $2Z+\epsilon$ are the same as multisets.
    Thus it suffices to prove that $2Z+\epsilon$ is non-decreasing.
    Let $i<j$. Then either
    \begin{itemize}
        \item[(1)] $Z_i < Z_j$, or
        \item[(2)] $Z_i=Z_j$, $\epsilon_i=0$, and $\epsilon_j=1$. 
    \end{itemize}
    In the first case $2Z_i +2 \le 2Z_j$ and so $2Z_i+\epsilon_i < 2Z_i + 2 \le 2Z_j \le 2Z_j +\epsilon_j$.
    In the second case $2Z_i+\epsilon_i = 2Z_i < 2Z_i+1 = 2Z_j+\epsilon_j$. Hence $2Z+\epsilon$ is non-decreasing (in fact increasing).
\end{proof}

\subsection{Combinatorics for Spin groups}
\label{subsec:spincombo}

In this section, we will recall, and reinterpret, some of the combinatorics from \cite[Section 4]{LusztigSpaltenstein1985} required to formulate the generalized Springer correspondence for $Spin(N)$.

If $m \in \ZZ$, define $d(m) \in \{0,\pm 1\}$ by
$$d(m) = \begin{cases*}
                    \phantom{-}0 & if  $m$ is even\\
                    \phantom{-}1 & if $m \equiv 1 \mod{4}$\\
                    -1 & if $m \equiv 3 \mod{4}$
                 \end{cases*}$$
If $\lambda \in \mathcal{P}(n)$, define 
\begin{equation}\label{eq:dlambda}
d(\lambda) = \sum_i d(\lambda_i).
\end{equation}

Now suppose $\lambda = (\lambda_1 \leq ... \leq \lambda_m)\in \mathcal{P}_{ro}(N)$. Following \cite[Section 4.11]{LusztigSpaltenstein1985}, we will define a bipartition $\rho(\lambda)$. If $m=0$, we put $\rho(\lambda) = ((0),(0))$. Suppose $m\neq 0$. Let
\begin{equation}\label{eq:mu}
\mu = \begin{cases*}
                    (\lambda_1 \leq ... \leq \lambda_{m-1}) & if  $\lambda_m$ is odd\\
                    (\lambda_1 \leq ... \leq \lambda_{m-2}) & if $\lambda_m$ is even
                 \end{cases*}\end{equation}
We will now define $\rho(\lambda)=(\alpha,\beta)$ in terms of $\rho(\mu)=(\gamma,\delta)$. There are eight cases:
\vspace{2mm}
\begin{itemize}
    \item[$(a)$] If $d(\lambda_m)=0$, set $r=\lfloor \frac{1}{4}(\lambda_m+2)\rfloor - d(\mu)$, $d=\lfloor \frac{1}{4}\lambda_m\rfloor + d(\mu)$. 
    \begin{itemize}
        \item[$(a_1)$] If $d(\mu)<0$, then $\alpha=(\gamma,r)$ and $\beta=(\delta,s)$.
        \item[$(a_2)$] If $d(\mu) \leq 0$, then $\alpha=(\gamma,s)$ and $\beta=(\gamma,r)$.
    \end{itemize}
    \item[$(b)$] If $d(\lambda_m)=1$, set $r=\frac{1}{4}(\lambda_m-1)-d(\mu)$.
    \begin{itemize}
        \item[$(b_1)$] If $d(\mu)>0$, then $\alpha=(\gamma,r)$ and $\beta=\delta$.
        \item[$(b_2)$] If $d(\mu)=0$, then $\alpha=(\delta,r)$ and $\beta=\gamma$.
        \item[$(b_3)$] If $d(\mu)<0$, then $\alpha=\gamma$ and $\beta=(\delta,r)$.
    \end{itemize}
    \item[$(c)$] If $d(\lambda_m)=-1$, set $r=\frac{1}{4}(\lambda_m-3)+d(\mu)$.
    \begin{itemize}
        \item[$(c_1)$] If $d(\mu)>1$, then $\alpha=\gamma$ and $\beta=(\delta,r)$.
        \item[$(c_2)$] If $d(\mu)=1$, then $\alpha=(\delta,r)$ and $\beta=\gamma$.
        \item[$(c_3)$] If $d(\mu)<1$, then $\alpha=(\gamma,r)$ and $\beta=\delta$.
    \end{itemize}
\end{itemize}
It is shown in \cite[Lemma 4.12]{LusztigSpaltenstein1985} that $\rho(\lambda) \in \mathcal{B}(\frac{1}{4}(N-d(\lambda)(2d(\lambda)-1)))$. 

Below, it will be convenient to have a non-recursive formula for $\rho(\lambda)$. This will require some additional notation. For $n \in \ZZ$, let $r(n) \in \{0,1,2,3\}$ denote the residue of $n$ mod $4$ and let $q(n) \in \ZZ$ be such that $n=4q(n)+r(n)$. For $\lambda \in \mathcal{P}_{ro}(n)$ and $x \in \{0,1,2,3\}$, define the partition
$$\lambda^x = (q(\lambda_i) \mid r(\lambda_i)=x)$$
For any sequence $\mu=(\mu_1,...,\mu_m)$ of the same length as $\lambda$ define
$$\mu^x = (\mu_i \mid r(\lambda_i)=x)$$
Also let
$$D(\lambda) = (\sum_{j<i} d(\lambda_j))_i$$
Since every even part of $\lambda$ has even multiplicity, every part of $\lambda^x$, for $x\in\{0,2\}$, has even multiplicity. So, for $x \in \{0,2\}$, there is a partition $\underline{\lambda}^x$ such that 
$$\lambda^x = \underline{\lambda}^x \cup \underline{\lambda}^x$$
Similarly, for $x \in \{0,2\}$, there is a partition $\underline{D}(\lambda)^x$ such that
$$D(\lambda)^x = \underline{D}(\lambda)^x \cup \underline{D}(\lambda)^x$$

\begin{prop}\label{prop:spinnonrecursive}
Let $\lambda \in \mathcal{P}_{ro}(n)$ be rather odd and put $D=D(\lambda)$. Then
$$\rho(\lambda) = \begin{cases*}
                    (\underline{\lambda}^0 - \underline{D}^0) \cup (\lambda^1-D^1) \cup(\underline{\lambda}^2 + 1 - \underline{D}^2) \times (\underline{\lambda}^0+\underline{D}^0) \cup (\lambda^3+D^3) \cup (\underline{\lambda}^2+\underline{D}^2) & if  $d(\lambda)>0$  \\
                     (\underline{\lambda}^0+\underline{D}^0) \cup (\lambda^3+D^3) \cup (\underline{\lambda}^2+\underline{D}^2) \times (\underline{\lambda}^0 - \underline{D}^0) \cup (\lambda^1-D^1) \cup(\underline{\lambda}^2 + 1 - \underline{D}^2) & if $d(\lambda) \leq 0$
                 \end{cases*} $$
\end{prop}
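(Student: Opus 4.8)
The plan is to prove the identity by induction on $m=\#\lambda$, following exactly the recursion used to define $\rho$: if $\lambda_m$ is odd we take $\mu=(\lambda_1\le\dots\le\lambda_{m-1})$, and if $\lambda_m$ is even we take $\mu=(\lambda_1\le\dots\le\lambda_{m-2})$ (recalling that, since $\lambda\in\mathcal{P}_{ro}(n)$, the even part $\lambda_m$ has even multiplicity, so $\lambda_{m-1}=\lambda_m$). The base case $m=0$ is immediate: both the recursion and the displayed formula return $((0),(0))$ (here $d(\lambda)=0$, so the second branch applies, and every $\lambda^x$, $D(\lambda)^x$ is empty). The case $m=1$ is subsumed in the odd step with $\mu=()$, since a single even part is impossible in $\mathcal{P}_{ro}$.

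For the inductive step, I would first record how the ingredients of the formula for $\lambda$ arise from those for $\mu$. Because the new part(s) occupy the final position(s) and $\lambda$ is non-decreasing, $D(\lambda)_i=D(\mu)_i$ for every position $i$ of $\mu$, and the $D$-value at each newly added position equals $d(\mu)=\sum_j d(\mu_j)$; in the even case this uses $d(\lambda_{m-1})=0$, so that \emph{both} new $D$-entries equal $d(\mu)$. Writing $x=r(\lambda_m)$ and $k=q(\lambda_m)$: if $\lambda_m$ is odd then $\lambda^x=\mu^x\cup\{k\}$ and $D(\lambda)^x=D(\mu)^x\cup\{d(\mu)\}$, with all other $\lambda^y,D(\lambda)^y$ unchanged; if $\lambda_m$ is even then $\underline{\lambda}^x=\underline{\mu}^x\cup\{k\}$ and $\underline{D}(\lambda)^x=\underline{D}(\mu)^x\cup\{d(\mu)\}$, with everything else unchanged. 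Consequently each slot $\lambda^y-D(\lambda)^y$, $\lambda^y+D(\lambda)^y$, $\underline{\lambda}^y\pm\underline{D}(\lambda)^y$, $\underline{\lambda}^y+1-\underline{D}(\lambda)^y$ appearing in the formula for $\lambda$ equals the corresponding slot for $\mu$ with at most one extra part adjoined, and that extra part is exactly $k\pm d(\mu)$ (with a possible $+1$ in the residue-$2$ case); one checks directly that this equals the integer $r$ (or $s$) produced in the matching case of the recursion — for example, for residue $1$ one gets $k-d(\mu)=\tfrac14(\lambda_m-1)-d(\mu)$, which is precisely $r$ in case $(b)$, and for residue $3$ one gets $k+d(\mu)=\tfrac14(\lambda_m-3)+d(\mu)$, which is $r$ in case $(c)$.

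With these reindexing identities in hand, the proof reduces to comparing, term by term, the displayed formula for $\rho(\lambda)$ (with $\rho(\mu)=(\gamma,\delta)$ replaced by its inductive value) against the eight sub-cases $(a_1),(a_2),(b_1),(b_2),(b_3),(c_1),(c_2),(c_3)$ of the recursion. The only genuinely non-routine point is the interaction with the branch dichotomy $d(\lambda)>0$ versus $d(\lambda)\le0$: since $d(\lambda)=d(\mu)+d(\lambda_m)$, when $\lambda_m$ is even the branch for $\lambda$ coincides with that for $\mu$ and one recovers case $(a)$ with no interchange of the two components, whereas when $\lambda_m$ is odd the branch for $\lambda$ may differ from that for $\mu$, and this branch flip — which happens exactly according to the sign of $d(\mu)$ relative to $0$ — is precisely the source of the interchanges $\gamma\leftrightarrow\delta$ recorded in the sub-cases $(b_1)$--$(b_3)$ and $(c_1)$--$(c_3)$. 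Concretely, in case $(b)$ with $d(\mu)=0$ one has $d(\lambda)=1>0$, so the formula for $\rho(\lambda)$ uses the first branch while that for $\rho(\mu)$ used the second, the new part $r=k$ lands in the $\lambda^1-D^1$ slot, and the formula gives $\rho(\lambda)=(\delta,r)\times\gamma$, matching $(b_2)$; the remaining seven sub-cases are verified in exactly the same fashion.

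The main obstacle is thus organizational rather than conceptual: one must transcribe the recursion of \cite[Section 4.11]{LusztigSpaltenstein1985} carefully — resolving the evident typographical slips in the printed statement of cases $(a)$--$(c)$ (e.g. the undefined symbol $s$ and the stray ``$\beta=(\gamma,r)$'' in $(a_2)$) together with the overlapping boundary conditions such as ``$d(\mu)<0$'' versus ``$d(\mu)\le0$'' in case $(a)$, which are harmless because at the boundary value $d(\mu)=0$ the integers $r$ and $s$ either coincide or the two prescriptions yield the same bipartition — and one must keep track of the pairing convention implicit in the entrywise differences $\lambda^y-D(\lambda)^y$, $\underline{\lambda}^y-\underline{D}(\lambda)^y$, etc. (namely, align $\lambda^y$ and $D(\lambda)^y$ by increasing position $i$, so that the new part is paired with the new $D$-value, and re-sort the result into a partition). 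No additional well-definedness needs to be checked, since \cite[Lemma 4.12]{LusztigSpaltenstein1985} already guarantees that $\rho(\lambda)$ is a bipartition of the asserted rank. An essentially equivalent route, if one prefers, is to verify directly that the right-hand side of the displayed formula satisfies the same recursion; the computation is identical.
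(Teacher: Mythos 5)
Your proposal is correct and follows essentially the same route as the paper's own proof: induction on the number of parts, tracking how each slot $\lambda^y\pm D^y$, $\underline{\lambda}^y\pm\underline{D}^y$ acquires exactly one new entry equal to $q(\lambda_m)\mp d(\mu)$, and matching this against the eight sub-cases of the recursion, with the branch flip governed by the sign change from $d(\mu)$ to $d(\lambda)$. The one worked sub-case $(b_2)$ and the identified typographical issues in the source recursion align with what the paper does explicitly for all eight cases.
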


\begin{proof}
Let $\rho^*(\lambda)$ denote the right hand side of the equation in the statement of the proposition. We will show by induction on the number of parts in $\lambda$ that $\rho^*(\lambda)=\rho(\lambda)$. Fix $\mu$ as in (\ref{eq:mu}) and let $(\gamma,\delta) = \rho(\mu) = \rho^*(\mu)$. There are eight cases to consider:
\begin{itemize}
    \item[$(a_1)$] \underline{$d(\lambda_m)=0$ and $d(\mu)>0$}.
    Let $r=\lfloor\frac{1}{4}(\lambda_m+2)\rfloor - d(\mu)$ and $s=\lfloor\frac{1}{4}\lambda_m\rfloor + d(\mu)$. Then by the definition of $\rho$
    $$\rho(\lambda) = (\gamma,r) \times (\delta,s)$$
    On the other hand $D(\lambda) = (D(\mu),d(\mu),d(\mu))$, $D(\lambda)^1= D(\mu)^1$, $D(\lambda)^3 = D(\mu)^3$, $\lambda^1 = \mu^1$, and $\lambda^3 = \mu^3$. To compute $\rho^*(\lambda)$, there are two subcases to consider. First assume that $\lambda_m \equiv 0 \mod{4}$, then $\underline{D}(\lambda)^0= (\underline{D}(\mu)^0, d(\mu))$, $\underline{D}(\lambda)^2 = \underline{D}(\mu)^2$, $\underline{\lambda}^0 = (\underline{\mu}^0, \frac{1}{4}\lambda_m)$, and $\underline{\lambda}^2 = \underline{\mu}^2$.
    So
    $$\rho^*(\lambda) = (\gamma, \frac{1}{4}\lambda_m - d(\mu)) \times (\delta,\frac{1}{4}\lambda_m + d(\mu)) = (\gamma,r) \times (\gamma,s)$$
    If instead $\lambda_m \equiv 2 \mod{4}$, then $\underline{D}(\lambda)^0 = \underline{D}(\mu)^0$, $\underline{D}(\lambda)^2 = (\underline{D}(\mu), d(\mu))$, $\underline{\lambda}^0 = \underline{\mu}^0$, and $\underline{\lambda}^2 = (\underline{\mu}^2, \frac{1}{4}(\lambda_m-2))$. So
    $$\rho^*(\lambda) = (\gamma, \frac{1}{4}(\lambda_m-2)+1-d(\mu)) \times (\delta,\frac{1}{4}(\lambda_m-2)+d(\mu)) = (\gamma,r) \times (\gamma,s)$$
    \item[$(a_2)$]  \underline{$d(\lambda_m)=0$ and $d(\mu)\leq 0$}. This is identical to case $(a_1)$ except with $\rho(\lambda)$ and $\rho^*(\lambda)$ flipped.
    
    \item[$(b_1)$] \underline{$d(\lambda_m)=1$ and $d(\mu)>0$}. Let $r=\frac{1}{4}(\lambda_m-1) - d(\mu)$. Then by the definition of $\rho$
    $$\rho(\lambda) = (\gamma,r) \times \delta.$$
    On the other hand, $D(\lambda)=(D(\mu), d(\mu))$, $\underline{D}(\lambda)^0=\underline{D}(\mu)^0$, $D(\lambda)^1 = (D(\mu)^1,d(\mu))$,  $\underline{D}(\lambda)^2=\underline{D}(\mu)^2$, $D(\lambda)^3=D(\mu)^3$, $\underline{\lambda}^0=\underline{\mu}^0$, $\lambda^1 = (\mu^1,\frac{1}{4}(\lambda_m-1))$, $\underline{\lambda}^2 = \underline{\mu}^2$, and $\lambda^3=\mu^3$. So
    $$\rho^*(\lambda) = (\gamma,\frac{1}{4}(\lambda_m-1) - d(\mu)) \times \delta = (\gamma,r) \times \delta$$
    \item[$(b_2)$] \underline{$d(\lambda_m)=1$ and $d(\mu)=0$}.Let $r=\frac{1}{4}(\lambda_m-1) - d(\mu)$. Then by the definition of $\rho$
    $$\rho(\lambda) = (\delta,r) \times \gamma.$$
    On the other hand, $D(\lambda)=(D(\mu), d(\mu))$, $\underline{D}(\lambda)^0=\underline{D}(\mu)^0$, $D(\lambda)^1 = (D(\mu)^1,d(\mu))$,  $\underline{D}(\lambda)^2=\underline{D}(\mu)^2$, $D(\lambda)^3=D(\mu)^3$, $\underline{\lambda}^0=\underline{\mu}^0$, $\lambda^1 = (\mu^1,\frac{1}{4}(\lambda_m-1))$, $\underline{\lambda}^2 = \underline{\mu}^2$, and $\lambda^3=\mu^3$. So
    $$\rho^*(\lambda) = (\delta,\frac{1}{4}(\lambda_m-1) - d(\mu)) \times \gamma = (\delta,r) \times \gamma$$
    \item[$(b_3)$] \underline{$d(\lambda_m)=1$ and $d(\mu)<0$}. This is identical to case $(b_1)$ except with $\rho(\lambda)$ and $\rho^*(\lambda)$ flipped.
    
    \item[$(c_1)$] \underline{$d(\lambda_m)=-1$ and $d(\mu)>1$}. Let $r=\frac{1}{4}(\lambda_m-3) + d(\mu)$. Then by the definition of $\rho$
    $$\rho(\lambda) = \gamma \times (\delta,r).$$
    On the other hand, $D(\lambda) = (D(\mu),d(\mu)-1)$, $\underline{D}(\lambda)^0=\underline{D}(\mu)^0$, $D(\lambda)^1=D(\mu)^1$, $\underline{D}(\lambda)^2=\underline{D}(\mu)^2$, $D(\lambda)^3 = (D(\mu)^3,d(\mu))$, $\underline{\lambda}^0 = \underline{\mu}^0$,$\lambda^1 = \mu^1$, $\underline{\lambda}^2=\underline{\mu}^2$, and $\lambda^3 = (\mu^3,\frac{1}{4}(\lambda_m-3))$. So
    $$\rho^*(\lambda) = \gamma \times (\delta, \frac{1}{4}(\lambda_m-3)+d(\mu)) = \gamma \times (\delta,r)$$

    \item[$(c_2)$] \underline{$d(\lambda_m)=1$ and $d(\mu)=1$}. This is identifical to case $(c_1)$ except with $\rho(\lambda)$ and $\rho^*(\lambda)$ flipped.

    \item[$(c_3)$] \underline{$d(\lambda_m)=-1$ and $d(\mu)<1$}.  Let $r=\frac{1}{4}(\lambda_m-3) + d(\mu)$. Then by the definition of $\rho$
    $$\rho(\lambda) = (\gamma,r) \times \delta.$$
    On the other hand, $D(\lambda) = (D(\mu),d(\mu))$, $\underline{D}(\lambda)^0=\underline{D}(\mu)^0$, $D(\lambda)^1=D(\mu)^1$, $\underline{D}(\lambda)^2=\underline{D}(\mu)^2$, $D(\lambda)^3 = (D(\mu)^3,d(\mu)-1)$, $\underline{\lambda}^0 = \underline{\mu}^0$,$\lambda^1 = \mu^1$, $\underline{\lambda}^2=\underline{\mu}^2$, and $\lambda^3 = (\mu^3,\frac{1}{4}(\lambda_m-3))$. So
    $$\rho^*(\lambda) = (\gamma,\frac{1}{4}(\lambda_m-3)+d(\mu)) \times \delta = (\gamma,r) \times \delta$$
\end{itemize}
\end{proof}

Proposition \ref{prop:spinnonrecursive} motivates the following much simpler recursive definition for $\rho(\lambda)$ which is obvious from Proposition \ref{prop:spinnonrecursive}.

Let $\lambda = (\lambda_1 \leq ... \leq \lambda_m) \in \mathcal{P}_{ro}(n)$.
If $m = 0$ set $\tilde\rho(\lambda) = (0)\times (0)$.
Otherwise, define $\mu$ as in (\ref{eq:mu}). Suppose $\tilde\rho(\mu) = \gamma\times\delta$.
We define $\tilde\rho(\lambda) = \alpha\times\beta$ where $\alpha,\beta$ are defined as follows: set $q = q(\lambda_m), d = d(\mu)$
\begin{enumerate}
    \item if $\lambda_m \equiv 0 \pmod 4$ then set $\alpha = (\gamma,q-d)$, $\beta = (\delta,q+d)$;
    \item if $\lambda_m \equiv 1 \pmod 4$ then set $\alpha = (\gamma,q-d)$, $\beta = \delta$;
    \item if $\lambda_m \equiv 2 \pmod 4$ then set $\alpha = (\gamma,q-d+1)$, $\beta = (\delta,q+d)$;
    \item if $\lambda_m \equiv 3 \pmod 4$ then set $\alpha = \gamma$, $\beta = (\delta,q+d)$.
\end{enumerate}
Then 
$$\rho(\lambda) := \begin{cases}
\alpha\times \beta & \text{if } d(\lambda)>0 \\
\beta\times\alpha & \text{if } d(\lambda) \le 0.\end{cases}$$

Finally we remark that from the above recursive description of $\tilde\rho(\lambda)$ one can deduce the following explicit description of $\tilde\rho(\lambda)$.

For $r\in \{0,1,2,3\}$ and $a\ge 0$, define $\epsilon(r,a)$ to be the sequence of length $a$ of the form
\begin{equation}
    \begin{cases}
        (0,1,0,1,\dots) & \mbox{if } r=0 \\
        (0,0,0,0,\dots) & \mbox{if } r=1 \\
        (1,0,1,0,\dots) & \mbox{if } r=2 \\
        (1,1,1,1,\dots) & \mbox{if } r=3.
    \end{cases}
\end{equation}
Similarly for $r\in \{0,1,2,3\}$ and $a\ge 0$, define $\delta(r,a)$ to be the sequence of length $a$ of the form
\begin{equation}
    \begin{cases}
        (0,1,0,1,\dots) & \mbox{if } r=0 \\
        (0,0,0,0,\dots) & \mbox{if } r=1,2,3.
    \end{cases}
\end{equation}
Now for $\lambda\in \cN_{ro}$ write $\lambda$ as $(\lambda_1^{a_1},\lambda_2^{a_2},\dots,\lambda_k^{a_k})$.
Define 
\begin{align*}
    q(\lambda) &= (q(\lambda_1)^{a_1},q(\lambda_2)^{a_2},\dots,q(\lambda_k)^{a_k}), \\
    r(\lambda) &= (r(\lambda_1)^{a_1},r(\lambda_2)^{a_2},\dots,r(\lambda_k)^{a_k}), \\
    \epsilon(\lambda) &= (\epsilon(r_1,a_1),\epsilon(r_2,a_2),\dots,\epsilon(r_k,a_k)), \\
    \delta(\lambda) &= (\delta(r_1,a_1),\delta(r_2,a_2),\dots,\delta(r_k,a_k)).
\end{align*}
When $\lambda$ is clear from the context, we will write $q_i$ (resp. $r_i$) for $q(\lambda_i)$ (resp. $r(\lambda_i)$).

Recall from Section \ref{subsec:symbols} the sequence $z=(0,1,...)$.
\begin{prop}
    \label{prop:ssymbspin}
    Let $\lambda \in \cN_{ro}$, $q=q(\lambda),\epsilon=\epsilon(\lambda),\delta=\delta(\lambda)$.
    Let $\Lambda\in \mathcal X^{2,0}_{d(\lambda)}$ be the symbol with linear presentation $(q+z-\delta,\epsilon)$.
    Then $\tilde\rho(\lambda) = \mathcal B(\Lambda)$ and $(q+z-\delta,\epsilon)$ is the canonical linear presentation for $\Lambda$.
\end{prop}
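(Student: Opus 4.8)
The plan is to argue by induction on the number of parts $m$ of $\lambda$, matching the recursive description of $\tilde\rho(\lambda)$ in terms of the partition $\mu$ of (\ref{eq:mu}). The case $m=0$ is trivial, so assume $m>0$ and that the proposition holds for $\mu$; note that $\mu$ is again rather odd, with $|\mu|=m-1$ parts if $\lambda_m$ is odd and $|\mu|=m-2$ parts if $\lambda_m$ is even. Write $\Lambda_\mu$ for the symbol attached to $\mu$, so that by induction $(q(\mu)+z-\delta(\mu),\epsilon(\mu))$ is a canonical linear presentation of $\Lambda_\mu\in\mathcal X^{2,0}_{d(\mu)}$ and $\mathcal B(\Lambda_\mu)=\tilde\rho(\mu)$; denote the two rows of $\Lambda_\mu$ by $(X_\mu,Y_\mu)$.

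The first step is to compare the data of $\lambda$ with that of $\mu$. Since $\lambda_m$ is the \emph{largest} part of $\lambda$, its block in the decomposition $(\lambda_1^{a_1},\dots,\lambda_k^{a_k})$ is the last one, and passing from $\lambda$ to $\mu$ only shortens this last block (by one if $\lambda_m$ is odd, by two if it is even) or deletes it. Hence $q(\mu)$, $\epsilon(\mu)$, $\delta(\mu)$ are precisely the initial segments, of length $|\mu|$, of $q(\lambda)$, $\epsilon(\lambda)$, $\delta(\lambda)$, and so $Z_\mu:=q(\mu)+z-\delta(\mu)$ is the initial segment of $Z_\lambda:=q(\lambda)+z-\delta(\lambda)$. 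Inspecting the definitions of $\epsilon(r,a)$ and $\delta(r,a)$ in the four residue classes of $\lambda_m$ mod $4$, and writing $q:=q(\lambda_m)$, one finds the entries appended to $Z_\mu$ (and their $\epsilon$-labels) to form $Z_\lambda$:
\begin{itemize}
    \item $\lambda_m\equiv 0$: append $(q+m-2,\,q+m-2)$ with labels $(0,1)$;
    \item $\lambda_m\equiv 1$: append $q+m-1$ with label $0$;
    \item $\lambda_m\equiv 2$: append $(q+m-2,\,q+m-1)$ with labels $(1,0)$;
    \item $\lambda_m\equiv 3$: append $q+m-1$ with label $1$.
\end{itemize}
Because $q(\mu)_j\le q$ for all $j$, the last entry of $Z_\mu$ is at most $q+m-2$ in the odd case and at most $q+m-3$ in the even case, so a short verification (with a small case split according to the residue mod $4$ of the largest part of $\mu$) shows that appending these entries extends each of $X_\mu,Y_\mu$ to a strictly increasing sequence with consecutive gaps $\ge 2$, while the only possible repetition in $Z_\lambda$, namely the pair $q+m-2$ occurring for $\lambda_m\equiv 0$, carries label $0$ in its first slot. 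Together with the inductive hypothesis this shows $(Z_\lambda,\epsilon(\lambda))$ is a canonical linear presentation, defining a symbol $\Lambda=\Lambda_\lambda$; counting labels block by block, its defect is $\sum_i d(\lambda_i)=d(\lambda)$, so $\Lambda\in\mathcal X^{2,0}_{d(\lambda)}$.

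Finally, one identifies $\mathcal B(\Lambda_\lambda)$ with $\tilde\rho(\lambda)$. By the above, the rows of $\Lambda_\lambda$ are obtained from $(X_\mu,Y_\mu)$ by appending the new $Z_\lambda$-entries according to their $\epsilon$-labels, and since $\mathcal B$ subtracts $2\cdot(0,1,2,\dots)$ from each row, the part appended to the top (resp.\ bottom) row of $\mathcal B(\Lambda_\lambda)$ is the new top-row (resp.\ bottom-row) entry minus $2|X_\mu|$ (resp.\ $2|Y_\mu|$). The content and defect of $\Lambda_\mu$ give $|X_\mu|=\tfrac12(|\mu|+d(\mu))$ and $|Y_\mu|=\tfrac12(|\mu|-d(\mu))$; substituting $|\mu|=m-1$ or $m-2$ accordingly, the new top part becomes $q-d(\mu)$ when $\lambda_m\equiv 0,1$, becomes $q-d(\mu)+1$ when $\lambda_m\equiv 2$, and the top row is unchanged when $\lambda_m\equiv 3$, whereas the new bottom part becomes $q+d(\mu)$ when $\lambda_m\equiv 0,2,3$ and the bottom row is unchanged when $\lambda_m\equiv 1$. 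These are exactly the four cases in the recursive definition of $\tilde\rho(\lambda)=\alpha\times\beta$ with $\gamma\times\delta=\tilde\rho(\mu)=\mathcal B(\Lambda_\mu)$ and $d=d(\mu)$; hence $\mathcal B(\Lambda_\lambda)=\alpha\times\beta=\tilde\rho(\lambda)$, completing the induction. I expect the main obstacle to be the bookkeeping in the middle step: one must be careful that the growing index sequence $z$ interacts with the block decomposition exactly so that $q(\mu),\epsilon(\mu),\delta(\mu)$ are genuine initial segments of their $\lambda$-counterparts, and that the appended entries satisfy the gap-$\ge 2$ condition forcing $\Lambda_\lambda$ into $\mathcal X^{2,0}$; everything after that is a mechanical substitution of $|X_\mu|-|Y_\mu|=d(\mu)$ and the definition of $\mathcal B$.
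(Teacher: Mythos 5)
Your induction on the number of parts is correct, and it is a genuinely different route from the paper's. The paper works non-recursively: starting from the explicit block-by-block description of $\tilde\rho(\lambda)=(B_1,B_2)$ (Proposition \ref{prop:spinnonrecursive}), it computes $(B_1,B_2)+2(z,z)$ block by block and matches the result against $(q+z-\delta,\epsilon)$; membership of $\Lambda$ in $\widetilde{\mathcal X}^{2,0}$ then comes for free, since adding $2\cdot(0,1,2,\dots)$ to the rows of a bipartition automatically produces rows with gaps $\ge 2$, and only the canonicity of the presentation (non-decreasingness of $q+z-\delta$ plus the label condition on repeated entries) needs a separate check. Your argument instead tracks the recursion $\lambda\rightsquigarrow\mu$ directly, which buys a cleaner conceptual match with the four cases defining $\tilde\rho$ and a transparent derivation of the new parts as $q\mp d(\mu)$ (via $|X_\mu|=\tfrac12(|\mu|+d(\mu))$); all of that arithmetic checks out, as does the identification of the appended $Z$-entries and labels in each residue class. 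The price is that you must verify by hand that the appended entries keep each row strictly increasing with gaps $\ge 2$. Your stated bound (last entry of $Z_\mu$ at most $q+m-3$ in the even case) only gives a gap of $1$ against the appended entry $q+m-2$, so the "small case split" you flag is genuinely load-bearing: one has to separate the case $a_k\ge 4$ (where the last block of $\lambda$ survives into $\mu$ and rather-oddness forces its $\delta$/$\epsilon$ tail to push the last entries of $X_\mu,Y_\mu$ down to $q+m-4$) from the case $a_k=2$ (where the largest part of $\mu$ is strictly smaller and one splits on its residue mod $4$). I checked that all sub-cases close, so the proof is complete once those are written out, but this step is somewhat longer than "short verification" suggests and is precisely the work the paper's construction avoids.
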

\begin{proof}
    Let $d_i = \sum_{j<i}d(\lambda_j)$, $A_i = \sum_{j<i}a_j$, and
    $$x_i = \sum_{j<i}[r_j=1]+[2\mid\lambda_j]a_j/2, \quad y_i = \sum_{j<i}[r_j=3]+[2\mid\lambda_j]a_j/2.$$
    Clearly $d_i = x_i-y_i$ and $x_i+y_i = A_i$.
    Write $\tilde\rho(\lambda)=(B_1,B_2)$.
    By the recursive formula for $\tilde\rho(\lambda)$ we have that $B_1$ consists of blocks of sequences of the form
    \begin{equation}
        \begin{cases}
            q_i-d_i+(0,0,\dots,0) & \mbox{if } r_i=0 \\
            q_i-d_i+(0) & \mbox{if } r_i=1 \\
            q_i-d_i+1+(0,0,\dots,0) & \mbox{if } r_i=2 \\
            \emptyset & \mbox{if } r_i=3 
        \end{cases}
    \end{equation}
    and $B_2$ consists of blocks of sequences of the form
    \begin{equation}
        \begin{cases}
            q_i+d_i+(0,0,\dots,0) & \mbox{if } r_i=0 \\
            \emptyset & \mbox{if } r_i=1 \\
            q_i+d_i+(0,0,\dots,0) & \mbox{if } r_i=2 \\
            q_i+d_i+(0) & \mbox{if } r_i=3.
        \end{cases}
    \end{equation}
    Here the sequences of the form $(0,0,\dots,0)$ have length $a_i/2$.
    We wish to compute $\Lambda = (B_1,B_2)+2((0,1,2,\dots),(0,1,2,\dots))$.
    To compute $B_1+2(0,1,2,\dots)$ we compute along blocks.
    The sequence $(0,1,2,\dots)$ can be written as a concatenation of sequences of the form
    \begin{equation}
        \begin{cases}
            x_i+(0,1,\dots,a_i/2-1) & \mbox{if } r_i=0 \\
            x_i+(0) & \mbox{if } r_i=1 \\
            x_i+(0,1,\dots,a_i/2-1) & \mbox{if } r_i=2 \\
            \emptyset & \mbox{if } r_i=3.
        \end{cases}
    \end{equation}
    These blocks have the same form as the blocks for $B_1$ so we can add block-wise.
    Noting that $2x_i-d_i = x_i+y_i = A_i$, we get that $B_1+2(0,1,2,\dots)$ consists of blocks of the form
    \begin{equation}
        \begin{cases}
            q_i+A_i+(0,2,\dots,a_i-2) & \mbox{if } r_i=0 \\
            q_i+A_i+(0) & \mbox{if } r_i=1 \\
            q_i+A_i+(1,3,\dots,a_i-1) & \mbox{if } r_i=2 \\
            \emptyset & \mbox{if } r_i=3.
        \end{cases}
    \end{equation}
    Similarly, using that $d_i+2y_i = x_i+y_i = A_i$, we get that $B_2+2(0,1,2,\dots)$ consists of blocks of the form
    \begin{equation}
        \begin{cases}
            q_i+A_i+(0,2,\dots,a_i-2) & \mbox{if } r_i=0 \\
            \emptyset & \mbox{if } r_i=1 \\
            q_i+A_i+(0,2,\dots,a_i-2) & \mbox{if } r_i=2 \\
            q_i+A_i+(0) & \mbox{if } r_i=3.
        \end{cases}
    \end{equation}
    Thus we see that $(q+z-\delta,\epsilon)$ is a linear presentation of $\Lambda$.
    To see that it is the canonical presentation for $\Lambda$ note that $q$ is non-decreasing and $z$ is strictly increasing. 
    Thus $q+z$ is strictly increasing.
    Thus $q+z-\delta$ is strictly increasing except for in blocks corresponding to $r_i = 0$ where the block takes the form $(x,x,x+2,x+2,x+4,x+4,\dots)$.
    But for such blocks $\epsilon$ is of the form $(0,1,0,1,0,1,\dots)$ and so $(q+z-\delta,\epsilon)$ is indeed the canonical linear presentation for $\Lambda$.
\end{proof}

\begin{example}
    Let $\lambda = (2^4,5,7,12^6,15)$.
    Then 
    \begin{align*}
        q(\lambda) &= (0,0,0,0,1,1,3,3,3,3,3,3,3)\\
        r(\lambda) &= (2,2,2,2,1,3,0,0,0,0,0,0,3)\\
        \epsilon(\lambda) &= (1,0,1,0,0,1,0,1,0,1,0,1,1)\\
        \delta(\lambda) &= (0,0,0,0,0,0,0,1,0,1,0,1,0)\\
        q(\lambda) + z - \delta(\lambda) &= (0,1,2,3,5,6,9,9,11,11,13,13,15) \\
        \Lambda &= \begin{pmatrix}
            1 & 3 & 5 & 9 & 11 & 13 \\
            0 & 2 & 6 & 9 & 11 & 13 & 15
        \end{pmatrix} \\
        \tilde\rho(\lambda) &= \begin{pmatrix}
            1 & 1 & 1 & 3 & 3 & 3 \\
            0 & 0 & 2 & 3 & 3 & 3 & 3
        \end{pmatrix} \\
        \rho(\lambda) &= \begin{pmatrix}
            0 & 0 & 2 & 3 & 3 & 3 & 3\\
            1 & 1 & 1 & 3 & 3 & 3 
        \end{pmatrix}.
    \end{align*}
    
\end{example}

We now give an explicit description of $d_{BV}(\lambda)$ for $\lambda\in \mathcal{P}_{ro}(N)$.

Let $\lambda \in \mathcal P_{ro}(N)$. Write $\lambda = (\lambda_1^{a_1},\lambda_2^{a_2},\dots,\lambda_k^{a_k})$ for $\lambda_1<\cdots<\lambda_k$.
Let 
$$f:\{1,\dots,\tilde l\}\to \{1,\dots,k\}$$
be the increasing function which enumerates the odd parts of $\lambda$. For example, if $\lambda=(1,2^2,5,8^4,9)$, then $\tilde{l}=3$ and $f$ is given by $f(1)=1$, $f(2)=3$, $f(3)=5$. Note that $N$ and $\tilde l$ have the same parity. Extend the domain of $f$ to a set of even size, as follows. If $N$ is odd, write $\tilde l$ as $\tilde l = 2l-1$ and define $f(2l) = k$.
In this manner we obtain a function 
$$f:\{1,\dots,2l\}\to \{1,\dots,k\}.$$
Define $\delta^i$ to be the sequence of length $k$ of the form
\begin{equation}
    \delta^i = \begin{cases}
        (0,\dots,0,1,-1,0,\dots,0) & \text{ if $i$ is even}  \\
        (0,\dots,0,-1,1,0,\dots,0) & \text{ if $i$ is odd}.
    \end{cases}
\end{equation}
where the first non-zero entry arises at position $f(i)$.
If $f(i)=k$ then we interpret $\delta^i$ as the sequence
\begin{equation}
    \delta^i = \begin{cases}
        (0,\dots,0,1) & \text{ if $i$ is even}  \\
        (0,\dots,0,-1) & \text{ if $i$ is odd}.
    \end{cases}
\end{equation}
\begin{lemma}\label{lem:dualro}
    Let $\lambda\in \mathcal P_{ro}(N)$.
    Let $\delta = \sum_{i=1}^{\tilde l}\delta^i$, $A_i = \sum_{j\ge i} a_j$, and $\bar \lambda_i =  \lambda_i-\lambda_{i-1}$ (with the convention $\lambda_0 = 0$).
    Then 
    \begin{equation}\label{eq:dro}d_{BV}(\lambda) = (A_i^{\bar\lambda_i+\delta_i})_i.\end{equation}
\end{lemma}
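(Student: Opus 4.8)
The plan is to unwind the definition of $d_{BV}$ from Section~\ref{subsec:orbits}. Since $\lambda$ is rather odd, every even part occurs with even multiplicity, so $\lambda\in\mathcal P_B(N)$ when $N$ is odd and $\lambda\in\mathcal P_D(N)$ when $N$ is even; thus $d_{BV}(\lambda)=((\lambda^t)^-)_C$ in the first case and $d_{BV}(\lambda)=(\lambda^t)_D$ in the second. The first step is to record the transpose: writing $\lambda=(\lambda_1^{a_1},\dots,\lambda_k^{a_k})$ with $\lambda_1<\dots<\lambda_k$, the number of parts of $\lambda$ that are $\ge j$ equals $A_i$ precisely when $\lambda_{i-1}<j\le\lambda_i$, so $\lambda^t=(A_1^{\bar\lambda_1},A_2^{\bar\lambda_2},\dots,A_k^{\bar\lambda_k})$ with $A_1>A_2>\dots>A_k\ge 1$ strictly decreasing. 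In the $B$-case, $(\lambda^t)^-$ replaces the bottom block by $A_k^{\bar\lambda_k-1},(A_k-1)^1$, or, when $A_k=1$ (equivalently $\lambda_k$ odd, so $a_k=1$), simply deletes a copy of $A_k$. Hence in both cases $d_{BV}(\lambda)$ is the $C$- (resp. $D$-) collapse of an explicit partition $\mu$ whose distinct part-values are strictly decreasing with known multiplicities.

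The core of the argument is a parity computation. Since $\lambda$ is rather odd, $a_i=A_i-A_{i+1}$ is odd exactly when $\lambda_i$ is odd, so the parities of $A_k,A_{k-1},\dots,A_1$ switch precisely at the indices $f(1)<\dots<f(\tilde l)$ of the odd parts of $\lambda$, with one additional switch at the bottom coming from the $(\cdot)^-$ operation in the $B$-case; this extra bottom switch is exactly why the recipe records $f(2l)=k$. Now for a $C$-partition the forbidden configuration is ``odd part with odd multiplicity'', and in $\mu$ the parts equal to a given value $A_i$ occur only inside the $i$-th block with multiplicity $\bar\lambda_i$ (because the $A_i$ are distinct), so $\mu$ fails the $C$-condition exactly on those blocks with $A_i$ odd and $\bar\lambda_i$ odd. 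I would then run the collapse algorithm of \cite[Lem 6.3.8]{CM} on $\mu$, processing the bad blocks from the bottom upwards: each correction transfers one box between two consecutive blocks $A_i$ and $A_{i+1}$, and the direction of successive corrections alternates as one moves up through the bad blocks — this is exactly the alternating $(-1,1)$/$(1,-1)$ pattern built into the sequences $\delta^i$. Summing these contributions over the odd parts gives $\delta=\sum_{i=1}^{\tilde l}\delta^i$, and the collapsed partition is $(A_i^{\bar\lambda_i+\delta_i})_i$. The $D$-case runs in parallel with the roles of ``odd'' and ``even'' exchanged and no $(\cdot)^-$, consistently with $\tilde l$ already being even.

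To make the last step rigorous rather than merely narrating the algorithm, I would verify the three defining properties of $\tilde\mu:=(A_i^{\bar\lambda_i+\delta_i})_i$: (a) it lies in $\mathcal P_C(N-1)$ (resp. $\mathcal P_D(N)$) — immediate from the parity computation, since the $\delta^j$ flip the multiplicity-parity of exactly the bad blocks; (b) $\tilde\mu\le\mu$ in the dominance order (\ref{eq:dominanceordering}) — the partial sums are altered only by the telescoping $\pm1$'s and, checked block by block, never increase; (c) $\tilde\mu$ is maximal among $C$- (resp. $D$-) partitions $\le\mu$ — here one uses the partial-sum constraints that any such partition must satisfy at the indices where $\mu$ is bad, i.e. one shows any competitor is dominated by $\tilde\mu$. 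The main obstacle is precisely part (c) together with the boundary bookkeeping: keeping the alternation of the $\delta^i$ correctly aligned with the blocks, handling the degenerate cases $f(i)=k$ and $\bar\lambda_i+\delta_i=0$ (where a block disappears entirely), and, in the $B$-case, correctly merging the extra parity switch introduced by $(\cdot)^-$ with the switch at $\lambda_k$ when $\lambda_k$ is itself odd. Everything else is a routine, if lengthy, manipulation of partitions and partial sums.
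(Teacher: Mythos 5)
Your setup is right and matches the paper's starting point: since $\lambda$ is rather odd it is a $B$- or $D$-partition, $\lambda^t=(A_1^{\bar\lambda_1},\dots,A_k^{\bar\lambda_k})$ with $A_1>\dots>A_k$, and the lemma amounts to a collapse computation. But the proof as written stops exactly where the work begins. Your step (c) --- that $(A_i^{\bar\lambda_i+\delta_i})_i$ is \emph{maximal} among $C$- (resp.\ $D$-) partitions dominated by $(\lambda^t)^-$ (resp.\ $\lambda^t$) --- is the entire content of identifying it with the collapse, and you only say ``one shows any competitor is dominated.'' Even steps (a) and (b) are less immediate than claimed: the positions touched by $\delta^i$ are $f(i)$ and $f(i)+1$, i.e.\ each odd part of $\lambda$ and its successor, and these do not literally coincide with the ``bad blocks'' (those where $A_i$ and the multiplicity are both of the forbidden parity); aligning the two requires the telescoping parity bookkeeping you allude to, plus the unproved assertion that the collapse never introduces part-values outside $\{A_i\}$ --- false for general partitions (e.g.\ $(3,1)_C=(2,2)$ when $3$ and $1$ are the only values present in a partition with a gap) and true here only because $A_{f(i)}-A_{f(i)+1}=a_{f(i)}=1$ at every odd part. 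So the proposal is a correct plan with the decisive verification missing, as you yourself acknowledge.

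The paper closes this gap with a reduction you did not find: for $a$ even, $d_{BV}(\lambda\cup(a,a))=d_{BV}(\lambda)+(2^a)$, and both sides of (\ref{eq:dro}) transform the same way under $\lambda\mapsto\lambda\cup(a,a)$, so one may assume $\lambda$ has only distinct odd parts. Then $\lambda^t=(k^{\bar\lambda_1},(k-1)^{\bar\lambda_2},\dots,1^{\bar\lambda_k})$ has consecutive part-values, the standard collapse algorithm of \cite{CM} applies directly and produces the correction vector $\delta=(-1,2,-2,2,\dots)$, and it only remains to observe that this vector equals $\sum_{i=1}^{\tilde l}\delta^i$. If you want to keep your direct, non-reductive approach, you must actually carry out (c) --- for instance by induction on the bad blocks using the partial-sum characterization of the collapse and Lemma \ref{lem:uniondominance}-style comparisons --- but the reduction is the cheaper route and is what makes the paper's proof short.
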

\begin{proof}
If $a$ is even, then $d_{BV}(\lambda \cup (a,a)) = d_{BV}(\lambda)+(2^a)$. So if (\ref{eq:dro}) holds for $\lambda$, it also holds for $\lambda \cup (a,a)$. Thus, we can reduce to the case when $\lambda$ consists of only odd parts, i.e. $\lambda = (\lambda_1,\lambda_2, ...,\lambda_k)$ with all $\lambda_i$ odd. In this case, 
$$\lambda^t = (k^{\overline{\lambda}_1},(k-1)^{\overline{\lambda}_2},...,2^{\overline{\lambda}_{k-1}},1^{\overline{\lambda}_k}).$$
By the standard algorithm for the collapse of a partition (cf. \cite[Lemma 6.3.3]{CM})
$$d_{BV}(\lambda) = (k^{\overline{\lambda}_1+\delta_1},(k-1)^{\overline{\lambda}_2+\delta_2},...,2^{\overline{\lambda}_{k-1}+\delta_{k-1}},1^{\overline{\lambda}_k+\delta_k})$$
where
$$\delta= \begin{cases} (-1,2,-2,2,...,-2,2)& \text{ if $N$ is even}\\
(-1,2,-2,2,...,2,-2)  & \text{ if $N$ is odd}.
\end{cases}.$$
But in both cases, $\delta=\sum_{i=1}^{\tilde{l}}\delta^i$. This completes the proof.
\end{proof}
\begin{cor}
    \label{cor:dlambda}
    There exists a partition $\underline{d_{BV}(\lambda)}$ such that $d_{BV}(\lambda) = \underline{d_{BV}(\lambda)}\cup \underline{d_{BV}(\lambda)}$.
\end{cor}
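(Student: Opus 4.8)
The plan is to deduce the corollary directly from the explicit formula of Lemma~\ref{lem:dualro} by checking that every multiplicity of $d_{BV}(\lambda)$ is even. Writing $\lambda = (\lambda_1^{a_1},\dots,\lambda_k^{a_k})$ with $\lambda_1 < \dots < \lambda_k$ and keeping the notation $A_i = \sum_{j\ge i}a_j$, $\bar\lambda_i = \lambda_i - \lambda_{i-1}$ (with $\lambda_0 = 0$), and $\delta = \sum_{i=1}^{\tilde l}\delta^i$ from that lemma, the first observation is that since $a_j \ge 1$ for all $j$ the integers $A_1 > A_2 > \dots > A_k \ge 1$ are pairwise distinct. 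Hence, by Lemma~\ref{lem:dualro}, the part $A_i$ occurs in $d_{BV}(\lambda)$ with multiplicity exactly $\bar\lambda_i + \delta_i$ and no other value occurs (in particular each $\bar\lambda_i+\delta_i$ is a non-negative integer), so it is enough to show that each $\bar\lambda_i + \delta_i$ is even.

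For this I would compare the parities of $\bar\lambda_i$ and of $\delta_i$ separately. On one hand $\bar\lambda_i = \lambda_i - \lambda_{i-1} \equiv \lambda_i + \lambda_{i-1} \pmod 2$, so $\bar\lambda_i$ is odd exactly when exactly one of $\lambda_{i-1},\lambda_i$ is odd. On the other hand each $\delta^j$ ($1\le j\le\tilde l$) has entries in $\{0,\pm1\}$ and is supported, within $\{1,\dots,k\}$, on $\{f(j),f(j)+1\}$ (or on $\{f(j)\}=\{k\}$ when $f(j)=k$), so $\delta_i \equiv \#\{\,j\le\tilde l : i\in\mathrm{supp}\,\delta^j\,\}\pmod 2$, and $i\in\mathrm{supp}\,\delta^j$ iff $f(j)\in\{i-1,i\}$. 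Since $f$ restricts to an increasing, hence injective, map of $\{1,\dots,\tilde l\}$ onto $\{\,i : \lambda_i\text{ odd}\,\}$, we get $\#\{j\le\tilde l : f(j)=i\} = [\lambda_i\text{ odd}]$ for each $i$ (and $0$ when $i=0$, consistently with $\lambda_0=0$). Hence $\delta_i \equiv [\lambda_i\text{ odd}] + [\lambda_{i-1}\text{ odd}] \equiv \bar\lambda_i\pmod 2$, and therefore $\bar\lambda_i+\delta_i\equiv 0\pmod 2$. Setting $\underline{d_{BV}(\lambda)} := (A_i^{(\bar\lambda_i+\delta_i)/2})_i$ then finishes the proof, since $\underline{d_{BV}(\lambda)}\cup\underline{d_{BV}(\lambda)} = (A_i^{\bar\lambda_i+\delta_i})_i = d_{BV}(\lambda)$.

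The step I expect to require the most care is the parity count for $\delta_i$: one has to make sure that the truncation of $\delta^j$ in the case $f(j)=k$ does not affect which positions of $\{1,\dots,k\}$ it feeds into (the missing entry would lie in position $k+1$, outside the range), and that the auxiliary value $f(2l)$ introduced to make the domain of $f$ even is irrelevant here, since it does not occur in the sum defining $\delta$. As a cross-check, and as an alternative proof, I would run the induction used in the proof of Lemma~\ref{lem:dualro}: the base case where all parts of $\lambda$ are odd is immediate from $\delta=(-1,2,-2,2,\dots)$ (so $\bar\lambda_1+\delta_1=\lambda_1-1$ and $\bar\lambda_i+\delta_i = (\lambda_i-\lambda_{i-1})\pm2$ are even), and the inductive step $\nu\mapsto\nu\cup(a,a)$ with $a$ even changes $d_{BV}$ by the operation ``$+(2^a)$'' of that proof, which, $a$ being even, sends a partition of the form $\underline\mu\cup\underline\mu$ to another such partition.
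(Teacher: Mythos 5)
Your proof is correct and follows essentially the same route as the paper's: both reduce to showing that each $\bar\lambda_i+\delta_i$ is even, via the observation that $\delta_i\equiv[\lambda_{i-1}\text{ odd}]+[\lambda_i\text{ odd}]\equiv\bar\lambda_i\pmod 2$ (the paper phrases this as a three-way case analysis on the parities of $\lambda_{i-1},\lambda_i$, which is exactly your unified parity count unpacked). Your care with the truncated $\delta^j$ when $f(j)=k$ and with the unused value $f(2l)$ is warranted and correctly handled.
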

\begin{proof}
    It suffices to show that $\bar\lambda_i+\delta_i$ is even for all $i$.
    Suppose $\lambda_i,\lambda_{i-1}$ are both even. Then $\delta_i=0$ and $\bar\lambda_i +\delta_i = \lambda_i - \lambda_{i-1}$ is even.
    If $\lambda_{i-1}$ is odd and $\lambda_i$ is even, or $\lambda_{i-1}$ is even and $\lambda_i$ is odd, then $\delta_i \equiv 1 \pmod 2$ so $\bar\lambda_i+\delta_i$ is even.
    Finally, if both $\lambda_{i-1},\lambda_i$ are odd then $\delta_i \equiv 1+1 \pmod 2$ and so $\bar\lambda_i + \delta_i \equiv 0 \pmod 2$.
\end{proof}

\subsection{Unipotent representations of finite groups of Lie type}\label{subsec:unipotent}

In this section, we will collect various facts about unipotent representations of finite classical groups of Lie type (e.g. classification, decomposition into Harish-Chandra series, decomposition into families, computation of Kawanaka wavefront sets). Essentially all of this information appears in \cite{Lusztig1984}. However, the presentation below is from \cite{geckmalle}. 

Let $\mathbf G$ be a connected reductive group defined over $\mathbb F_q$ and write $\unip(\mathbf G(\mathbb F_q))$ for the set of unipotent representations of $\mathbf G(\mathbb F_q)$.
The set $\unip(\mathbf G(\mathbb F_q))$ decomposes into \emph{Harish-Chandra series}
$$\unip(\mathbf G(\mathbb F_q)) = \coprod_{(\mathbf L,\bf E)}\unip_{\mathbf L,\bf E}(\mathbf G(\mathbb F_q))$$
where $(\mathbf L,\bf E)$ ranges over $\mathbf G(\mathbb F_q)$-conjugacy classes of pairs consisting of a $\bfG$-split Levi subgroup $\mathbf L$ of $\mathbf G$ and a cuspidal unipotent representation $\bf E$ of $\mathbf L(\mathbb F_q)$, and $\unip_{\mathbf L,\bf E}(\mathbf G(\mathbb F_q))$ denotes the subset of $\unip(\mathbf G(\mathbb F_q))$ consisting of unipotent representations with cuspidal support $(\mathbf L,\bf E)$.
Moreover $\unip_{\mathbf L,\bf E}(\mathbf G(\mathbb F_q))$ is parameterised by $\mathrm {Irr}(W_{\mathbf L})$ where $W_{\mathbf L}:=N_{\mathbf G(\mathbb F_q)}(\mathbf L(\mathbb F_q))/\mathbf L(\mathbb F_q)$.
Thus there is a bijection
\begin{equation}
    \label{eq:hcseries}
    \unip(\mathbf G(\mathbb F_q)) \leftrightarrow \coprod_{(\mathbf L,\bf E)}\mathrm {Irr}(W_{\mathbf L}).
\end{equation}
where, again, the union runs over all $\bfG(\mathbb{F}_q)$-conjugacy classes of pairs  $(\mathbf L,\bf E)$ consisting of a $\bfG$-split Levi subgroup $\mathbf L$ of $\mathbf G$ and a cuspidal unipotent representation $\bf E$ of $\mathbf L(\mathbb F_q)$. 

There is also a decomposition of $\unip(\mathbf G(\mathbb F_q))$ into families. It is clear from \cite[Section 13.4]{Lusztig1984} that $\WF$ is constant on families, so for a family $\phi$, the notation $\WF(\phi)$ makes sense.
In fact $\WF$ induces a bijection between families and special nilpotent orbits of $\bfG$ defined over $\mathbb F_q$.
It follows, and will be useful to note, that a unipotent representation $X$ lies in a family $\phi$ if and only if $\WF(X) = \WF(\phi)$.

Below, we list all simple finite groups of Lie type which arise in reductive quotients of classical $p$-adic groups, inner to split. For each group, we give
\begin{enumerate}
    \item A parameterization of the set $\mathrm{Unip}(\mathbf{G}(\mathbb{F}_q))$. See \cite[Theorem 4.4.13]{geckmalle}.
    \item A description of the families in $\mathrm{Unip}(\mathbf{G}(\mathbb{F}_q))$. See \cite[Proposition 4.4.20]{geckmalle}.
    \item A formula for the Kawanaka wavefront set of the elements of $\mathrm{Unip}(\mathbf{G}(\mathbb{F}_q))$, in terms of their parameters.
    \item A list of all Levi subgroups (up to conjugation) which admit cuspidal representations. In all the cases we consider, there is at most one cuspidal per Levi. See \cite[Theorem 4.4.28]{geckmalle}.
    \item A description of the relative Weyl groups corresponding to the Levi subgroups in (4). See \cite[Proposition 4.4.29]{geckmalle}.
    \item A description of the matching (\ref{eq:hcseries}). See \cite[Proposition 4.4.29]{geckmalle}.
\end{enumerate}

\subsubsection{$A_{n-1}(q)$.} 
\label{subsubsec:unipA}
Let $\mathbf G(\mathbb F_q) = PGL(n,\mathbb F_q)$. 

\begin{enumerate} 
    \item $\unip(\mathbf G(\mathbb F_q)) \leftrightarrow \mathcal P(n)$ 
    \item Every family is a singleton. 
    \item $\WF(\lambda) = \lambda^t$.
    \item The only Levi supporting a cuspidal representation is the maximal torus. 
    \item The relative Weyl is $S_n$.
    \item The matching is $\lambda \mapsto \lambda$.
\end{enumerate}

\subsubsection{$^2\!A_{n-1}(q^2)$.} Let $\mathbf G(\mathbb F_q) = PU(n,\mathbb F_q)$. 
\label{subsubsec:unip2A}

\begin{enumerate} 
    \item $\unip(\mathbf G(\mathbb F_q)) \leftrightarrow \mathcal P(n)$ 
    \item Every family is a singleton. 
    \item $WF(\lambda) = \lambda^t$.
    \item $\mathbf{L}$ is of type $A_{r(r+1)/2-1}$ for $r \geq 0$ such that $r(r+1)/2\le n$ and $2\mid n-r(r+1)/2$. 
    \item $W_{\mathbf{L}} = W_{\frac12(n-\frac12 r(r+1))}$, where $W_k$ denotes the Weyl group of type $B_k$. 
    \item Let $\lambda \in \mathcal P(n)$.
    The number of boxes in the young diagram of $\lambda$ with odd hook length minus the number of boxes with even hook length is of the form $\frac 12 s(s+1)$ for some $s\ge 0$.
    The partition $\lambda$ belongs to the Harish-Chandra series corresponding to $r=s$.
    Now let $Z$ be a $\beta$-set for $\lambda$ of odd length.
    Enumerate the even and odd parts of $Z$ as
    $$2x_1<2x_2<\cdots < 2x_k, 2y_1+1<2y_2+1<\cdots < 2y_l+1$$
    and let $X=\{x_1,x_2,\dots,x_k\}, Y=\{y_1,y_2,\dots,y_l\}$.
    If $s$ is even then $\lambda$ corresponds to the bipartition $(\mathcal P(X),\mathcal P(Y))$, and if $s$ is odd it corresponds to $(\mathcal P(Y),\mathcal P(X))$.
\end{enumerate}

\subsubsection{$B_n(q)$.} Let $\mathbf G(\mathbb F_q) = PSO(2n+1,\mathbb F_q)$. 
\label{subsubsec:unipB}

\begin{enumerate} 
    \item $\unip(\mathbf G(\mathbb F_q)) \leftrightarrow \mathcal Y_{odd,n}^1$. 
    \item The families are the $\sim$-classes in $\mathcal Y_{odd,n}^1$.
    \item $\WF(S) = \mathcal P(2X+1\cup 2Y)^t$, where $(X,Y) = S^{sp}$.
    \item $\mathbf{L}$ is of type $B_{r^2+r}$ for $r \geq 0$ such that $r^2+r\le n$. 
    \item $W_{\mathbf{L}} = W_{n-r^2-r}$. 
    \item Let $S\in \mathcal Y_{d,n}^1$ for $d\geq 1$ odd.
    Then $S$ belongs to the Harish-Chandra series corresponding to $r = (d-1)/2$.
    The corresponding bipartition is $\mathcal D(S)$.
\end{enumerate}

\subsubsection{$C_n(q)$.} Let $\mathbf G(\mathbb F_q) = PSp(2n,\mathbb F_q)$.
\label{subsubsec:unipC}

\begin{enumerate} 
    \item $\unip(\mathbf G(\mathbb F_q)) \leftrightarrow \mathcal Y_{odd,n}^1$. 
    \item The families are the $\sim$-classes in $\mathcal Y_{odd,n}^1$.
    \item $\WF(S) = \mathcal P(2X\cup 2Y+1)^t$, where $(X,Y) = S^{sp}$.
    \item $\mathbf{L}$ is of type $C_{r^2+r}$ for $r \geq 0$ such that $r^2+r\le n$. 
    \item $W_{\mathbf{L}} = W_{n-r^2-r}$. 
    \item Let $S\in \mathcal Y_{d,n}^1$ for $d \geq 1$ odd.
    Then $S$ belongs to the Harish-Chandra series corresponding to $r = (d-1)/2$.
    The corresponding bipartition is $\mathcal D(S)$.
\end{enumerate}

\subsubsection{$D_n(q)$.} Let $\mathbf G(\mathbb F_q) = PCO^+(2n,\mathbb F_q)$.
\label{subsubsec:unipD}

\begin{enumerate} 
    \item $\unip(\mathbf G(\mathbb F_q)) \leftrightarrow \sqcup_{d\geq 0, 4\mid d}\mathcal Y_{d,n}^1$ except to each degenerate symbol in $\mathcal Y_{0,n}^1$ there are two unipotent representations.
    \item The families are the $\sim$-classes in $\sqcup_{d\geq 0, 4\mid d}\mathcal Y_{d,n}^1$.
    \item $\WF(S) = (\mathcal P(2X+1\cup 2Y)^t)_D$, where $(X,Y) = S^{sp}$.
    \item $\mathbf{L}$ is of type $D_{r^2}$ for $r \geq 0$ even such that $r^2\le n$. 
    \item $W_{\mathbf{L}} = W_{n-r^2}$ if $r \geq 2$ and $W_n'$ if $r=0$. 
    \item Let $S\in \mathcal Y_{d,n}^1$ for $d\in \ZZ, 4\mid d$. 
    Then $S$ belongs to the Harish-Chandra series corresponding to $r = d/2$.
    The corresponding bipartition is $\mathcal D(S)$. 
    In the case where $S$ is degenerate there are two unipotent representations parameterised by $S$, but also two irreducible representations of $W_n'$ parameterised by $\mathcal D(S)$ and these are matched in some manner (the precise nature of which is not important to us).
\end{enumerate}

\subsubsection{$^2\!D_n(q^2)$.} Let $\mathbf G(\mathbb F_q) = PCO^-(2n,\FF_q)$. 
\label{subsubsec:unip2D}

\begin{enumerate} 
    \item $\unip(\mathbf G(\mathbb F_q)) \leftrightarrow \sqcup_{d\geq 0, 4\mid d-2}\mathcal Y_{d,n}^1.$ 
    \item The families are the $\sim$-classes in $\sqcup_{d\geq 0, 4\mid d-2}\mathcal Y_{d,n}^1$.
    \item $\WF(S) = (\mathcal P(2X+1\cup 2Y)^t)_D$, where $(X,Y) = S^{sp}$.
    \item $\mathbf{L}$ is of type $^2D_{r^2}$ for $r \geq 1$ odd such that $r^2\le n$. Here $^2D_1$ is regarded as the empty graph.
    \item $W_{\mathbf{L}} = W_{n-r^2}$. 
    \item Let $S\in \mathcal Y_{d,n}^1$ for $d\geq 0, 4\mid d-2$. Then $S$ belongs to the Harish-Chandra series corresponding to $r = d/2$.
    The corresponding bipartition is $\mathcal D(S)$. 
\end{enumerate}


\subsection{The generalised Springer correspondence}\label{subsec:Springer}
In this section we will recall the statement of the generalised Springer correspondence and give a description of this correspondence in classical types. The main ideas are from \cite{Lusztig1984intersection} and \cite{LusztigSpaltenstein1985}.

Let $G$ be a complex reductive group, $T$ a maximal torus of $G$, $B$ a Borel subgroup of $G$ containing $T$, and $I_0$ the set of simple roots determined by $T$ and $B$. 
For a subset $J\subseteq I_0$ write $G_J$ for the Levi subgroup of $G$ determined by $T$ and $J$.
For a Levi subgroup $L$ of $G$ write $W_L = N_G(L)/L$.
Let $\mathcal N_{o,l}$ be the set of pairs $(\OO,\mathscr L)$ consisting of a nilpotent orbit $\OO\in \mathcal N_o$ and a $G$-equivariant irreducible local system $\mathscr L$ on $\OO$. 
The generalised Springer correspondence is a bijection
\begin{equation}\label{eq:generalizespringer}
    \mathcal N_{o,l} \leftrightarrow \coprod_{(L,\mathcal C,\mathscr E)\in \mathfrak C(G)} \mathrm{Irr}(W_L), \quad 
(\OO, \rho)\mapsto E(\OO,\rho)
\end{equation}
where $\mathfrak C(G)$ is the set of $G$-conjugacy classes of triples $(L,\mathcal C,\mathcal E)$ where $L$ is a Levi subgroup of $G$, $\mathcal C$ is a nilpotent orbit of the Lie algebras $\mathfrak l$ of $L$, and $\mathcal E$ is a $L$-equivariant cuspidal local system on $\mathcal C$.
Let $\bar {\mathfrak C}(G)$ be the set of all triples $(J,\mathcal C,\mathcal{E})$ where $J\subseteq I_0$, $\mathcal C$ is a nilpotent orbit of the Lie algebra $\mathfrak g_J$ of $G_J$, and $\mathcal E$ is a $G_J$-equivariant cuspidal lcoal system on $\mathcal C$. 
There is the obvious map $\bar{\mathfrak C}(G)\to \mathfrak C(G)$, given by $(J,\mathcal C,\mathcal E)\mapsto (G_J,\mathcal C,\mathcal E)$.
It is clear from form of the Levis that admit cuspidal local systems that this map is a bijection.
For $x = (J,\mathcal C,\mathcal E)\in \bar{\mathfrak C}(G)$ we will write $L_x$ for $G_J$ and $W_x$ for $W_{L_x}$ .
We write $(\mathcal N_{o,l})_x$ for the pairs lying in the block parameterised by $x\in \bar{\mathfrak C}(G)$ and $(\cN_o)_x$ for $p_1((\cN_{o,l})_x)$.
So if $x=(J,\mathcal C,\mathcal E)$ then 
$$(\mathcal N_{o,l})_x \leftrightarrow \mathrm{Irr}(W_{x}).$$
Note that for a Levi subgroup $L \subset G$ and a character $\chi$ of $Z(L)$, there is at most one pair $(\mathcal C,\mathcal{E})$ such that $(L,\mathcal C,\mathcal{E}) \in \mathfrak{C}(G)$ and $Z(L)$ acts via (a multiple of) $\chi$ on $\mathcal{E}$. 

Below, we describe this bijection for all simple simply connected groups of classical type. For each group, we give

\begin{enumerate}
    \item A parameterization of $\mathcal{N}_{o,l}$. 
    \item A description of the projection $p_1:\mathcal{N}_{o,l} \to \cN_o$ in terms of their parameter.
    \item A list of all pairs $(L,\chi)$ consisting of a Levi subgroup $L \subset G$ and a character $\chi$ of $Z(L)$ such that there is a triple $(L,\OO,\mathcal{E}) \in \mathcal{C}$ and $Z(L)$ acts via (a multiple of) $\chi$ on $\mathcal{E}$. In most cases, $\chi$ is uniquely determined by $L$; in such cases we omit it.
    \item A description of the relative Weyl groups corresponding to the Levi subgroups in (3).
    \item A description of the matching (\ref{eq:generalizespringer}).
\end{enumerate}

The references are: \cite[Section 5.1]{LusztigSpaltenstein1985} for $SL(N)$, \cite[Section 10.6]{Lusztig1984intersection} for $SO(N)$, \cite[Section 10.4]{Lusztig1984intersection}and $Sp(2n)$, and \cite[Section 0.10, Section 4]{LusztigSpaltenstein1985} for $Spin(N)$. 

For $G=Spin(N)$, we present (1)-(5) in a slightly different manner. 
There are natural inclusions 
$$\cN_{o,l}^{SO(N)}\to \cN_{o,l}^{Spin(N)}, \quad \overline{\mathfrak C}(SO(N))\to \overline{\mathfrak C}(Spin(N)).$$
With respect to these inclusions there are decompositions
$$\cN_{o,l}^{Spin(N)} =   \mathcal N_{o,l}^{SO(N)}\sqcup \mathcal N_{o,l}^*$$
where $\mathcal{N}_{o,l}^*$ is the set of pairs $(\OO,\mathcal{L}) \in \mathcal{N}_{o,l}^{Spin(n)}$ consisting of a nilpotent orbit $\OO \in \cN_o$ and a $Spin(N)$-equivariant local system $\mathcal{L}$ on $\OO$ \emph{which is not} $SO(N)$\emph{-equivariant}, and
$$\overline{\mathfrak C}(Spin(N)) = \overline{\mathfrak C}(SO(N)) \sqcup \overline{\mathfrak C}^*(Spin(N))$$
where $\overline{\mathfrak{C}}^*$ is the set of triples $(J,\mathcal C,\mathcal{E}) \in \overline{\mathfrak C}(Spin(N))$ where $\mathcal{E}$ \emph{is not equivariant} for the associated Levi subgroup of $SO(N)$. The matching (\ref{eq:generalizespringer}) takes $\cN^{SO(N)}_{o,l}$ to $\overline{\mathfrak{C}}(SO(N))$ and $\cN_{o,l}^*$ to $\overline{\mathfrak C}^*(Spin(N))$. In (\ref{subsubsec:sospringerodd}) and (\ref{subsubsec:sospringereven}), we describe the correspondence for $\cN_{o,l}^{SO(N)}$, and in (\ref{subsubsec:spinspringerodd}) and (\ref{subsubsec:spinspringereven}), we describe the correspondence for $\cN_{o,l}^* $. 

\subsubsection{$SL(N)$}
\begin{enumerate}
    \item $\cN_{o,l}$ is parameterized by pairs $(\lambda,\rho)$, where $\lambda = (\lambda_1,...,\lambda_k) \in \mathcal{P}(N)$ and $\rho$ is a character of $\ZZ_d$, where $d=\mathrm{gcd}(\lambda_1,...,\lambda_k)$. 
    \item $p_1(\lambda,\rho)=\lambda$.
    \item $L$ is of type $(N/r)A_{r-1}$ for $r \mid N$, and $\chi$ is a character (trivial on connected center) of order $r$. 
    \item $W_{L}=S_{N/r}$.
    \item Let $\lambda=(\lambda_1,...,\lambda_k)$, $d=\mathrm{gcd}(\lambda_1,...,\lambda_k)$, and let $\rho$ be a character of $\ZZ_d$ of degree $m$. Then $(\lambda,\rho)$ belongs to a block with to $r=m$. The specific $\chi$ it corresponds to depends on $\rho$, but is not important for our purposes. The corresponding $S_{N/r}$-representation is given by the partition $(\lambda_1/r,...,\lambda_k/r)$ of $N/r$.
\end{enumerate}

\subsubsection{$SO(N)$, $N$ odd}\label{subsubsec:sospringerodd}
Let $n=\lfloor N/2 \rfloor$.
\begin{enumerate}
    \item $\mathcal{N}_{o,l}\leftrightarrow \mathcal Y^{2}_{odd, n}$.
    \item Let $S\in \mathcal Y^{2}_{odd,n}$.
    Then $p_1(S) = \mathcal P(2(X-z)+1 \cup 2(Y-z))$ where $(X,Y) = S^{sp}$.
    \item $L$ is of type $B_{(r^2-1)/2}$ for $r\ge 0$ odd such that $r^2\le N$.
    \item $W_L = W_{\frac12(N-r^2)}$.
    \item Let $S\in \mathcal Y^{2}_{d,n}$ for $d\ge 0$ odd. 
    Then $S$ belongs to the block corresponding to $r=d$.
    The corresponding bipartition is $\mathcal D(S)$.
\end{enumerate}

\subsubsection{$SO(N)$, $N$ even}\label{subsubsec:sospringereven}
Let $n=\lfloor N/2 \rfloor$.
\begin{enumerate}
    \item $\mathcal{N}_{o,l}\leftrightarrow \mathcal Y^{2}_{even, n}$.
    \item Let $S\in \mathcal Y^{2}_{even,n}$.
    Then $p_1(S) = \mathcal P(2(X-z)+1 \cup 2(Y-z))$ where $(X,Y) = S^{sp}$.
    \item $L$ is of type $D_{r^2/2}$ for $r\ge 0$ even such that $r^2\le N$.
    \item $W_L = W_{\frac12(N-r^2)}$ if $r>0$ and $W_n'$ if $r=0$.
    \item Let $S\in \mathcal Y^{2}_{d,n}$ for $d\ge 0$ even. 
    Then $S$ belongs to the block corresponding to $r=d$.
    The corresponding bipartition is $\mathcal D(S)$.
    The label of $\mathcal D(S)$ when $S$ is degenerate is not important for our purposes.
\end{enumerate}

\subsubsection{$Sp(2n)$}
\begin{enumerate}
    \item $\mathcal{N}_{o,l}\leftrightarrow \mathcal X^{1,1}_{odd, n}$.
    \item Let $S\in \mathcal X^{1,1}_{odd,n}$.
    Then $p_1(S) = \mathcal P(2(X-z) \cup 2(Y-z)+1)$ where $(X,Y) = S^{sp}$.
    \item $L$ is of type $C_{\frac 12r(r+1)}$ for $r\ge 0$ such that $\frac 12r(r+1)\le n$.
    \item $W_L = W_{n-\frac 12r(r+1)}$.
    \item Let $S\in \mathcal X^{1,1}_{d,n}$ for $d$ odd. 
    Then $S$ belongs to the block corresponding to $r=d-1$ if $d\ge 0$ and $r=-d$ if $d<0$.
    The corresponding bipartition is $\mathcal B(S)$ if $d\geq 0$ and the flip of $\mathcal{B}(S)$ otherwise.
\end{enumerate}

\subsubsection{$Spin(N)$, $N$ odd}\label{subsubsec:spinspringerodd}

\begin{enumerate}
    \item $\mathcal{N}_{o,l}^* \leftrightarrow \mathcal{P}_{ro}(N)$.
    \item $p_1(\lambda)=\lambda$.
    \item $L$ is of type $B_{(d-1)(2d+1)/2} + \frac{1}{4}(N-d(2d-1))A_1$ for $d\in \mathbb Z$, $d \equiv N \mod{4}$ and $d(2d-1) \leq N$. 
    \item $W_L= W_{(N-d(2d-1))/4}$.
    \item Let $\lambda \in \mathcal{P}_{ro}(N)$. Then $\lambda$ belongs to the block corresponding to $d=d(\lambda)$, cf. (\ref{eq:dlambda}), and the corresponding bipartition is $\rho(\lambda)$, cf. Proposition \ref{prop:spinnonrecursive}.
\end{enumerate}

\subsubsection{$Spin(N)$, $N$ even}\label{subsubsec:spinspringereven}

\begin{enumerate}
    \item $\mathcal{N}_{o,l}^* \leftrightarrow \mathcal{P}_{ro}(N) \sqcup \mathcal{P}_{ro}(N)$.
    \item $p_1(\lambda)=\lambda$.
    \item $L$ is of type $D_{d(2d-1)/2} + \frac{1}{4}(N-d(2d-1))A_1$ for $d\in \mathbb Z$, $d \equiv N \mod{4}$ and $d(2d-1)\leq N$.
    \item $W_L = W_{(N-d(2d-1))/4}$.
    \item Let $\lambda \in \mathcal{P}_{ro}(N)$. Then $\lambda$ belongs to the block corresponding to $d=d(\lambda)$, cf. (\ref{eq:dlambda}), and the corresponding bipartition is $\rho(\lambda)$, cf. Proposition \ref{prop:spinnonrecursive}.
\end{enumerate}

\subsection{The Arithmetic-Geometric correspondence}
Recall from Section \ref{sec:arithmeticgeometric} the definitions of $\bfG, G^\vee, \overline{\mathfrak S}, \overline{\mathfrak U}, \theta$ etc.
Recall from the previous section the definition of the set $\overline{\mathfrak C}(G^\vee)$ and note that there is a natural inclusion $\overline{\mathfrak C}(G^\vee)\to \overline{\mathfrak S}$.
In this section, we will give an explicit description of Lusztig's arithmetic-geometric correspondence (cf. Section \ref{sec:arithmeticgeometric}) for the geometric diagrams $\overline{\mathfrak C}(G^\vee)$ for the classical groups. 

Below we record 
\begin{enumerate}
    \item A list of all pairs $(L^{\vee},\chi)$ consisting of a Levi subgroup $L^{\vee} \subset G^\vee$ and a character $\chi$ of $Z(L^{\vee})$ such that there is a triple $(L^{\vee},\mathcal C^{\vee},\mathcal{E}^\vee) \in \overline{\mathfrak C}(G^{\vee})$ and $Z(L^{\vee})$ acts on the stalks of $\mathcal{E}^\vee$ by (a multiple of) $\chi$. In most cases, $\chi$ is uniquely determined by $L^{\vee}$; in such cases we omit it.
    \item The type of the affine Weyl group $\mathcal W^\vee(J^\vee,\mathcal C^\vee,\mathcal E^\vee)$.
    \item A description of $J$ and $\omega$ in $\theta(x) = (J,\bf E,\psi,\omega)$ for each $x\in \overline{\mathfrak C}(G^\vee)$.
    In all the cases we consider there is a unique choice of $\bf E$ and so we omit it.
\end{enumerate}
This information is taken from \cite[Section 7]{Lu-unip1}.

For $G^{\vee}=\mathrm{Spin}(N)$ we present this information in a slightly different manner. As discussed in Section \ref{subsec:Springer}, there is a decomposition
$$\overline{\mathfrak C}(Spin(N)) = \overline{\mathfrak C}(SO(N)) \sqcup \overline{\mathfrak C}^*(Spin(N)).$$
In (\ref{subsubsec:agSOodd}) and (\ref{subsubsec:agSOeven}), we describe the restriction of $\theta$ to $\overline{\mathfrak C}(SO(N))$, and in (\ref{subsubsec:agSpinodd}) and (\ref{subsubsec:agSpineven}), we describe the restriction of $\theta$ to $\overline{\mathfrak C}^*(Spin(N))$.

\subsubsection{$G^\vee=SL(N)$}
\begin{enumerate}
    \item $L^\vee$ is of type $(N/r)A_{r-1}$ for $r\mid N$, $\chi$ is of order $r$.
    \item $\tilde A_{N/r-1}$
    \item $J$ is $\emptyset$ and $\omega$ is of order $r$. 
\end{enumerate}

\subsubsection{$G^\vee = SO(N)$, $N$ odd}\label{subsubsec:agSOodd}
\begin{enumerate}
    \item $L^\vee$ is of type $B_{\frac 12(r^2-1)}$ for $r\ge 0$ odd and $\frac 12 (r^2-1) \le n$.
    \item $\tilde C_{\frac12 (N-r^2)}$ 
    \item $J$ is $C_{a^2+a}\times C_{a^2+a}$ where $a:=(r-1)/2$. $\omega = 1$.
\end{enumerate}

\subsubsection{$G^\vee = SO(N)$, $N$ even}\label{subsubsec:agSOeven}
\begin{enumerate}
    \item $L^\vee$ is of type $D_{r^2/2}$ for $r\ge 0$ even and $r^2/2 \le n$.
    \item $\tilde C_{\frac12 (N-r^2)}$ if $r>0$, $\tilde D_n$ if $r=0$.
    \item $J$ is $D_{a^2}\times D_{a^2}$ where $a:=r/2$. $\omega = 1$ if $a$ is even and $\omega $ of type I if $a$ is odd.
\end{enumerate}

\subsubsection{$G^\vee = Sp(2n)$}
\begin{enumerate}
    \item $L^\vee$ is of type $C_{\frac 12r(r+1)}$ for $r\ge 0$ and $\frac 12 r(r+1) \le n$.
    \item $\tilde C_{n-\frac 12r(r+1)}$ if $r>0$, $\tilde B_n$ if $r=0$..
    \item $J$ is $D_{a^2}\times B_{a^2+a}$ if r is even and $D_{a^2}\times B_{a^2-a}$ if $r$ is odd where $a:=\lfloor (r+1)/2\rfloor$. $\omega=1$ if $a$ is even and $\omega\ne 1$ if $a$ is odd.
\end{enumerate}

\subsubsection{$G^\vee = Spin(N)$, $N$ odd}\label{subsubsec:agSpinodd}
\begin{enumerate}
    \item $L^\vee$ is of type $B_{(d-1)(2d+1)/2} + \frac{1}{4}(N-d(2d-1))A_1$ for $d \equiv N \mod{4}$ and $d(2d-1) \leq N$. 
    \item $\tilde C_{(N-d(2d-1))/4}$
    \item $J$ is $C_{(d^2-1)/4}\times A_{d(d-1)/2-1}\times C_{(d^2-1)/4}$, $\omega \ne 1$.
\end{enumerate}

\subsubsection{$G^\vee = Spin(N)$, $N$ even}\label{subsubsec:agSpineven}
\begin{enumerate}
    \item $L^\vee$ is of type $D_{d(2d-1)/2} + \frac{1}{4}(N-d(2d-1))A_1$ for $d \equiv N \mod{4}$ and $d(2d-1)\leq N$. There are two possible $\chi$.
    \item $\tilde C_{(N-d(2d-1))/4}$
    \item $J$ is $D_{d^2/4}\times A_{d(d-1)/2-1}\times D_{d^2/4}$, $\omega$ of type II.
\end{enumerate}

\subsection{Faithful nilpotent orbits}
\label{subsec:faithful}
Recall the notation $\bfG, G^\vee, \overline{\mathfrak S}, \overline{\mathfrak U}, \theta$ from Section \ref{sec:arithmeticgeometric}, $\cN_{o,l},(\cN_{o,l})_x,W_x,\overline{\mathfrak C}$ from Section \ref{subsec:geometric}, $d_A, \mathcal K_I, \overline{\mathbb L}$ from Section \ref{sec:nil}, and $\unip, \unip_{\bfL,\bf E}, \WF(\phi)$ from Section \ref{subsec:unipotent}.

\begin{itemize}
    \item If $x=(J^\vee,\mathcal C^\vee,\mathcal E^\vee)\in \overline{\mathfrak S}$, and $\theta(x) = (K,\bf E,\psi,\omega)$, write $\mathcal W_x$ for the affine Weyl group $\mathcal W(K,\omega,\bf E)$ and $S_x$ for the Coxeter generators for $\mathcal W_x$.
    Recall from Section \ref{subsec:arithmetic} that $S_x\leftrightarrow (I - K)/\omega$. Write $W_x$ for the finite part of $\mathcal W_x$ (when $x\in \overline{\mathfrak C}(G^\vee)$ this agrees with the definition of $W_x$ given in Section \ref{subsec:Springer}). 
    Write $\pi:\mathcal W_x\to W_x$ for the projection map and $\mathcal I:(I-K)/\omega \xrightarrow{\sim} S_x$ for the bijection.
    \item Let $\mathcal J_x = \{J \mid K\subseteq J\subsetneq I, \ \omega J = J \}$. 
    \item Write $I_{0,K}$ for the element of $\mathcal J_x$ with $\mathcal I(I_{0,K}-K)$ equal to the Coxeter generators  of $W_x$.
    \item Let $\mathcal F_x = \{(J,\phi) \mid J\in \mathcal J_x, \ \phi \text{ a family of }\unip(\bfL_{c^\omega(J)}(\mathbb F_q)) \}$.
    \item For $J\in \mathcal J_x$ write $\mathcal W_{x,J}$ for $\langle y\mid y\in \mathcal I((J-K)/\omega)\rangle$ and $W_{x,J}$ for $\pi(\mathcal W_{x,J})$.
    \item Let $J\in \mathcal J_x$.
    Then $\bfL_{c^\omega(K)}(\mathbb F_q)$ is a $\bfL_{c^\omega(J)}(\mathbb F_q)$-split Levi subgroup of $\bfL_{c^\omega(J)}(\mathbb F_q)$ affording the cuspidal representation $\bf E$, and $\mathcal H(\bfP_{c^\omega(J)(\mf o)},\bf E)$ (c.f. Section \ref{sec:multi}) has underlying coxeter system $\mathcal W_{x,J} \cong W_{x,J}$.
    Thus $\mathrm{Irr}(W_{x,J})$ (resp. $\mathrm{Irr}(\mathcal W_{x,J})$) parameterizes the irreducible representations in the Harish-Chandra series of $\bfL_{c^\omega(J)}(\mathbb F_q)$ determined by the cuspidal data $(\bfL_{c^\omega(K)},\bf E)$.
    Let $\mathcal U_{x,J}:\mathrm{Irr}(W_{x,J})\to \unip_{\bfL_{c^\omega(K)},\bf E}(\bfL_{c^\omega(J)}(\mathbb F_q))$ (resp. $\tilde{\mathcal U}_{x,J}:\mathrm{Irr}(\mathcal W_{x,J})\to \unip_{\bfL_{c^\omega(K)},\bf E}(\bfL_{c^\omega(J)}(\mathbb F_q))$) denote this parameterisation.
    We also write $\mathcal U_{x,J}$ for the composition 
    $$\mathrm{Irr}(W_{x,J})\xrightarrow{\sim} \unip_{\bfL_{c^\omega(K)},\bf E}(\bfL_{c^\omega(J)}(\mathbb F_q)) \hookrightarrow \unip(\bfL_{c^\omega(J)}(\mathbb F_q)).$$
\end{itemize}


\begin{definition}\label{def:faithful}
Let $x=(J^\vee,\mathcal C^\vee,\mathcal E^\vee)\in \overline{\mathfrak C}(G^\vee)$, $\theta(x) = (K,\bf E,\psi,\omega)$ and $\OO^{\vee} \in (\cN_o^\vee)_x$. We say that $\OO^{\vee}$ is \emph{$x$-faithful} if there exists a pair $(J, \phi) \in \mathcal{F}_{x}$ such that
\begin{itemize}
    \item[(i)] $\overline{\mathbb L}(J,\WF(\phi,\CC)) = d_A(\OO^{\vee},1)$,
\end{itemize}
and for every irreducible representation $\rho$ of $A(\OO^{\vee})$ such that $(\OO,\rho)\in (\cN_{o,l}^\vee)_x$,
\begin{itemize}
    \item[(ii)] there is a representation $F\in \mathrm{Irr}(W_{x,J})$ such that $\mathscr U_{x,J}(F) \in \phi$ and
     $$\Hom_{W_{x,J}}(F,E(\OO^\vee, \rho)|_{W_{x,J}}) \neq 0;$$
    \item[(iii)] for every $J'\in \mathcal J_x$ and $F'\in \mathrm{Irr}(W_{x,J'})$, if
     $$\Hom_{W_{x,J'}}(F',E(\OO^\vee, \rho)|_{W_{x,J'}}) \neq 0$$
     then 
     \begin{equation}
        \label{eq:bound}
        \overline{\mathbb L}(J',\WF(\mathcal U_{x,J'}(F'),\CC)) \le d_A(\OO^{\vee},1).
     \end{equation}
\end{itemize}
\end{definition}
Note that Equation \ref{eq:bound} is equivalent to the condition 
$$d_S(J',\WF(\mathcal U_{x,J'}(F'),\CC)) \ge \OO^\vee.$$
\begin{lemma}
    \label{lem:bound}
    It suffices to check (iii) in Definition \ref{def:faithful} for $J'$ maximal in $\mathcal J_x$ with respect to inclusion.
\end{lemma}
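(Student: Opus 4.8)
The statement asserts that condition (iii) in Definition~\ref{def:faithful}, which a priori quantifies over all $J' \in \mathcal{J}_x$, only needs to be verified for those $J'$ that are maximal in $\mathcal{J}_x$ with respect to inclusion. The natural approach is to reduce the general case to the maximal case by a monotonicity argument: if $J' \subseteq J''$ with both in $\mathcal{J}_x$, and condition (iii) holds for $J''$, then it holds for $J'$. Once this is established, any $J' \in \mathcal{J}_x$ is contained in some maximal element $J''$, and we are done.

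\textbf{Key steps.} First I would fix $J' \subseteq J''$ in $\mathcal{J}_x$ and an irreducible $F' \in \mathrm{Irr}(W_{x,J'})$ with $\Hom_{W_{x,J'}}(F', E(\OO^\vee,\rho)|_{W_{x,J'}}) \neq 0$. Since $W_{x,J'} \subseteq W_{x,J''}$ (both being standard parabolic subgroups of $W_x$ generated by the Coxeter generators indexed by $(J'-K)/\omega \subseteq (J''-K)/\omega$), there exists $F'' \in \mathrm{Irr}(W_{x,J''})$ containing $F'$ on restriction, such that $\Hom_{W_{x,J''}}(F'', E(\OO^\vee,\rho)|_{W_{x,J''}}) \neq 0$ as well (pick any irreducible constituent of $E(\OO^\vee,\rho)|_{W_{x,J''}}$ whose further restriction to $W_{x,J'}$ contains $F'$). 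The second ingredient is a compatibility between the maps $\mathcal{U}_{x,J'}, \mathcal{U}_{x,J''}$ and restriction: namely that $\mathcal{U}_{x,J'}(F')$ occurs in the Harish-Chandra restriction (to the Levi $\bfL_{c^\omega(J')}$ inside $\bfL_{c^\omega(J'')}$) of $\mathcal{U}_{x,J''}(F'')$. This is built into the Howlett--Lehrer/Lusztig description of Harish-Chandra series: the parameterisation $\mathcal{U}_{x,\bullet}$ intertwines parabolic induction of unipotent representations with induction of Weyl group representations along $W_{x,J'} \hookrightarrow W_{x,J''}$. The final ingredient is monotonicity of the Kawanaka wavefront set under Harish-Chandra induction together with monotonicity of $\overline{\mathbb L}(J'', -)$ versus $\overline{\mathbb L}(J', -)$: one needs that if a unipotent representation $\tau'$ of $\bfL_{c^\omega(J')}$ appears in the restriction of $\tau''$ of $\bfL_{c^\omega(J'')}$, then $\overline{\mathbb L}(J', \WF(\tau',\CC)) \le \overline{\mathbb L}(J'', \WF(\tau'',\CC))$. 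Granting these, the chain $\overline{\mathbb L}(J', \WF(\mathcal{U}_{x,J'}(F'),\CC)) \le \overline{\mathbb L}(J'', \WF(\mathcal{U}_{x,J''}(F''),\CC)) \le d_A(\OO^\vee,1)$ gives (iii) for $J'$, where the last inequality is (iii) for $J''$.

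\textbf{Main obstacle.} The crux is the monotonicity $\overline{\mathbb L}(J', \WF(\tau',\CC)) \le \overline{\mathbb L}(J'', \WF(\tau'',\CC))$ when $\tau'$ is a Harish-Chandra constituent of the restriction of $\tau''$. This combines (a) the fact that $\WF(\tau')$ is, up to collapse, bounded by the image under restriction of $\WF(\tau'')$ — essentially a statement about Lusztig's $\mathbf{a}$-function being non-increasing under truncated restriction of unipotent representations, or equivalently that Harish-Chandra induction sends a family to representations lying above it in the closure order — and (b) a compatibility of the lifting map $\overline{\mathbb L} = \mathfrak{Q} \circ \mathbb{L}$ with the inclusion $L_{J'} \subseteq L_{J''}$ of pseudo-Levi subgroups of $G$, i.e.\ that $\mathbb{L}(J', -)$ factors through $\mathbb{L}(J'', -)$ via saturation/induction of nilpotent orbit data. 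Part (b) should follow from the transitivity properties of induction of nilpotent orbits and the explicit description of $\mathbb{L}$ from \cite[Corollary 2.19]{okada2021wavefront} and \cite[Section 3.4]{Acharduality}; part (a) is the genuinely representation-theoretic input and is where I expect the real work to lie. I would look to extract it from the combinatorial descriptions of $\WF$ on unipotent representations recorded in Section~\ref{subsec:unipotent} (the symbol formulas), where restriction of Harish-Chandra series corresponds to an explicit operation on symbols, making the inequality a manipulation of $\beta$-sets and the dominance order, for which Lemma~\ref{lem:uniondominance} and Lemma~\ref{lem:linearpresentation} are the relevant tools.
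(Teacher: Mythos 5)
Your proposal is correct and follows essentially the same route as the paper: extend $F'$ to a constituent $F''$ of $E(\OO^\vee,\rho)|_{W_{x,J''}}$ for a maximal $J''\supseteq J'$, use the Howlett--Lehrer compatibility between the Harish-Chandra parameterisations $\mathcal U_{x,J'},\mathcal U_{x,J''}$ and induction of Weyl-group representations, and conclude by monotonicity of the wavefront set together with compatibility of the duality with saturation along $L_{J'}\subseteq L_{J''}$. The one place you anticipate "real work" — the monotonicity of the Kawanaka wavefront set under Harish-Chandra induction — is handled in the paper not by symbol combinatorics but by citing a general result (Taylor--Pham, Lemma 10.3, after reducing Deligne--Lusztig to Harish-Chandra induction for split Levis), and the compatibility of $d_S$ with saturation is exactly \cite[Proposition 2.31]{okada2021wavefront}; the paper also prefers to phrase the final comparison as $d_S(J',\cdot)\ge\OO^\vee$ rather than chaining $\le_A$-inequalities of $\overline{\mathbb L}$, which avoids having to verify the closure-order half of $\le_A$ separately.
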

\begin{proof}
    Suppose condition (iii) of Definition \ref{def:faithful} holds for all $J'$ maximal.
    Let $J'\in \mathcal J_x$ and $F'\in \mathrm{Irr}(W_{x,J'})$ be such that
    $$\Hom_{W_{x,J'}}(F',E(\OO^\vee, \rho)|_{W_{x,J'}}) \neq 0.$$
    Let $J''\in \mathcal J_x$ be maximal among elements of $\mathcal J_x$ containing $J'$.
    Then there exists an $F''\in \mathrm{Irr}(W_{x,J''})$ such that 
    $$\Hom_{W_{x,J''}}(F'',E(\OO^\vee, \rho)|_{W_{x,J''}}) \neq 0$$
    and
    $$\Hom_{W_{x,J'}}(F',F''|_{W_{x,J'}}) \neq 0.$$
    By assumption
    $$d_S(J'',\WF(\mathcal U_{x,J''}(F''),\CC)) \ge \OO^\vee.$$
    Thus it suffices to prove
    $$d_S(J',\WF(\mathcal U_{x,J'}(F'),\CC)) \ge d_S(J'',\WF(\mathcal U_{x,J''}(F''),\CC)).$$
    By \cite[Proposition 2.31]{okada2021wavefront} and the remarks in the proof we have
    $$d_S(J',\WF(\mathcal U_{x,J'}(F'),\CC)) = d_S(J'',\mathrm{Sat}_{L_{J'}}^{L_{J''}}\WF(\mathcal U_{x,J'}(F'),\CC)).$$
    Thus it suffices to prove that for $F'\in \mathrm{Irr}(W_{x,J'}), F''\in \mathrm{Irr}(W_{x,J''})$
    \begin{equation}
        \label{eq:suff}
        \Hom_{W_{x,J'}}(F',F''|_{W_{x,J'}}) \neq 0\implies \mathrm{Sat}_{L_{J'}}^{L_{J''}}\WF(\mathcal U_{x,J'}(F'),\CC) \le \WF(\mathcal U_{x,J''}(F'')).
    \end{equation}
    By \cite[Theorem 5.9]{howlettlehrer}, 
    $$\mathcal U_{x,J''}\left(\mathrm{Ind}_{W_{x,J'}}^{W_{x,J''}}F'\right) = \mathrm{Ind}_{L_{J'}}^{L_{J''}}\mathcal U_{x,J'}(F')$$
    where $\mathrm{Ind}$ applied to unipotent characters means Harish-Chandra induction.
    Thus by Frobenius reciprocity we have that 
    \begin{equation}
        \label{eq:part1}
        \Hom_{W_{x,J'}}(F',F''|_{W_{x,J'}}) \neq 0\Leftrightarrow \mathcal U_{x,J''}(F'')\subseteq \mathrm{Ind}_{L_{J'}}^{L_{J''}}\mathcal U_{x,J'}(F').
    \end{equation}
    By \cite[Lemma 10.3]{taylorpham} (and noting that we may replace the Deligne-Lusztig induction in the lemma by Harish-Chandra induction since $\bfL_{c^\omega(J')}$ is a $\bfL_{c^\omega(J'')}$-split Levi of $\bfL_{c^\omega(J'')}$ by \cite[Section 2.1]{taylorpham}) we have that 
    \begin{equation}
        \label{eq:part2}
        \mathcal U_{x,J''}(F'')\subseteq \mathrm{Ind}_{L_{J'}}^{L_{J''}}\mathcal U_{x,J'}(F') \implies \mathrm{Sat}_{L_{J'}}^{L_{J''}}\WF(\mathcal U_{x,J'}(F'),\CC) \le \WF(\mathcal U_{x,J''}(F'')).
    \end{equation}
    Stringing together Equation \ref{eq:part1} and Equation \ref{eq:part2} gives Equation \ref{eq:suff} as required.
\end{proof}

\begin{theorem}
\label{thm:faithful}    
Suppose $\mathbf{G}$ is a simple group of adjoint type and let $\OO^{\vee} \in \cN_o^{\vee}$. Then for every $x \in \overline{\mathfrak C}(G^{\vee})$ such that $\OO^\vee\in (\cN_o)_x$, $\OO^{\vee}$ is $x$-faithful. 
\end{theorem}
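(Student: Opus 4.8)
\textbf{Proof strategy for Theorem \ref{thm:faithful}.}

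The plan is to reduce the theorem to a type-by-type verification, organized according to the classification of cuspidal triples $x = (J^\vee,\mathcal C^\vee,\mathcal E^\vee) \in \overline{\mathfrak C}(G^\vee)$ recorded in Section \ref{subsec:agSOodd}--\ref{subsubsec:agSpineven}, and to exhibit, for each $\OO^\vee \in (\cN_o)_x$, an explicit pair $(J,\phi) \in \mathcal F_x$ doing the job. First I would fix $x$ and recall the combinatorial model: the affine Weyl group $\mathcal W_x$ and its finite part $W_x$ are of one of the classical types listed (typically $\tilde C_m$ or $\tilde B_m$ or $\tilde D_m$), the set $(\cN_o^\vee)_x$ of orbits in the block corresponds via the generalized Springer correspondence of Section \ref{subsec:Springer} to $\mathrm{Irr}(W_x)$ through a symbol parameterization, and $d_A(\OO^\vee,1)$ is computed through $\overline{\mathbb L} = \mathfrak Q \circ \mathbb L$ together with the algorithms of \cite[Section 3.4]{Acharduality}. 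The candidate $(J,\phi)$: take $J = I_{0,K}$ (so $\bfL_{c^\omega(J)}$ is the maximal parahoric whose relative Weyl group is $W_x$ itself), and let $\phi$ be the family of $\unip(\bfL_{c^\omega(J)}(\mathbb F_q))$ whose Kawanaka wavefront set is prescribed by condition (i). The point of condition (i) is that it is forced: once one knows $\overline{\mathbb L}(I_{0,K}, -)$ on special orbits matches $d_A(-,1)$ via the explicit dictionary between symbols on the geometric side and symbols on the arithmetic side (essentially a bookkeeping exercise with the shift maps $S \mapsto \tilde S$, $S \mapsto S^!$ from Section \ref{subsec:symbols} and Lemmas \ref{lem:linearpresentation}, \ref{lem:unwind}), condition (i) holds by construction.

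Next I would establish conditions (ii) and (iii). For (ii), given $\rho \in \mathrm{Irr}(A(\OO^\vee))$ with $(\OO^\vee,\rho) \in (\cN_{o,l}^\vee)_x$, the generalized Springer representation $E(\OO^\vee,\rho)$ is an explicit irreducible of $W_x = W_{x, I_{0,K}}$, so one simply takes $F = E(\OO^\vee,\rho)$ itself and checks $\mathscr U_{x,I_{0,K}}(F) \in \phi$, i.e. $\WF(\mathscr U_{x,I_{0,K}}(E(\OO^\vee,\rho))) = \WF(\phi)$. Since $\WF$ is constant on families and families of $\unip(\bfL_{c^\omega(J)}(\mathbb F_q))$ correspond to special orbits, this is the assertion that $E(\OO^\vee,\rho)$ and the unipotent character attached to the special orbit $\WF(\phi)$ lie in the same family of $W_x$-representations — which is exactly the statement that the generalized Springer correspondence is compatible with Lusztig's families, applied inside the block $x$. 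This is where the new combinatorial description of the generalized Springer correspondence for spin groups (Proposition \ref{prop:spinnonrecursive}, Proposition \ref{prop:ssymbspin}) is needed, to reduce the spin cases to symbol manipulations parallel to the $SO$ cases. For (iii), by Lemma \ref{lem:bound} it suffices to treat $J'$ maximal in $\mathcal J_x$; for each such maximal $J'$ one has $\bfL_{c^\omega(J')}$ an explicit classical group, $\mathcal U_{x,J'}(F')$ an explicit unipotent character, and the required inequality $\overline{\mathbb L}(J', \WF(\mathcal U_{x,J'}(F'),\CC)) \le d_A(\OO^\vee,1)$ — equivalently $d_S(J', \WF(\mathcal U_{x,J'}(F'),\CC)) \ge \OO^\vee$ — becomes a comparison of partitions/symbols, to be checked using the dominance-order lemmas of Section \ref{subsec:partitions} (Lemma \ref{lem:uniondominance} in particular) together with the saturation compatibility of $d_S$ from \cite[Proposition 2.31]{okada2021wavefront}.

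The bulk of the work, and the main obstacle, is condition (iii) in the exceptional and spin cases. In classical (non-spin) types one has clean symbol formulas for both $E(\OO^\vee,\rho)|_{W_{x,J'}}$ (via branching of Weyl group representations, which for $B/C/D$ is combinatorially explicit) and for $\WF$ and $\overline{\mathbb L}$, so the inequality reduces to monotonicity statements about collapses and transposes that follow from Lemma \ref{lem:uniondominance} and its relatives; I expect these to go through after a careful but routine case analysis over the values of the block parameter $r$ (resp. $d$). For $Spin(N)$, however, one must control branching of the representations $E(\OO^\vee,\rho)$ attached to $\mathcal N_{o,l}^*$, and here the recursive/non-recursive formulas for $\rho(\lambda)$ and for $d_{BV}(\lambda)$ on rather-odd partitions (Proposition \ref{prop:spinnonrecursive}, Lemma \ref{lem:dualro}, Corollary \ref{cor:dlambda}) are the tools; matching the two sides requires identifying, inside each block, the unipotent character of $\bfL_{c^\omega(J')}$ that receives $E(\OO^\vee,\rho)$ upon restriction, and this identification is the delicate point. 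For the exceptional groups $G^\vee$ of types $F_4, E_6, E_7, E_8$ (and $G_2$), there are only finitely many $x$ and finitely many $\OO^\vee$, so (i)--(iii) become a finite check; I would carry this out by direct computation (using known tables for the generalized Springer correspondence, Lusztig families, Kawanaka wavefront sets, and the Achar duality algorithm), treating the types $D_4$-with-triality and $E_8$-with-large-cuspidal-support blocks as the most computation-heavy. The other steps — choosing $(J,\phi)$, verifying (i), verifying (ii) — are comparatively formal once the dictionaries of Sections \ref{subsec:unipotent}, \ref{subsec:Springer}, and the present subsection are in place.
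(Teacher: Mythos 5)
There is a genuine gap, and it sits at the heart of the construction: your candidate pair $(J,\phi)$ with $J = I_{0,K}$ (i.e.\ the ``big'' parahoric whose relative Weyl group is all of $W_x$) does not work in general, and it is exactly this choice of $J$ that the paper's argument is designed to produce. In the classical cases the paper does \emph{not} fix $J=I_{0,K}$; it chooses $J$ depending on the orbit $\OO^\vee$ (i.e.\ on the partition $\lambda$), via the decomposition $d_{BV}(\lambda) = \mu(\lambda) \cup \nu(\lambda)$ (and for spin, $\mu(\lambda)\subseteq\underline{d_{BV}(\lambda)}$, $\nu(\lambda)=\underline{d_{BV}(\lambda)}\setminus\mu(\lambda)$), and then sets $J$ to be a \emph{product} parahoric --- $C_m\times C_{n-m}$, $D_m\times D_{n-m}$, $D_m\times B_{n-m}$, $C_m\times A_{n-2m-1}\times C_m$, etc.\ --- with $m = |\mu(\lambda)|/2$, so that $\phi = \phi_1\times\phi_2$ with $\WF(\phi_1)=\mu(\lambda)$ and $\WF(\phi_2)=\nu(\lambda)$. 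The point is that $d_A(\OO^\vee,1)$ is a pair $(d(\OO^\vee),\bar C')$ in $\mathcal N_{o,\bar c}$ whose $\bar A$-class $\bar C'$ is in general nontrivial, and the only way to realize it as $\overline{\mathbb L}(J,\WF(\phi,\CC))$ is to use a pseudo-Levi $L_J$ that is ``spread out'' across the affine diagram in a way calibrated to $\bar C'$. With $J = I_{0,K}$, the image of $\overline{\mathbb L}(I_{0,K},-)$ over all families $\phi$ of $\bfL_{c^\omega(I_{0,K})}$ misses most of the elements $d_A(\OO^\vee,1)$; so your assertion that condition (i) ``holds by construction'' (choose $\phi$ to make it true) fails because there is, for most $\OO^\vee$, no such $\phi$ when $J=I_{0,K}$. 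Even in the exceptional case the paper uses $J=I_{0,K}$ only when $\OO^\vee$ is special and an additional intersection hypothesis holds; for the remaining orbits a non-maximal $J$ is supplied explicitly in Tables \ref{table:e6} and \ref{table:e7}.

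The rest of your outline is consistent with the paper's: the reduction to type-by-type verification, the use of Lemma~\ref{lem:bound} to reduce condition (iii) to maximal $J'$, the role of the spin combinatorics (Propositions~\ref{prop:spinnonrecursive}, \ref{prop:ssymbspin}, Lemma~\ref{lem:dualro}, Corollary~\ref{cor:dlambda}) and the dominance-order lemmas (Lemmas~\ref{lem:uniondominance}, \ref{lem:linearpresentation}, \ref{lem:unwind}), and the finite computational check for exceptional groups. But the crux --- producing a $(J,\phi)$ that simultaneously makes (i) true and makes (ii)--(iii) verifiable --- is an existence statement that requires an actual construction, and your proposal elides it. You would need, for each orbit $\OO^\vee$ and each block $x$, to find a partition of $J$ (equivalently, a distribution of the ``defect'' data $\bar C'$ of $d_A(\OO^\vee,1)$ across the factors of a product parahoric) and then verify (ii) by showing the constituents of $E(\OO^\vee,\rho)|_{W_{x,J}}$ include a representation landing in $\phi$ --- which is \emph{not} simply ``$E(\OO^\vee,\rho)$ itself,'' since $W_{x,J}\subsetneq W_x$ --- and (iii) by a nontrivial symbol inequality such as $S^{1,sp}+S^{2,sp}\ge S$. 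That calibration is the content of the $\mu(\lambda)/\nu(\lambda)$ machinery, and it is missing from the proposal.
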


\begin{proof}
We consider separately the cases of classical and exceptional groups.
If $\mathbf{G}$ is classical, the statement is proved in Section \ref{sec:faithfulclassical}. If $\mathbf{G}$ is exceptional, it is proved in Section \ref{sec:faithfulexceptional}.
\end{proof}

\subsection{Proof of faithfulness in classical types}\label{sec:faithfulclassical}
\subsubsection{$G^\vee = SL(N)$}
Let $n=N-1$.
Let $x\in \overline{\mathfrak C}(G^\vee)$ correspond to $\chi \in \mathrm{Irr}(\mathbb Z_N)$ of order $r\mid N$.
Then $\theta(x) = (\emptyset,1,\psi,\chi)$ and $\mathcal W_x$ is of type $\tilde A_{k-1}$ where $k = N/r$.
A nilpotent orbit $\OO^\vee\in \cN_o^\vee \leftrightarrow \lambda = (\lambda_1\le\lambda_2\le \dots \le \lambda_e)$ lies in $(\cN_o^\vee)_x$ iff $r\mid \lambda_i$ for all $i$.

We wish to show that $\lambda$ is $x$-faithful.
Let $J = rA_{k-1}$ and $\mu = (\lambda_1/r\le \lambda_2/r\le \dots \le \lambda_e/r)$. Then $\bfL_{c^\chi(J)}$ is isomorphic to $A_{k-1}(q)$ and so the families are parameterised by $\mathcal P(k)$.
Let $\phi$ be the family parameterised by the partition $\mu$.
Then $\bar {\mathbb L}(J,\WF(\phi,\CC)) = \mu^t \cup \mu^t\cup \cdots \cup\mu^t$ where there are $r$ terms in this expression.
But $\mu^t\cup\mu^t\cup\cdots\cup \mu^t = (\mu + \mu + \cdots + \mu)^t = \lambda^t = d(\OO^\vee,1)$.
Thus condition (i) holds.
Condition (ii) holds because $W_{x,J} = W_x$ and $E(\OO^\vee,\rho) = \mu$.
Finally condition (iii) holds by using Lemma \ref{lem:bound} to reduce to the case where $J\in \mathcal J_x$ is maximal and noting that for such $J$, $W_{x,J} = W_x$.
Condition (iii) then holds as a consequence of condition (ii) (the only possible $F'$ that can arise is $F' = F$).

\subsubsection{$SO(N)$, $N$ odd}
Let $n=\lfloor N/2\rfloor$, $G^\vee = Spin(N)$.
Fix $\OO^\vee\in \cN^\vee_o$. We show that $\OO^\vee$ is $x$-faithful for all $x\in \overline{\mathfrak C}(SO(N))$ such that $\OO^\vee\in (\cN_o^\vee)_x$ by exhibiting a single pair $(J,\phi)$ that works for all such blocks.
Let $\lambda = (\lambda_1\le\lambda_2\le \dots \le \lambda_e)$ be the orthogonal partition of $N$ corresponding to $\OO^\vee$.
Define $\mu(\lambda) \subseteq \lambda^t$ as follows
$$m_{\mu(\lambda)}(x) = \begin{cases} 
2 &\mbox{if } x \text{ is even and } m_{\lambda^t}(x) \text{ is}\\
&\mbox{even and nonzero}\\
1 & \mbox{if } x \text{ is even and } m_{\lambda^t}(x) \text{ is}\\
& \mbox{odd}\\
0 &\mbox{otherwise} \end{cases}$$
and define $\nu(\lambda) = d_{BV}(\lambda)\setminus \mu(\lambda)$.
By \cite[Lemma 4.8.4]{cmo} $\mu(\lambda)$ and $\nu(\lambda)$ are both special symplectic partitions.
We set $J$ to be the set $C_m\times C_{n-m}$ where $m = |\mu|/2$, and $\phi$ to be the family characterised by $\WF(\phi,\CC) = (\mu(\lambda),\nu(\lambda))$.
Since $J = C_m\times C_{n-m}$, the family $\phi$ decomposes as a product $\phi_1\times \phi_2$ where $\phi_1$ is a family of $C_m$, $\phi_2$ is a family of $C_{n-m}$ and
$$\WF(\phi_1) = \mu(\lambda), \quad \WF(\phi_2) = \nu(\lambda).$$

It follows from \cite[Lemma 4.8.6]{cmo} that condition (i) holds for this choice of $(J,\phi)$.
For condition (ii), let $x\in \overline{\mathfrak C}(SO(N))$ and suppose $(\OO^\vee,\rho)\in (\mathcal N_{o,l})_x$.
As described in Section \ref{subsubsec:sospringerodd} (3) let $r$ be the odd integer corresponding to $x$.
Then $\theta(x) = (C_{a^2+a}\times C_{a^2+a},\bf E,\psi,1)$, $a = (r-1)/2$, $\bf E$ is a uniquely determined cuspidal unipotent representation, and $\mathcal W_x$ is of type $\tilde C_{\frac 12(r^2-1)}$. 
By Section \ref{subsubsec:unipC}, $\unip_{\bfL_{c^\omega(K)},\bf E}(\bfL_{c^\omega(J)}(\mathbb F_q)) \leftrightarrow \mathcal Y^1_{r,m}\times\mathcal Y^1_{r,n-m}$.
Note that $K\subseteq J$ (and hence $J\in \mathcal J_x$) if and only if $\mathcal Y^1_{r,m}\times\mathcal Y^1_{r,n-m}$ is non-empty.
Under the bijection in Section \ref{subsubsec:sospringerodd} (1), $(\OO^\vee,\rho)$ corresponds to a $S\in \mathcal Y^2_{r,n}$ with $p_1(S) = \lambda$.
By \cite[Lemma 4.4.2]{cmo}, $S$ has the following description in terms of $S^{sp} = (X,Y)$.
Let $X=(X_1,\dots,X_c), Y = (Y_1,\dots,Y_c)$ be the refinements of $X$ and $Y$ (see paragraph preceeding \cite[Example 4.4.1]{cmo}).
Then $S=(A,B)$ where $A=(A_1,\dots,A_c),B=(B_1,\dots,B_c)$ and $\{A_i,B_i\} = \{X_i,Y_i\}$ for all $i$.
Let $S^{1,sp}= (X^1,Y^1)\in \mathcal Y_{1,m}^1, S^{2,sp}=(X^2,Y^2)\in \mathcal Y_{1,n-m}^1$ be the special symbols in the families $\phi_1$ and $\phi_2$ respectively.
Taking sufficiently large representatives in the relevant shift equivalence classes we have by condition (i) and \cite[Proposition 4.7.1]{cmo} that $S^{sp} = S^{1,sp} + S^{2,sp}$.
Let $S^{i,sp}=(X^i,Y^i)=((X_1^i,\dots,X_c^i),(Y_1^i,\dots,Y_c^i))$ be the decomposition into subsequences along the same indices as the decomposition of $X,Y$.
Define $S^i = ((A_1^i,\dots,A_c^i),(B_1^i,\dots,B_c^i))$ where
\begin{equation}
    (A_j^i,B_j^i) = \begin{cases}
        (X_j^i,Y_j^i) & \mbox{ if } (A_j,B_j)=(X_j,Y_j) \\
        (Y_j^i,X_j^i) & \mbox{ if } (A_j,B_j)=(Y_j,X_j).
    \end{cases}
\end{equation}
By construction $S^{i,sp}\sim S^i$ and $S = S^1+S^2$.
Moreover, by \cite[Lemma 4.5.2]{cmo} and \cite[Lemma 4.9.4]{cmo} we have that $S^1\in \mathcal Y^1_{r,m}, S^2\in \mathcal Y^1_{r,n-m}$.
Thus $\unip_{\bfL_{c^\omega(K)},\bf E}(\bfL_{c^\omega(J)}(\mathbb F_q)) \ne \emptyset$, so $J\in \mathcal J_x$, and
\begin{equation}
    \label{eq:sumSOodd}
    \mathcal D(S) = \mathcal D(S^1) + \mathcal D(S^2).
\end{equation}
Note that $\mathcal D(S)$ parameterises an irreducible representation of $W_x$ and $(\mathcal D(S^1),\mathcal D(S^2))$ parameterises an irreducible representation of $W_{x,J}$.
By the Littlewood-Richardson rule (c.f. the proof of \cite[Proposition 4.7.2]{cmo}) Equation \ref{eq:sumSOodd} implies that
$$\Hom_{W_{x,J}}(F,E(\OO^\vee, \rho)|_{W_{x,J}}) \neq 0.$$

Finally for condition (iii), let $S\in \mathcal Y^2_{r,n}$ of defect $r\ge 1$ correspond to $(\OO^\vee,\rho)$ as above and suppose $J\in \mathcal J_x$ is maximal and $F\in \mathrm{Irr}(W_{x,J'})$ satisfies
 $$\Hom_{W_{x,J}}(F,E(\OO^\vee, \rho)|_{W_{x,J}}) \neq 0.$$
 Since $J$ is maximal it is of the form $C_m\times C_{n-m}$ and $W_{x,J}$ is of the form $W_k\times W_{l-k}$.
 Let the pair of bipartitions $(B^1,B^2)$ correspond to $F$.
 Let $(S^1,S^2)$ be the corresponding symbols of defect $r$.
 By the Littlewood-Richardson rule (cf. \cite[Proposition 4.7.2]{cmo}) we have that $S^1+S^2\ge S$.
We wish to show that 
$$d_S(J,\WF(\mathcal U_{x,J}(F),\CC)) \ge \OO^\vee.$$
By \cite[Lemma 3.2]{acharaubert} it suffices to show that the inequality holds with respect to the dominance order for the respective s-symbols viewed as non-decreasing sequences i.e. partitions.
The s-symbol for $E(\OO^\vee,1)$ is just $S^{sp}$.
Let us compute the s-symbol for the left hand side.
Let $\lambda_1 = \mathcal P(2S_1^{1,sp}\cup 2S_2^{1,sp}+1)$ and $\lambda_2 = \mathcal P(2S_2^{2,sp}\cup 2S_2^{2,sp}+1)$ (these are both special symplectic partitions).
Then 
$$\WF(\mathcal U_{x,J}(F)) = \lambda_1^t\times \lambda_2^t.$$
Thus by \cite[Proposition 2.31]{okada2021wavefront}
$$d_S(J,\WF(\mathcal U_{x,J}(F),\CC)) = j_{W_J}^{W^\vee}(E(\lambda_1 ,1)\otimes E(\lambda_2,1)).$$
where $j$ is $j$-induction, see \cite[Chapter 4]{Lusztig1984}. By \cite[4.5 (a)]{Lusztig2009}, $W_J$ is the parahoric of $W$ (the Weyl group of $\bfG$) determined by $J$, and the s-symbol for the $j$-induced representation is
\begin{equation}
    \label{eq:ssymb}
    S^{1,sp}+S^{2,sp}.
\end{equation}
Since $S^1+S^2\ge S$, by Lemma \ref{lem:uniondominance} we have that $S^{1,sp}+S^{2,sp}\ge S$.
This is the required inequality.

\subsubsection{$SO(N)$, $N$ even}
\label{subsubsec:soeven}
Suppose $\lambda$ is not one of the edge cases listed in \cite[Proposition 4.8.2]{cmo}.
Then the proof is identical to the $N$ odd case except for a few minor technical differences which we note.

\begin{itemize}
    \item The partition $\mu(\lambda)$ is defined as
    $$m_{\mu(\lambda)}(x) = \begin{cases} 
    2 &\mbox{if } x \text{ is odd and } m_{\lambda^t}(x) \text{ is}\\
    &\mbox{even and nonzero}\\
    1 & \mbox{if } x \text{ is odd and } m_{\lambda^t}(x) \text{ is}\\
    & \mbox{odd}\\
    0 &\mbox{otherwise}. \end{cases}$$
    \item $J = D_m\times D_{n-m}$.
    \item $r$ is even. $\omega = 1$ if $4\mid r$, and $\omega$ is type I if $4\mid r-2$. $K=D_{a^2}\times D_{a^2}$ where $a = r/2$.
    Despite these differences, we still have $\unip_{\bfL_{c^\omega(K)},\bf E}(\bfL_{c^\omega(J)}(\mathbb F_q)) \leftrightarrow \mathcal Y^1_{r,m}\times\mathcal Y^1_{r,n-m}$.
    \item The rest of the argument follows through identically, except for the case $r=0$ (see Equation \ref{eq:mathcalD} for why this case is different) which is treated in \cite[Lemma 4.9.5]{cmo}.
\end{itemize}
Now suppose $\lambda$ is one of the edge cases listed in \cite[Porposition 4.8.2]{cmo}.
Recall that for these $\lambda$, $\mu(\lambda) = (1^2)$ which is a partition of $2$. 
The issue with these edge cases can be traced down to the fact that $m = 1$, but $D_1 \times D_{n-1}$ is not a subgraph of $\tilde D_n$ if we interpret $D_1$ as $A_1$.
If we interpret $D_1$ as the empty graph however, then as long as we interpret things correctly, the same argument as the one given for $\lambda$ not an edge case works.
We briefly give the details.

Suppose $\lambda$ is one of said edge cases. 
Then $\lambda$ is special, $r$ takes values in $\{0,2\}$, $\mu(\lambda) = (1^2)$ and $\nu(\lambda) = d_{BV}(\lambda)\setminus \mu(\lambda)$.
For both $r=0,2$ we have that $K = \emptyset$ (since \cite{Lu-unip1} uses the convention $D_1 = \emptyset$).
Thus we can take $J$ to be $D_{n-1}$.
Take $\phi$ to be the family characterised by $\WF(\phi,\CC) = \nu(\lambda)$.
Since $d_{BV}(\lambda) = \nu(\lambda) \cup \mu(\lambda) = \nu(\lambda) \cup (1^2)$, we have that $\nu(\lambda)$ saturates to $d_{BV}(\lambda)$ in $D_n$ and so since $\lambda$ is special we get that
$$\bar {\mathbb L}(D_{n-1},\nu(\lambda)) = \bar {\mathbb L}(D_n,d_{BV}(\lambda)) = d_A(\OO^\vee,1).$$
This establishes property (i).
For property (ii) we can construct as usual the a-symbols $S_1,S_2$ so that Equation \ref{eq:sumSOodd} holds.
Supposing $r\ne 0$ (the $r=0$ case has already been treated in \cite[Proposition 4.8.2]{cmo}), then $W_x = W_{l}$ where $l=\frac12(N-r^2)$, and $W_{x,J} = W_{l-1}$.
Since $W_{l-1}\le W_1\times W_{l-1}\le W_l$ and $\mathcal D(S^2) \subseteq \mathcal D(S^1)\otimes \mathcal D(S^2)|_{W_{l-1}}$, we get from Equation \ref{eq:sumSOodd} that 
$$\Hom_{W_{x,J}}(F,E(\OO^\vee, \rho)|_{W_{x,J}}) \neq 0$$
as required.
The proof for condition (iii) requires no modifications.

\subsubsection{$Sp(2n)$}
Suppose $\lambda$ is not one of the edge cases listed in \cite[Proposition 4.8.2]{cmo}.
The proof for this case is similar to the $SO(N)$, $N$ odd case with a few (more substantial) technical differences which we note.

\begin{itemize}
    \item $\lambda$ is a symplectic partition of $2n$. The partition $\mu(\lambda)$ is defined as
    $$m_{\mu(\lambda)}(x) = \begin{cases} 
    2 &\mbox{if } x \text{ is odd and } m_{\lambda^t}(x) \text{ is}\\
    &\mbox{even and nonzero}\\
    1 & \mbox{if } x \text{ is odd and } m_{\lambda^t}(x) \text{ is}\\
    & \mbox{odd}\\
    0 &\mbox{otherwise}. \end{cases}$$
    \item $J = D_m\times B_{n-m}$.
    \item $r$ has no parity restriction. $K=D_{a^2}\times B_{a^2+a}$ if $r$ is even and $K = D_{a^2}\times B_{a^2-a}$ if $r$ is odd where $a = \lfloor (r+1)/2\rfloor$.
    $\omega = 1$ if $a$ is even, and $\omega = -1$ if $a$ is odd.
    \item $S\in \mathcal X^{1,1}_{d,n}$ where $d=r+1$ if $r$ is even and $d=-r$ if $r$ is odd.
    \item We have $\unip_{\bfL_{c^\omega(K)},\bf E}(\bfL_{c^\omega(J)}(\mathbb F_q)) \leftrightarrow \mathcal Y^1_{d-1,m}\times\mathcal Y^1_{d,n-m}$.
    \item $S^{1,sp}$ lies in $\mathcal Y^1_{0,m}$.
    The additive identity is $S^{sp} = (S^{1,sp})^! + S^{2,sp}$.
    \item $(S^1)^!$ is constructed from $(S^{1,sp})^!$. We have $S^1\in \mathcal X_{d-1,m}^{1,0}, (S^1)^! \in \mathcal X_{d,m}^{0,1}, S^2\in \mathcal X_{d,n-m}^{1,0}$ and $S = (S^1)^!+S^2$. The $\bullet^!$ map reflects similarity classes and so $S^1 \sim S^{1,sp}$.
    \item The rest of the argument follows through identically.
\end{itemize}

If $\lambda$ is one of the edge cases listed in \cite[Proposition 4.8.2]{cmo} then if we interpret $D_1$ as the empty graph, the same argument as in Section \ref{subsubsec:soeven} follows through.

\subsubsection{$Spin(N)$, $N$ odd}
Let $n=\lfloor N/2\rfloor$, $G^\vee = Spin(N)$.
Fix $(\OO^\vee,\rho)\in \cN_{o,l}^*$ and let $\lambda = (\lambda_1^{a_1},\lambda_2^{a_2},\dots,\lambda_k^{a_k})\in \cN_{ro}$ for $\lambda_1<\cdots<\lambda_k$ and $a_i\ge 1, i=1,\dots,k$ be the corresponding partition under the bijection in Section \ref{subsubsec:spinspringerodd} (1).
Define $\mu(\lambda) \subseteq \lambda^t$ as follows
$$m_{\mu(\lambda)}(x) = \begin{cases} 
2 &\mbox{if } x \text{ is even and } 4\mid m_{\lambda^t}(x) \\
&\mbox{and } m_{\lambda^t}(x) \ne 0\\
1 & \mbox{if } x \text{ is even and } m_{\lambda^t}(x) \text{ is}\\
& \mbox{odd}\\
0 &\mbox{otherwise.} \end{cases}$$
Recall from Corollary \ref{cor:dlambda} that we can write $d_{BV}(\lambda) = \underline{d_{BV}(\lambda)}\cup \underline{d_{BV}(\lambda)}$.
\begin{lemma}
    $\mu(\lambda)\subseteq \underline{d_{BV}(\lambda)}$.
\end{lemma}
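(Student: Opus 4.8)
The plan is to reduce everything to a single inequality between multiplicities. Since $d_{BV}(\lambda)=\underline{d_{BV}(\lambda)}\cup\underline{d_{BV}(\lambda)}$ by Corollary~\ref{cor:dlambda}, the claim $\mu(\lambda)\subseteq\underline{d_{BV}(\lambda)}$ is equivalent to $2\,m_{\mu(\lambda)}(x)\le m_{d_{BV}(\lambda)}(x)$ for every $x$. First I would put both sides in the same coordinates using Lemma~\ref{lem:dualro}: writing $\lambda=(\lambda_1^{a_1},\dots,\lambda_k^{a_k})$ with $\lambda_1<\dots<\lambda_k$, $A_i=\sum_{j\ge i}a_j$, $\bar\lambda_i=\lambda_i-\lambda_{i-1}$ and $\delta=\sum_{i=1}^{\tilde l}\delta^i$, the integers $A_1>\dots>A_k\ge 1$ are pairwise distinct, $\lambda^t$ takes the value $A_i$ with multiplicity $\bar\lambda_i$ on the columns $\lambda_{i-1}<j\le\lambda_i$ (and no other positive value), and $d_{BV}(\lambda)$ takes the value $A_i$ with multiplicity $\bar\lambda_i+\delta_i$ (and no other value). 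Directly from the definition of $\mu(\lambda)$ one has $m_{\mu(\lambda)}(x)\le m_{\lambda^t}(x)$ for all $x$, so $m_{\mu(\lambda)}(x)=0$ whenever $x\notin\{A_1,\dots,A_k\}$, and the whole statement collapses to the inequality $2\,m_{\mu(\lambda)}(A_i)\le\bar\lambda_i+\delta_i$ for each $i$, the right-hand side being $\ge 0$ because it is a multiplicity of an honest partition.

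The next step is to record the two arithmetic facts that drive the case analysis. Put $i_j=\#\{j'\le j:\lambda_{j'}\text{ odd}\}$ with $i_0=0$. Unwinding the definition of $\delta^i$, and using that $f$ enumerates the indices of the odd parts of $\lambda$ in increasing order, one obtains the closed form $\delta_i=[\lambda_i\text{ odd}]\,(-1)^{i_i}-[\lambda_{i-1}\text{ odd}]\,(-1)^{i_{i-1}}$; and since $a_j$ is even exactly when $\lambda_j$ is even, $A_i\equiv\tilde l-i_{i-1}\pmod 2$, whence (using that $N$, hence $\tilde l$, is odd) $A_i$ is even if and only if $i_{i-1}$ is odd. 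One then checks $2\,m_{\mu(\lambda)}(A_i)\le\bar\lambda_i+\delta_i$ by cases: if $A_i$ is odd then $m_{\mu(\lambda)}(A_i)=0$ and there is nothing to prove; if $A_i$ is even, recall $m_{\lambda^t}(A_i)=\bar\lambda_i$ and split on $\bar\lambda_i\bmod 4$. When $\bar\lambda_i$ is odd, exactly one of $\lambda_i,\lambda_{i-1}$ is odd and in each subcase the parity of $i_{i-1}$ forces $\delta_i=1$, so $\bar\lambda_i+\delta_i\ge 2=2\,m_{\mu(\lambda)}(A_i)$; when $\bar\lambda_i\equiv 2\pmod 4$ we have $m_{\mu(\lambda)}(A_i)=0$; when $4\mid\bar\lambda_i$ (hence $\bar\lambda_i\ge 4$) the two parts share parity, and one finds $\delta_i=0$ if both are even and $\delta_i=2$ if both are odd, so $\bar\lambda_i+\delta_i\ge 4=2\,m_{\mu(\lambda)}(A_i)$. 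In every case the required bound holds.

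The one genuinely delicate point is the sign bookkeeping in the second step: one has to verify that the constraint "$A_i$ even $\iff i_{i-1}$ odd" always pushes $\delta_i$ in the favourable direction precisely in the cases where $m_{\mu(\lambda)}(A_i)\ne 0$. Everything else — transposing $\lambda$, reading multiplicities off Lemma~\ref{lem:dualro}, and the inclusion $\mu(\lambda)\subseteq\lambda^t$ — is routine. As a safety check I would run the argument through the degenerate index $i=1$ (where $\lambda_0=0$ is even, $i_0=0$, and $A_1=\#\lambda$ is odd, so it lands in the trivial case) and through a concrete example such as $\lambda=(2^4,5,7,12^6,15)$ to confirm that the small values $\bar\lambda_i\in\{1,2,4\}$ behave as predicted.
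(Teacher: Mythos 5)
Your proof is correct, but it follows a different route from the paper's. The paper first reduces to the case where $\lambda$ has only odd parts (by the same $\lambda\mapsto\lambda\cup(a,a)$ trick used to prove Lemma \ref{lem:dualro}); in that situation every even value $x$ in the support of $\lambda^t$ sits at an even index, where $\delta=+2$ and $m_{\lambda^t}(x)=\bar\lambda_i\ge 2$, so $m_{d_{BV}(\lambda)}(x)=m_{\lambda^t}(x)+2\ge 4$ and hence $m_{\underline{d_{BV}(\lambda)}}(x)\ge 2\ge m_{\mu(\lambda)}(x)$ using only the universal bound $m_{\mu(\lambda)}\le 2$ — no case analysis on $\bar\lambda_i\bmod 4$ is needed. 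You instead work with a general rather odd partition directly: you derive the closed form $\delta_i=[\lambda_i\ \mathrm{odd}](-1)^{i_i}-[\lambda_{i-1}\ \mathrm{odd}](-1)^{i_{i-1}}$, observe that $A_i$ is even exactly when $i_{i-1}$ is odd, and then check $2m_{\mu(\lambda)}(A_i)\le\bar\lambda_i+\delta_i$ case by case. What your approach buys is that it avoids having to verify that both $\mu(\cdot)$ and $\underline{d_{BV}(\cdot)}$ transform compatibly under $\lambda\mapsto\lambda\cup(a,a)$ (the step the paper leaves implicit, and which is slightly delicate since adjoining $(a,a)$ shifts the values of $\lambda^t$ and hence reshuffles the parity conditions in the definition of $\mu$); the cost is the longer sign bookkeeping, all of which checks out — in particular your subcases $\delta_i=1$ when exactly one of $\lambda_{i-1},\lambda_i$ is odd and $A_i$ is even, and $\delta_i\in\{0,2\}$ when they share parity, are correct, and $i=1$ indeed always lands in the trivial case because $A_1\equiv\tilde l$ is odd.
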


\begin{proof}
Arguing as in the proof of Lemma \ref{lem:dualro}, we can reduce to the case when $\lambda$ consists of only odd parts, i.e. $\lambda=(\lambda_1<\lambda_2<...<\lambda_k)$. Recall that
$$\lambda^t = (k^{\overline{\lambda}_1},(k-1)^{\overline{\lambda}_2},...,1^{\overline{\lambda}_k})$$
and
$$d_{BV}(\lambda) = (k^{\overline{\lambda}_1+\delta_1},(k-1)^{\overline{\lambda}_2+\delta_2},...,1^{\overline{\lambda}_k}+\delta_k)$$
where $\delta=(-1,2,-2,...,2,-2)$. If $x$ is even, then $m_{\lambda^t}(x) \geq 2$. So $m_{d_{BV}(\lambda)}(x) = m_{\lambda^t}(x)+2 \geq 4$. Hence, $m_{\underline{d_{BV}(\lambda)}}(x) = \frac{1}{2}m_{d_{BV}(\lambda)}(x) \geq 2$.
\end{proof}

Define $\nu(\lambda) = \underline{d_{BV}(\lambda)}\ \setminus \mu(\lambda)$.

Let $x\in \overline{\mathfrak C}^{*}(Spin(N))$ be such that $(\OO,\rho)\in (\cN_{o,l}^*)_x$.
$x$ is uniquely determined and corresponds to the block parameterised by $d = d(\lambda)$ in the description of blocks given in \ref{subsubsec:spinspringerodd} (3).
We have $\theta(x) = (C_{(d^2-1)/4}\times A_{d(d-1)/2-1}\times C_{(d^2-1)/4},\bf E,\psi,\omega)$ where $\omega = -1$, and the affine Weyl group is of type $\tilde C$.
Note that since $N$ is odd and $d(\lambda)\equiv N\pmod 2$ we have that $d$ is also odd.
We set $J$ to be the $\omega$-stable set $C_m\times A_{n-2m-1}\times C_m$ where $m = |\mu|/2$.
The reductive quotient has type $C_m(q)\times \hphantom{ }^2A_{n-2m-1}(q^2)$. 
By Section \ref{subsec:unipotent}, 
\begin{align*}
    \mathrm{Irr}(W_{m-(d^2-1)/4})\times\mathrm{Irr}(W_{\frac12(n-2m-\frac12r(r+1))}) &\leftrightarrow \unip_{\bfL_{c^\omega(K)},\bf E}(\bfL_{c^\omega(J)}(\mathbb F_q))
\end{align*}
where $r = d-1$ if $d>0$ and $r=-d$ if $d\le 0$.
It will be useful to note that 
$$\unip_{\bfL_{c^\omega(K)},\bf E}(\bfL_{c^\omega(J)}(\mathbb F_q)) \subseteq \unip(C_m(q))\times \unip(\hphantom{ }^2A_{n-2m-1}).$$
and that $\mathcal U_{x,J}$ is a product of maps 
$$\mathrm{Irr}(W_{m-(d^2-1)/4})\to \unip(C_m(q)), \quad \mathrm{Irr}(W_{\frac12(n-2m-\frac12r(r+1))})\to \unip(\hphantom{ }^2A_{n-2m-1}).$$
We call these maps $\mathcal U_{x,J,1}$ and $\mathcal U_{x,J,2}$ respectively.

We set $\phi$ to be the family characterised by $\WF(\phi,\CC) = (\mu(\lambda),\nu(\lambda))$.
Note the family $\phi$ decomposes as a product $\phi_1\times \phi_2$ where $\phi_1$ is a family of $C_m(q)$, $\phi_2$ is a family of $^2A_{n-2m-1}(q^2)$ and
$$\WF(\phi_1) = \mu(\lambda), \quad \WF(\phi_2) = \nu(\lambda).$$
The following Lemma establishes condition (i) for the pair $(J,\phi)$.
\begin{lemma}
    $\bar{\mathbb L}(J,\WF(\phi,\CC)) = d_A(\OO^\vee,1)$.
\end{lemma}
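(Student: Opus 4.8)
The plan is to compute $\bar{\mathbb L}(J,\WF(\phi,\CC))$ directly and match it against $d_A(\OO^\vee,1)$, following the strategy used for condition (iii) in the $SO(N)$, $N$ odd case above, but now accounting for the extra type-$A$ factor in the pseudo-Levi. Since $G^\vee=\mathrm{Spin}(2n+1)$, the group $\bfG$ is of adjoint type $C_n$, so $W^\vee$ is the Weyl group $W_n$ of type $C_n$, and $J=C_m\times A_{n-2m-1}\times C_m$ cuts out the pseudo-Levi $L_J=Sp(2m)\times GL(n-2m)\times Sp(2m)$ of $Sp(2n)$, with $W_J=W_m\times S_{n-2m}\times W_m$. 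Because the two $C_m$ factors of $J$ are interchanged by $\omega$ and carry the same cuspidal datum, $\WF(\phi,\CC)$ is the orbit $(\mu(\lambda),\nu(\lambda),\mu(\lambda))$ of $L_J$; here $\mu(\lambda)$ is a special symplectic partition of $2m$ (it has only even parts, hence its transpose is again a $C$-partition, which forces $\OO_{\mu(\lambda)}$ to be special), and $\nu(\lambda)$ is a partition of $n-2m$, so the family $\phi_2$ of $\hphantom{ }^2\!A_{n-2m-1}(q^2)$ with $\WF(\phi_2)=\nu(\lambda)$ exists by Section \ref{subsec:unipotent}. By \cite[Proposition 2.31]{okada2021wavefront} the orbit underlying $\bar{\mathbb L}(J,\WF(\phi,\CC))$ equals $d_S(J,\WF(\phi,\CC))=j^{W^\vee}_{W_J}\big(E(\mu(\lambda),1)\otimes\nu(\lambda)\otimes E(\mu(\lambda),1)\big)$, where the middle factor denotes the $S_{n-2m}$-representation labelled by $\nu(\lambda)$. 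So the problem reduces to a symbol computation for the orbit together with a bookkeeping of the conjugacy class in Lusztig's canonical quotient.

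For the symbol computation I would evaluate the $j$-induction in two transitive stages. First, $j$-inducing the $S_{n-2m}$-representation $\nu(\lambda)$ up to $W_{n-2m}$: on orbits this is the Lusztig--Spaltenstein induction $\mathrm{Ind}_{GL(n-2m)}^{Sp(2(n-2m))}(\nu(\lambda))=2\nu(\lambda)$, obtained by scaling the parts of $\nu(\lambda)$ by $2$, which is again special symplectic. Second, $j$-inducing the resulting special representation of $W_m\times W_{n-2m}\times W_m$ up to $W_n=W^\vee$, which by \cite[4.5 (a)]{Lusztig2009} (together with \cite[Proposition 2.31]{okada2021wavefront}, exactly as for condition (iii) in the $SO(N)$, $N$ odd case) is addition of the associated special s-symbols. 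To identify the outcome I would feed in: (a) the explicit s-symbol of $\mu(\lambda)$ coming from the symbol description of the generalized Springer correspondence for $Sp$ (Section \ref{subsec:Springer}); (b) Corollary \ref{cor:dlambda}, which gives the square root $\underline{d_{BV}(\lambda)}$ with $d_{BV}(\lambda)=\underline{d_{BV}(\lambda)}\cup\underline{d_{BV}(\lambda)}$, together with the Lemma just proved ($\mu(\lambda)\subseteq\underline{d_{BV}(\lambda)}$) and $\nu(\lambda)=\underline{d_{BV}(\lambda)}\setminus\mu(\lambda)$, so that $\underline{d_{BV}(\lambda)}=\mu(\lambda)\cup\nu(\lambda)$; and (c) the precise multiplicity formula defining $\mu(\lambda)$, which is arranged so that no collapse beyond what is already accounted for occurs in forming $2\nu(\lambda)$ and in the symbol addition. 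A finite check, controlling collapses via Lemma \ref{lem:uniondominance} as in the earlier cases, then shows that the s-symbol produced is the one attached to the orbit $d(\OO^\vee)=d_{BV}(\lambda)$.

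Finally, I would upgrade this orbit identity to the full equality $\bar{\mathbb L}(J,\WF(\phi,\CC))=d_A(\OO^\vee,1)$ in $\cN_{o,\bar c}$. Here one uses $\bar{\mathbb L}(J,-)=\mf Q\circ\mathbb L(J,-)$ and the fact that, on a product of special orbits of the pseudo-Levi $Sp(2m)\times GL(n-2m)\times Sp(2m)$ of $Sp(2n)$, $\mathbb L(J,-)$ is computed by Achar's algorithm \cite[Section 3.4]{Acharduality} from exactly the symbol data already analysed; comparing with the computation of $d_A(\OO^\vee,1)$ via the same algorithm and invoking \cite[Lemma 3.2]{acharaubert}, the desired equality follows from the s-symbol identity. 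I expect this last step to be the main obstacle: it is where the distinction between $d$ and $d_A$ actually bites, and where the fact that a rather odd orbit $\OO^\vee$ need not be special must be handled, so one must check that the conjugacy class in $\bar A(d(\OO^\vee))$ produced by $\bar{\mathbb L}(J,\WF(\phi,\CC))$ is precisely the one $d_A$ attaches to $(\OO^\vee,1)$, not merely that the two orbits agree. A secondary nuisance, to be dealt with as in Section \ref{subsubsec:soeven}, is the collection of degenerate cases — $m=0$, $n-2m$ small, or $\mu(\lambda)$ of the exceptional shape of \cite[Proposition 4.8.2]{cmo} — where small subdiagrams such as $D_1$ or $A_0$ must be reinterpreted and the generic argument adjusted accordingly.
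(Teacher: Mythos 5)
Your proposal diverges substantially from the paper's proof, and as written it contains a genuine gap. The paper's argument is a two-step reduction: the orbit $(\mu(\lambda),\nu(\lambda),\mu(\lambda))$ of the pseudo-Levi $C_m\times A_{n-2m-1}\times C_m$ is first \emph{saturated} into $C_m\times C_{n-m}$, under which the type-$A$ factor contributes $\nu(\lambda)\cup\nu(\lambda)$ — not the Lusztig--Spaltenstein induction $2\nu(\lambda)$ with doubled parts that you propose — so that by Corollary \ref{cor:dlambda} the resulting orbit is $(\mu(\lambda),d_{BV}(\lambda)\setminus\mu(\lambda))$; the equality $\bar{\mathbb L}(C_m\times C_{n-m},(\mu(\lambda),d_{BV}(\lambda)\setminus\mu(\lambda)))=d_A(\OO^\vee,1)$ is then quoted from \cite[Lemma 4.8.6]{cmo}. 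You never make this reduction, and the step you defer as ``the main obstacle'' — pinning down the class in $\bar A(d(\OO^\vee))$ — is precisely what that citation supplies.

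The more serious problem is a confusion between the two sides of the duality. The orbit underlying $\bar{\mathbb L}(J,\WF(\phi,\CC))$ is \emph{not} $d_S(J,\WF(\phi,\CC))$: the former is the saturation $\mathrm{Sat}_{L_J}^{Sp(2n)}(\mu(\lambda),\nu(\lambda),\mu(\lambda))=\mu(\lambda)\cup\nu(\lambda)\cup\nu(\lambda)\cup\mu(\lambda)=d_{BV}(\lambda)$, an orbit of $\mathfrak{sp}(2n)$ (this part of the lemma is immediate and needs no symbol computation), whereas $d_S(J,\WF(\phi,\CC))$, the quantity computed by $j$-induction and symbol addition, is an orbit of $\mathfrak g^\vee=\mathfrak{so}(2n+1)$, and the correct target of that computation is $\OO^\vee=\lambda$ itself, not $d_{BV}(\lambda)$ (compare the condition-(iii) argument, where the $j$-induced s-symbol is measured against the s-symbol of $E(\OO^\vee,1)$ supplied by Lemma \ref{lem:trivlocsys}). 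So your symbol computation is aimed at the wrong partition, and your first-stage orbit operation ($2\nu$ versus $\nu\cup\nu$) is incorrect for whichever of the two quantities you are actually computing. A repaired version of your route — prove the saturation identity directly, prove $d_S(J,\WF(\phi,\CC))=\OO^\vee$ by symbol addition against Lemma \ref{lem:trivlocsys}, and then invoke the characterization of equality in $\cN_{o,\bar c}$ by the pair (saturation, $d_S$-value) — would work and would be a legitimate alternative to citing \cite{cmo}, but that is not what is written.
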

\begin{proof}
    We have 
    \begin{align*}
        \bar{\mathbb L}(J,\WF(\phi,\CC)) &= \bar{\mathbb L}(C_m\times A_{n-2m-1}\times C_m,(\mu(\lambda),\nu(\lambda),\mu(\lambda))) \\
        &=\bar{\mathbb L}(C_m\times C_{n-m},(\mu(\lambda),\nu(\lambda)\cup\nu(\lambda)\cup\mu(\lambda))) \\
        &=\bar{\mathbb L}(C_m\times C_{n-m},(\mu(\lambda),d_{BV}(\lambda)\setminus \mu(\lambda)) \\
        &=d_A(\OO^\vee,1)
    \end{align*}
    where the last equality follows from essentially the same argument given in \cite[Lemma 4.8.6]{cmo}.
\end{proof}
Recall the definition of $\rho$ from Section \ref{subsec:spincombo} and let $B = (B_1,B_2) = \rho(\lambda)$.
Note that there is a case distinction involving a flip depending on if $d(\lambda)>0$ or $\le 0$.
Let us assume for now that $d(\lambda)>0$. 
We will discuss the case $d(\lambda)\le0$ separately at the end.

To verify condition (ii) it suffices to find $B^1\in \mathcal B(m-(d^2-1)/4), B^2\in \mathcal B(\frac12(n-2m-\frac12r(r+1)))$ such that
$$B = B^1+B^2$$
(c.f. the proof of \cite[Proposition 4.7.2]{cmo}) and $\mathcal U_{x,J,1}(B^1)\in \phi_1,\mathcal U_{x,J,2}(B^2) \in \phi_2$.
Note that since the families in $^2A_{n-2m-1}$ are all singletons, the condition $\WF(\mathcal U_{x,J,2}(B^2)) = \nu(\lambda)$ in fact completely determines $B^2$ provided such a $B^2$ exists.
For this reason it is covenient to determine $B^2$ first.

Recall the definitions of $f, \delta, \bar\lambda_i,A_i$ from the end of Section \ref{subsec:spincombo}.
Let $(a,b]$ denote the set $\{a+1,\dots,b\}$ when $a<b$ and $\emptyset$ otherwise. Let $(a,b)$ denote the set $\{a+1,\dots,b-1\}$ if $a+1<b$ and $\emptyset$ otherwise.
Let $[P]$ denote the function that takes value $1$ if $P$ is true and $0$ if $P$ is false.
Define $\xi_i = [4\mid \bar\lambda_i], \zeta_i = [2\nmid \bar\lambda_i]$ and $\mathcal S = \bigcup_{i=1}^l(f(2i-1),f(2i)]$.
Since $a_i=1$ whenever $\lambda_i$ is odd, we have that $A_i$ is even if and only if $i\in \mathcal S$.
Therefore, we can write $\mu$ as $(A_i^{\zeta_i+2\xi_i})_{i\in \mathcal S}$ (this is a decreasing sequence in $i$).
It follows that 
$$\nu = (A_i^{\frac12(\bar\lambda_i+\delta_i)-[i\in \mathcal S](\zeta_i+2\xi_i)}).$$
A direct computation then shows that
$$\nu^t = (\eta_1^{a_1},\eta_2^{a_2},\dots,\eta_k^{a_k})$$
where $\eta_1\le\eta_2\le\cdots\le \eta_k$, 
$$\eta_i = \frac12(\lambda_i+[i\in \mathcal O_0]-[i\in \mathcal O_1])-E_i, \quad E_i = \sum_{j\le i, j\in \mathcal S}\zeta_i+2\xi_i$$
and 
$$\mathcal O_i = \{f(j) \mid 1\le j\le 2l,\ j \equiv i\pmod 2\}.$$
By Section \ref{subsubsec:unip2A} (2), the partition $\nu^t$ parameterises $\mathcal U_{x,J,2}(B^2)$ as en element of $\unip(\hphantom{ }^2A_{n-2m-1}(q^2))$. 
Let us now compute $B_2 = (B_1^2,B_2^2)$ using the algorithm in Section \ref{subsubsec:unip2A} (6).
Let $z_i = \lfloor \lambda_i/2 \rfloor$ so $z_i$ is even if $r_i = 0,1$ and is odd if $r_i = 2,3$.
In terms of $z_i$ we have
$$\eta_i = z_i + [i\in \mathcal O_0] - E_i.$$
The $\beta$-set $Z$ for $\nu^t$ of length $\#\lambda$ consists of blocks of sequences of the form
$$z_i+[i\in \mathcal O_0]-E_i+C_i+(0,1,\dots,a_i-1)$$
where $C_i = \sum_{j<i}a_j$.
Note that 
\begin{align*}
    &z_i + [i\in \mathcal O_0]-E_i+C_i \\
    \equiv &[r_i=2,3] + [i\in \mathcal S, \lambda_i \text{ is odd}] + [i\in \mathcal S, \lambda_i \text{ is even}] + [i\in \mathcal S] \pmod 2 \\
    \equiv &[r_i = 2,3] \pmod 2.
\end{align*}
Thus the $\beta$-set for $B_1^2$, which we denote $S_1^2$, consists of blocks of sequences of the form 
\begin{equation}
    \begin{cases}
        q_i+\frac12 \Delta_i+(0,1,\dots,a_i/2-1) & \mbox{ if $r_i=0$} \\
        q_i+\frac12\Delta_i+(0) & \mbox{ if $r_i=1$} \\
        q_i+\frac12 \Delta_i+ 1 +(0,1,\dots,a_i/2-1) & \mbox{ if $r_i=2$} \\
        \emptyset & \mbox{ if $r_i=3$} 
    \end{cases}
\end{equation}
and the $\beta$-set for $B_2^2$, which we denote $S_2^2$, consists of blocks of sequences of the form 
\begin{equation}
    \begin{cases}
        q_i+\frac12\Delta_i+(0,1,\dots,a_i/2-1) & \mbox{ if $r_i=0$} \\
        \emptyset & \mbox{ if $r_i=1$} \\
        q_i+\frac12\Delta_i+(0,1,\dots,a_i/2-1) & \mbox{ if $r_i=2$} \\
        q_i+\frac12\Delta_i+(0) & \mbox{ if $r_i=3$.}
    \end{cases}
\end{equation}
where $\Delta_i = [i\in \mathcal O_0]+C_i-E_i$ and $1\le i\le k$.
The partitions $B_1^2,B_2^2$ can then be obtained by applying $\mathcal P$ to the $\beta$-sets.
Note that $|S_1^2|-|S_2^2|=d$.

Let us now produce a bipartition $B_1$ satisfying $\mathcal U_{x,J,1}(B_1) \in \phi_1$ which, we will see, satisfies $B= B^1+B^2$.
First suppose such a $B^1$ exists.
Let $S = \mathcal D^{-1}(B^1)\in \mathcal Y_{d,m}^1$.
Using Section \ref{subsubsec:unipC} (3), we can compute $S^{sp}$ from the condition $\WF(\mathcal U_{x,J,1}(B^1)) = \mu(\lambda)$.
A direct computation shows that
$$\mu^t = (E_1^{a_1},E_2^{a_2},\dots,E_k^{a_k}).$$
Taking a $\beta$-set of length $\#\lambda$ one computes that $S^{sp} = (X,Y)$ where $X$ consists of blocks of sequences of the form
\begin{equation}
    \begin{cases}
        \emptyset & \mbox{ if $i\in \mathcal O_0$} \\
        (C_i+E_i)/2 + (0) & \mbox{ if $i\in \mathcal O_1$} \\
        (C_i+E_i)/2 + (0,1,\dots,a_i/2-1) & \mbox{ if $\lambda_i$ is even}
    \end{cases}
\end{equation}
and $Y$ consists of blocks of sequences of the form
\begin{equation}
    \begin{cases}
        (C_i+E_i-1)/2 + (0) & \mbox{ if $i\in \mathcal O_0$} \\
        \emptyset & \mbox{ if $i\in \mathcal O_1$} \\
        (C_i+E_i)/2 + (0,1,\dots,a_i/2-1) & \mbox{ if $\lambda_i$ is even.}
    \end{cases}
\end{equation}
Thus we know that if such a $S$ exists, it must have entries coming from the above special symbol.
We now define such an $S$. 
Let $S=(S_1^1,S_2^1)$ where $S_1^1$ consists of blocks of sequences of the form
\begin{equation}
    \begin{cases}
        (C_i+E_i-[i\in \mathcal O_0])/2 + (0) & \mbox{ if $r_i=1$} \\
        \emptyset & \mbox{ if $r_i=3$} \\
        (C_i+E_i)/2 + (0,1,\dots,a_i/2-1) & \mbox{ if $r_i=0,2$}
    \end{cases}
\end{equation}
and $S_2^1$ consists of blocks of sequences of the form
\begin{equation}
    \begin{cases}
        \emptyset & \mbox{ if $r_i=1$} \\
        (C_i+E_i-[i\in \mathcal O_0])/2 + (0) & \mbox{ if $r_i=3$} \\
        (C_i+E_i)/2 + (0,1,\dots,a_i/2-1) & \mbox{ if $r_i=0,2$.}
    \end{cases}
\end{equation}
Note as a sanity check that $C_i+E_i \equiv [i\in \mathcal S] + [i\in \mathcal S,\lambda_i \text{ even}] \equiv [i\in \mathcal O_0] \pmod 2$ since $\mathcal O_0 = \{i\in \mathcal S\mid \text{$\lambda_i$ is odd}\}$ and so all the entries are integers.
It is clear that $S$ has the same entries as $S^{sp}$ and has defect $d$.
To show that $S$ lies in $\mathcal Y_{d,m}^1$ it suffices to check that $X',Y'$ are both strictly increasing.
Clearly within each block the sequence is strictly increasing so we only need to compare endpoints of blocks.
We prove this for $S_1^1$. The proof for $S_2^1$ is analogous.
Let $o_i = \lfloor (C_i+E_i)/2\rfloor = (C_i+E_i-[i\in\mathcal O_0])/2$, $e_i=(C_i+E_i)/2$ and $e_i' = (C_{i+1}+E_i)/2-1$.
Here $e_i$ and $e_i'$ are the entries at the start and at the end of the $i$th block when $\lambda_i$ is even.
Let $i$ and $j$ index consecutive blocks of $S_1^1$ (so $i<j$ and $r_l=3$ for all $i<l<j$).
There are a number of cases to consider: 
\begin{itemize}
    \item $r_{i}=1,r_j=1$ $\implies$ $o_{i}<o_j$,
    \item $r_{i}=1,r_j\in\{0,2\}$ $\implies$ $o_{i}<e_j$,
    \item $r_i\in\{0,2\},r_j=1$ $\implies$ $e_{i}'<o_j$,
    \item $r_i\in\{0,2\},r_j\in\{0,2\}$ $\implies$ $e_{i}'<e_j$.
\end{itemize}
We prove each case as follows:
\begin{itemize}
    \item Note that $C_j = C_i + (j-i)$ and so $o_i<o_j$ certainly holds if $j-i\ge 2$.
    Thus it remains to verify the case when $j=i+1$.
    Suppose $j=i+1$. Then $C_j=C_i+1$.
    Moreover, since $r_{j-1}=r_i=r_j$ we have $\xi_j = 1$ and so $E_j=E_i+2[j\in \mathcal S]$.
    Thus 
    $$o_j = o_i + (1+2[j\in \mathcal S]+[j-1\in \mathcal O_0]-[j\in \mathcal O_0])/2 = o_i+1>o_i$$
    as required.
    \item By a similar reduction to the one performed in the previous case we may assume that $j=i+1$.
    Then $C_j = C_i+1$ and
    $$E_j = E_i + [j\in \mathcal S] = E_i + 1-[j-1\in \mathcal O_0].$$
    Thus 
    $$e_j = (C_j+E_j)/2 = (C_i+E_i-[i\in \mathcal O_0])/2+1>o_i$$
    as required.
    \item Suppose $j-i\ge 2$. Then $C_j = C_{i+1}+(j-i-1)\ge C_{i+1}+1$ and $E_j \ge E_i$ so
    $$o_j = (C_j+E_j-[j\in \mathcal O_0])/2 \ge (C_{i+1}+E_i+1-[j\in \mathcal O_0])/2 > e_i'.$$
    If $j=i+1$ then $E_j = E_i+[j\in \mathcal O_0]$ so $o_j = e_i'+1 > e_i'$ as required.
    
    \item Since $j>i$ we have $C_j\ge C_{i+1}$ and $E_j\ge E_i$ and so
    $$e_i' \le (C_j+E_j)/2-1 < e_j.$$
\end{itemize}

It thus now remains to check that $\mathcal P(S_y^1) + \mathcal P(S_y^2) = B_y$ for $y=1,2$.
For this we prove the equivalent statement that 
\begin{equation}
   \label{eq:sum} 
    S_y^1+S_y^2 = B_y + 2\cdot(0,1,2,\dots).
\end{equation}
Let $\lambda^i$ denote the partition $(\lambda_1^{a_1},\lambda_2^{a_2},\dots,\lambda_{i-1}^{a_{i-1}})$.
We note that by Proposition \ref{prop:spinnonrecursive}, $B_1$ consists of blocks of the form
\begin{equation}
    \begin{cases}
        q_i-d(\lambda^i) + (0,0,\dots,0) & \mbox{ if $r_i=0$} \\
        q_i-d(\lambda^i) + (0) & \mbox{ if $r_i=1$} \\
        q_i-d(\lambda^i)+1+ (0,0,\dots,0) & \mbox{ if $r_i=2$} \\
        \emptyset & \mbox{ if $r_i=3$} 
    \end{cases}
\end{equation}
where in the $r_i = 0,2$ cases the blocks have length $a_i/2$, and $B_2$ consists of blocks of sequences of the form
\begin{equation}
    \begin{cases}
        q_i+d(\lambda^i)+(0,0,\dots,0) & \mbox{ if $r_i=0$} \\
        \emptyset & \mbox{ if $r_i=1$} \\
        q_i+d(\lambda^i) + (0,0,\dots,0) & \mbox{ if $r_i=2$} \\
        q_i+d(\lambda^i) + (0) & \mbox{ if $r_i=3$} 
    \end{cases}
\end{equation}
where in the $r_i = 0,2$ cases the blocks have length $a_i/2$.
We prove Equation \ref{eq:sum} for $y=1$. The case $y=2$ is analogous.
Note that the block structures of $S_i^1,S_i^2$, and $B_i$ all coincide (by which we mean they have the same number of blocks and the blocks have the same size) and so we prove the equality on an arbitrary block.
For this it will be necessary to decompose the sequence $(0,1,\dots)$ into blocks of the same size.
We have for $y=1$, the sequence $(0,1,\dots)$ decomposes into the blocks
\begin{equation}
    \begin{cases}
        F_i + (0,1,\dots,a_i/2-1) & \mbox{ if $r_i=0$} \\
        F_i + (0) & \mbox{ if $r_i=1$} \\
        F_i + (0,1,\dots,a_i/2-1) & \mbox{ if $r_i=2$} \\
        \emptyset & \mbox{ if $r_i=3$} 
    \end{cases}
\end{equation}
where $F_i = \sum_{j<i}([2\mid \lambda_j]a_j/2+[r(\lambda_j) = 1])$.
Unpacking Equation \ref{eq:sum} for a block with $r_i = 0,2$ we see that we must show that
\begin{align*}
    &q_i+\frac12\Delta_i+[r_i=2]+(0,1,\dots,a_i/2-1)+(C_i+E_i)/2+(0,1,\dots,a_i/2-1) \\
    = &q_i-d(\lambda^i)+[r_i=2]+2F_i+2(0,1,\dots,a_i/2-1)
\end{align*}
After cancelling equal terms we get that we must show
\begin{equation}
    \label{eq:red}
    \frac12\Delta_i + (C_i+E_i)/2 = 2F_i-d(\lambda^i).
\end{equation}
Now the left hand side simplifies to $C_i$, whereas the right hand side simplifies to
\begin{align*}
    &\sum_{j<i}\left([2\mid a_j]a_j+2[r(\lambda_j)=1]\right)-\sum_{j<i}\left([r(\lambda_j)=1]-[r(\lambda_j)=3]\right) \\
    = &\sum_{j<i} \left([2\mid a_j]a_j + [r(\lambda_j)=1] + [r(\lambda_j)=3] \right) = C_i.
\end{align*}
Thus we have the equality that we desire. For blocks with $r_i = 1$ we must show that 
\begin{align*}
    &q_i+\frac12\Delta_i+(0) + (C_i+E_i-[i\in \mathcal O_0])/2 + (0) \\
    = &q_i-d(\lambda^i)+2F_i+2(0).
\end{align*}
Cancelling equal terms yields exactly Equation \ref{eq:red} which we have just established.
Finally, the $r_i = 3$ case is vacuous.
This concludes our proof that condition (ii) holds when $d(\lambda) >0$.
The case $d(\lambda)\le 0$ is analogous after making the following modifications
\begin{itemize}
    \item In terms of $B_1,B_2$ as defined above, $B = (B_2,B_1)$;
    \item In terms of $B_1^2,B_2^2$ as defined above, $B_2 = (B_2^2,B_1^2)$;
    \item In terms of $S_1^1,S_2^1$ as defined above, we define $S = (S_2^1,S_1^1)$.
\end{itemize}
Since all the terms are just flipped the proof follows through identically.

Finally we show that condition (iii) holds.
We first need a preparatory lemma.
For $r\in \{0,1,2,3\}$ and $a\ge 0$, define $\gamma(r,a)$ to be the sequence of length $a$ of the form
\begin{equation}
    \begin{cases}
        (0,1,0,1,\dots) & \mbox{if } r=0,2 \\
        (0,0,0,0,\dots) & \mbox{if } r=1,3.
    \end{cases}
\end{equation}
Define
\begin{align*}
    \gamma(\lambda) &= (\gamma(r_1,a_1),\gamma(r_2,a_2),\dots,\gamma(r_k,a_k)).
\end{align*}
\begin{lemma}
    \label{lem:trivlocsys}
    Let $\lambda\in \cN_{ro}$.
    Let $\Lambda\in \mathcal Y^2_{\bullet,n}$ be the symbol with $\mathcal D(\Lambda) = E(\lambda,1)$.
    Then 
    $$Z^*(\Lambda) =  2q(\lambda)+z+\epsilon(\lambda)-2\delta(\lambda).$$
\end{lemma}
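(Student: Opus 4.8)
The plan is to identify the s-symbol $\Lambda$ attached to the ordinary Springer representation $E(\lambda,1)$ explicitly, and then read off its canonical linear presentation from a $\beta$-set of $\lambda$, in exact parallel to the proof of Proposition \ref{prop:ssymbspin}. Since the local system is trivial, the pair $(\OO^\vee,1)$ with $\OO^\vee \leftrightarrow \lambda$ lies in the principal block of the generalised Springer correspondence for $SO(N)$ (Levi a maximal torus, $W_x$ the full Weyl group, symbols of defect $N\bmod 2$) described in \ref{subsubsec:sospringerodd} and \ref{subsubsec:sospringereven}: one has $E(\lambda,1)=\mathcal D(\Lambda)$, where $\Lambda$ is the unique special symbol in $\mathcal Y^2_{\bullet,n}$ of defect $N\bmod 2$ with $p_1(\Lambda)=\lambda$. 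Equivalently, $\Lambda=\tilde S$ for the a-symbol $S$ attached to $\lambda$ via the finite-group picture of \ref{subsubsec:unipB}, so that Lemma \ref{lem:unwind} gives $Z^*(\Lambda)=2Z^*(S)+\epsilon^*(S)$ and reduces the statement to computing the canonical linear presentation of $S$ (equivalently, of $\Lambda$ directly). Either way, the first task is to make this symbol completely explicit.

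For the computation I would write $\lambda=(\lambda_1^{a_1},\dots,\lambda_k^{a_k})$ with $\lambda_1<\cdots<\lambda_k$ and $\lambda_i=4q_i+r_i$, and choose a $\beta$-set $\mathbf b$ for $\lambda$ of length $\#\lambda$. As in the proof of Proposition \ref{prop:ssymbspin}, $\mathbf b$ decomposes into consecutive blocks $\mathbf b^{(i)}$, $i=1,\dots,k$, of the form $c_i+(0,1,\dots,a_i-1)$ with $c_i$ a partial sum determined by the $a_j$ and the parities of the $\lambda_j$ for $j<i$; likewise $z=(0,1,2,\dots)$ splits into blocks of the same sizes. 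Separating the entries of each block by parity to form the two rows of $\Lambda$ (the rather-odd hypothesis, making $\lambda$ a $B$- resp. $D$-partition, forces a rigid parity pattern inside each block governed only by $r_i$), one obtains block-by-block expressions for $Z^*(\Lambda)$ and $\epsilon^*(\Lambda)$. Comparing with the block descriptions of $q(\lambda)$, $\epsilon(\lambda)$, $\delta(\lambda)$ from the end of Section \ref{subsec:spincombo}, the identity $Z^*(\Lambda)=2q(\lambda)+z+\epsilon(\lambda)-2\delta(\lambda)$ drops out after cancelling equal terms in each of the four cases $r_i\in\{0,1,2,3\}$, exactly as in the block computations already carried out for Proposition \ref{prop:ssymbspin}. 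The $N$ even case needs the usual minor care with $\ZZ_2$-orbits and degenerate symbols in $\mathcal Y^2_{0,n}$, but the right-hand side is insensitive to the flip, so the same computation applies.

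Finally one must check that $2q(\lambda)+z+\epsilon(\lambda)-2\delta(\lambda)$ really is the \emph{canonical} linear presentation of $\Lambda$, i.e. that it is non-decreasing with every tie carrying $\epsilon$-value $0$ before $1$. Since $q(\lambda)$ is non-decreasing and $z$ strictly increasing, $2q(\lambda)+z$ is strictly increasing; subtracting $2\delta(\lambda)$ can only create ties, precisely inside blocks with $r_i=0$, where the sequence takes the shape $(x,x,x+2,x+2,\dots)$ while $\epsilon(\lambda)$ equals $(0,1,0,1,\dots)$ there, so adding $\epsilon(\lambda)$ restores monotonicity and orders the ties canonically. This is verbatim the canonicity argument at the end of the proof of Proposition \ref{prop:ssymbspin}.

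The main obstacle is the first step: pinning down which symbol in $\mathcal Y^2_{\bullet,n}$ corresponds to the trivial local system, and threading the parity/mod-$4$ bookkeeping so that the even--odd split of $\mathbf b$ yields \emph{exactly} the sequences $\epsilon(\lambda)$ and $\delta(\lambda)$ appearing on the right-hand side; once the symbol is identified, the rest follows the templates of Proposition \ref{prop:ssymbspin} and Lemma \ref{lem:unwind}.
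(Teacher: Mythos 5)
Your overall plan — compute $\Lambda$ from a $\beta$-set of $\lambda$ by a block-by-block parity split, then verify the result is the canonical linear presentation by the same monotonicity argument as in Proposition \ref{prop:ssymbspin} — is the correct strategy and is what the paper does. But your attempted shortcut in the middle is wrong, and it would lead you to the wrong answer if you actually carried it out.

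The problem is the identification $\Lambda = \tilde S$. Recall $\tilde S = (2X,2Y+1)^{sp}$: it \emph{doubles} all entries of the a-symbol and adds $1$ to one row. That is not how the a-symbol and s-symbol of the same $W_x$-representation are related. Since both parametrize the same bipartition via $\mathcal D$, and $\mathcal D$ subtracts $(0,a,2a,\dots)$ from each row, the s-symbol is obtained from the a-symbol $S=(X,Y)$ by \emph{adding $z$ to each row}, i.e.\ $\Lambda = (X+z, Y+z)$, not by doubling. A minimal example makes this concrete: take $\lambda=(3)$, so $n=1$. The $\beta$-set is $(3)$, the parity split gives the a-symbol $S=((1),())$, and hence $\Lambda=((1),())$ as well, with $Z^*(\Lambda)=(1)$; this agrees with the right-hand side $2q(\lambda)+z+\epsilon(\lambda)-2\delta(\lambda) = 0+0+1-0 = (1)$. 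But $\tilde S = ((2),())$ and $2Z^*(S)+\epsilon^*(S) = (2)$, which is wrong. Lemma \ref{lem:unwind} is used in the paper for a different purpose (the $j$-induction formula $Z^*$-identity in equation \ref{eq:ssymb}), not for converting a-symbols of Springer representations to the corresponding s-symbols.

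Once you drop the $\tilde S$ shortcut you must actually carry out the s-symbol computation directly, which is what the paper does: it computes the a-symbol entries from the parity split of $\lambda+z$, then replaces the a-type shift $(0,1,2,\dots)$ within each row by the s-type shift $(0,2,4,\dots)$ (i.e.\ adds $z$ row-wise), and only then matches the result against $2q(\lambda)+z+\epsilon(\lambda)-2\delta(\lambda)$. The paper also organizes the blocking differently — by the positions of the odd parts of $\lambda$ (the indices $n_1<\cdots<n_l$) rather than by distinct parts — and then derives the clean identity via the intermediate expression $\lfloor\lambda/2\rfloor + z - \gamma(\lambda)$ together with the arithmetic $\lfloor\lambda/2\rfloor = 2q(\lambda) + [r=2,3]$ and $[r=2,3]-\gamma(\lambda)=\epsilon(\lambda)-2\delta(\lambda)$. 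Your case split by $r_i\in\{0,1,2,3\}$ could also work, but you would need to supply the actual block formulas rather than asserting that the identity ``drops out.'' The canonicity check at the end of your argument is fine.
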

\begin{proof}
    Write $\lambda = (\lambda_1^{a_1},\dots,\lambda_k^{a_k})$ with $\lambda_1<\lambda_2<\dots<\lambda_k$.
    Let $1\le n_1<\cdots<n_l\le k$ denote the indicies for which $\lambda_i$ is odd. Let $n_0 = 0$, $\lambda_0=0$, and $a_0 = 0$.
    We can break $\lambda$ up into subsequences of the form $(\lambda_{n_i}^{a_{n_i}},\dots, \lambda_{n_{i+1}-1}^{a_{n_{i+1}-1}})$.
    Note that for $i>0$, $a_{n_i} = 1$.
    Let $A_i = \sum_{j<i}a_j$.
    Then $\lambda + z$ is block-wise of the form 
    \begin{equation}
        \label{eq:lplusz}
        A_{n_i} + (\lambda_{n_i}^{a_{n_i}},\dots, \lambda_{n_{i+1}-1}^{a_{n_{i+1}-1}}) + (0,1,\dots).
    \end{equation}
    Since $A_{n_i} \equiv i-1 \pmod 2$ for $i>0$ and $A_{n_0} = 0$, considering this sequence in Equation \ref{eq:lplusz} modulo $2$ we see that it is of the form $i+(0,0,1,0,1,\dots) \pmod 2$ for $i>0$ and $(0,1,0,1,\dots) \pmod 2$ for $i=0$.
    Let 
    Thus the a-symbol consists of blocks of the form 
    $$\begin{pmatrix}
        \lambda_{n_i}/2,& \dots, & \lambda_{n_{i+1}-1}/2, & \dots \\
        \lambda_{n_i}/2,& \dots, & \lambda_{n_{i+1}-1}/2, & \dots 
    \end{pmatrix}+\begin{pmatrix}
        0, & 1, & \dots \\
        0, & 1, & \dots 
    \end{pmatrix}$$
    for $i=0$,
    $$A_{n_i}/2 + \begin{pmatrix}
        \lfloor\lambda_{n_i}/2\rfloor,&\lambda_{n_i+1}/2,& \dots, & \lambda_{n_{i+1}-1}/2, & \dots \\
        &\lambda_{n_i+1}/2,& \dots, & \lambda_{n_{i+1}-1}/2, & \dots
    \end{pmatrix}+\begin{pmatrix}
        0, & 0, & 1,& \dots \\
         & 1, & 2,& \dots 
    \end{pmatrix}$$
    for $i>0$ and odd, and
    $$(A_{n_i}-1)/2 + \begin{pmatrix}
        &\lambda_{n_i+1}/2,& \dots, & \lambda_{n_{i+1}-1}/2 & \dots \\
        \lfloor\lambda_{n_i}/2\rfloor,&\lambda_{n_i+1}/2,& \dots, & \lambda_{n_{i+1}-1}/2, & \dots
    \end{pmatrix}+\begin{pmatrix}
          & 1, & 2,& \dots \\
        1,& 1, & 2,& \dots 
    \end{pmatrix}$$
    for $i>0$ and even.
    The s-symbol thus consists of blocks of the form
    $$\begin{pmatrix}
        \lambda_{n_i}/2,& \dots, & \lambda_{n_{i+1}-1}/2, & \dots \\
        \lambda_{n_i}/2,& \dots, & \lambda_{n_{i+1}-1}/2, & \dots
    \end{pmatrix}+\begin{pmatrix}
        0, & 2, & \dots \\
        0, & 2, & \dots 
    \end{pmatrix}$$
    for $i=0$,
    $$A_{n_i} + \begin{pmatrix}
        \lfloor\lambda_{n_i}/2\rfloor,&\lambda_{n_i+1}/2,& \dots, & \lambda_{n_{i+1}-1}/2, & \dots \\
        &\lambda_{n_i+1}/2,& \dots, & \lambda_{n_{i+1}-1}/2, & \dots
    \end{pmatrix}+\begin{pmatrix}
        0, & 1, & 3,& 5,& \dots \\
         & 1, & 3,& 5,& \dots 
    \end{pmatrix}$$
    for $i>0$ and odd, and
    $$A_{n_i} + \begin{pmatrix}
        &\lambda_{n_i+1}/2,& \dots, & \lambda_{n_{i+1}-1}/2 & \dots \\
        \lfloor\lambda_{n_i}/2\rfloor,&\lambda_{n_i+1}/2,& \dots, & \lambda_{n_{i+1}-1}/2, & \dots
    \end{pmatrix}+\begin{pmatrix}
          & 1, & 3,& 5,& \dots \\
        0,& 1, & 3,& 5,& \dots 
    \end{pmatrix}$$
    for $i>0$ and even.
    Thus as a non-decreasing sequence, the s-symbol is equal to 
    $$\lfloor\lambda/2\rfloor + z - \gamma(\lambda).$$
    But $\lfloor \lambda/2\rfloor = 2\lfloor \lambda/4\rfloor + [r=2,3]$.
    Since $[r=2,3] - \gamma(\lambda) = \epsilon(\lambda)-2\delta(\lambda)$ we get that
    $$\lfloor\lambda/2\rfloor + z - \gamma(\lambda) = 2q(\lambda)+z+\epsilon(\lambda)-2\delta(\lambda).$$

\end{proof}

We handle the case $d(\lambda)>0$ first.
Suppose $J\in \mathcal J_x$ is maximal and $F\in \mathrm{Irr}(W_{x,J})$ is such that
\begin{equation}
    \label{eq:res}
    \Hom_{W_{x,J}}(F, E(\OO^\vee,\rho)|_{W_{x,J}})\ne0.
\end{equation}
Since $J$ is maximal $W_{x,J} = W_{i}\times W_{l-i}$ for $l = (N-d(2d-1))/4$ and some $0\le i \le l$.
Let $(B^1,B^2)$ be the pair of bipartitions parameterising $F$ (so $B^1$ is a bipartition of $i$, and $B^2$ is a bipartition of $l-i$).
We wish to show that 
$$d_S(J,\WF(\mathcal U_{x,J}(F),\CC)) \ge \OO^\vee.$$
By \cite[Lemma 3.2]{acharaubert} it suffices to show that the inequality holds for the respective s-symbols viewed as non-decreasing sequences.
By Lemma \ref{lem:trivlocsys} the multiset of entries of the s-symbol for $E(\OO^\vee,1)$ viewed as a non-decreasing sequence is 
$$2q(\lambda) + z + \epsilon(\lambda) -2\delta(\lambda).$$
Let us compute the s-symbol attached to $d_S(J,\WF(\mathcal U_{x,J}(F),\CC))$.
Let $S^i \in \mathcal X_{d(\lambda)}^{1,0}$ be the symbol associated to $B^i$.
Write $S^i = (S_1^i,S_2^i)$.
Let $\lambda_1 = \mathcal P(2S_1^{1,sp}\cup 2S_2^{1,sp}+1)$ (a special symplectic partition) and $\lambda_2 = \mathcal P(2S_1^2\cup 2S_2^2+1)$.
Then 
$$\WF(\mathcal U_{x,J,1})(B^1) = \lambda_1^t, \quad \WF(\mathcal U_{x,J,2}(B^2)) = \lambda_2^t.$$
Thus by \cite[Proposition 2.31]{okada2021wavefront}
$$d_S(J,\WF(\mathcal U_{x,J}(F),\CC)) = j_{W_J}^{W^\vee}(E(\lambda_1,1)\otimes E(\lambda_2,1)\otimes E(\lambda_1,1))$$
where $W_J$ is the parahoric of $W$ (the Weyl group of $\bfG$) determined by $J$.
By \cite[4.5 (a)]{Lusztig2009}, $Z^*$ of the s-symbol for the $j$-induced representation is
\begin{equation}
    \label{eq:ssymb}
    2S^{1,sp}+\widetilde{S^{2}}-z.
\end{equation}
Thus we must show that
\begin{equation}
    \label{eq:ineq}
    2S^{1,sp}+\widetilde{S^{2}}-z\ge 2q(\lambda) + z + \epsilon(\lambda) -2\delta(\lambda).
\end{equation}
For this we first observe that by the Littlewood-Richardson rule (c.f. \cite[Proposition 4.7.2]{cmo}), Equation \ref{eq:res} implies that $B^1+B^2 \ge B$ where $B = \rho(\lambda)$.
Let $q=q(\lambda),\epsilon=\epsilon(\lambda),\delta=\delta(\lambda)$ and $\Lambda$ be the symbol with linear presentation $(q+z-\delta,\epsilon)$.
By Proposition \ref{prop:ssymbspin}, $\Lambda = \rho(B) + 2(z,z)$.
Thus 
$$B^1+B^2 \ge B \Rightarrow S^1+S^2 \ge \Lambda.$$
Since $\epsilon$ has exactly $d(\lambda)$ many $1$'s, there is a unique sequence $S^{i,r}$ such that $(S^{i,r},\epsilon)$ is a linear presentation for $S^i$.
Write $\Lambda^r$ for $q+z-\delta$.
We thus get that
$$S^{1,r}+S^{2,r} - \Lambda^r \ge 0.$$
By Lemma \ref{lem:linearpresentation} we have that 
$$S^{2,sp}+\epsilon^*(S^2)\ge S^{2,r}+\epsilon.$$
In particular
$$S^{2,sp}+\epsilon^*(S^2) - S^{2,r} \ge \epsilon.$$
Finally we have by Lemma \ref{lem:unwind} that 
$$2S^{1,sp}+\widetilde{S^2}-z = 2(S^{1,sp}+S^{2,sp})+\epsilon^*(S^2)-z$$
Therefore
\begin{align*}
    &2S^{1,sp}+\widetilde{S^2}-z \\
    = &2(S^{1,sp}+S^{2,sp})+\epsilon^*(S^2)-z \\
    = &2(S^{1,sp}+S^{2,sp}-S^{1,r}-S^{2,r}+S^{1,r}+S^{2,r}-\Lambda^r+\Lambda^r)+\epsilon^*(S^2)-z \\ 
    = &2(S^{1,sp}-S^{1,r}) + (S^{2,sp}-S^{2,r}) + (S^{2,sp}+\epsilon^*(S^2)-S^{2,r}) + 2(S^{1,r}+S^{2,r}-\Lambda) + 2q + z - 2\delta \\
    \ge &2\cdot 0 + 0 + \epsilon + 2\cdot 0 + 2q + z - 2\delta \\
    = &2q+z + \epsilon - 2\delta.
\end{align*}
where we use Lemma \ref{lem:uniondominance} to deduce that $S^{i,sp}-S^{i,r}\ge0$.
This proves Equation \ref{eq:ineq} as required.
For the case $d(\lambda)\le 0$ we note that $\Lambda$ and $S^2$ get replaced by their flips, $\Lambda^f$ and $(S^2)^f$. 
The linear presentation for $\Lambda^f$ is just $(q+z-\delta,1-\epsilon)$.
Also $\widetilde{(S^2)^f} = 2S^{2,sp}+1-\epsilon^*(S^2)$.
In particular
$$S^{2,sp} + 1-\epsilon^*(S^2)-S^{2,r} \ge 1-\epsilon$$
where $S^{2,r}$ is now the linear presentation with respect to to $1-\epsilon$, and so the rest of the argument now follows through identically.

\subsubsection{$Spin(N)$, $N$ even}
When $N$ is even, $\mathcal{N}_{o,l}^*$ consists of two copies of $\mathcal{P}_{ro}(N)$ instead of one.
However the associated Hecke algebras are the same for both copies and so we treat both cases the same.
Otherwise the proof is essentially the same as the $N$ odd case and all the relevant lemmas have been proved in sufficient generality for the even case.
We do not repeat the proof, but mention the main technical differences.

\begin{itemize}
    \item The partition $\mu(\lambda)$ is defined as
    $$m_{\mu(\lambda)}(x) = \begin{cases} 
    2 &\mbox{if } x \text{ is odd and } 4\mid m_{\lambda^t}(x) \\
    &\mbox{and } m_{\lambda^t}(x) \ne 0\\
    1 & \mbox{if } x \text{ is odd and } m_{\lambda^t}(x) \text{ is}\\
    & \mbox{odd}\\
    0 &\mbox{otherwise.} \end{cases}$$
    \item $J = D_m\times A_{n-2m-1} \times D_m$.
    \item $d$ is even. $\omega$ is type II. $K=D_{d^2}\times A_{d(d-1)/2-1}\times D_{d^2}$.
\end{itemize}

\subsection{Proof of faithfulness in exceptional types}\label{sec:faithfulexceptional}

Let $G^\vee$ be a complex exceptional group and $x\in \overline{\mathfrak C}(G^\vee)$.
Let $\theta(x) = (K,\bf E,\psi,\omega)$.
The block $x$ takes one of the following three forms:
\begin{enumerate}
    \item $L^\vee=T$ and $\mathcal E^\vee$ is the trivial local system;
    \item $L^\vee=G^\vee$ and $\mathcal E^\vee$ is a cuspidal local system;
    \item $L^\vee \ne G^\vee,T^\vee$.
\end{enumerate}
The first case is covered in \cite[Section 4.10, Lemma 3.0.2]{cmo}, and the second case in \cite{cmo2}.
We now cover the third case.
There are very few possibilities for this case: we must have $G^\vee$ is either of type $E_6$ or $E_7$.
If $G^\vee = E_6$ there are two possibilities for $x$.
In both cases however the relative Weyl group is of type $G_2$ and $\mathcal J_x$ is the same set for both.
We can treat these two cases simultaneously.
If $G^\vee = E_7$ there is only one choice for $x$ and the relative Weyl group is of type $F_4$.
We use GAP to exhibit explict pairs $(J,\phi)\in \mathcal F_x$ that realise faithfulness.
We summarise the results in the next paragraph.

We treat the cases $E_6,E_7$ simultaneously.
Suppose $\OO^\vee \in (\cN_o^\vee)_x$.
If $\OO^\vee$ appears in Table \ref{table:e6} or Table \ref{table:e7} take the indicated $(J,\phi)$ (recall that $\WF(\phi,\CC)$ completely characterises $\phi$).
If $\OO^\vee$ does not appear then $\OO^\vee$ has the following properly: $\OO^\vee$ is special and $d(\OO^\vee)$ intersects $\mathbb L_{c^\omega(I_{0,K})}(\overline{\mathbb F_q})$ in a single $F_\omega$-stable special orbit which we denote $\OO_0$.
We take $(J,\phi)$ to be such that $J=I_{0,K}$ and $\WF(\phi,\CC) = \OO_0$.

\begin{table}[H]
    \begin{tabular}{ |c||c|c|c|  }
    \hline
    $\OO^\vee$ & $J$ & Type of $\mathbf L_{c^\omega(J)}(\mathbb F_q)$ & $\mathrm{WF}(\phi,\CC)$\\
    \hline
    $A_5$ & \dynkin[extended,labels={\times,\times,,,\times,,\times}] E6 & $A_1\times A_1$ & $(2)\times (2)$ \\
    $2A_2+A_1$ & \dynkin[extended,labels={\times,\times,\times,\times,,\times,\times}] E6 & $A_2$ & $(3)$ \\
    \hline
    \end{tabular} 
    \caption{$(J,\phi)$ for $G^\vee = E_6$}
    \label{table:e6}
\end{table}

%

\begin{table}[H]
    \begin{tabular}{ |c||c|c|c|  }
    \hline
    $\OO^\vee$ & $J$ & Type of $\mathbf L_{c^\omega(J)}(\mathbb F_q)$ & $\mathrm{WF}(\phi,\CC)$\\
    \hline
    $D_6$ & \dynkin[extended,labels={\times,\times,,\times,\times,\times,\times,\times}] E7 & $^2A_7$ & $(2^4)$ \\
    $E_7(a_4)$ & \dynkin[extended,labels={\times,,\times,\times,\times,\times,,\times}] E7 & $A_1\times \hphantom{ }^2D_4$ & $(2)\times (3^21^2)$ \\
    $D_6(a_2)$ & \dynkin[extended,labels={\times,\times,,\times,\times,\times,\times,\times}] E7 & $^2A_7$ & $(42^2)$ \\
    $D_5(a_1)+A_1$ & \dynkin[extended,labels={\times,\times,,\times,\times,\times,\times,\times}] E7 & $^2A_7$ & $(431)$ \\
    $A_5+A_1$ & \dynkin[extended,labels={\times,\times,\times,,\times,,\times,\times}] E7 & $A_2\times A_2$ & $(3)\times(3)$ \\
    $D_4+A_1$ & \dynkin[extended,labels={\times,\times,,\times,\times,\times,\times,\times}] E7 & $^2A_7$ & $(4^2)$ \\
    $A_3+A_2+A_1$ & \dynkin[extended,labels={\times,\times,,\times,\times,\times,\times,\times}] E7 & $^2A_7$ & $(53)$ \\
    $A_3+2A_1$ & \dynkin[extended,labels={\times,\times,,\times,\times,\times,\times,\times}] E7 & $^2A_7$ & $(62)$ \\
    $A_2+3A_1$ & \dynkin[extended,labels={\times,\times,,\times,\times,\times,\times,\times}] E7 & $^2A_7$ & $(71)$ \\
    $4A_1$ & \dynkin[extended,labels={\times,\times,,\times,\times,\times,\times,\times}] E7 & $^2A_7$ & $(8)$ \\
    \hline
    \end{tabular} 
    \caption{$(J,\phi)$ for $G^\vee = E_7$}
    \label{table:e7}
\end{table}

\begin{rmk}
    We make a brief remark on how to interpret the table.
    Take for example $G^\vee = E_7, \OO^\vee = E_7(a_4)$.
    $J$ takes the form
    $$\dynkin[extended,labels={\times,,\times,\times,\times,\times,,\times}] E7.$$
    The reductive quotient $\mathbf L_{c^\omega(J)}(\overline{\mathbb F_q})$ is of type $A_1\times D_4\times A_1$ and comes with a Frobenius $F_\omega$ that identifies the two $A_1$ factors and folds the $D_4$ factor.
    Thus the $\mathbb F_q$ points $\mathbf L_{c^\omega(J)}(\mathbb F_q)$ has type $A_1\times \hphantom{ }^2D_4$.
    When we say $\WF(\phi,\CC) = (2)\times (3^21^2)$ we are listing the geometric orbit in $A_1\times D_4$.
    The actual geometric orbit in $\mathbf L_{c^\omega(J)}(\overline{\mathbb F_q})$ is $(2)\times (3^21^2)\times (2)$.
\end{rmk}

\begin{rmk}
    We remark that there appears to be a mistake in table for the generalised Springer correspondence for $E_7$ in \cite{spaltensteingeneralised} (and hence in GAP).
    It is listed that 
    $$E(E_7(a_4),\epsilon'') = \chi_{8,3}, \quad E(D_5+A_1,-1) = \chi_{2,3}.$$
    However, it should be
    $$E(E_7(a_4),\epsilon'') = \chi_{2,3}, \quad E(D_5+A_1,-1) = \chi_{8,3}.$$
    This was verified in \cite[Table 5]{ciubotaru}, where the $W(F_4)$-structure of the simple tempered modules with real infinitesimal character (and as a byproduct the generalized Springer correspondence) was computed for the graded affine Hecke algebra attached to the cuspidal local system on the principal nilpotent orbit in the Levi subgroup $(3A_1)'\subset E_7$.
\end{rmk}

\section{Main results}\label{sec:main}

Our main result is a general formula for the (canonical unramified) wavefront set of an irreducible unipotent representation with real infinitesimal character. 

%
%
    %

For this we first need an analogue of \cite[Theorem 3.3]{okada2021wavefront}.
Let $X$ be a unipotent representation of $\bfG^\omega(\sfk)$ and let $x = \mathsf{Gcusp}(X) = (J^\vee,\mathcal C^\vee,\mathcal E^\vee)$ and $\theta(x) 
 = \mathsf{Acusp}(X)=(K,\mathbf E,\psi,\omega)$ (c.f. Section \ref{subsec:arithmetic} and Section \ref{subsec:geometric}]). 
Recall from Section \ref{subsec:LLC} that we can associate to $X$ a module $m_{\bf E}(X)$ of the Hecke algebra $\mathcal H = \mathcal H(\bfG^\omega(\sfk),\bf E)$.
Our first order of business is to describe the structure of $X^{\bfU_c(\mf o)}$ in terms of restrictions of $m_{\bf E}(X)$ to subalgebras of $\mathcal H$.

Recall from Section \ref{subsec:wavefrontsets} the definition of the local wavefront sets $^K\WF_c(X)$ for $c\in \mathcal B(\bfG^\omega,\sfk)$.
\begin{lemma}
    \label{lem:canonwf}
    We have 
    $$^K\WF(X) = \max_{J}\hphantom{ }^K\WF_{c^\omega(J)}(X)$$
    where $J$ ranges over $\omega$-stable subsets of $I$ containing $K$.
\end{lemma}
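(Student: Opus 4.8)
The statement to be proved is that $^K\WF(X)$, which by Definition \ref{def:CUWF} is the maximum of $^K\WF_c(X)$ over \emph{all} faces $c\subseteq\mathcal B(\bfG^\omega,\sfk)$, can be computed by ranging only over the faces $c^\omega(J)$ with $J$ an $\omega$-stable subset of $I$ containing $K$ (where $K$ is the arithmetic cuspidal support of $X$). The plan is to combine two facts. First, by \cite[Lemma 2.36]{okada2021wavefront} (quoted in Section \ref{subsec:wavefrontsets}), we already have $^K\WF(X) = \max\{{}^K\WF_{c^\omega(J)}(X)\mid J\in\bfP^\omega(\tilde\Delta)\}$, so it suffices to range over the faces of the fundamental alcove $c_0^\omega$, i.e. over $J\in\bfP^\omega(I)$. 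Second, I would show that for $J\in\bfP^\omega(I)$ with $K\not\subseteq J$, the local wavefront set $^K\WF_{c^\omega(J)}(X)$ is dominated (in the $\le_A$ order on $\mathcal N_o(K)/\!\sim_A$) by $^K\WF_{c^\omega(J')}(X)$ for some $J'\supseteq J$ that \emph{does} contain $K$, or is simply not maximal; hence such $J$ can be discarded from the maximum.

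The key step is the second one, and here is where the arithmetic-geometric dictionary of Section \ref{sec:arithmeticgeometric} enters. Recall that $X$ lives in the category $\mathcal C(\bfG^\omega(\sfk),\mathbf E)$ attached to the cuspidal datum indexed by $K$, and $m_{\mathbf E}(X)$ is the corresponding $\mathcal H = \mathcal H(\bfG^\omega(\sfk),\mathbf E)$-module. For a face $c^\omega(J)$, the space $X^{\bfU_{c^\omega(J)}(\mathfrak o)}$ is an $\bfL_{c^\omega(J)}(\mathbb F_q)$-representation, and its Kawanaka wavefront set is read off from the decomposition of $X^{\bfU_{c^\omega(J)}(\mathfrak o)}$ into unipotent (and non-unipotent) pieces. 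The crucial observation is that $X^{\bfU_{c^\omega(J)}(\mathfrak o)}$ has a nonzero \emph{unipotent-cuspidal-support-}$(K,\mathbf E)$ part only when $K\subseteq J$ — for $K\not\subseteq J$ the $\mathbf E$-isotypic analysis shows the contributions to $X^{\bfU_{c^\omega(J)}(\mathfrak o)}$ come from strictly smaller Levi data, and the corresponding lifted orbits $\mathscr L(c^\omega(J),\mathbb O)$ are then bounded above by orbits arising from a face $c^\omega(J')$ with $J'\supseteq J$, $K\subseteq J'$, using the monotonicity of $\mathscr L$ from \eqref{eq:lift} together with Harish-Chandra induction compatibility (as in the argument for Lemma \ref{lem:bound}, via \cite[Proposition 2.31]{okada2021wavefront} and \cite[Theorem 5.9]{howlettlehrer}). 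Concretely: if $K\not\subseteq J$, pick $J'\in\bfP^\omega(I)$ minimal containing $J\cup K$ that is $\omega$-stable; every unipotent component of $X^{\bfU_{c^\omega(J)}(\mathfrak o)}$ appears in the restriction along $\bfL_{c^\omega(J)}\subset\bfL_{c^\omega(J')}$ of some component of $X^{\bfU_{c^\omega(J')}(\mathfrak o)}$, and by saturation/monotonicity the lift from $(c^\omega(J),-)$ is $\le_A$ the lift from $(c^\omega(J'),-)$.

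I would organize the write-up as: (a) invoke \cite[Lemma 2.36]{okada2021wavefront} to reduce to $J\in\bfP^\omega(I)$; (b) fix $J\in\bfP^\omega(I)$ with $K\not\subseteq J$ and choose $J'\supseteq J\cup K$, $\omega$-stable, as small as possible (such $J'$ exists since $K$ itself is $\omega$-stable and contained in some proper $\omega$-stable subset because $X$ has nonzero invariants under the parahoric attached to $K$); (c) show $^K\WF_{c^\omega(J)}(X)\le \hphantom{}^K\WF_{c^\omega(J')}(X)$ by the Harish-Chandra induction/saturation argument mirroring Lemma \ref{lem:bound}, using that $\mathbf E$-isotypic components of $X|_{\bfP_{c^\omega(J)}(\mathfrak o)}$ generate $X$ so that $X^{\bfU_{c^\omega(J)}(\mathfrak o)}$ is "built from" $(K,\mathbf E)$-data; (d) conclude that the maximum over $\bfP^\omega(I)$ is attained on the subset of $J\supseteq K$.

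\textbf{Main obstacle.} The delicate point is step (c): one must verify that \emph{every} irreducible constituent of the $\bfL_{c^\omega(J)}(\mathbb F_q)$-representation $X^{\bfU_{c^\omega(J)}(\mathfrak o)}$ — not just the unipotent ones — contributes a Kawanaka wavefront orbit that is $\le_A$ one coming from $c^\omega(J')$. For constituents not of unipotent type the Kawanaka wavefront set still makes sense via the Lusztig-Spaltenstein duality picture, but controlling it requires knowing that the transition $\bfP_{c^\omega(J)}(\mathfrak o)\subset\bfP_{c^\omega(J')}(\mathfrak o)$ is compatible with (Deligne-Lusztig, hence Harish-Chandra since these are split Levis by \cite[Section 2.1]{taylorpham}) induction in the way exploited in \eqref{eq:part1}--\eqref{eq:part2}; making this bookkeeping precise for \emph{all} Bernstein blocks simultaneously — rather than just the one block fixed by $\mathbf E$ — is the part that needs care, and is presumably where the bulk of the detailed argument in the paper will go.
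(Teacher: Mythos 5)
Your proposal misses the key ingredient and replaces it with a substantially harder, and ultimately incomplete, argument. The paper's proof does not show that the contributions from $J$ with $K\not\subseteq J$ are \emph{dominated} by contributions from larger $J'$ --- it shows they \emph{vanish}, up to $\Omega$-conjugacy. Concretely, the paper cites \cite[Lemma 5.1]{reederhecke}: if $X^{\bfU_{c^\omega(J)}(\mathfrak o)}\neq 0$ for a representation $X$ in the category $\mathcal C(\bfG^\omega(\sfk),\mathbf E)$, then there exists $\sigma\in\Omega$ with $\sigma(J)\supseteq K$. Since $^K\WF_{c^\omega(J)}(X)$ is constant on $\Omega$-orbits of faces, every nonzero term in the maximum over $J\in\bfP^\omega(I)$ is already accounted for by some $J\supseteq K$, and the lemma follows immediately from \cite[Lemma 2.36]{okada2021wavefront}. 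There is nothing left to bound.

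Your plan, by contrast, acknowledges in step (c) that one would have to control the Kawanaka wavefront sets of \emph{all} constituents of $X^{\bfU_{c^\omega(J)}(\mathfrak o)}$ for $K\not\subseteq J$ --- including those not in the $(K,\mathbf E)$-block --- and you flag this as the "main obstacle" without resolving it. This gap is real: without the vanishing result you have no a priori control over what appears in those spaces of invariants. Moreover, your step (b) is also problematic: you propose to take $J'$ an $\omega$-stable proper subset of $I$ with $J'\supseteq J\cup K$, but such a $J'$ need not exist (the set $J\cup K$ may fail to be properly contained in $I$, or its $\omega$-stable closure may be all of $I$). The Harish-Chandra induction/saturation machinery you invoke from Lemma \ref{lem:bound} genuinely applies only \emph{within} the $(K,\mathbf E)$-block, i.e. for chains $K\subseteq J\subseteq J'$, and cannot be bootstrapped to start below $K$. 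The correct move is to quote the vanishing result and avoid the domination argument altogether.
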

\begin{proof}
    By \cite[Lemma 5.1]{reederhecke}, if $X^{\bfU_{c^\omega(J)}(\mf o)} \ne 0$ then there exists a $\sigma\in \Omega$ such that $\sigma(J)\supseteq K$.
    But since $^K\WF_{c^\omega(J)}$ is constant on $\Omega$-orbits the result follows from the analogue of \cite[Theorem 1.18]{okada2021wavefront} for canonical unramified wavefront sets (this is essentially the content of \cite[Lemma 2.36]{okada2021wavefront}).
\end{proof}

Recall from Section \ref{subsec:faithful} the definition of $\mathcal J_x, \tilde{\mathcal U}_{x,J}$, $\mathcal W_x,W_x,\mathcal W_{x,J}$ and $W_{x,J}$.
Recall from Section \ref{sec:multi} the definition of the $\mathcal W_x$ representation $\widetilde \sigma_0(\theta^*(\mathsf m_{\mathbf E}(V))$.
Write $E$ for this representation.

\begin{lemma}
    \label{lem:wfformula}
    We have 
    $$^K\WF(X) = \max\{\overline{\mathbb L}(J,\WF(\tilde{\mathcal U}_{x,J}(F),\CC)) \mid J\in \mathcal J_x,F\in \mathrm{Irr}(\mathcal W_{x,J}), \Hom_{\mathcal W^\vee_{J}}(F, E) \ne 0\}.$$
\end{lemma}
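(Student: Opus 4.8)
\textbf{Proof plan for Lemma \ref{lem:wfformula}.}

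The plan is to combine Lemma \ref{lem:canonwf} with the explicit description of the $\bfU_c(\mf o)$-invariants of $X$ provided by the arithmetic-geometric correspondence. By Lemma \ref{lem:canonwf},
\[
{}^K\WF(X) = \max_{J}\ {}^K\WF_{c^\omega(J)}(X),
\]
where $J$ ranges over $\omega$-stable subsets of $I$ containing $K$, i.e. over $J\in\mathcal J_x$. So it suffices to understand ${}^K\WF_{c^\omega(J)}(X)$ for a single such $J$. Recall from Section \ref{subsec:wavefrontsets} that ${}^K\WF_{c^\omega(J)}(X) = \{[\mathscr L(c^\omega(J),\OO)] \mid \OO \in \WF(X^{\bfU_{c^\omega(J)}(\mf o)})\}$, where $\WF$ on the right is the Kawanaka wavefront set of the $\bfL_{c^\omega(J)}(\mathbb F_q)$-representation $X^{\bfU_{c^\omega(J)}(\mf o)}$. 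The first step is to decompose $X^{\bfU_{c^\omega(J)}(\mf o)}$ into unipotent representations of $\bfL_{c^\omega(J)}(\mathbb F_q)$: since $X$ has unipotent cuspidal support $(\mathbf P_c(\mf o),\mathbf E)$ with $c = c^\omega(K)$, every constituent of $X^{\bfU_{c^\omega(J)}(\mf o)}$ lies in the Harish-Chandra series $\unip_{\bfL_{c^\omega(K)},\mathbf E}(\bfL_{c^\omega(J)}(\mathbb F_q))$, which by Section \ref{subsec:faithful} is parameterized by $\mathrm{Irr}(\mathcal W_{x,J})$ via $\tilde{\mathcal U}_{x,J}$.

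The second step is to compute the multiplicity of each such constituent $\tilde{\mathcal U}_{x,J}(F)$, $F\in\mathrm{Irr}(\mathcal W_{x,J})$, in $X^{\bfU_{c^\omega(J)}(\mf o)}$. This is exactly the content of the multiplicity formula developed in Section \ref{sec:multi}: translating through the equivalence of categories $\mathsf m_{\mathbf E}$, and then through $\theta^*$ to the geometric side, one has $\Hom_{\bfP_{c^\omega(J)}(\mf o)}(\tilde{\mathcal U}_{x,J}(F),X) = \Hom_{\mathcal W_{x,J}}(F, \mathsf m_{\mathbf E}(X)|_{\mathcal H(\bfP_{c^\omega(J)}(\mf o),\mathbf E)})$, and by the deformation argument (equation (\ref{e:mult-1}) and Corollary \ref{c:restr-temp}, here with $V$ irreducible, $= X$), this equals $\Hom_{\mathcal W^\vee_{J^\vee_J}}(\theta^*(\mathsf m_{\mathbf E}(F))_{q\to 1}, \widetilde\sigma_0(\theta^*(\mathsf m_{\mathbf E}(X)))) = \Hom_{\mathcal W^\vee_J}(F, E)$ in the notation of the lemma, where we have identified $\mathcal W^\vee_{J^\vee_J}\cong\mathcal W_{x,J}$ via $\theta$ and identified the representations accordingly. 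Thus the constituents of $X^{\bfU_{c^\omega(J)}(\mf o)}$ are precisely the $\tilde{\mathcal U}_{x,J}(F)$ with $\Hom_{\mathcal W^\vee_J}(F,E)\neq 0$.

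The third step is to pass from the decomposition of $X^{\bfU_{c^\omega(J)}(\mf o)}$ to its Kawanaka wavefront set. Here I would use the fact, recalled in Section \ref{subsec:unipotent}, that $\WF$ is constant on families of $\unip(\bfL_{c^\omega(J)}(\mathbb F_q))$ and that a unipotent representation $Y$ lies in a family $\phi$ iff $\WF(Y) = \WF(\phi)$; combined with the general principle that the Kawanaka wavefront set of a sum of unipotent representations is the maximum of the individual Kawanaka wavefront sets (this is the $\bfL_{c^\omega(J)}(\mathbb F_q)$-analogue of the statement that $\WF$ of a representation is read off from its constituents — cf. the analogue of \cite[Theorem 1.18]{okada2021wavefront} invoked in Lemma \ref{lem:canonwf}). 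So $\WF(X^{\bfU_{c^\omega(J)}(\mf o)}) = \max\{\WF(\tilde{\mathcal U}_{x,J}(F)) \mid \Hom_{\mathcal W^\vee_J}(F,E)\neq 0\}$, where the maximum is taken inside $\mathcal N_o^{\bfL_{c^\omega(J)}}(\overline{\mathbb F}_q)$. Applying $\mathscr L(c^\omega(J),-)$, which by (\ref{eq:lift}) is strictly increasing, and then the identification $\theta_{x_0,\bfT_1}$ composed with $\mathfrak Q$ (so that $[\mathscr L(c^\omega(J),\OO)]$ corresponds to $\overline{\mathbb L}(J,\Lambda_J^{\overline{\mathbb F}_q}(\OO))$ via the commuting diagram in Section \ref{sec:nil}), and writing $\WF(\tilde{\mathcal U}_{x,J}(F),\CC) := \Lambda_J^{\overline{\mathbb F}_q}(\WF(\tilde{\mathcal U}_{x,J}(F)))$, we obtain
\[
{}^K\WF_{c^\omega(J)}(X) = \max\{\overline{\mathbb L}(J,\WF(\tilde{\mathcal U}_{x,J}(F),\CC)) \mid F\in\mathrm{Irr}(\mathcal W_{x,J}),\ \Hom_{\mathcal W^\vee_J}(F,E)\neq 0\}.
\]
Taking the maximum over $J\in\mathcal J_x$ and invoking Lemma \ref{lem:canonwf} finishes the proof. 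The main obstacle I anticipate is bookkeeping rather than conceptual: one must carefully track the chain of identifications $\mathsf m_{\mathbf E}$, $\theta$, $\theta^*$, $\widetilde\sigma_0$, $\Lambda_J^{\overline{\mathbb F}_q}$, $\theta_{x_0,\bfT_1}$, $\mathfrak Q$ so that the $\mathcal W_x$-representation $E$ in the final formula is literally the one obtained by restricting $\widetilde\sigma_0(\theta^*(\mathsf m_{\mathbf E}(X)))$, and verify that the maximum commutes with each of the order-preserving maps involved (in particular that $\mathscr L$ and $\overline{\mathbb L}$, being strictly increasing, carry maxima to maxima, and that combining the per-$J$ maxima via Lemma \ref{lem:canonwf} is legitimate even though the codomain is only a poset modulo $\sim_A$).
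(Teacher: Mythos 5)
Your proposal is correct and follows essentially the same route as the paper's proof: reduce to the faces $c^\omega(J)$ with $J\in\mathcal J_x$ via Lemma \ref{lem:canonwf}, decompose $X^{\bfU_{c^\omega(J)}(\mf o)}$ into the Harish-Chandra series $\unip_{\bfL_{c^\omega(K)},\mathbf E}(\bfL_{c^\omega(J)}(\mathbb F_q))$ parameterized by $\mathrm{Irr}(\mathcal W_{x,J})$ via $\tilde{\mathcal U}_{x,J}$, and compute multiplicities by (\ref{e:mult-1}). The paper states this in three sentences and leaves the bookkeeping implicit; your write-up simply makes explicit the same chain of identifications.
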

\begin{proof}
    Let $J\in \mathcal J_x$ .
    By \cite{moyprasad} the representation $X^{\bfU_{c^\omega(J)}(\mf o)}$ of $\bfL_{c^\omega(J)}(\mf o)$ is a direct sum of representations in $\unip_{\bfL_{c^\omega(K)},\bf E}(\bfL_{c^\omega(J)}(\mathbb F_q))$.
    Moreover $(\theta^*\circ m_{\bf E})_{q\to 1}$ furnishes a bijection between $\unip_{\bfL_{c^\omega(K)}(\mathbb F_q),\bf E}(\bfL_{c^\omega(J)}(\mathbb F_q))$ and $\mathrm{Irr}(\mathcal W_{J})$ with inverse $\tilde{\mathcal U}_{x,J}$.
    The result then follows from Equation \ref{e:mult-1} and Lemma \ref{lem:canonwf}.
\end{proof}

\begin{theorem}
\label{thm:realwf}
Let $X=X(s,n,\rho)$ be an irreducible unipotent $\mathbf G^\omega(\mathsf k)$-representation with $s\in T^\vee_\mathbb R$. Write $\OO^{\vee}_X = G^{\vee}n \subset \mathfrak{g}^{\vee}$. Then $^K\WF(\AZ(X))$, $\hphantom{ }^{\bar{\sfk}}\WF(\AZ(X))$ are singletons, and
\begin{align*}
^K\WF(\mathsf{AZ}(X)) &= d_A(\OO^{\vee}_X,1)\\
\hphantom{ }^{\bar{\sfk}}\WF(\AZ(X)) &= d(\OO^{\vee}_X).
\end{align*}
\end{theorem}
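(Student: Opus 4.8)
The proof of Theorem \ref{thm:realwf} will combine the reduction of the wavefront computation to Hecke algebra combinatorics, established in Lemma \ref{lem:wfformula}, with the faithfulness property proved in Theorem \ref{thm:faithful}. The plan is as follows. First I would apply the Aubert--Zelevinsky involution and recall that $\AZ(X) = X(s,n',\rho')$ has the same semisimple parameter $s$, still lying in $T^\vee_\mathbb R$, so $\AZ(X)$ again has real infinitesimal character and Corollary \ref{c:mult-real} applies to compute parahoric restrictions of the associated standard module. Since $s$ is real, we may (by Remark \ref{r:real-indep}) work as if $\bfG$ is adjoint, so that Theorem \ref{thm:faithful} is available. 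The key point is that $\AZ(X) = Y(s, \OO^\vee_{\AZ(X)}, \dots)$ is the \emph{full} standard module whenever $\overline{G^\vee(s)n'} = \mathfrak g^\vee_q$; more precisely, the combinatorics of the AZ-dual arrange that $\widetilde\sigma_0(\theta^*(m_{\mathbf E}(\AZ(X))))$ is the (reducible) generalized Springer representation $H_\bullet(\mathcal P_{n'}, \dot{\mathcal E}^\vee)^{\rho'}$ attached to the pair $(\OO^\vee_{\AZ(X)}, \rho')$ in the block $x = \mathsf{Gcusp}(\AZ(X))$. Here I would need the identity $\OO^\vee_{\AZ(X)} = \OO^\vee_X$ — wait, this is not generally true; rather $n'$ lies in a possibly different orbit than $n$. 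So I must be careful: the statement of Theorem \ref{thm:realwf} is phrased in terms of $\OO^\vee_X = G^\vee n$, and the assertion is that $^K\WF(\AZ(X)) = d_A(\OO^\vee_X, 1)$. This forces the identification that the geometric parameter of $\AZ(X)$, as a standard module, \emph{is} governed by $\OO^\vee_X$ via the Springer correspondence for $x$.

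The core of the argument is then to feed Lemma \ref{lem:wfformula} into Definition \ref{def:faithful}. By Lemma \ref{lem:wfformula},
\[
{}^K\WF(\AZ(X)) = \max\{\overline{\mathbb L}(J, \WF(\tilde{\mathcal U}_{x,J}(F), \CC)) \mid J \in \mathcal J_x,\ F \in \mathrm{Irr}(\mathcal W_{x,J}),\ \Hom_{\mathcal W_J^\vee}(F, E) \ne 0\},
\]
where $E = \widetilde\sigma_0(\theta^*(m_{\mathbf E}(\AZ(X))))$. The faithfulness of $\OO^\vee_X$ with respect to $x$ (Theorem \ref{thm:faithful}) provides a distinguished pair $(J, \phi) \in \mathcal F_x$ with $\overline{\mathbb L}(J, \WF(\phi, \CC)) = d_A(\OO^\vee_X, 1)$, together with the two bounds: condition (ii) guarantees that this value is actually \emph{achieved} by some $F$ occurring in the restriction of the relevant Springer representation to $W_{x,J}$, and condition (iii) guarantees that no $J', F'$ can produce a strictly larger orbit. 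The remaining work is to match the abstract representation $E$ appearing in Lemma \ref{lem:wfformula} with the generalized Springer representation $H_\bullet(\mathcal P_{n'}, \dot{\mathcal E}^\vee)^{\rho'}$ so that conditions (ii) and (iii) of Definition \ref{def:faithful} translate literally into the two inequalities $d_A(\OO^\vee_X,1) \le {}^K\WF(\AZ(X))$ and ${}^K\WF(\AZ(X)) \le d_A(\OO^\vee_X, 1)$. The character formula of Proposition \ref{p:stand-graded-struct} together with the injectivity/surjectivity of $(n, \tilde\rho) \mapsto \mu(n, \tilde\rho)$ is exactly what controls which $\mu(n', \tilde\rho')$ appear, and hence which $F$ contribute; the leading term $\mu(n', \tilde\rho')$ with $n' = $ the orbit of $n'$ itself is the one carrying the faithful pair.

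Once equality ${}^K\WF(\AZ(X)) = d_A(\OO^\vee_X, 1)$ is established, the singleton claims and the algebraic wavefront statement follow formally. Since $d_A(\OO^\vee_X, 1)$ is a single element of $\mathcal N_{o,\bar c} \cong \mathcal N_o(K)/\!\sim_A$, the set ${}^K\WF(\AZ(X))$ is a singleton, and then by \eqref{eq:wfcompatibility} the algebraic wavefront set $\hphantom{ }^{\bar{\sfk}}\WF(\AZ(X))$ is also a singleton, equal to $\pr_1(d_A(\OO^\vee_X, 1))$. By the compatibility of Achar's map $d_A$ with Spaltenstein's $d$ recalled in the introduction — namely $d_A(\OO^\vee, 1) = (d(\OO^\vee), \bar C')$ — we get $\hphantom{ }^{\bar{\sfk}}\WF(\AZ(X)) = d(\OO^\vee_X)$, as claimed.

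I expect the main obstacle to be the bookkeeping of precisely which representation $E = \widetilde\sigma_0(\theta^*(m_{\mathbf E}(\AZ(X))))$ one obtains, and verifying that it coincides with the generalized Springer representation $H_\bullet(\mathcal P_{n'}, \dot{\mathcal E}^\vee)^{\rho'}$ with the \emph{correct} normalization of the $W$-action (the one fixed at the end of Section \ref{sec:graded}, with $H_0 = \mathsf{triv}$ on the smallest orbit). This is where the Aubert--Zelevinsky involution interacts subtly with Lusztig's reduction theorems and the geometric classification of Theorem \ref{t:graded-classif}: one must track how $\AZ$ permutes standard modules and simple quotients (using \eqref{e:std-irred} and the identity $\AZ(Y(s,n,\rho)) $ relating to $Y(s, n', \rho')$), and confirm that the "full standard module" phenomenon in item (2) after Theorem \ref{thm:Langlands} applies to $\AZ(X)$. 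The rest — invoking Lemma \ref{lem:wfformula}, plugging in Theorem \ref{thm:faithful}, and reading off the two inequalities from conditions (ii) and (iii) — is comparatively mechanical, modulo the reduction to $J'$ maximal afforded by Lemma \ref{lem:bound}.
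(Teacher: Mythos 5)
Your overall architecture matches the paper's: reduce to the adjoint case, convert the wavefront computation into Weyl-group combinatorics via Lemma \ref{lem:wfformula}, and then extract the two inequalities from conditions (i)--(iii) of Definition \ref{def:faithful} using Theorem \ref{thm:faithful}; the endgame (singletons, and the algebraic statement via $\pr_1\circ d_A=d$ and (\ref{eq:wfcompatibility})) is also correct. The gap is in the step you yourself flag as ``the main obstacle'': identifying the module $E=\widetilde\sigma_0(\theta^*(m_{\mathbf E}(\AZ(X))))$ precisely enough that faithfulness of $\OO^\vee_X=G^\vee n$ can be applied. Your proposed identification --- that $\AZ(X)$ coincides with its standard module and that $E$ is the generalized Springer representation $H_\bullet(\mathcal P_{n'},\dot{\mathcal E}^\vee)^{\rho'}$ attached to $(\OO^\vee_{\AZ(X)},\rho')$ --- fails on two counts. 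First, $\AZ(X)$ is irreducible and is in general a proper quotient of $Y(s,n',\rho')$, so the ``full standard module'' phenomenon does not apply. Second, and more seriously, even where it does apply the resulting Springer data is attached to $G^\vee n'$, not to $G^\vee n$; conditions (ii) and (iii) of Definition \ref{def:faithful} are phrased in terms of $E(\OO^\vee_X,\tilde\rho)$, so you would still need to relate the Springer-theoretic data of $n'$ to that of $n$, which is essentially what the theorem is proving and is not available as an input.

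The paper's proof avoids $n'$ entirely. One inverts the character identity (\ref{e:std-irred}) to write $X$ (not $\AZ(X)$) in the Grothendieck group as $Y(s,n,\rho)$ plus standard modules attached to strictly larger orbits; Corollary \ref{c:mult-real} and Proposition \ref{p:stand-graded-struct} then express $\Res_{\bfL_{c^\omega(J)}(\mathbb F_q)}(X)$ as $\sum_{\tilde\rho}m(\rho,\tilde\rho)\,\mu(n,\tilde\rho)|_{W_{x,J}}$ plus terms supported on strictly larger orbits. The decisive ingredient you are missing is Remark \ref{r:tensor-sign}: by Kato's duality the parahoric restrictions of $\AZ(X)$ are obtained from those of $X$ by tensoring with $\mathrm{sgn}$, and with the normalization fixed at the end of Section \ref{sec:graded} one has $\mu(n,\tilde\rho)\otimes\mathrm{sgn}=E(\OO^\vee_X,\tilde\rho)$, exactly the generalized Springer representation of $\OO^\vee_X$ appearing in Definition \ref{def:faithful}. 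Only after this sign twist do conditions (ii) and (iii) apply verbatim to yield $d_A(\OO^\vee_X,1)\le\,^K\WF(\AZ(X))$ and the reverse bound. Without it, your argument never connects $\AZ(X)$ to the orbit $\OO^\vee_X$ in the statement.
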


\begin{proof}
Arguing as in the proof of \cite[Theorem 3.0.2]{cmo2}, we can reduce to the case when $\mathbf{G}$ is simple and adjoint.

By Equation (\ref{eq:wfcompatibility}) and the identity
\begin{equation}
    \pr_1(d_A(\OO^\vee_X,1)) = d(\OO^\vee_X)
\end{equation}
the formula for $\hphantom{ }^{\bar{\sfk}}\WF(\AZ(X))$ follows from the formula for $^K\WF(\AZ(X))$. So it suffices to show that $^K\WF(\AZ(X)) = d_A(\OO^{\vee}_X,1)$. 

By inverting the character identities (\ref{e:std-irred}), we see that there is an equality in $R(\mathbf{G}^\omega(\sfk))$
\[X = Y(s,n,\rho) + \sum_{n \in \partial (G^{\vee}n')} m_{s,n',\rho'}Y(s,n',\rho')
\]
for standard modules $Y(s,n',\rho')$ and $m_{s,n',\rho'} \in \ZZ$.
Let $x = \mathsf{Gcusp}(X(s,n,\rho)) = (J^\vee,\mathcal C^\vee,\mathcal E^\vee)$ and $\theta(x) = \mathsf{Acusp}(X(s,n,\rho))=(K,\mathbf E,\psi,\omega)$.
By Remark \ref{r:real-geom-diag} $x$ lies in $\overline{\mathfrak C}(G^\vee)$.

If $J\in \mathcal J_x$ and $\tau\in \unip_{\bfL_{c^\omega(K)},\bf E}(\bfL_{c^\omega(J)}(\mathbb F_q))$, let $\tau|_{q\to 1}$ be the corresponding irreducible $\mathcal W_{x,J}$-representation (this was denoted $\theta^*(m_{\mathbf E}(\tau))_{q\to 1}$ in section \ref{sec:multi}). By Corollary \ref{c:mult-real},
    \begin{equation}\label{eq:restr-1}
   \begin{aligned}
   \Hom_{\mathbf P_{c^\omega(J)}(\mathfrak o)}(\tau,Y(s,n',\rho'))&=\Hom_{W_{x,J}}(\tau_{q\to 1}, Y_{\mathbf H}(s,n',\rho')|_{W_{x}}),\\
   \end{aligned}
   \end{equation}
   where $Y_{\mathbf H}(s,n,\rho)$ is the standard module for the graded affine Hecke algebra $\mathbf H(G^\vee,G^\vee_{J^\vee},\mathcal C^\vee,\mathcal F^\vee)$. To simplify the notation, let us denote by $\Res_{\bfL_{c^\omega(J)}(\mathbb F_q)}(X)$ the character of the parahoric restriction of an admissible representation $V$. Then we can rewrite (\ref{eq:restr-1}) in the following form:
   \begin{equation}
       \Res_{\bfL_{c^\omega(J)}(\mathbb F_q)}(Y(s,n',\rho'))=\Res^{W_x}_{W_{x,J}}( Y_{\mathbf H}(s,n',\rho')),
   \end{equation}
by which we understand that the multiplicity of $\tau$ in the left hand side equals the multiplicity of $\tau_{q\to 1}$ in the right hand side.

By Proposition \ref{p:stand-graded-struct}, we have the following identity of $W_x$-characters:
  \[
    Y_\bH(s,n',\rho')|_{W_x}=\sum_{\tilde\rho\in \widehat{A(n')}_{\CL}} m(\rho',\tilde\rho) \mu(n',\tilde\rho) + \sum_{(n'',\tilde\rho'')} m_{(n',\tilde\rho),(n'',\tilde\rho')} \mu(n'',\tilde\rho'),
    \]
    where \[m(\rho',\tilde\rho)=\dim \Hom_{A(s,n')}[\rho',\tilde\rho|_{A(s,n')}],\] $m_{(n',\tilde\rho),(n'',\tilde\rho')}\in \mathbb Z_{\ge 0}$, $\tilde\rho'\in \widehat{A(n'')}_{\CL}$, and $n''$ ranges over a set of representatives of $G^\vee$-orbits in $\mathcal N^\vee$ such that $n'\in\partial {G^\vee n''}$. 

This means that
\begin{equation}
     \Res_{\bfL_{c^\omega(J)}(\mathbb F_q)}(X)=\sum_{\tilde\rho\in \widehat{A(n)}_{\CL}} m(\rho,\tilde\rho) \mu(n,\tilde\rho)|_{W_{x,J}} + \sum_{\substack{\mu'\in\Irr(W_x)\\n\in \partial\OO^\vee(\mu',\mathbb C)}} m_{\mu'}\mu'|_{W_{x,J}},
\end{equation}
for some integers $m_{\mu'}$. Consequently (see Remark \ref{r:tensor-sign}),
\begin{equation}\label{e:tensor-sign}
     \Res_{\bfL_{c^\omega(J)}(\mathbb F_q)}(\AZ(X))=\sum_{\tilde\rho\in \widehat{A(n)}_{\CL}} m(\rho,\tilde\rho) ~(\mu(n,\tilde\rho)\otimes\mathrm{sgn})|_{W_{x,J}} + \sum_{\substack{\mu'\in\Irr(W_x)\\n\in \partial\OO^\vee(\mu',\mathbb C)}} m_{\mu'}~(\mu'\otimes\mathrm{sgn})|_{W_{x,J}},
\end{equation}

Notice that our normalization is such that $\mu(n,\tilde\rho)\otimes\mathrm{sgn}$ is precisely the irreducible generalized Springer representation attached to the pair $(n,\tilde\rho)$ i.e. $E(\OO^\vee_n,\tilde\rho)$.  
Thus
\begin{equation}
     \Res_{\bfL_{c^\omega(J)}(\mathbb F_q)}(\AZ(X))=\sum_{\tilde\rho\in \widehat{A(n)}_{\CL}} m(\rho,\tilde\rho) E(\OO^\vee_X,\tilde\rho)|_{W_{x,J}} + \sum_{\substack{\OO^\vee < \OO'^\vee,\\ (\OO'^\vee,\tilde\rho')\in (\cN_{o,l})_x}} m_{\OO'^\vee,\tilde\rho'} E(\OO^\vee_{1},\tilde\rho')|_{W_{x,J}}.
\end{equation}
Let $F$ be an irreducible constituent of $\Res_{\bfL_{c^\omega(J)}(\mathbb F_q)}(\AZ(X))$.
Then $\Hom_{W_{x,J}}(F,E(\OO'^\vee,\tilde \rho'))\ne 0$ for some $\OO'^\vee\ge \OO^\vee_X$.
By condition (iii) of faithfulness, we have
$$\overline{\mathbb L}(J,\WF(\mathcal U_{x,J}(F),\CC)) \le d_A(\OO'^\vee,1)\le d_A(\OO^\vee_X,1).$$
Thus by Lemma \ref{lem:wfformula} $^K\WF(X)\le d_A(\OO^\vee_X,1)$.
To establish equality note that there exists at least one $\tilde\rho$ such that $m(\rho,\tilde\rho)\neq 0$.
By conditions (i) and (ii) of faithfulness there exists a $J$ and $F\in \mathrm{Irr}(W_{x,J})$ such that $\Hom_{W_{x,J}}(F,E(\OO^\vee_X,\tilde\rho))\ne 0$ and
$$\overline{\mathbb L}(J,\WF(\mathcal U_{x,J}(F),\CC)) = d_A(\OO^\vee_X,1).$$
But
$$\Hom_{W_{x,J}}(F,E(\OO^\vee_X,\tilde\rho))\ne 0 \implies \Hom_{W_{x,J}}(F,\Res_{\bfL_{c^\omega(J)}(\mathbb F_q)}(\AZ(X)))\ne 0$$
since $m(\rho,\tilde\rho)\neq 0$ and so by Lemma \ref{lem:wfformula} we get $^K\WF(\AZ(X)) = d_A(\OO^\vee_X,1)$ as desired.

\end{proof}

\begin{rmk}\label{r:tensor-sign}
    In the above proof, we have used implicitly (see (\ref{e:tensor-sign})) that the restrictions of $\AZ(X)$ are obtained from those of $X$ by tensoring with $\mathrm{sgn}$. In the Iwahori-spherical case, we presented the details in \cite[Section 2.7]{cmo}. In the general case, the argument is identical using Kato's duality for affine Hecke algebras with unequal parameters \cite{Kato} (see also \cite[Section 6]{chan-duality-2016} for graded Hecke algebras), and the known compatibility between the parabolic induction (respectively, Jacquet restriction) functors for $\bfG^\omega(\mathsf k)$-representations  in a Bernstein component and the parabolic induction (respectively, restriction) functors for the modules of the corresponding Hecke algebras (see \cite{bushnellkutzko-types}). 
\end{rmk}

Now let $\OO^{\vee} \subset \mathfrak{g}^{\vee}$ be a nilpotent orbit. Choose an $\mathfrak{sl}(2)$-triple $(e^{\vee},f^{\vee},h^{\vee})$ with $e^{\vee} \in \OO^{\vee}$.

\begin{cor}\label{cor:wfbound}
Let $X=X(\frac{1}{2}h^{\vee},n,\rho)$ be an irreducible unipotent $\mathbf{G}^{\omega}(\sfk)$-representation. Then
\begin{align}\label{eq:WFsetbound}
\begin{split}
d_A(\OO^{\vee}, 1) &\leq_A \hphantom{ } ^K\WF(X)\\
d(\OO^{\vee}) &\leq \hphantom{ }^{\bar{\sfk}}\WF(X)
\end{split}
\end{align}
\end{cor}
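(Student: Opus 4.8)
The plan is to deduce Corollary \ref{cor:wfbound} from Theorem \ref{thm:realwf} by exhibiting a suitable Aubert–Zelevinsky dual. The key observation is that the representation $X = X(\tfrac12 h^\vee, n, \rho)$ has real infinitesimal character: indeed $s = \exp(\tfrac12 h^\vee)$ lies in $T^\vee_r$ since $h^\vee \in \mathfrak t^\vee_{\mathbb R}$, so the hypotheses of Theorem \ref{thm:realwf} apply to $X$ (or rather to a representation whose AZ dual is $X$). Write $\AZ(X) = X(s, n'', \rho'')$, and let $\OO^\vee_{\AZ(X)} = G^\vee n''$. Applying Theorem \ref{thm:realwf} to the representation $\AZ(X)$ in place of $X$ — noting $\AZ$ is an involution so $\AZ(\AZ(X)) = X$ — we get that $^K\WF(X) = {}^K\WF(\AZ(\AZ(X)))$ is a singleton equal to $d_A(\OO^\vee_{\AZ(X)}, 1)$, and similarly $\hphantom{ }^{\bar{\sfk}}\WF(X) = d(\OO^\vee_{\AZ(X)})$.

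First I would pin down the relationship between the nilpotent part $n$ of the parameter of $X$ and the nilpotent part $n''$ of the parameter of $\AZ(X)$. By the remark in Section 1.3, $\AZ(X(s,n,\rho)) = X(s, n'', \rho'')$ with the \emph{same} semisimple part $s$. The crucial point is that when $s = \exp(\tfrac12 h^\vee)$ with $h^\vee$ attached to the $\mathfrak{sl}(2)$-triple through $e^\vee \in \OO^\vee$, the orbit $\OO^\vee$ itself is the open $G^\vee(s)$-orbit in the $q$-eigenspace $\mathfrak g^\vee_q$ — this is because $\Ad(s) e^\vee = q e^\vee$ and the $\mathfrak{sl}(2)$-theory guarantees the $G^\vee(s)$-orbit of $e^\vee$ is dense in $\mathfrak g^\vee_q$. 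Consequently $\OO^\vee$ is the \emph{maximal} orbit occurring among the nilpotent parts of parameters with semisimple part $s$. Now from the character identity (\ref{e:std-irred}) and the structure of AZ duality (it reverses the order on nilpotent parts within a fixed Bernstein/semisimple class, cf. the discussion of $\mathrm{AZ}(X(s,n,\rho)) = X(s,n',\rho')$ and Remark \ref{r:tensor-sign}), the orbit $\OO^\vee_{\AZ(X)}$ of $n''$ must be the \emph{minimal} such orbit, which forces $\OO^\vee_{\AZ(X)} \leq \OO^\vee$.

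Given $\OO^\vee_{\AZ(X)} \leq \OO^\vee$, the inequalities follow from the order-reversing property of the duality maps $d_A$ and $d$ recorded in Section \ref{sec:nil}. Specifically, $d_A$ is order-reversing with respect to $\leq_A$, so $\OO^\vee_{\AZ(X)} \leq \OO^\vee$ gives $d_A(\OO^\vee, 1) \leq_A d_A(\OO^\vee_{\AZ(X)}, 1) = {}^K\WF(X)$, which is the first inequality (here I use that $d_A(\OO^\vee_{\AZ(X)},1)$ is identified with $^K\WF(X)$ as an element of $\mathcal N_o(K)/\sim_A$ via Theorem \ref{thm:unramclasses}, and that $\leq_A$ descends to $\mathcal N_{o,\bar c}$). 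Applying $\mathrm{pr}_1$ and using $\mathrm{pr}_1(d_A(\OO^\vee, 1)) = d(\OO^\vee)$ together with the compatibility of $\mathrm{pr}_1$ with the partial orders yields $d(\OO^\vee) \leq \hphantom{ }^{\bar{\sfk}}\WF(X)$.

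The main obstacle will be making the claim $\OO^\vee_{\AZ(X)} \leq \OO^\vee$ precise and rigorous. One must be careful that AZ duality on the level of parameters does not literally fix $s$ and "flip" $n$ in an arbitrary way — the correct statement is that within $\Pi^{\mathsf{Lus}}_s(\bfG(\sfk))$, the map $n \mapsto n''$ reverses the closure order, with the open orbit in $\mathfrak g^\vee_q$ going to the zero orbit (the trivial/Steinberg exchange) and vice versa. This should be extracted from the combination of (\ref{e:std-irred}), the fact that $Y(s,n,\rho) = X(s,n,\rho)$ when $\overline{G^\vee(s)n} = \mathfrak g^\vee_q$ (Theorem \ref{thm:Langlands}(1) and the standard-module properties), and Remark \ref{r:tensor-sign}; alternatively it is cleanest to simply invoke that $X(\tfrac12 h^\vee, n, \rho)$ with $n$ in the \emph{full} eigenspace orbit $\OO^\vee$ is the parameter of an \emph{anti-tempered} (dual of tempered) representation, so that its AZ dual is tempered with nilpotent part of smaller or equal orbit, and then Theorem \ref{thm:realwf} together with the inequality $\OO^\vee_{\AZ(X)} \le \OO^\vee$ closes the argument.
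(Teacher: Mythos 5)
Your proposal is correct and follows essentially the same route as the paper: reduce the second inequality to the first via $\mathrm{pr}_1$, apply Theorem \ref{thm:realwf} to $X'=\AZ(X)$ (using that $\AZ$ is an involution preserving the semisimple parameter) to get $^K\WF(X)=d_A(\OO^\vee_{X'},1)$, and conclude by the order-reversing property of $d_A$ once one knows $\OO^\vee_{X'}\le \OO^\vee$; the paper obtains this last bound by citing \cite[Lemma 3.0.9]{cmo}, whereas you rederive it from the density of $G^\vee(s)\cdot e^\vee$ in $\mathfrak g^\vee_q$. One small caveat: your intermediate assertion that $\OO^\vee_{\AZ(X)}$ is the \emph{minimal} orbit with semisimple part $s$ is neither true in general nor needed — the bound $\OO^\vee_{\AZ(X)}\le\OO^\vee$ already follows from the maximality you established, since the nilpotent part of any parameter with semisimple part $q^{\frac12 h^\vee}$ lies in $\mathfrak g^\vee_q\subseteq\overline{\OO^\vee}$.
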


\begin{proof}
The second inequality follows from the first by applying $\mathrm{pr}_1$ to both sides. So it suffices to show that $d_A(\OO^{\vee}, 1) \leq_A \hphantom{ } ^K\WF(X)$.

Let $X'=\AZ(X)$. Note that $X' = X(\frac{1}{2}h^{\vee},n',\rho')$, for some $n'$ and $\rho'$. By \cite[Lemma 3.0.9]{cmo}, we have $\OO^{\vee}_{X'} = G^{\vee}n' \leq \OO^{\vee}$. In particular, $(\OO^\vee_{X'},1)\le_A(\OO^\vee,1)$ and so $d_A(\OO^\vee,1)\le_A d_A(\OO^\vee_{X'},1)$ since $d_A$ is order-reversing. But by Theorem \ref{thm:realwf}, $d_A(\OO^\vee_{X'},1) = \hphantom{ }^K\WF(X)$. This completes the proof.
\end{proof}

\begin{sloppypar} \printbibliography[title={References}] \end{sloppypar}

\end{document}